\newtheorem{theorem}{Theorem}[section]
\newtheorem{remark}[theorem]{Remark}
\newtheorem{proposition}[theorem]{Proposition}
\newtheorem{lemma}[theorem]{Lemma}
\newtheorem{corollary}[theorem]{Corollary}
\newtheorem*{definition*}{Definition}
\newtheorem*{theorem*}{Theorem}
\newcommand{\E}[1]{\mathbb{E}\left(#1\right)}
\renewcommand{\P}{\mathbb{P}}
\newcommand{\R}{\mathbb{R}}
\newcommand{\Z}{\mathbb{Z}}
\newcommand{\N}{\mathbb{N}}
\renewcommand{\H}{\mathbb{H}}
\renewcommand{\S}{\mathbb{S}}
\newcommand{\F}{\mathcal{F}}
\newcommand{\X}{\mathcal{X}}
\newcommand{\e}{\varepsilon}
\newcommand{\vf}{\varphi}
\newcommand{\G}{\Gamma}
\newcommand{\g}{\gamma}
\newcommand{\LL}{\mathcal L}
\newcommand{\PP}{\mathcal P}
\newcommand{\MM}{\mathcal M}
\newcommand{\la}{\lambda}
\newcommand{\La}{\Lambda}
\newcommand{\de}{\delta}
\newcommand{\s}{\sigma}
\newcommand{\Si}{\Sigma}
\newcommand{\x}{\times}
\newcommand{\om}{\omega}
\newcommand{\Om}{\Omega}
\newcommand{\GG}{\mathcal G}
\newcommand{\ov}{\overline}
\newcommand{\un}{\underline}
\DeclareMathOperator{\GL}{GL}
\newcommand{\dist}{\operatorname{dist}}
\newcommand{\planes}{\mathcal{P}}
\newcommand{\lines}{\mathcal{L}}
\renewcommand{\det}{\text{det}}
\begin{document}

\title{Exact dimension of Furstenberg measures}
\author{Fran\c cois Ledrappier and Pablo Lessa}
\address{Fran\c cois Ledrappier, Universit\'e de Paris et Sorbonne Universit\'e, CNRS, LPSM, Bo\^{i}te Courrier 158, 4, Place Jussieu, 75252 PARIS cedex
05, France,} \email{fledrapp@nd.edu}
\address{ Pablo Lessa,  IMERL, Facultad de Ingeniería, Julio Herrera y Reissig 565,  11300 Montevideo, Uruguay}  \email{plessa@fing.edu.uy}

\subjclass{37C45, 37A99, 28A80}\keywords{Furstenberg measure, dimension}
\thanks{FL was partially supported by IFUM; PL thanks CSIC research project 389}

\maketitle
\begin{abstract} For a probability measure \( \mu \) on \( SL_d(\mathbb R) \), we consider the Furstenberg stationary measure \( \nu \) on the space of flags. Under general non-degeneracy conditions, if \( \mu \) is discrete and if \( \sum_g \log \|g\| \, \mu (g) < + \infty \), then the measure \( \nu \) is exact-dimensional.
\end{abstract}

\section{Introduction}
\subsection{Main results}
Let $\mu $ be a probability measure on the group $ SL_d(\R)$ of $d\x d$ real matrices with determinant 1.  Let $\F$ be the space of complete flags in $\R^d$, \[ f \in \F \iff f = U_0 \subset U_1 \subset \ldots \subset U_{d-1} \subset U_d,\] where \( U_i\) is a vector space of dimension \(i\) in $\R^d, U_0 = \{0\}, U_d = \R^d.$  $SL_d(\R) $ acts naturally on $\F$. A probability measure \(\nu \) on \( \F\) is called {\it {stationary}} if it satisfies \[\int g_\ast \nu \, d\mu (g) = \nu. \]
By compactness, there always exist stationary measures. Understanding stationary  measures is central to many studies of linear groups and applications (see \cite{benoist-quint16} for a recent survey).

Let \( G\) be a group, \(\mu \) a probability on \(G\), \(X\)  a compact \(G\)-space and \( \nu \) a stationary probability measure. We define the {\it{Furstenberg entropy }}  as the nonnegative number \( h(X, \mu, \nu ) \)  given by \[ h(X, \mu, \nu ) := \int _G \int _{X} \log \frac{dg_\ast\nu}{d\nu} (x)\,  \frac{dg_\ast\nu}{d\nu } (x) \, d\nu  (x) d\mu (g) ,\]
 with the  convention that \( 0 \log 0 = 0\) and  that the entropy  is \( +\infty \) if  the measure \( g_\ast \nu \) is not absolutely continuous with respect to the measure \(\nu \) for a  set of positive \(\mu\)-measure of elements \(g \in G.\)
 
 Let \( G_0\) be the subgroup of \(SL_d(\R)\) generated by the support of \( \mu \). Assume that \(\int _{G_0}\log \|g\| \, d\mu (g) <+\infty \) and \( \nu \) is extremal among stationary measures. Then,  there are $d$  {\it{Lyapunov exponents}}  \[ \chi _1 \geq \chi _2 \geq \ldots \geq \chi _d, \; {\textrm {with }} \chi_1 + \ldots + \chi _d = 0 ,\] such that
for \( \nu \)-a.e. $f \in \F, f= \{ \{0\} \subset U_1(f) \subset \ldots \subset U_{d-1}(f) \subset \R^d \},$ any  \(j,  j = 1\ldots, d-1, \) \( \mu ^{\otimes _\N }\) and almost every sequence \( g_0, g_1, \ldots, \)  \begin{equation}\label{exponents} \sum _{i\leq j} \chi _i  \; =\;\lim\limits _{n \to \infty } \frac{1}{n}  \log |\det_{U_j (f)} (g_{n-1}\ldots g_1g_0) |  ,\end{equation}
where, for any subspace $U$ in $\R^d$, $|\det_U (g)|$ is the Jacobian of the linear mapping from $U$ to $gU$, both endowed with the Euclidean metric.

The following inequality is very general (and essentially due to Furstenberg (\cite{furstenberg1963}))
\begin{theorem}\label{ent/exp1} Let \( \mu \) be a probability  on \( SL_d(\mathbb R) \) such that \(\int _{SL_d(\mathbb R) } \log \|g\| \, d\mu (g) <+\infty. \) Then there exists a stationary measure \( \nu \) on \(\F \) such that 
\begin{equation}\label{ent/exp} h(\F, \mu, \nu )  \; \leq \; \sum _{i,j: i<j } \chi_i - \chi _j .\end{equation}
If there is equality in (\ref{ent/exp}), then the measure \( \nu  \) is exact-dimensional with  dimension \( d(d-1)/2\). \end{theorem}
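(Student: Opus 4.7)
The plan is to construct a stationary measure $\nu$ via the attracting flag of the random walk, prove the inequality through a Jacobian--entropy comparison, and handle the equality case via Ledrappier--Young style arguments.

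\emph{Construction of $\nu$.} Under $\int \log \|g\|\,d\mu < \infty$, the Iwasawa (KAN) decomposition combined with the Oseledets theorem implies that, for $\mu^{\otimes\N}$-a.e.\ trajectory $\omega = (g_i)_{i\in\N}$ and any reference flag $f_0$, the iterates $(g_0 g_1 \cdots g_{n-1})\cdot f_0$ converge in $\F$ to a limit flag $f_\infty(\omega)$. Take $\nu$ to be the distribution of $f_\infty$; it is $\mu$-stationary and the Lyapunov convergence (\ref{exponents}) holds for $\nu$-a.e.\ $f$.

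\emph{Upper bound on the entropy.} Since $\F \cong SL_d(\R)/B$, the tangent space $T_f\F$ splits into $d(d-1)/2$ lines indexed by pairs $i<j$. In charts adapted to $f$, the Jacobian $J_f(g)$ of the action of $g$ on $\F$ factors as a product, over these pairs, of ratios of the quantities $|\det_{U_k(f)}(g)|$ appearing in (\ref{exponents}); taking logarithms and applying (\ref{exponents}) yields
\[
-\frac{1}{n}\log J_f(g_{n-1}\cdots g_0)\;\longrightarrow\; \sum_{i<j}(\chi_i - \chi_j)
\]
in $L^1(\nu\otimes\mu^{\otimes\N})$. The classical Furstenberg--Kaimanovich inequality bounds the Furstenberg entropy of any stationary measure on a smooth $G$-space by the $\mu^{\otimes\N}\otimes\nu$-average of $-\frac{1}{n}\log J_f(g_{n-1}\cdots g_0)$ for every $n$ (the proof iterates the cocycle identity for Radon--Nikodym derivatives and applies Jensen's inequality with Riemannian volume as reference). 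Letting $n\to\infty$ gives (\ref{ent/exp}).

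\emph{Equality case.} Assume equality in (\ref{ent/exp}). The Jensen step above saturates in the limit only when the conditional measures of $\nu$ along the unstable foliation of the random cocycle on $\F$ are absolutely continuous with respect to Riemannian volume along leaves --- this is the Ledrappier--Young dichotomy in the random-dynamical setting. Every nonzero Lyapunov exponent of the induced cocycle on $T\F$ is positive (each equals some $\chi_i-\chi_j$ with $i<j$), so the unstable foliation has full leaf dimension on a set of full $\nu$-measure, and $\nu$ is in fact absolutely continuous with respect to Riemannian volume on $\F$. Exact-dimensionality with dimension $d(d-1)/2 = \dim \F$ follows.

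The principal obstacle is the equality case: converting a single global equality in (\ref{ent/exp}) into pointwise absolute continuity of conditional measures on unstable manifolds requires a random-dynamical analogue of the Ledrappier--Young theorem. One must also handle coincidences $\chi_i = \chi_j$, which produce zero Lyapunov exponents of the cocycle on $T\F$ and demand a separate argument to show that the unstable foliation truly fills $\F$ rather than leaving hidden singular transverse mass; this is where I expect the bulk of the technical work to reside.
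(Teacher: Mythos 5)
There is a genuine gap at the step where you invoke a ``classical Furstenberg--Kaimanovich inequality'' and justify it by ``iterates the cocycle identity for Radon--Nikodym derivatives and applies Jensen's inequality.'' The cocycle manipulation you have in mind --- writing $\frac{dg_*\nu}{d\nu}(f)=\frac{\phi(g^{-1}f)}{\phi(f)}\cdot\frac{dg_*\eta}{d\eta}(f)$ for $\phi=\frac{d\nu}{d\eta}$ with $\eta$ the rotation-invariant volume, and integrating against $dg_*\nu\,d\mu$ so that the density terms cancel by stationarity --- yields the \emph{identity} $h(\F,\mu,\nu)=\int\int\log\frac{dg_*\eta}{d\eta}(gf)\,d\nu(f)\,d\mu(g)$, not an inequality; and it is only valid when $\nu\ll\eta$ with $\int|\log\phi|\,d\nu<\infty$. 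When $\nu$ is singular with respect to Riemannian volume (precisely the case of interest here) the cancellation is $\infty-\infty$ and the argument collapses. Worse, one does not know a priori that $g_*\nu\ll\nu$ for $\mu$-a.e.\ $g$, i.e.\ that $h(\F,\mu,\nu)$ is finite at all: that quasi-invariance is part of what must be proved (see Remark \ref{abscont}). The paper's proof deals with exactly this by mollifying: for $\mu_n=\lambda_n*\mu$ with $\lambda_n$ a smooth density on $SO(d)$ tending to $\delta_{Id}$, the unique stationary measure $\nu_n$ has a positive smooth density, so the entropy--Jacobian identity $I_n=\sum_{i<j}(\chi_{i,n}-\chi_{j,n})$ holds exactly; the bound (\ref{ent/exp}) for $\nu=\lim\nu_n$ then follows from upper semicontinuity of mutual information (via Dobrushin's theorem) together with continuity of the exponents $\chi_{i,n}\to\chi_i$. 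That approximation step is the real content of the proof, not a formality you can replace with Jensen.

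Two smaller points. Your construction of $\nu$ as the law of the attracting flag requires the full flag $(g_{n-1}\cdots g_0)\cdot f_0$ to converge, which needs a simple Lyapunov spectrum or proximality that the theorem does not assume (only a first-moment condition); the paper's $\nu$ is a weak limit of the $\nu_n$, which exists under the moment hypothesis alone. For the equality case, you correctly observe that a random-dynamical Ledrappier--Young theorem is needed, but that theorem is exactly what the remainder of this paper proves (Theorems \ref{exactfibers} and \ref{mainT}), so treating it as a known black box here is circular. The paper does the same thing in spirit --- equality in (\ref{ent/exp}) forces all partial dimensions $\g_{T,T'}=1$, and Theorem \ref{mainT}.1 then gives dimension $d(d-1)/2$ --- but it explicitly defers to those later results rather than asserting them as available.
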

The inequality (\ref{ent/exp}) is proven in Section \ref{section:ent/exp}. See the discussion after theorem \ref{exactfibers} for the equality case. 
 \begin{remark}\label{abscont} It follows from theorem \ref{ent/exp1}  and   its proof, that if \(\int _{SL_d(\R)} \log \|g\| \, d\mu (g) <+\infty \), then  there exists  a stationary measure \( \nu \) with finite entropy. In particular,  for \( \mu\)-a.e. \(g \in G_0\),  the measure \( g_\ast \nu \) is absolutely continuous with respect to the measure \(\nu \) (see Corollary \ref{finiteentropy}). \end{remark}

Let \( \MM(d)\) be the set of probability measures on \(SL_d(\R) \) with \(\int \log \|g\| \, d\mu (g) <+\infty \) such that the stationary measure on \( \F\) is unique and the Lyapunov exponents are pairwise distinct. For example, if the group \(G_0\)  is Zariski dense in \(SL(d, \R) \) and \(\int  \log \|g\| \, d\mu (g) <+\infty \), then \( \mu \in \MM(d) \) (see \cite{guivarch-raugi} and \cite{goldsheid-margulis}).

Let $(X,\rho) $ be a metric space, \( \nu \) a  measure on \(X.\) The {\it {lower dimension }} \( \un \de \) and the {\it {upper dimension}} \( \ov \de \) of \((X, \rho, \nu )\) are defined by 
\[ \un \de \, = \, \underset {\nu}{{\textrm {ess.inf}}} \, \liminf _{r\to 0} \frac{\log \nu (B(x,r))}{\log r} , \quad  \ov \de \, = \, \underset {\nu}{{\textrm {ess.sup}}} \,\limsup_{r\to 0} \frac{\log \nu (B(x,r))}{\log r}.\]
A measure $\nu $ on $X$ is called {\it {exact-dimensional with dimension }}$\delta$ if \(\un \de = \ov \de = \de  .\)

If the space \((X, \rho ) \) is bilipschitz equivalent to an Euclidean \( \R ^n, n\geq 1,\) and  $\nu $ is exact-dimensional of dimension $\delta $, then $\delta $ is the smallest Hausdorff dimension of sets of positive $\nu $-measure (see e.g. \cite{young82}, Prop 2.1).  Our main result is 
\begin{theorem}\label{main} Let $\mu \in \MM (d)$ be a discrete probability measure.  Endow the space \(\F\) of  flags with the natural Riemannian distance invariant under the action of \( SO(d).\) 
Then the unique stationary probability measure $\nu $ on the space \(\F\)   is exact-dimensional. \end{theorem}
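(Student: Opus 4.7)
The strategy is to realise $\nu$ as the push-forward of the Bernoulli measure $\P=\mu^{\otimes\N}$ on sample paths under the limit-flag map, and to compute the local dimension of $\nu$ by a Shannon--McMillan--Breiman argument together with the Oseledets decomposition of the flag bundle.

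\emph{Step 1 (Symbolic coding).} On $\Omega=SL_d(\R)^{\N}$ with the Bernoulli measure $\P=\mu^{\otimes\N}$ and shift $T$, write $A_n(\omega)=g_{n-1}\cdots g_0$. Since $\mu\in\MM(d)$, for $\P$-a.e.\ $\omega$ the flags $A_n(\omega)\cdot f$ converge to a single flag $Z(\omega)\in\F$ independently of $f$ in a $\nu$-full set, and the law of $Z$ is $\nu$. This gives a measurable coding $\Omega\to\F$ compatible with the dynamics.

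\emph{Step 2 (Geometric contraction).} At $Z(\omega)$, the tangent space decomposes as $T_{Z(\omega)}\F=\bigoplus_{i<j}E_{ij}(\omega)$ into $d(d-1)/2$ one-dimensional Oseledets subspaces, with the contraction rate of $DA_n(\omega)$ on $E_{ij}(\omega)$ equal to $\chi_j-\chi_i<0$. The hypothesis that the $\chi_i$ are pairwise distinct makes this Oseledets splitting non-degenerate and Hölder-measurable.

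\emph{Step 3 (Dynamical partitions).} Discreteness of $\mu$ makes $\mathcal Q=\{[g]:g\in\mathrm{supp}\,\mu\}$ a countable partition of $\Omega$, with refinements $\mathcal Q_n=\bigvee_{k=0}^{n-1}T^{-k}\mathcal Q$ consisting of cylinders of length $n$ and $\P$-mass $\prod_{k<n}\mu(g_k)$. Although the Shannon entropy $H(\mu)$ may be infinite, the finite-entropy stationary measure provided by Theorem~\ref{ent/exp1} and Remark~\ref{abscont} allows a truncation argument to pin down a limiting entropy density for $-\tfrac1n\log\P(\mathcal Q_n(\omega))$ along sub-partitions with finite Shannon entropy.

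\emph{Step 4 (Matching scales).} The key geometric fact is that $Z(\mathcal Q_n(\omega))$ is contained in an adapted ellipsoid around $Z(\omega)$ with semi-axis $\approx e^{n(\chi_j-\chi_i)}$ in direction $E_{ij}(\omega)$. Equivalently, $Z(\mathcal Q_n(\omega))=A_n(\omega)\,Z(\Omega)$, and the anisotropic shape of the image is governed by the singular value spectrum of $A_n(\omega)$. For suitable $n=n(r,\omega)$ one can cover $B(Z(\omega),r)$ by a controlled family of cylinder images, so that $\nu(B(Z(\omega),r))$ is estimated, up to constants, by the $\P$-mass of these cylinders.

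\emph{Step 5 (Exact dimension via the flag tower).} Pass through the tower of partial flag varieties $\P^{d-1}=\F_1\leftarrow\F_2\leftarrow\cdots\leftarrow\F_{d-1}=\F$. On each step one considers the conditional measure along the Grassmannian fibre and repeats the analysis of Steps~2--4, the relevant contraction rates being gaps $\chi_i-\chi_j$ corresponding to the added direction. A Ledrappier--Young-type decomposition splits both the entropy contribution and the local dimension between the base and the fibre, yielding by induction that upper and lower local dimensions of $\nu$ coincide almost surely.

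\emph{Main obstacle.} The hardest ingredient is the flag-tower analysis of Step~5: constructing, on the skew product $\Omega\times\F$, the family of unstable manifolds associated with each Lyapunov gap, disintegrating $\nu$ along them in a measurable way, and proving the Ledrappier--Young-style chain rule that relates the entropies of the successive symbolic partitions to the geometric contraction rates. The anisotropy of the flag geometry (several Lyapunov exponents of different sizes) must be reconciled with the isotropic Riemannian distance, and the potentially infinite Shannon entropy of $\mu$ must be handled by the truncation procedure suggested in Step~3, using the finite Furstenberg entropy given by Theorem~\ref{ent/exp1}.
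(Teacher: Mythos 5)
Your Steps~1--4 are broadly compatible with the paper's framework: the Bernoulli model $(\Omega,m)$, Oseledets theorem, countable partitions from the discreteness of $\mu$, and a cylinder-vs-ball covering argument all appear (Sections~\ref{section:oseledets} and \ref{addingdimensions}). However, your Step~5 — ``pass through the tower of partial flag varieties $\P^{d-1}=\F_1\leftarrow\cdots\leftarrow\F_{d-1}=\F$'' and repeat the analysis on each Grassmannian fibre — is exactly the route the paper explains does \emph{not} work. The paper proves (Proposition~\ref{noconsdim}, Theorem~\ref{Manhattan}) that already for $d=3$ one can construct $\mu\in\MM(3)$ for which the projection $\F\to\planes$ (or $\F\to\lines$, depending on the sign of $\chi_2$) fails dimension conservation: the base dimension plus fibre dimension is strictly less than $\dim\nu$. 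The reason is structural: the Ledrappier--Young chain of inequalities (\ref{induction1}), (\ref{induction2}) requires the one-step contraction rates to be processed in \emph{nondecreasing} order, and the exponent $\chi_i-\chi_j$ of a Grassmannian fibre of the natural flag tower need not be the smallest remaining gap. Concretely, for $d=3$ with $\chi_2>0$, the fibre of $\F\to\lines$ carries exponent $\chi_2-\chi_3$, which exceeds the base exponent $\chi_1-\chi_2$, so the induction breaks.

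The missing idea is the introduction of a dual flag $f'\sim\nu'$ (the stable Oseledets flag) and the associated configuration spaces $\X_T^{f'}$ for admissible topologies $T$ on $\{1,\ldots,d\}$. These interpolate between $\F$ and a point through one-dimensional fibrations that are \emph{not} partial-flag projections, and, crucially, Proposition~\ref{order}.1 shows one can always choose the chain $T_1=T^N\prec\cdots\prec T^0=T_0$ so that the exponents $\chi_{T^t,T^{t-1}}$ are nondecreasing. This is what lets Theorem~\ref{exactfibers} (exact dimensionality of each one-dimensional conditional measure, via Maker's ergodic theorem) feed into the Ledrappier--Young scheme of Theorem~\ref{mainT} and Theorem~\ref{counting}. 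Without this extra structure, the inductive estimate in your Step~5 cannot be closed — so the proposal, as written, has a genuine gap at precisely the point you flag as the ``main obstacle.''
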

The dimension in Theorem \ref{main} is given by a formula involving exponents and some partial entropies (see (\ref{etvoila})).  This implies an a priori bound on the dimension that we describe now.
Denote \( \{ 0 < \lambda _1 \leq \lambda _2 \leq \ldots \leq \lambda _{d(d-1)/2} \} \) the differences of exponents \( \chi _i - \chi _j \) for all \((i,j), i<j.\)  Assume \( \mu \) is discrete and belongs to \( \MM(d) \). Let \( \nu \) be the unique stationary measure. We   define the continuous, piecewise affine  function \( D_{ \mu } \) on the interval \( [ 0,d(d-1)/2 ]\)
 as:
\[ D_{ \mu } (0) := h(\F, \mu ,\nu ) \quad {\textrm {and}}\quad  D'_{ \mu } (s) = - \lambda _k\; {\textrm {for}} \; s \in (k-1,k), \; k = 1, \ldots, d(d-1)/2. \]
Observe that by theorem \ref{ent/exp1}, \( D_\mu (d(d-1)/2) \leq 0 .\)
Following Kaplan-Yorke (\cite{kaplanyorke}) and Douady-Oesterl\'e (\cite{douadyoesterle}), the {\it {Lyapunov dimension}} \( \dim_{{\textrm{LY}}}(\F, \mu  )\) 
 is the number  such that \( D_\mu ( \dim_{{\textrm{LY}}}(\F, \mu )) = 0.\) 
\begin{theorem}\label{LyaDim1} Let \( \mu \in \MM (d)\) be discrete. Then, the exact dimension \(\de \) of \( \nu  \) satisfies
\begin{equation}\label{LyaDim} \de  \; \leq \; \dim_{{\textrm{LY}}}(\F, \mu).\end{equation}
\end{theorem}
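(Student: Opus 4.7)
The plan is to deduce the bound directly from the dimension formula (\ref{etvoila}) provided by Theorem~\ref{main}. I expect this formula to be of Ledrappier--Young type, decomposing
\[ \delta \;=\; \sum_{k=1}^{d(d-1)/2} \delta_k, \qquad \delta_k \in [0,1], \]
where each $\delta_k$ is a partial dimension along the one-dimensional tangent direction of $\F$ corresponding to the gap $\lambda_k$, with partial entropies satisfying $\lambda_k \delta_k = h_k - h_{k-1}$ for a telescoping sequence $0 = h_0 \leq h_1 \leq \cdots \leq h_{d(d-1)/2} = h(\F,\mu,\nu)$. Summing over $k$ gives the entropy balance
\[ \sum_{k=1}^{d(d-1)/2} \lambda_k \delta_k \;=\; h(\F,\mu,\nu). \]

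Given these two inputs, the theorem reduces to an elementary rearrangement argument. Among all tuples $(a_k) \in [0,1]^{d(d-1)/2}$ with prescribed sum $\sum_k a_k = s$, the quantity $\sum_k \lambda_k a_k$ is minimized by the greedy allocation $a_1=\cdots=a_{\lfloor s \rfloor}=1$, $a_{\lfloor s \rfloor+1}=s-\lfloor s \rfloor$, $a_k=0$ otherwise: if $j<k$ with $a_j<1$ and $a_k>0$, transferring mass from $a_k$ to $a_j$ weakly decreases the sum because $\lambda_j \leq \lambda_k$. By the very definition of $D_\mu$ (initial value $h(\F,\mu,\nu)$, slope $-\lambda_k$ on $(k-1,k)$), the value of this greedy minimum is exactly $h(\F,\mu,\nu) - D_\mu(s)$. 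Applying this with $s = \delta$ and $a_k = \delta_k$,
\[ h(\F,\mu,\nu) \;=\; \sum_k \lambda_k \delta_k \;\geq\; h(\F,\mu,\nu) - D_\mu(\delta), \]
so $D_\mu(\delta) \geq 0$. Since $\mu \in \MM(d)$ forces the exponents to be pairwise distinct, every $\lambda_k > 0$, so $D_\mu$ is strictly decreasing with unique zero at $\dim_{{\textrm{LY}}}(\F,\mu)$, and we conclude $\delta \leq \dim_{{\textrm{LY}}}(\F,\mu)$.

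The combinatorial step just described is trivial; the genuine content of the argument is entirely contained in the two structural inputs from (\ref{etvoila}) that it relies upon. The first, $\delta_k \in [0,1]$, should come from a Ruelle--Margulis type inequality bounding each partial entropy increment by the expansion rate $\lambda_k$ along the corresponding one-dimensional sub-foliation of the flag bundle; the second, the telescoping of partial entropies to $h(\F,\mu,\nu)$, should follow from a chain rule for conditional entropies along the associated filtration of $T\F$. These are the genuine obstacles, but they are already built into Theorem~\ref{main}; once they are available, the Lyapunov dimension bound is an immediate consequence of the convex rearrangement above.
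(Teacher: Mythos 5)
Your proposal is correct and follows essentially the same route as the paper: the paper's Proposition~\ref{LyaDimQ} proves the bound by comparing $D_{\F_Q,\mu}$ with an auxiliary piecewise-affine function $\underline{D}_{\F_Q,\mu,\nu_Q}$ whose slopes $-\lambda_k$ run over intervals of lengths $\gamma^Q_{i_k,j_k}\le 1$, which is precisely your greedy/rearrangement minimization in a different guise. The two structural inputs you isolate — the partial dimensions lying in $[0,1]$ and the entropy balance $h = \sum_k \lambda_k\delta_k$ — are exactly what the paper supplies via Proposition~\ref{order}.2 and equation~(\ref{etvoila}).
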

We discuss the proof of theorem \ref{LyaDim1} in Section \ref{Lyapunov}.

We can prove equality in relation (\ref{LyaDim}) in some examples: in dimension 2, for \( \mu \in \MM(2), \) we always have \( \de (\nu ) = \frac{h(\F, \mu, \nu)}{\chi _1 -\chi _2} = \dim_{{\textrm{LY}}}(\F, \mu)\) (\cite{ledrappier}, where the formula is proven with a less precise notion of dimension);
in section \ref{example}, we discuss the terms and the proof of the following 
\begin{theorem}\label{LyaDim3}  Let \(\G \) be a cocompact group of isometries of \( \H^2 \), \( \rho \) a Hitchin representation of \( \G\) in \(PSL_d(\R)\) and  \( \mu\) be an  adapted probability measure on \( \G\) such that 
\( \sum _g   | g| \, \mu (g)  < + \infty ,\) where \( |\cdot | \) is some word metric on \(\G\). Consider the random walk on \(PSL_d(\R)\) directed by the probability \( \rho_\ast (\mu) \) and \( \nu \) the stationary measure on the space \( \F\) of flags. Then, \(\nu \) is exact-dimensional  and
 \( \de (\nu ) \; = \; \dim_{{\textrm{LY}}}(\F, \mu) .\)  \end{theorem}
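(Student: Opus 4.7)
Since $\rho$ is faithful with Zariski-dense image in $PSL_d(\R)$, the pushforward $\rho_\ast\mu$ is a discrete element of $\MM(d)$. Theorems \ref{main} and \ref{LyaDim1} therefore apply and give exact-dimensionality of $\nu$ together with the upper bound $\de(\nu)\leq\dim_{{\textrm{LY}}}(\F,\mu)$. The content of the theorem is the matching lower bound.

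My plan is to exploit the rigidity provided by the Anosov structure. By Labourie's theorem, a Hitchin representation is Anosov with respect to the minimal parabolic subgroup, so for each $1\leq k\leq d-1$ there is an equivariant Hölder embedding $\xi_k:\partial\H^2\to Gr_k(\R^d)$, and these jointly form an embedding $\xi:\partial\H^2\to\F$ onto the limit set. Equivariance together with the uniqueness of the stationary measure forces $\nu=\xi_\ast\tilde\nu$, where $\tilde\nu$ is the unique $\mu$-stationary measure on $\partial\H^2$; the analogous identities hold on each Grassmannian $Gr_k(\R^d)$.

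The main step is then to compute the exact dimension of $\nu$ directly from the Hitchin geometry, using the Ledrappier--Young type formula (\ref{etvoila}) derived in the proof of Theorem \ref{main}, which expresses $\de(\nu)$ as a combination of partial entropies along the full flag fibration together with the Lyapunov gaps $\la_k$. In the Hitchin case, each partial flag projection of $\nu$ comes from $\tilde\nu$ via an injective Anosov boundary map, so the various Furstenberg entropies on the intermediate Grassmannians are controlled by the single boundary entropy $h_\partial:=h(\partial\H^2,\mu,\tilde\nu)$, with explicit constants dictated by the Anosov expansion rates in each root-space direction. Combined with the classical boundary dimension formula for Fuchsian walks (Ledrappier, generalizing the $d=2$ result cited in the text), the formula (\ref{etvoila}) should collapse into exactly the Kaplan--Yorke expression defining $\dim_{{\textrm{LY}}}(\F,\mu)$.

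The main obstacle, as I see it, is identifying the partial dimensions in each direction with their saturated Ledrappier--Young values. Concretely, one needs a pointwise directional-Hölder statement for $\xi$: for $\tilde\nu$-a.e.\ $x\in\partial\H^2$ and each pair $i<j$, a ball of radius $r$ in $\F$ along the root-space direction associated with $\la=\chi_i-\chi_j$ should pull back under $\xi$ to an arc on $\partial\H^2$ of length controlled exactly (not merely bounded above) by $r^{\eta/\la}$, where $\eta$ is the Anosov expansion on $\partial\H^2$. Establishing this uniformly across all $d(d-1)/2$ directions requires marrying the uniform hyperbolicity of each of the partial Anosov boundary maps $\xi_k$ with ergodic-theoretic control along the $\mu$-walk—for which the finite first moment hypothesis $\sum_g|g|\mu(g)<+\infty$ is essential, via Oseledec's theorem applied to the full random cocycle on $\rho(\G)$.
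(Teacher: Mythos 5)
Your opening steps are correct and match the paper: $\rho_\ast\mu$ is a discrete element of $\MM(d)$ (Zariski density via Benoist, strong irreducibility and proximality via Guichard and Labourie), so Theorems~\ref{main} and~\ref{LyaDim1} give exact-dimensionality and the upper bound $\de(\nu)\leq\dim_{\textrm{LY}}(\F,\mu)$, and the stationary measure is carried by the Hitchin limit curve $\La=\xi(\S^1)\subset\F$. The general strategy --- exploit the one-dimensionality of $\La$ and Labourie's structure theory for Hitchin representations --- is also the right one.

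However, the heart of your proposal has a genuine gap, and the step you flag as ``the main obstacle'' is not the one the paper crosses. You argue that because each $\xi_k:\partial\H^2\to\GG_k$ is an injective equivariant H\"older map, the Furstenberg entropies on all intermediate flag spaces equal the single boundary entropy $h$; this much is true and is recorded in the paper (each $\kappa_T=h$ for $T\neq T_0$, see Proposition~\ref{noconsdim1}). But you then suggest that this, together with a directional pull-back estimate of the form $r^{\eta/\la}$ for arcs along each root direction, should make (\ref{etvoila}) ``collapse into'' the Kaplan--Yorke expression. Injectivity of a Lipschitz map does \emph{not} by itself control dimension --- the whole discussion of dimension (non-)conservation in Section~1.3, and the failure recorded in Proposition~\ref{noconsdim}, show that the conditionals can carry dimension even over an a.e.\ one-to-one projection. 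And a two-sided pointwise H\"older comparison of arcs in each of the $d(d-1)/2$ root directions is much stronger than what is available; Oseledets only yields a.e.\ logarithmic asymptotics, and building the uniform estimate you ask for would essentially be the theorem itself.

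What the paper actually proves, and what makes (\ref{etvoila}) collapse, is a purely \emph{geometric} fact coming from Labourie's hyperconvexity (Property~(H)) of the Frenet curve. Lemma~\ref{convexity} shows that for every admissible $T^1$ one step finer than $T_0$, the fiber $(\pi_{T_1,T^1})^{-1}(x')$ meets $\La$ in at most one point. Hence the conditional measure $\nu^{x'}_{T_1,T^1}$ is a Dirac mass, $\kappa_{T_1,T^1}=0$ and $\g_{T_1,T^1}=0$ --- no directional H\"older estimate is required. Inserting this into the Ledrappier--Young decomposition of $T_1\prec T_0$, the partial dimensions $\g_{T^t,T^{t-1}}$ vanish for all but the last step (the one with the smallest Lyapunov gap $\la_1=\la=\min_i(\chi_i-\chi_{i+1})$), so $\de=\g_{T^1,T_0}=h/\la$ and $h/\la=\g_{T^1,T_0}\le 1$, which is exactly $\dim_{\textrm{LY}}(\F,\mu)$. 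Your plan is missing this Dirac-conditional observation, and without it the passage from ``all partial entropies equal $h$'' to ``the dimension is $h/\la$'' does not go through.
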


If \( \mu  \in \MM(d)\), we can also consider  a  partition \(Q= \{ 0< q_1 < \dots  < q_{\ell -1} < q_\ell =  d\} \)  of $\{0,1, \ldots , d\}$ into  intervals, the group \( SL_d(\mathbb R) \) acts naturally on the space  $\F_Q$  of increasing sequences of vector subspaces of $\R^d$, \[   \{0\} = U_0 \subset U_1 \subset \ldots \subset U_{\ell-1} \subset U_\ell = \R^d, \] 
with  $ \dim U_i = q_i $ for $i =  1, \ldots, \ell $. The group $G_0$ acts naturally on  \( \F_Q\) and there is a unique stationary probability measure \( \nu _Q\) for this action. Theorems \ref{ent/exp1}, \ref{main} and \ref{LyaDim1} will be the particular case (\( Q = Q_1 := \{ 0<1<2<\ldots < d \} \)) of the corresponding results stated and proven for the action of \( \F_Q\), for any partition \(Q\) (see respectively theorem \ref{ent/expQ1}, corollary \ref{mainQ} and proposition \ref{LyaDimQ}). 
 
 \

\subsection{Related results}\label{history}
 There are many results (and still open questions) related to the  regularity of the Furstenberg measure in dimension 2: the space $\F$ is the space of lines in $\R^2$ and  theorem \ref{main} holds for a general measure \( \mu \in \MM\) (Hochman and Solomyak \cite{hochman-solomyak2017}). Theorem \ref{LyaDim1} follows with equality in the formula (\ref{LyaDim}). The main result of \cite{hochman-solomyak2017} is stronger: it concerns a formula analogous to (\ref{LyaDim}) but with a larger  definition of the Lyapunov dimension and is related to the well-known problem of finding conditions under which the Furstenberg measure is absolutely continuous. 
 For any lattice \( \G \in SL_2(\R),\) by discretizing the Brownian motion on the symmetric space, Furstenberg (\cite{furstenberg1971}) constructed a probability measure on \( \G\) which belongs to \( \MM (2) \) and such that the stationary measure is Lebesgue. On the contrary, the stationary \(\nu \) is singular if \( \mu \)  has  finite support on \( SL_2(\Z) \) (Guivarc'h-Le Jan \cite{guivarchlejan}).
 It is not known whether the same is true for measures with finite support on uniform lattices. There are examples of measures with finite support and an absolutely continuous stationary measure (B\'ar\'any-Pollicott-Simon \cite{BPS} and Bourgain \cite{bourgain12}), but the group generated by the support is dense.

In all dimensions, under exponential moment and proximality conditions, Guivarc'h (\cite{guivarch1990}) showed that the unique stationary measure \( \nu \) has  uniformly  positive dimension: there is \(C, \de >0 \) such that for all \( f \in \F \), all \( \e >0,\)  \( \nu (B (f, \e ) )\; \leq C \e^\de .\)\\
The stationary measure might be absolutely continuous: Furstenberg's discretization works in all dimensions and Benoist-Quint (\cite{benoist-quint18}) found  finitely supported measures with absolutely continuous stationary measures. In the other direction, there exist measures supported on a discrete Zariski dense subgroup \( \G \) with  arbitrarily small \( \dim _{LY} \) (Kaimanovich-Le Prince \cite{kaimanovichLePrince}).

Theorem \ref{main} has been recently proven by A. Rapaport (\cite{rapaport}) for the stationary measure on the space \( \F_{0<1<d} = \R\P^{d-1} \) of directions in \( \R^d\) and on the space \( \F_{0< d-1<d} = \GG_{d-1} \) of \( (d-1)\)-dimensional hyperplanes in the case when the measure $\mu $ is finitely supported.  Theorem \ref{LyaDim1} follows with a  notion of \( \dim _{LY} \) adapted to the projective space (see Corollary 1.7 in \cite{rapaport}). We explain in section \ref{rapaport} how to recover \cite{rapaport} from our results when both apply.  D. Feng  proved a similar result for affine IFSs (\cite{feng}, see also \cite{barany-kaenmaki17}). All these papers (and this one) can be seen as higher dimension extensions of  \cite{feng-hu}. For the flag space (and for the projective space as well), the extension of the equality \( \de (\nu ) = \dim_{{\textrm{LY}}}(\F, \mu)\) to  \( \mu \) discrete in \( \MM (d), d>2 \) seems to be delicate. The proof of theorem \ref{main} may suggest a strategy. This is in accord with the results of \cite{barany-hochman-rapaport}, \cite{hochman-rapaport}.

\subsection{Strategy in dimension 3}
The main new feature of our paper is already present for finitely supported  measures in the case \( d=3.\) 
The space $\F_{0<1<3}$ is the space $\lines$ of lines in $\R^3$, the space $\F_{0<2<3}$ is the space $\planes$ of planes in $\R^3$. The stationary measure \(\nu \) on \(\F\) projects on the stationary measures \(\nu _\lines\) and \(\nu _\planes\) and the fibers of the projections are one-dimensional. We know by \cite{rapaport} that \(\nu _\lines\) and \(\nu _\planes\) are exact-dimensional. Moreover, we know by \cite{lessa} that the conditional measures on the fibers are exact-dimensional and \cite{lessa} has formulae for the almost everywhere constant dimensions. This is not enough information to be able to conclude that the measure $\nu $ is exact-dimensional and to compute its dimension. Indeed, in  the setting of Theorem \ref{LyaDim3} in dimension 3, as soon as the Hitchin representation is not Fuchsian, there exists a probability measure \( \mu _0 \in \MM (\G) \) for which the dimensions do not add up for  the projection from \(( \F, \nu _0 ) \) to \( \planes \) (see  proposition \ref{noconsdim} and  theorem \ref{Manhattan}). 

 We next recall this phenomenon of {\it{dimension conservation}} and explain what is the third codimension one projection of $\F$ that we consider and for which we will prove dimension conservation. In the following subsection, we introduce the corresponding formalism in higher dimensions.

Let \((X, \nu ), (X', \nu ') \) be  standard probability spaces, \( \pi :(X,\nu) \to (X', \nu ') \) a measure preserving mapping. Recall that a {\it{disintegration }} of the measure \(\nu \) with respect to \(\pi \) is  an a.e. defined measurable  family of probability measures \( x' \mapsto \nu^{x'} \) (or \(x' \mapsto \nu^{x'}_{X'}, x' \mapsto  \nu^{x'}_\pi \)) of probability measures on $X$ such that\[ \nu^{x'} \pi^{-1} (x') = 1 \quad {\textrm {and }} \quad \nu = \int _{X'} \nu^{x'} \, d\nu' (x').\] Two families of disintegrations of the measure \(\nu \) with respect to \(\pi \) coincide \( \nu '\)-a.e..

Assume now that \((X, d ), (X', d' ) \) are separable metric spaces and that \(\pi : (X,d)\to (X',d')\) is  a Lipschitz mapping. Let  \(\nu \) be  a probability measure on $X$. We say that the projection $\pi $ is {\it {dimension conserving for \(\nu \)}} if\begin{enumerate} 
\item the measure \(\nu \) is exact-dimensional with dimension \(\delta \),
\item  the measure \(\pi _\ast \nu \) is exact-dimensional with dimension \(\delta' \), 
\item for \(\pi _\ast \nu \)-a.e. \( x' \in X',\) the disintegration \(\nu ^{x'} \) is exact-dimensional on \( \pi ^{-1} (x') \) with dimension \(\delta - \delta'\).
\end{enumerate}
The definition is adapted from Furstenberg (\cite{furstenberg1970}, \cite{furstenberg2008}). Dimension conservation occurs often in the presence of iterations or randomness. Classical examples are the results \`a la  Marstrand and Mattila for projections of measures along almost every direction in $\R^d$ (\cite{jarvenpaa-mattila}), see \cite{jarvenpaa2-llorante} and the more recent  \cite{falconer-fraser-jin2015} and \cite{shmerkin2015} for  surveys. On the other hand, it is easy to construct examples (for instance the graphs of the Brownian trajectories or the graph of the Weierstra{\ss}    function (see \cite{shen18})) where 1 and 2 hold, but the conditional measures are Dirac measures with dimension 0 whereas \( \delta > \delta'.\) See also \cite{rapaport17} for an example in a context close to ours.

 We will now describe, in dimension 3, a third  projection defined on \(\F\) with one-dimensional fibers for which we will be able to prove dimension conservation (see corollary \ref{consdim}).

We say that two flags \(f= \{0\} \subset U_1 \subset U_2 \subset \R^3 \) and \(f'= \{0\} \subset U'_1 \subset U'_2 \subset \R^3 \) are in general position if \( U_1 \oplus U'_2 = U_2 \oplus U'_1 = \R^3.\) Set \(\F^{(2)} \) for the set of pairs of flags in general position. The mapping \( F: \F^{(2)} \to \planes \x \lines \x \F \) defined by 
\[ F(f,f') \; := \; ( U_1 \oplus U'_1, U_2 \cap U'_2, f' ) \] has one-dimensional fibers. Indeed,  fixing \(U_1\) in \(U_1 \oplus U'_1\) determines \( U_2 = U_1 \oplus (U_2 \cap U'_2) ;\) alternatively, choosing \(U_2 \supset U_2 \cap U'_2 \) determines \( U_1 = (U_1 \oplus U'_1) \cap U_2.\)

Let $\mu'$ be the image of $\mu $ under the mapping $g \mapsto g^{-1}$ and let \( \nu '\) be the \(\mu'\)-stationary probability measure  for the action of $G$ on $\F$ associated by remark \ref{stable}. From our results will follow that, for $\nu '$-a.e. $f'$, all  projections in the following sequence are dimension conserving for \( \nu \x \delta _{f'} \)
\begin{eqnarray*}
(f,f') \mapsto ( U_1 \oplus U'_1, U_2 \cap U'_2, f' ) \mapsto ( U_1 \oplus U'_1, f' ) \mapsto f' &{\textrm{ if }}& \chi _2 \geq 0,\\
(f,f') \mapsto ( U_1 \oplus U'_1, U_2 \cap U'_2, f' ) \mapsto ( U_2 \cap U'_2, f' ) \mapsto f' &{\textrm{ if }}& \chi _2 \leq 0.
\end{eqnarray*} 
In order to prove theorem \ref{main} on \( \F\) when \( d =3 \), we are reduced to three projections with one dimensional fibers, for which we want to prove exact dimensionality of the conditional measures on the fibers and dimension conservation. Moreover, we have arranged so that the exponents of the dynamics on the fibers are nondecreasing:
\begin{eqnarray*} 
\chi_3 -\chi _1 \; < \; \chi_3 - \chi _2 \; \leq \; \chi_2 - \chi _1 \; <0  &{\textrm{ if }}& \chi _2 \geq 0,\\
\chi_3 -\chi _1 \; < \; \chi_2 - \chi _1 \; \leq \; \chi_3 - \chi _2  \; <0 &{\textrm{ if }}& \chi _2 \leq 0.
\end{eqnarray*}
Therefore, we can apply the strategy of \cite{feng} and \cite{rapaport} (following \cite{ledrappier-young1985}, \cite{feng-hu}, \cite{barany-kaenmaki17}), and work one exponent at a time. We first generalize the above picture to higher dimensions.

\subsection{Topologies, configuration spaces and entropy\label{admissibletopologysection}}

A topology \(T\) on the set \(\lbrace 1,\ldots, d\rbrace\) will be called admissible if \(\lbrace i,i+1,\ldots, d\rbrace \in T\) for all \(i\).   
Given an admissible topology \(T\) we denote by \(T(i)\) the atom of \(i\) i.e. the smallest set in \(T\) containing \(i\).  Notice that any topology \(T\) is determined by listing its atoms \(T(1),T(2),\ldots, T(d)\).  
And \(T\) is admissible if, and only if, \(T(i) \subset \lbrace i,i+1,\ldots, d\rbrace\) for all \(i\).

Recall that a topology \(T\) is finer than another \(T'\) (equivalently, \(T'\) is coarser than \(T\)), denoted \(T \prec T'\), if \(T\supset T'\).
The coarsest admissible topology \(T_0\) is (defined by the list of atoms) 
\[\lbrace 1,\ldots, d\rbrace \lbrace 2,\ldots, d\rbrace \ldots \lbrace d\rbrace,\]
the finest admissible topology \(T_1\) is \(\lbrace 1\rbrace\ldots \lbrace d\rbrace\).

\begin{figure}[b]
 \includegraphics[width=\textwidth]{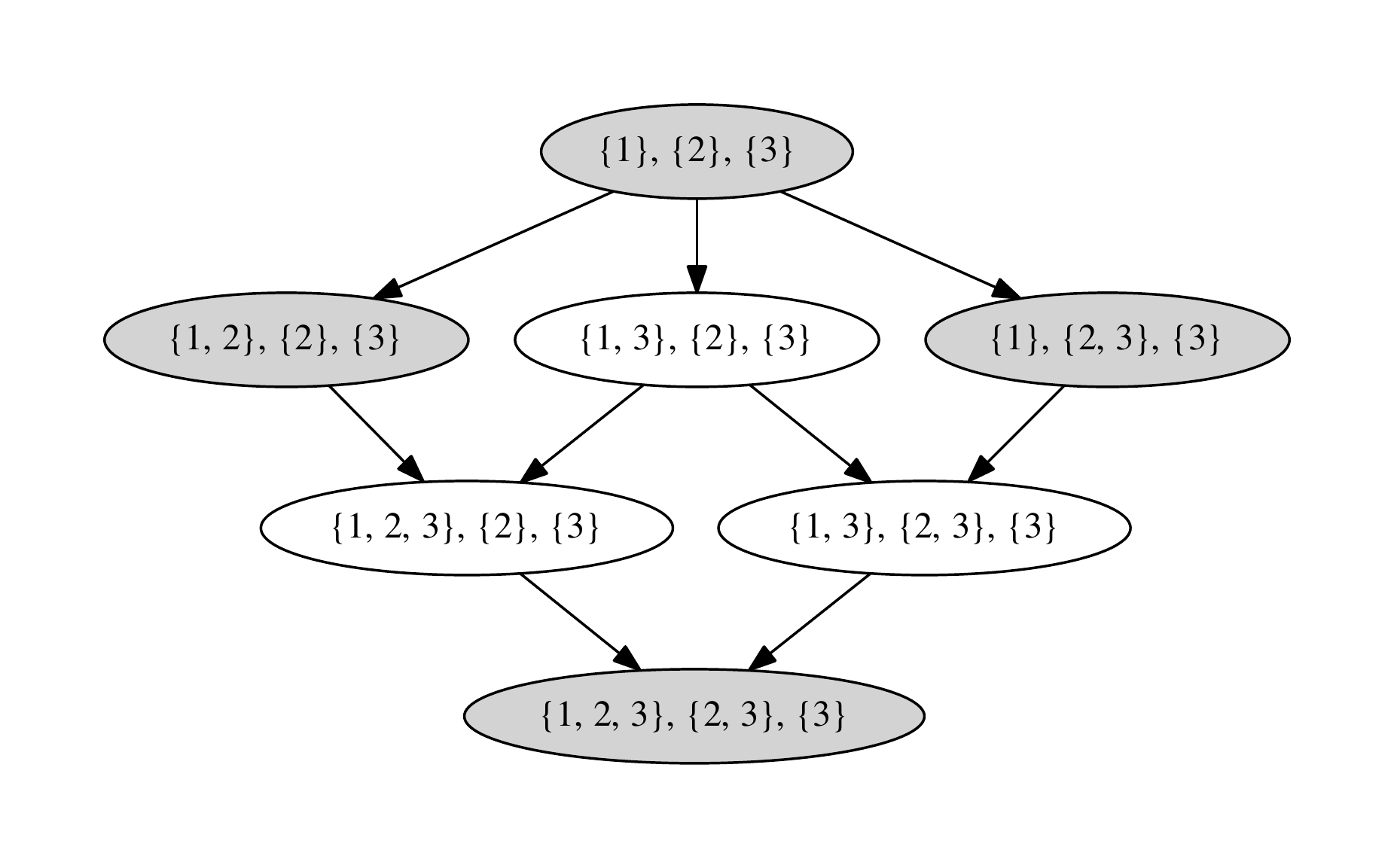}
 \caption{Admissible topologies for \(d = 3\), an arrow indicates a topology one step coarser than another, filtered topologies are indicated in gray.}
\end{figure}

We say an admissible topology \(T\) is one step finer than an admissible topology \(T'\) (equivalently \(T'\) is one step coarser than \(T\)), denoted \(T \overset{1}{\prec} T'\),  if there exists a unique \(i \in \lbrace 1,\ldots, d\rbrace\) such that \(T(i) \neq T'(i)\) and furthermore \(T'(i) \setminus T(i)\) is a singleton.
Let  \(j\) be so that \(\lbrace j \rbrace = T'(i) \setminus T(i)\). Then, \(j >i \)  and \( T(i) \setminus \{i\} \subset T(i) \subset T(i) \cup \{j\} = T'(i).\) We associate to  a pair  \(T \overset{1}{\prec} T'\) its {\it {exponent }} \(\chi _{T,T'} \) \begin{equation}\label{exponent} \chi_{T,T'} \; = \; \chi _i - \chi _j . \end{equation}

An admissible topology is called filtered if it is generated by the coarsest admissible topology and some subset of \(\left\lbrace \lbrace 1\rbrace, \lbrace 1,2\rbrace, \ldots, \lbrace 1,2,\ldots, d\rbrace \right\rbrace\). Let $Q$ be a partition of $\{0,1, \ldots , d\}$ into intervals, $Q = \lbrace q_0 = 0 < q_1 < \ldots < q_k = d\rbrace.$ We associate to it the filtered topology $T_Q$ generated by $T_0$ and the sets \( \lbrace 1,2,\ldots, q_j\rbrace, j = 1, \ldots ,k.\) There are exactly \(2^{d-1}\) filtered topologies and they are all obtained that way. The topologies $T_1$ and $T_0$ are filtered and correspond to respectively the space of complete flags and the one-point flag space of the trivial partition \(Q_0 =  \lbrace 1,2,\ldots, d\rbrace \).

\begin{figure}[b]
 \includegraphics[width=\textwidth]{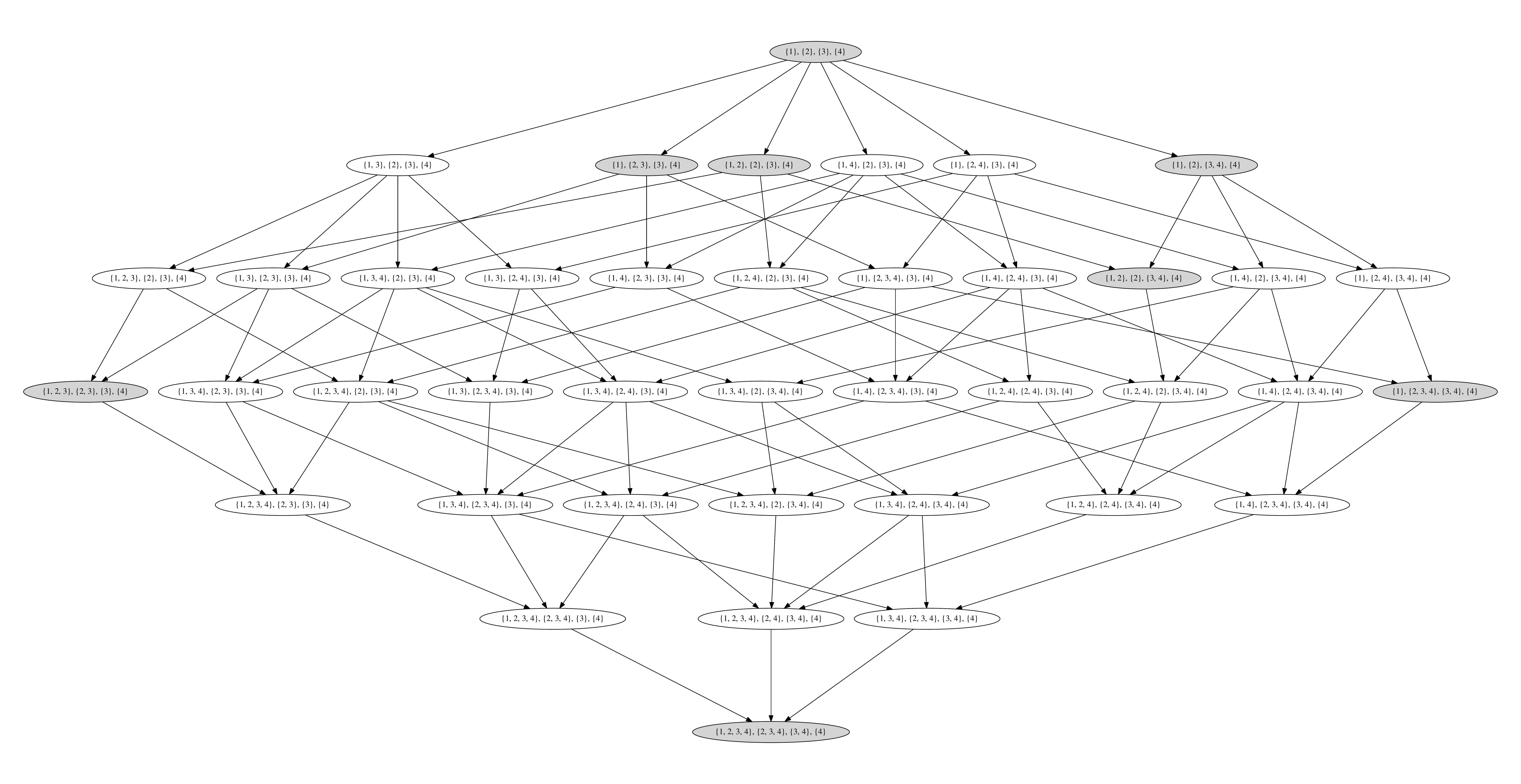}
 \caption{Admissible topologies for \(d = 4\), an arrow indicates a topology one step coarser than another, filtered topologies are indicated in gray.}
\end{figure}

Figures 1 and 2 represent the graphs of the one-step relations between admissible topologies in dimensions 3 and 4 respectively. In dimension 3, the new fibration correspond to the nonfiltered topology that is one-step finer than \(T_1\). One sees  the two ways of further descending on the graph according to the sign of \( \chi _2\). For a general \(d\), admissible topologies are in one-to-one correspondence with partial orders on \( \{ 1, \ldots, d\} \) that are suborders of the natural order. Their number as a function of \(d\) is the list A006455 of the Online Encyclopedia of Integer Sequences.\footnote{We thank Yves Coud\`ene for this observation.} In dimension 4, there are indeed 40 admissible topologies  and 92 one-step arrows. The non-trivial filtered topologies correspond to the spaces of partial  flags with only one level missing or to the Grassmannians of lines, planes or three-dimensional spaces. This correspondence is extended to all admissible topologies by constructing the {\it {configuration spaces}} as follows.

Given an admissible topology \(T\) we define the configuration space \(\X_T\) to be the space of sequences \(x = (x_I)_{I \in T}\) indexed on \(T\) where
\begin{enumerate}
 \item \(x_I\) is a \(|I|\)-dimensional subspace of \(\R^d\) for each \(I \in T\),
 \item \(x_{I \cup J} = x_I + x_J\) for all \(I,J \in T\), and 
 \item \(x_{I \cap J} = x_I \cap x_J\) for all \(I,J \in T\).
\end{enumerate}
The configuration space \(\X_{T_1} \)  is identified with the space of \(d\) independent lines in \(\R^d\);  \(\X_{T_0}\) is the space \(\F\) of complete flags. If \(T\) is finer than \(T'\) there is a natural projection mapping \(\pi_{T,T'}:\X_T \to \X_{T'}\).   In particular, all configuration spaces project onto  \(\X_{T_0}\).

We say that two flags \(f= \{0\} \subset U_1 \subset \ldots \subset  \R^d \) and \(f'= \{0\} \subset U'_1 \subset \ldots \subset \R^d \) are in general position if  for all \(j, 0 <j<d\), \( U_j \oplus U'_{d-j} = \R^d.\) Set \(\F^{(2)} \) for the set of pairs of flags in general position. Given an admissible topology \(T\), we associate to \( (f,f') \in \F^{(2)} \) the configuration \(F_T(f,f') \in \X_T \) given, for \( I \in T\),  by 
\[ [F_T(f,f')]_I \; = \oplus _{i\in I} \left( U_i \cap U'_{d-i+1} \right).\]
Observe that if \(T\) is finer than \(T'\), then \( F_{T'} = \pi _{T,T'} \circ F_T.\) Set, for an admissible topology \(T\) and a fixed \( f' \in \F,  \, \X_T^{f'} \) for the set of  \(F_T(f,f')\) for all $f$ such that \((f,f') \in \F^{(2)}.\) In particular, we identify \(\X_{T_0}^{f'} \) with \(\{f'\}\), \(\X_{T_1}^{f'} \) with the set of flags in general position with respect to \(\{f'\}\).  If  $Q$ is a partition of $\{0,1, \ldots , d\}$ into intervals, the configuration space \(\X_{T_Q}^{f'} \) is identified with the set of partial flags \( \F_Q\) in general position with respect to \(f'\).

Let $\mu'$ be the image of $\mu $ under the mapping $g \mapsto g^{-1}$ and let \( \nu '\) be the \(\mu'\)-stationary probability measure  for the action of $G$ on $\F$ associated by remark \ref{stable}. Endow 
\(\F^{(2)} \)  with the measure \( \nu \otimes \nu' \) and \(\X_T \) with  the measure \( (\F_T)_\ast (\nu \otimes \nu' ). \) Write \( f' \mapsto \nu _T^{f'} \) for the \(\nu '\)-a.e. defined family of disintegrations  of \( (\F_T)_\ast (\nu \otimes \nu' ) \) with respect to the projection on the second coordinate in \(\F^{(2)} .\) By definition, for \(\nu '\)-a.e. \( f'\), \( \nu ^{f'}_T \) is a probability measure supported by \(\X^{f'}_T \) and so that  \( (\F_T)_\ast (\nu \otimes \nu' )= \int _\F \nu _T^{f'} \, d\nu '(f').\)

We define the {\it {entropy }} \(\kappa _{T} \)  by 
\begin{equation}\label{entropy} \kappa _{T}  \; := \;\int  \log \frac{ dg_\ast\nu _T^{g^{-1} f'} }{  d\nu _T^{f'} }(y)\, dg_\ast\nu _T^{g^{-1} f'}(y)  d\nu '(f')d\mu(g) .\end{equation} 
We will see in Section \ref{section:mutualinformation} that this integral makes sense and can be seen as a conditional mutual entropy   \( H(gF_T, F_T| f') \). In particular, \(\kappa _{T_0} =0 \).  If \(T\) is filtered and associated to the partition \(Q\), then the mapping of \(\X_{T_Q}\) onto \(\F_Q\) is a bilipschitz homeomorphism  when restricted to each fiber \(  \X_{T_Q}^{f'}\), and identifies \(\nu_{T_Q}^{f'}\)  with \(\nu_Q\). Therefore, 
 \[ \kappa _{T _Q}\; = \;  \int _{G \x\F_Q} \log \frac{dg_\ast\nu_Q}{d\nu _Q} (x)\,  dg_\ast\nu_Q (x) d\mu (g) \; =\;  h (\F_Q, \mu , \nu _Q).\]

Assume that  the admissible topology \(T\) is  finer than the admissible topology \(T'\). Then, clearly,  for \(\nu '\)-a.e. \( f' \in \F \), 
\[( \pi _{T,T'} )_\ast  \nu _T^{f'}  \; = \; \nu _{T'}^{f'} \]
and  we set \( (\nu _{T,T'}^{x'} , x' \in \X_{T'}^{f'} )\) for a family of disintegrations of the measure \( \nu _T^{f'} \) with respect to \( \pi_{T,T'} .\) 
The {\it {entropy difference}} \begin{equation}\label{entropydifference}\kappa _{T,T'}   \; := \; \kappa _T -\kappa _{T'} \end{equation} 
 can be seen as a conditional mutual entropy   \( H (gF_T, F_T|F_{T'}, f') \) and can be expressed in terms of the measures \(\nu _{T,T'}^{x'} \) (see below section \ref{section:mutualinformation}).

 A key step in our proof is 
 \begin{theorem}\label{exactfibers} Fix  \( \mu \in \MM (d) .\) Assume \(T\) and \(T'\) are admissible topologies, with \(T\)  one step finer than \(T'\). With the above notations, for \(\nu'\)-a.e. \(f'\), for \( \nu _{T'}^{f'} \)-a.e. \( x' \in  \X_{T'}^{f'} \), the measure 
\( \nu _{T,T'}^{x'} \)  is exact-dimensional with dimension \( \g _{T,T'} \) given by \[ \g_{T,T'} \; = \; \frac {\kappa_{T,T'}}{\chi _{T,T'}} . \] \end{theorem}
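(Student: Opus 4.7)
The plan is to realize \(\nu_{T,T'}^{x'}\) as the stationary measure of an auxiliary one-dimensional random walk whose entropy equals \(\kappa_{T,T'}\) and whose asymptotic contraction rate equals \(\chi_{T,T'}\), and then invoke a Feng--Hu/Ledrappier--Young type dimension formula. The first step is to analyze the fibers of \(\pi_{T,T'}\). Since \(T\) is one-step finer than \(T'\), there is a unique index \(i\) with \(T'(i) = T(i)\cup\{j\}\) for some \(j>i\); all other atoms of \(T\) and \(T'\) coincide. For a fixed \(x' \in \X_{T'}^{f'}\), the preimage \(\pi_{T,T'}^{-1}(x')\) consists of configurations \(x\) agreeing with \(x'\) on every atom except \(T(i)\), where \(x_{T(i)}\) must be a codimension-one subspace of \(x'_{T'(i)}\) meeting the neighboring atoms of \(T\) in prescribed dimensions. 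A dimension count shows the fiber is a one-parameter family, canonically identified (up to lower-dimensional strata) with an open subset of \(\R\P^1\).

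Next, the \(SL_d(\R)\) action on \(\X_T\) covers its action on \(\X_{T'}\) through \(\pi_{T,T'}\), and between fibers it acts by a projective transformation. By Oseledets applied to the \(\mu\)-random walk, the single Lyapunov exponent of this fiber dynamics is \(\chi_i - \chi_j = \chi_{T,T'}\), which is precisely the content of \eqref{exponent}. Meanwhile, using the conditional-mutual-information interpretation of \(\kappa_T\) to be developed in Section~\ref{section:mutualinformation}, the difference \(\kappa_{T,T'} = H(gF_T, F_T \mid F_{T'}, f')\) unfolds, via the chain rule and the disintegration \(\nu_T^{f'} = \int \nu_{T,T'}^{x'}\, d\nu_{T'}^{f'}(x')\), as the average over \((f',x')\) of the Kullback--Leibler divergence of \(g_\ast \nu_{T,T'}^{g^{-1}x'}\) with respect to \(\nu_{T,T'}^{x'}\), integrated against \(d\mu(g)\). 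This is exactly a Furstenberg-type entropy of the fiber system with \(\nu_{T,T'}^{x'}\) as its (fiberwise) stationary measure.

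With a one-dimensional skew-product over \((\X_{T'}, (F_{T'})_\ast (\nu\otimes\nu'))\) whose fibers are open subsets of \(\R\P^1\), whose fiberwise stationary measures have Furstenberg entropy \(\kappa_{T,T'}\), and whose fiber Lyapunov exponent is \(\chi_{T,T'} > 0\) (the exponents being pairwise distinct because \(\mu \in \MM(d)\)), the Feng--Hu dimension formula, in the form used by Rapaport~\cite{rapaport}, building on \cite{ledrappier-young1985}, \cite{feng-hu}, \cite{feng}, yields that the fiber measures \(\nu_{T,T'}^{x'}\) are exact-dimensional with common dimension \(\kappa_{T,T'}/\chi_{T,T'}\). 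The proof follows the standard route: an upper bound from Shannon--McMillan--Breiman applied to a natural dynamical partition that refines along the fiber at exponential rate \(\chi_{T,T'}\), matched by a lower bound obtained from an entropy-via-cover estimate, both uniform in the base point.

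The main obstacle, in my view, is the rigorous passage between the abstract entropy \(\kappa_{T,T'}\) and the Furstenberg entropy of the fiber random walk. The disintegrations \(\nu_{T,T'}^{x'}\) are defined only up to \(\nu_{T'}^{f'}\)-null sets and vary only measurably with \(x'\); the fibers themselves vary with \(x'\); and one must verify that \((g,x') \mapsto \nu_{T,T'}^{gx'}\) satisfies the correct equivariance almost surely, so that the ergodic-theoretic ingredients of Feng--Hu (Shannon--McMillan--Breiman and Oseledets along the fiber direction) can be applied uniformly over \(f'\) and \(x'\). The preliminary work in Section~\ref{section:mutualinformation} is designed precisely to handle this point.
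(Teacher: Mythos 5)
Your plan matches the paper's high-level structure --- a one-dimensional fiber parameterization (Lemma \ref{coordinate}), identification of the fiber Lyapunov exponent $\chi_{T,T'}$, and reinterpretation of $\kappa_{T,T'}$ as conditional mutual information (Section \ref{section:mutualinformation}) --- and you correctly flag as the main obstacle the passage from $\kappa_{T,T'}$ to a quantity usable fiber by fiber across measurably varying disintegrations.

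However, the claimed mechanism for the dimension computation is the wrong tool. Theorem \ref{exactfibers} holds for \emph{all} $\mu \in \MM(d)$, not only discrete ones, so a Feng/Rapaport-style counting argument (which requires $\mu$ to be atomic) and Shannon--McMillan--Breiman for finite-entropy partitions cannot be applied here; those techniques only enter later, in the proof of Theorem \ref{counting}. Moreover the disintegrations $\nu_{T,T'}^{x'}$ are not stationary measures of a single fiber random walk: they satisfy a cocycle relation across fibers, with Radon--Nikodym density $f_\omega(x)= \frac{dg_0(\omega)_*\nu_{T,T'}^{E_{T'}(\omega)}}{d\nu_{T,T'}^{E_{T'}(\sigma\omega)}}(x)$ over the two-sided trajectory space, and this must be tracked, not quoted. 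What the paper actually does, following Hochman--Solomyak and Lessa, is a telescoping argument: the dynamically defined intervals $N_n^\alpha(\omega)=g_{-1}(\omega)\cdots g_{-n}(\omega)N^\alpha(\sigma^{-n}\omega)$ shrink at rate $e^{-\chi n+o(n)}$ (Lemma \ref{intervallengthlemma}); their conditional measure is compared to $e^{-\kappa n + o(n)}\nu_{T,T'}^{E_{T'}(\sigma^{-n}\omega)}(N^{\alpha,\pm'}(\sigma^{-n}\omega))$ via Maker's ergodic theorem together with a weak-$(1,1)$ maximal inequality on the one-dimensional fiber (Proposition \ref{intervalprobabilitylemma}); and the upper local dimension is then controlled by Poincar\'e recurrence, using that a fixed large-$\alpha$ neighborhood carries uniformly positive $\nu_{T,T'}$-mass on a positive-measure set of $\omega$, hence along a positive-density sequence of $n$. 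A black-box invocation of a ``Feng--Hu dimension formula'' supplies none of this, and in particular gives no way to handle the limsup without the recurrence step.
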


The proof of theorem \ref{exactfibers} is given in section \ref{section:telescoping}. In dimension \( d= 2\), there is only one pair \( T_1, T_0 \), the measure \( \nu _{T_1,T_0}^{f'}  \) is the constant measure \(\nu \) and theorem \ref{exactfibers} comes from \cite{hochman-solomyak2017}. In higher dimensions, if \( T = T_1 \) and \(T'\) has one atom of the form \(\{i, i+1\} \), then theorem \ref{exactfibers} is theorem 2 from \cite{lessa}. The general scheme of the proof  is the same: we find a one-dimensional parameterization of the support of the measure 
\( \nu _{T,T'}^{x'}\) that is adapted to  the $G$-action, and then use a telescoping argument and the Maker ergodic theorem. It follows from the one-dimensional parameterization that if \(T \overset{1}{\prec} T',\) then \( \g_{T,T'} \leq 1\) (see lemma \ref{coordinate} below) and this leads to a general estimate of \(\kappa _{T,T'} \) in terms of the differences of exponents associated to \(\pi _{T,T'} \) (see proposition \ref{order}.2 below). Inequality (\ref{ent/exp}) in theorem \ref{ent/exp1} corresponds to the particular case of \( \kappa _{T_{1},T_0}.\) We provide a direct proof of  (\ref{ent/exp})  first, since we use it to ensure that the entropy \( \kappa_T\) defined  by 
equation (\ref{entropy}) is finite. Since for all \(T \overset{1}{\prec} T', \g_{T,T'} \leq  1,\)  if we have equality in (\ref{ent/exp}), all corresponding \(\g_{T,T'} \) are 1 and the equality case in theorem \ref{ent/exp1} follows from theorem \ref{mainT}.1.

In the next section, we reduce the proofs of our  results  to entropy/dimension statements related to the fine structure of the configuration spaces. A more detailed organization of the remaining proofs is given at the end of the next section.

\section{Proofs}\label{proofs}
\subsection{Proof of theorem \ref{main}}\label{section:proofs}
Observe that for  \(T\) and \(T'\)  admissible topologies, \( T\prec T' \) if, and only if, for all \( i = 1,\ldots, d-1, T(i) \subset T'(i), \) where \(T(i)\) is the atom of \(T\) containing \(\{i\}\). We denote \(D_{T,T'} \) the set of pairs \((i , j)\) such that \( j \in T'(i) \setminus T(i). \) 
The numbers \(\chi_i - \chi _j, (i,j) \in D_{T,T'}\)  are called  the exponents of the pair \( T \prec T'\).  

Observe that \(\{ i\} \subset T(i), T'(i)  \subset  \{i, i+1, \ldots d\} . \) Therefore, for \((i,j ) \in D_{T,T'}, \; i<j \) and the exponents \( \chi _i - \chi _j , (i,j) \in D_{T,T'}\) are positive. Set \( N_{T, T'} := \# D_{T,T'} = \sum _{i=1}^d \# (T'(i) \setminus T(i)).\)

\begin{proposition}\label{order}
Let  \(T \prec T'\) be a pair of admissible topologies, \( N := N_{T,T'} .\) \\
1. Then, there exists a sequence \(T^0 =T',T^1,\ldots,T^N = T\) such that \(T^{t}\) is one step finer than \(T^{t-1}\) for \(t=1,\ldots,N\), and
\[\chi_{T^1,T^0} \leq  \chi_{T^2,T^1} \leq \ldots \leq \chi_{T^{N},T^{N-1}}.\]
2. Moreover, if \(\mu \in \MM(d),\)  \( \ov \de_{T,T'}  \leq N_{T,T'} \)  and \( \kappa _{T,T'}  \leq \sum _{ (i,j) \in D_{T,T'} } (\chi _i - \chi _j),\) where 
\( \ov \de_{T,T'} \) is the essentially constant (in \(x'\)) value of the upper dimension of the measure \( \nu _{T,T'}^{x'} .\)
\end{proposition}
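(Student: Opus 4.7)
The plan is to prove part 1 combinatorially by induction on \(N := N_{T,T'}\), and then to use the chain produced to reduce part 2 to the one-step case.

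For part 1, at each inductive step I would select a pair \((i,j) \in D_{T,T'}\) of minimal exponent \(\chi_i - \chi_j\), breaking ties by minimizing \(j - i\). The key claim is that such a pair is a cover in the partial order associated to \(T'\), so that deleting it yields a valid one-step-finer topology. If not, some \(k\) satisfies \(i <_{T'} k <_{T'} j\); since \(T\) is itself a topology, one cannot have both \(k \in T(i)\) and \(j \in T(k)\), else transitivity gives \(j \in T(i)\), contradicting \((i,j) \in D_{T,T'}\). Hence at least one of \((i,k), (k,j)\) belongs to \(D_{T,T'}\). Because the exponents are non-increasing in the natural order, the exponent of this new pair is \(\leq \chi_i - \chi_j\), hence equal by minimality, forcing \(\chi_k = \chi_j\) or \(\chi_k = \chi_i\) respectively. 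Either way, one obtains a pair with the same minimal exponent but strictly smaller span, contradicting the tie-breaking. Thus \((i,j)\) is a cover, and removing it produces \(T^1\) with \(T \prec T^1 \overset{1}{\prec} T'\) and \(N_{T,T^1} = N - 1\). Applying the inductive hypothesis to \(T \prec T^1\) and concatenating with \(T^0 = T'\) yields the desired chain; monotonicity is preserved because \(\chi_{T^1,T^0}\) is globally minimal over \(D_{T,T'} \supset D_{T,T^1}\).

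For part 2 I would use this chain. Telescoping via (\ref{entropydifference}),
\[
\kappa_{T,T'} \;=\; \sum_{t=1}^N \kappa_{T^t, T^{t-1}}.
\]
Each one-step term is controlled by a Furstenberg-type entropy/exponent inequality applied to the one-dimensional fiber dynamics furnished by lemma \ref{coordinate} (playing the role that (\ref{ent/exp}) does in the \(d=2\) setting), giving \(\kappa_{T^t, T^{t-1}} \leq \chi_{T^t, T^{t-1}}\). Summing and identifying the one-step exponents with the set \(\{\chi_i - \chi_j : (i,j) \in D_{T,T'}\}\) yields the entropy bound. For the dimension bound, the same chain realises \(\pi_{T,T'}\) as a composition of \(N\) one-step projections with one-dimensional fibers, so each fiber \(\pi_{T,T'}^{-1}(x')\) is an \(N\)-dimensional smooth submanifold; any measure supported there has upper pointwise dimension at most \(N\), giving \(\overline{\delta}_{T,T'} \leq N_{T,T'}\).

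The main obstacle I anticipate is the cover-existence argument in part 1: one must ensure that a minimum-exponent pair can be chosen to be a cover in the current intermediate topology at every stage, and the tie-breaking by span combined with the transitivity analysis is what makes this go through. Part 2 then reduces to bookkeeping on top of lemma \ref{coordinate} and the standard one-dimensional entropy/exponent inequality.
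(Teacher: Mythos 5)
Your proposal is correct and essentially matches the paper's argument. For part~1 the paper also orders $D_{T,T'}$ by increasing exponent difference and verifies that removing the current minimum is a one-step operation via the same transitivity contradiction (if $i <_{T'} k <_{T'} j$ then one of $(i,k),(k,j)$ lies in $D_{T,T'}$ with a strictly smaller exponent); since the exponents are pairwise distinct for $\mu \in \MM(d)$, your tie-breaking by span is harmless but not actually needed. For part~2 the dimension bound $\ov\de_{T,T'} \le N_{T,T'}$ is obtained exactly as you say (proposition~\ref{lipschitzproduct}); the only small deviation is that you propose proving the one-step inequality $\kappa_{T^t,T^{t-1}} \le \chi_{T^t,T^{t-1}}$ directly by a Furstenberg-type argument on the fiber, whereas the paper obtains it from theorem~\ref{exactfibers} (so $\g_{T^t,T^{t-1}} = \kappa_{T^t,T^{t-1}}/\chi_{T^t,T^{t-1}}$) together with $\g_{T^t,T^{t-1}} \le 1$ from lemma~\ref{coordinate} — two routes to the same bound, both relying on the one-dimensional fiber parametrization.
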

 Proposition \ref{order}.1 is proven in section \ref{section:topologies}.  Proposition \ref{order}.2 is proven at the end of section \ref{section:coordinates} after a more precise description of the metric spaces \( (\pi _{T,T'})^{-1} (x') \) for \( x' \in \X_{T'} .\) For any pair \(T \prec T'\) of admissible topologies, we henceforth choose and fix a sequence given by proposition \ref{order}.1. The entropy  \( \kappa _{T,T'} \) is the sum of the entropy differences \(  \kappa _{T^t, T^{t-1}}  .\)

Theorem \ref{exactfibers} yields a Ledrappier-Young  formula for the entropy \( \kappa _{T,T'} \)
 \begin{equation}\label{LY} \kappa _{T,T'} \; = \;  \sum _{t=1}^{N_{T,T'}} \kappa _{T^t, T^{t-1}} \;= \;  \sum _{t=1}^{N_{T,T'}}  \chi _{T^t,T^{t-1}} \, \g _{T^t, T^{t-1}}\, .\end{equation}
The precise form of our main result is the dimension counterpart of (\ref {LY}):
\begin{theorem}\label{mainT} Let \( \mu \in \MM(d)\) and  \(T \prec T'\)  be a  pair of admissible topologies. With the previous notations, for \(\nu '\)-a.e. \(f'\in \F\), \( \nu _{T}^{f'} \)-a.e \(x' \in \X_{T'}^{f'} \), \begin{enumerate}
\item the lower dimension  \( \un \delta _{T , T'}\) of the measure \( \nu _{T,T'}^{x'} \) is at least \(\sum \limits_{t=1}^{N_{T,T'}} \g _{T^t, T^{t-1}};\)\\
\item moreover, in the case when \( \mu \) is discrete, the measure \( \nu _{T,T'}^{x'} \) is exact-dimensional, with dimension \(\delta _{T , T'}\) given by \[ \delta _{T ,T'}\; = \; \sum _{t=1}^{N_{T,T'}} \g _{T^t, T^{t-1}}\,.\] \end{enumerate} \end{theorem}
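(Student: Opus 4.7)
The plan is to induct on \(N := N_{T,T'}\) using the chain \(T^{0} = T' \overset{1}{\prec} T^{1} \overset{1}{\prec} \cdots \overset{1}{\prec} T^{N} = T\) furnished by Proposition~\ref{order}.1, so that the one-step exponents \(\chi_{T^{t},T^{t-1}}\) appear in nondecreasing order. The base case \(N=1\) is exactly Theorem~\ref{exactfibers}. For the inductive step, peel off the last refinement: writing \(T^{N-1} =: S\), the disintegration identity
\begin{equation*}
\nu_{T,T'}^{x'} \;=\; \int \nu_{T,S}^{x^{S}}\; d\nu_{S,T'}^{x'}(x^{S})
\end{equation*}
reduces matters to combining a lower-dimension bound on the base \((\X_{S}^{f'},\nu_{S,T'}^{x'})\) (inductive hypothesis, part~1) with the exact-dimensional fibers \(\nu_{T,S}^{x^{S}}\) of dimension \(\g_{T,S}\) (Theorem~\ref{exactfibers}). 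In the discrete case, part~2 of the inductive hypothesis strengthens the base to exact-dimensional of dimension \(\sum_{t=1}^{N-1}\g_{T^{t},T^{t-1}}\).

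For the lower bound (part~1), I would fix \(\e>0\) and, using Egorov, restrict to a compact set \(K \subset \X_S^{f'}\) of nearly full \(\nu_{S,T'}^{x'}\)-measure on which the inequality \(\nu_{T,S}^{x^{S}}(B(y,r)) \le r^{\g_{T,S}-\e}\) holds for all \(r \le r_{0}\) uniformly in \(x^{S}\). Given \(x\) lying over \(K\) and a small ball \(B(x,r) \subset \X_{T}^{f'}\), Lipschitz continuity of \(\pi_{T,S}\) gives
\begin{equation*}
\nu_{T,T'}^{x'}(B(x,r)) \;\le\; \int_{\pi_{T,S}(B(x,r))} \nu_{T,S}^{x^{S}}(B(x,r))\, d\nu_{S,T'}^{x'}(x^{S}) \;\le\; r^{\g_{T,S}-\e}\, \nu_{S,T'}^{x'}\!\bigl(B(\pi_{T,S}(x),Lr)\bigr),
\end{equation*}
and the inductive lower bound on the base factor yields \(\un\de_{T,T'} \ge \sum_{t=1}^{N} \g_{T^{t},T^{t-1}} - (N-1)\e\), for any \(\e\). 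This is essentially the Ledrappier--Young slicing inequality; no discreteness of \(\mu\) is used.

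The hard part is part~2: matching this from above, i.e.\ \(\ov\de_{T,T'} \le \sum_{t=1}^{N} \g_{T^{t},T^{t-1}}\) when \(\mu\) is discrete. Here I would follow the strategy of Feng--Hu, Feng and Rapaport that the excerpt credits, using the ordering \(\chi_{T^{1},T^{0}} \le \cdots \le \chi_{T^{N},T^{N-1}}\) to pick, for each large \(n\), a geometric scale \(r_{n} = e^{-n\chi_{T^{N},T^{N-1}}}\) such that the \(t\)-th fiber of \(\pi_{T^{t},T^{t-1}}\) is controlled at the coarser scale \(r_{n}^{\chi_{T^{N},T^{N-1}}/\chi_{T^{t},T^{t-1}}}\). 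Combined with the entropy identity \(\kappa_{T,T'}=\sum_{t}\chi_{T^{t},T^{t-1}}\g_{T^{t},T^{t-1}}\) from (\ref{LY}) and the random-walk Shannon--McMillan--Breiman identity along the trajectory \((g_{0},\ldots,g_{n-1})\), one gets cylinder-type lower bounds of the form \(\nu_{T,T'}^{x'}(B(x,r_{n})) \ge c\, e^{-n(\kappa_{T,T'}+o(1))}\). Comparing exponents and telescoping through the chain yields the matching upper bound on the local dimension.

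Discreteness of \(\mu\) enters in this last step in two linked ways: it makes the random walk filtration into a countable partition with finite entropy along each step, so that the Shannon--McMillan--Breiman and Maker ergodic theorems (already used to prove Theorem~\ref{exactfibers} one step at a time) apply simultaneously to the whole tower; and it ensures enough "transversality" to convert the entropy lower bound into a measure lower bound on geometric balls via the one-dimensional coordinate parameterizations from Section~\ref{section:coordinates}. I expect the main obstacle to be precisely this simultaneous control: at a given geometric scale \(r_{n}\) the different one-step fibers live at very different dynamical scales, and one must show that the uniform control supplied by the proof of Theorem~\ref{exactfibers} at each level can be glued together along the chain of disintegrations without losing dimension. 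Once this is achieved, part~2 follows by sandwiching \(\un\de_{T,T'}\) and \(\ov\de_{T,T'}\) against the same sum \(\sum_{t=1}^{N}\g_{T^{t},T^{t-1}}\).
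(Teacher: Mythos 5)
Your proposal follows essentially the same route as the paper. Part~1 is the Ledrappier--Young slicing inequality (\cite{ledrappier-young1985}, lemma~11.3.1) applied along the chain from Proposition~\ref{order}.1, with one-step fibers exact-dimensional by Theorem~\ref{exactfibers}. Your decomposition puts the one-step fiber $\nu_{T,S}^{x^S}$ on top of the inductive base $\nu_{S,T'}^{x'}$; the paper does the mirror image (one-step base $\nu_{T^t,T^{t-1}}^{x'}$, inductive fiber $\nu_{T,T^t}^{x^t}$, tracking $\un\delta^t$ for $t=N,\ldots,1$). Both are valid. Your Egorov sketch skips the density-point step needed to discard the part of $B(x,r)$ meeting the complement of the good compact set; this is routine since configuration spaces are locally bilipschitz to Euclidean (Proposition~\ref{lipschitzproduct}), but it should be stated, since the estimate $\nu_{T,S}^{x^S}(B(y,r))\le r^{\g_{T,S}-\e}$ only holds uniformly for centers $y$ in the good set.

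For part~2 your sketch correctly points to the Feng--Rapaport cylinder counting on $\Om_-$ and to the role of discreteness (the cylinders $[h_{-n},\dots,h_{-1}]$ form a countable partition used in Lemma~\ref{conditionalslemma} and in the final counting), but the scale choice is wrong. If you work at the single scale $r_n = e^{-n\chi_{T^N,T^{N-1}}}$ (the largest exponent), the claimed lower bound $\nu_{T,T'}^{x'}(B(x,r_n)) \ge c\,e^{-n(\kappa_{T,T'}+o(1))}$ would force a local dimension at most $\kappa_{T,T'}/\chi_{T^N,T^{N-1}} = \sum_t(\chi_{T^t,T^{t-1}}/\chi_{T^N,T^{N-1}})\g_{T^t,T^{t-1}}$, which by $(\ref{LY})$ is strictly less than $\sum_t\g_{T^t,T^{t-1}}$ whenever the one-step exponents are not all equal --- contradicting your own part~1. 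The paper instead proves $\ov\delta^{t-1}\le\ov\delta^t+\g_{T^t,T^{t-1}}$ one step at a time (Theorem~\ref{counting}), each step at its own scale $r_n=e^{-n(\chi_{T^t,T^{t-1}}-\e)}$; the ordering of Proposition~\ref{order}.1 ensures this exponent is the minimum among the remaining arrows $\{\chi_{T^s,T^{s-1}}: s\ge t\}$, which is precisely what the approximation Lemma~\ref{approximationlemma} requires, and one then sums over $t$. This step-by-step telescope, with a step-dependent scale and the minimum property, is the ``gluing along the chain'' you correctly flag as the obstacle; it cannot be replaced by a single geometric scale over the whole chain.
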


Theorem \ref{mainT}.2 gives a condition for dimension conservation:
\begin{corollary}\label{consdim}  Assume \( \mu \) is a discrete measure in \(\MM(d)\) and let \( T \prec T' \prec T'' \)  be  admissible   topologies. Assume that all differences  \( \chi _i -\chi _j, (i,j) \in D_{T,T'} \) are at least as large as the differences \( \chi _i -\chi _j, (i,j) \in D_{T',T''} \). Then, there is dimension conservation for the  projections \( \pi _{T,T''} = \pi _{T',T''} \circ \pi _{T,T'} \) of the measure \( \nu _{T}^{f'}\):
\[ \delta _{T ,T''} \; = \; \delta _{T' ,T''} + \delta _{T,T'}.\] \end{corollary}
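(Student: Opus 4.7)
The plan is to exploit the flexibility in Proposition \ref{order}.1 to fix refinement chains for the three pairs $T \prec T'$, $T' \prec T''$ and $T \prec T''$ in a compatible way, so that the Ledrappier--Young formula of Theorem \ref{mainT}.2 for $T \prec T''$ splits along the intermediate level $T'$. The key combinatorial observation is that, because $T \prec T' \prec T''$ forces $T(i) \subset T'(i) \subset T''(i)$ for every $i$, we have the disjoint decomposition $D_{T,T''} = D_{T,T'} \sqcup D_{T',T''}$ of exponent sets, and hence $N_{T,T''} = N_{T',T''} + N_{T,T'}$.

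First, I would invoke Proposition \ref{order}.1 on the pair $T' \prec T''$ to obtain a chain
\[ T' = S^{N_{T',T''}} \overset{1}{\prec} \cdots \overset{1}{\prec} S^1 \overset{1}{\prec} S^0 = T'' \]
with non-decreasing exponents $\chi_{S^t,S^{t-1}}$ drawn from $D_{T',T''}$; and similarly on $T \prec T'$ to produce
\[ T = R^{N_{T,T'}} \overset{1}{\prec} \cdots \overset{1}{\prec} R^1 \overset{1}{\prec} R^0 = T' \]
with non-decreasing exponents drawn from $D_{T,T'}$. Glueing these at $T'$ yields a chain of length $N_{T,T''}$ from $T''$ down to $T$. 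The hypothesis that every exponent in $D_{T,T'}$ dominates every exponent in $D_{T',T''}$ is exactly what is needed to keep the sequence of exponents non-decreasing across the junction, so this concatenation is a legitimate realization of Proposition \ref{order}.1 for the pair $T \prec T''$.

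Now, since the dimension $\delta_{T,T'}$ produced by Theorem \ref{mainT}.2 is an intrinsic invariant of the exact-dimensional measure $\nu_{T,T'}^{x'}$, the sum $\sum_t \g_{T^t,T^{t-1}}$ must give the same value for any admissible chain, and in particular for the concatenated chain above. Splitting that sum at the intermediate index $N_{T',T''}$ gives
\[ \delta_{T,T''} \;=\; \sum_{t=1}^{N_{T',T''}} \g_{S^t,S^{t-1}} \;+\; \sum_{t=1}^{N_{T,T'}} \g_{R^t,R^{t-1}} \;=\; \delta_{T',T''} + \delta_{T,T'}, \]
where each half-sum is identified with its respective dimension via Theorem \ref{mainT}.2 applied to $T' \prec T''$ along $S^\bullet$ and to $T \prec T'$ along $R^\bullet$.

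There is really only one point to verify carefully, namely that the concatenated chain remains admissible in the sense of Proposition \ref{order}.1, and that is precisely where the monotonicity assumption of the corollary enters; everything else is bookkeeping on the Ledrappier--Young formula. The only auxiliary remark worth making, so as to read the identity $\delta_{T,T''} = \delta_{T',T''} + \delta_{T,T'}$ as genuine dimension conservation for $\pi_{T,T''} = \pi_{T',T''} \circ \pi_{T,T'}$, is that the conditional measures of $\nu_{T,T''}^{x''}$ along the intermediate projection $\pi_{T,T'}$ agree $\nu_{T',T''}^{x''}$-almost everywhere with the measures $\nu_{T,T'}^{x'}$ by transitivity of disintegration.
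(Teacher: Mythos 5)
Your proof is correct and is essentially the same argument as the paper's: both exploit the hypothesis to choose the chain for $T \prec T''$ from Proposition \ref{order}.1 so that it passes through $T'$ at step $N_{T',T''}$, and then split the Ledrappier--Young sum from Theorem \ref{mainT}.2 at that index. Your version spells out the concatenation of the two subchains and the disjointness $D_{T,T''} = D_{T,T'} \sqcup D_{T',T''}$ more explicitly, but the underlying idea is identical.
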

\begin{proof} Since differences  \( \chi _i -\chi _j, (i,j) \in D_{T,T'} \) are at least as large as the differences \( \chi _i -\chi _j, (i,j) \in D_{T',T''} \), it is possible to find the sequence associated to \( T\prec T''\) by  proposition \ref{order}.1 in such a way that \( T^{N_{T',T''}} = T'.\) The corollary follows. \end{proof}
Observe that there are examples of \( T \prec T' \prec T'' \) where  dimension conservation does not hold (see e.g. proposition \ref{noconsdim} and  theorem \ref{Manhattan}).

In the case when \(T' = T_0,\) we can identify the measures \( \nu _{T^t}^{f'} \) and
\( \nu _{T^t,T_0}^{ f'} \) on \(\X^{f'}_{T^t}\) and we obtain, setting  \( D_T := D_{T, T_0} = \{ (i,j), i<j , j \not \in T(i)\}, N_T := \# D_T\) and, for  \( (i,j) \in D_T, \g^T_{i,j} \)  for \(\g_{T^t,T^{t-1}}\) associated by proposition \ref{order}.1 to \((i,j)\).
\begin{corollary}\label{exact-dim}  Assume \( \mu \) is a discrete measure in \(\MM(d)\) and let \(T\)  be an admissible   topology. With the previous notations, we have \( \kappa _T \;= \; \sum _{(i,j) \in D_T} \g ^T_{i,j} (\chi _i -\chi _j) \) and, for \(\nu '\)-a.e. \(f'\in \F\), the measure \( \nu _{T}^{f'} \) is exact-dimensional, with dimension \(\delta _{T}\) given by \[ \delta _{T}\; = \; \delta _{T,T_0} \; =\;  \sum _{(i,j) \in D_T} \g ^T_{i,j} .\] 
\end{corollary}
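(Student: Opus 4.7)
The plan is to derive this corollary as a direct specialization of Theorem \ref{mainT} and the Ledrappier--Young formula (\ref{LY}) to the pair $(T, T_0)$.  I would first verify (as already announced after the definition of the configuration spaces) that $\X_{T_0}^{f'}$ reduces to a single point for $\nu'$-a.e.\ $f'$.  Using the list of atoms $\{k, k+1, \ldots, d\}$ of $T_0$ and the definition of $F_{T_0}(f,f')$, one computes
\[
[F_{T_0}(f,f')]_{\{k,\ldots,d\}} \;=\; \bigoplus_{i=k}^{d} \bigl(U_i \cap U'_{d-i+1}\bigr) \;=\; U'_{d-k+1},
\]
since each summand lies inside $U'_{d-k+1}$ and a dimension count in general position forces the sum to exhaust it.  Hence $F_{T_0}(f,f')$ depends only on $f'$, so $\pi_{T,T_0}\colon \X_T^{f'}\to \X_{T_0}^{f'}$ is constant and the disintegration $\nu_{T,T_0}^{f'}$ coincides with the full measure $\nu_T^{f'}$.

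Next I would unfold the chain $T^0 = T_0 \overset{1}{\prec} T^1 \overset{1}{\prec}\cdots \overset{1}{\prec} T^{N_T} = T$ supplied by Proposition \ref{order}.1.  At each step $t$ a unique pair $(i_t,j_t)$ with $\{j_t\} = T^{t-1}(i_t)\setminus T^t(i_t)$ is removed from $T_0(i_t) = \{i_t,\ldots,d\}$, and after the $N_T$ steps the collection of removed pairs is exactly $D_T = \{(i,j): i<j,\ j\notin T(i)\}$.  This gives a bijection $t \leftrightarrow (i_t,j_t)$, under which the exponent of step $t$ is $\chi_{T^t,T^{t-1}} = \chi_{i_t}-\chi_{j_t}$ by (\ref{exponent}).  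Setting $\g^T_{i_t,j_t} := \g_{T^t,T^{t-1}}$, the Ledrappier--Young identity (\ref{LY}) becomes $\kappa_{T,T_0} = \sum_{(i,j)\in D_T} \g^T_{i,j}(\chi_i-\chi_j)$; combined with $\kappa_{T_0}=0$ (recorded just after (\ref{entropy})), this yields the entropy formula $\kappa_T = \sum_{(i,j)\in D_T}\g^T_{i,j}(\chi_i-\chi_j)$.

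For the dimension statement I would apply Theorem \ref{mainT}.2 to the pair $(T,T_0)$: the measure $\nu_{T,T_0}^{f'}$ is exact-dimensional with dimension $\sum_{t=1}^{N_T}\g_{T^t,T^{t-1}}$, which under the same bijection equals $\sum_{(i,j)\in D_T}\g^T_{i,j}$.  The earlier identification $\nu_{T,T_0}^{f'} = \nu_T^{f'}$ transfers this to $\nu_T^{f'}$, as required.  I do not anticipate any genuine obstacle: once Theorem \ref{mainT} and formula (\ref{LY}) are granted, the corollary is essentially an exercise in unpacking the above bijection.  The one point to keep straight is that a single chain, fixed once and for all at the start of the section via Proposition \ref{order}.1, must be used consistently to index both the entropy sum and the dimension sum.
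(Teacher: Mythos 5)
Your proof is correct and follows exactly the route the paper takes: one specializes Theorem \ref{mainT} and formula (\ref{LY}) to the pair $(T, T_0)$, using the identification of $\X_{T_0}^{f'}$ with $\{f'\}$ (so that $\nu_{T,T_0}^{f'} = \nu_T^{f'}$) and the bijection supplied by Proposition \ref{order}.1 between the steps $t$ of the chain and the pairs $(i,j) \in D_T$, exactly as you spell out. The only slip is notational---since $T^t \overset{1}{\prec} T^{t-1}$ means $T^t$ is the \emph{finer} topology, the arrows in the chain should run $T = T^{N_T} \overset{1}{\prec} T^{N_T-1} \overset{1}{\prec} \cdots \overset{1}{\prec} T^0 = T_0$, not the reverse as you wrote---but this does not affect any of the calculations.
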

 
In particular, if \(T\) is a filtered admissible topology associated to a partition $Q$, then \(\X_T\) is identified with \(\F_Q\), the measure \( \nu _{T}^{f'} \) is then identified with the measure \(\nu _Q\) for almost every \(f'\).
\begin{corollary} \label{mainQ} Let $\mu \in \MM(d)$ be a discrete probability measure on $ SL_d(\R)$.  
Let $Q$ be a partition of $\{0,1, \ldots , d\}$ into intervals. 
Then the unique stationary probability measure $\nu _Q$ on the space \(\F_Q\) of partial flags is exact-dimensional.  There are numbers \( \g ^Q_{i,j} := \g_{i,j}^{T_Q} \) such that \footnote{Comparing with theorem \ref{ent/exp1}, the content of (\ref{etvoila}) for \( Q_1\)  is that the numbers \(  \g^{Q_1}_{i,j} \) are the dimensions of certain conditional measures on specific 1-dimensional leaves in \( \F\).}
\begin{equation}\label{etvoila}  \de _Q= \sum _{i,j: \ell _Q(i) < \ell _Q(j) }  \g^Q_{i,j} , \quad \quad h(\F_Q,\mu , \nu _Q) =  \sum _{i,j: \ell _Q(i) < \ell _Q(j) }  \g^Q_{i,j} (\chi _i - \chi _j). \end{equation}\end{corollary}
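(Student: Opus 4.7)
The plan is to deduce this corollary directly from Corollary \ref{exact-dim} applied to the filtered admissible topology $T = T_Q$, using the identifications between configuration spaces and partial flag spaces set up in Section \ref{admissibletopologysection}.

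First I would invoke the observation made just before the statement: for a filtered $T_Q$ the projection $\X_{T_Q}^{f'} \to \F_Q$ is a bilipschitz homeomorphism onto the (full-measure) set of partial flags in general position with $f'$, and under this identification the conditional measure $\nu_{T_Q}^{f'}$ is pushed to $\nu_Q$ for $\nu'$-almost every $f'$. Since a bilipschitz map preserves pointwise upper and lower local dimensions, exact-dimensionality of $\nu_{T_Q}^{f'}$ transfers to exact-dimensionality of $\nu_Q$ with the same almost-everywhere constant value. The same identification was already used in the text to give $\kappa_{T_Q} = h(\F_Q, \mu, \nu_Q)$, so the Ledrappier--Young--type identity of Corollary \ref{exact-dim} translates to an identity for the Furstenberg entropy on $\F_Q$.

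Second I would perform the combinatorial identification of the index set $D_{T_Q}$. Since $T_Q$ is generated by $T_0$ together with the initial segments $\{1,\ldots,q_j\}$, a direct check shows that the atom $T_Q(i)$ equals $\{i, i+1, \ldots, q_{\ell_Q(i)}\}$, where $\ell_Q(i)$ is the index of the block of $Q$ containing $i$. Hence for $i<j$ the condition $j \notin T_Q(i)$ is equivalent to $q_{\ell_Q(i)} < j$, i.e.\ to $\ell_Q(i) < \ell_Q(j)$. Setting $\g^Q_{i,j} := \g^{T_Q}_{i,j}$ for $(i,j) \in D_{T_Q}$, Corollary \ref{exact-dim} then yields both identities of (\ref{etvoila}) verbatim, and uniqueness of $\nu_Q$ is already granted by $\mu \in \MM(d)$ as stated in the introduction.

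There is essentially no new mathematical content here beyond Corollary \ref{exact-dim}: the corollary is a translation statement. The only points that require any care are that the bilipschitz constant of the fiber identification $\X_{T_Q}^{f'} \to \F_Q$ can be controlled uniformly in $f'$ on compact pieces of $\F_Q$ (so that pointwise dimensions transfer cleanly from $\nu_{T_Q}^{f'}$ to $\nu_Q$), and the combinatorial description of $D_{T_Q}$ just given. Both are immediate unpackings of the definitions, so no substantial obstacle arises.
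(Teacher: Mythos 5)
Your proof is correct and follows the same route the paper takes, namely treating the statement as the direct specialization of Corollary~\ref{exact-dim} to the filtered topology $T_Q$, together with the identification of $\nu_{T_Q}^{f'}$ with $\nu_Q$ and of $\kappa_{T_Q}$ with $h(\F_Q,\mu,\nu_Q)$ already recorded in the text. The combinatorial identification of $D_{T_Q}$ with $\{(i,j) : \ell_Q(i) < \ell_Q(j)\}$ via $T_Q(i) = \{i,\ldots,q_{\ell_Q(i)}\}$ is also right. One small remark: you do not actually need uniformity in $f'$ of the bilipschitz constant of $\X_{T_Q}^{f'} \to \F_Q$; since $\nu_Q$ does not depend on $f'$, it suffices to pick a single $\nu'$-generic $f'$ for which $\nu_{T_Q}^{f'}$ is exact-dimensional, and a bilipschitz homeomorphism (with any constant) preserves pointwise upper and lower dimensions exactly.
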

Theorem \ref{main} is the particular case of corollary \ref{mainQ} when \( Q = Q_1.\)

\begin{proof}[Proof of Theorem \ref{mainT}]
We endow \( \X^{f'}_{T} \) with a smooth metric. The following limits are  constants  for  \(\nu'\)-a.e.  \(f' \in \F\), \(\nu ^{f'}_{T^t} \)-a.e. \(x' \in \X^{f'}_{T^t} \),  \(\nu ^{x'}_{T,T^t} \)-a.e. \(y' \in \X^{f'}_{T} \):
\[ \un \delta ^t := \liminf _{r\to 0} \frac {\log \nu _{T, T^t}^{x'}( B (y' ,r))}{\log r}, \;\; \ov \delta^t := \limsup_{r\to 0} \frac {\log \nu _{T, T^t}^{x'} ( B (y' ,r))}{\log r}. \]
With this notation,  lower and upper dimensions of the measure \( \nu _{T,T'}^{x'} \) are respectively \(\un \de^0 \) and \(\ov \de ^0.\)

For the first part  of theorem \ref{mainT}, we  have that for almost every \((f', x', y' )\), the following relation  (\ref{induction1})   holds for all \( t = N_{T,T'}, N_{T,T'} -1, \ldots , 1 \)
\begin{equation}\label{induction1}  \un \delta^{t-1} \;\geq \; \un \delta^{t} +  \g _{T^{t}, T^{t-1}}. \end{equation}

Indeed, since  \(T \overset {1}{\prec} T^{N_{T,T'}-1}\), (\ref{induction1})  holds for \( t = N_{T,T'} \) (with  \(\un  \de ^{N_{T,T'} } = 0 \)) by theorem \ref{exactfibers}. By \cite{ledrappier-young1985} lemma 11.3.1, (\ref{induction1}) for \( t < N_{T,T'} \)  follows from theorem \ref{exactfibers} as well. Theorem \ref{mainT}.1 follows by summing the relations (\ref{induction1}) for  \( t = N_{T,T'}, N_{T,T'} -1, \ldots , 1 .\)

For the second part of theorem \ref{mainT},  it remains to prove 
\begin{theorem}\label{counting} Assume that the measure \(\mu \in \MM(d)\) is discrete. With the above notations, for almost every \((f', x', y' )\), for all \( t = N_{T,T'}, N_{T,T'} -1, \ldots , 1, \)  \begin{equation}\label{induction2} \ov \delta^{t-1} \leq \ov \delta^{t} +  \g _{T^{t}, T^{t-1}}. \end{equation} \end{theorem}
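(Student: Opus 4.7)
The plan is to establish (\ref{induction2}) by backward induction on \(t\), running from \(t = N_{T,T'}\) down to \(t = 1\). The base case \(t = N_{T,T'}\) is immediate: \(\nu_{T, T^{N_{T,T'}}}^{x'} = \delta_{x'}\) so \(\overline{\delta}^{N_{T,T'}} = 0\), and Theorem \ref{exactfibers} applied to the one-step pair \(T \overset{1}{\prec} T^{N_{T,T'}-1}\) yields \(\overline{\delta}^{N_{T,T'}-1} = \gamma_{T, T^{N_{T,T'}-1}}\), which is exactly (\ref{induction2}) at \(t = N_{T,T'}\).

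For the inductive step from \(t\) to \(t-1\) with \(t < N_{T,T'}\), the key tool is the disintegration
\[
\nu_{T, T^{t-1}}^{x'_{t-1}} \;=\; \int \nu_{T, T^t}^{x'_t}\, d\nu_{T^t, T^{t-1}}^{x'_{t-1}}(x'_t),
\]
combined with the fact that \(\pi_{T, T^t}\) is Lipschitz on the fibers of \(\pi_{T, T^{t-1}}\). Denoting a Lipschitz constant by \(L\), for any \(y\) in the big fiber and any small \(r>0\),
\[
\nu_{T, T^{t-1}}^{x'_{t-1}}(B(y, r)) \;\leq\; \int_{B(\pi_{T, T^t}(y),\, L r)} \nu_{T, T^t}^{x'_t}(B(y, r))\, d\nu_{T^t, T^{t-1}}^{x'_{t-1}}(x'_t).
\]
Theorem \ref{exactfibers} applied to the one-step pair \(T^t \overset{1}{\prec} T^{t-1}\) gives that the base measure \(\nu_{T^t, T^{t-1}}^{x'_{t-1}}\) has exact dimension \(\gamma_{T^t, T^{t-1}}\), so the mass of the domain of integration is at most \(r^{\gamma_{T^t, T^{t-1}} - \epsilon}\) for small \(r\). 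If the induction hypothesis could be upgraded to a uniform bound \(\nu_{T, T^t}^{x'_t}(B(y, r)) \leq r^{\overline{\delta}^t - \epsilon}\) in \(x'_t\), the conclusion \(\overline{\delta}^{t-1} \leq \overline{\delta}^t + \gamma_{T^t, T^{t-1}}\) would follow directly.

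The main obstacle is precisely this uniformity. The induction hypothesis supplies the upper dimension bound only on almost every individual fiber, whereas one needs it uniformly on a set of nearly full base measure. Egorov's theorem furnishes a set \(A\) on which the bound is uniform, but one must control the contribution to the integral from its complement \(A^c\). This is where the discreteness of \(\mu\) is essential: following the scheme used for \(N_{T,T'}=1\) and echoing the Feng--Hu / Rapaport strategy, one partitions the fiber using \(n\)-step orbits of the random walk directed by \(\mu\), whose number at step \(n\) is bounded combinatorially by \(|\mathrm{supp}(\mu)|^n\). The Shannon--McMillan--Breiman theorem then shows that the entropy of this partition grows like \(n \kappa_{T^t, T^{t-1}}\), while the Lyapunov exponent \(\chi_{T^t, T^{t-1}}\) translates step count into the spatial scale \(r \sim e^{-n \chi_{T^t, T^{t-1}}}\). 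The non-decreasing ordering \(\chi_{T^1, T^0} \leq \cdots \leq \chi_{T^{N_{T,T'}}, T^{N_{T,T'}-1}}\) guaranteed by Proposition \ref{order}.1 ensures that at this scale the coarser fibers are already fully resolved, so the partition atoms respect the fibration produced by \(\pi_{T, T^t}\). Combining the entropy estimate with the scale estimate promotes the almost-everywhere bound \(\overline{\delta}^t\) on fibers into a uniform one up to an arbitrarily small loss \(\epsilon\); the contribution from \(A^c\) becomes negligible thanks to the discrete combinatorial control, and sending \(n \to \infty\) yields (\ref{induction2}). Summing (\ref{induction2}) over \(t = 1, \ldots, N_{T,T'}\) and combining with (\ref{induction1}) completes Theorem \ref{mainT}.2.
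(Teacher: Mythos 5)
Your proposal correctly identifies the broad strategy of the paper -- work on the trajectory space, exploit discreteness to obtain cylinder-set/SMB-type estimates, use the non-decreasing ordering of exponents from Proposition \ref{order}.1 -- and correctly isolates the central difficulty, namely that the fiberwise dimension bound is not uniform in the base variable $x'_t$. But there are two genuine gaps in the way you propose to close it.

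First, the Egorov step does not actually resolve the uniformity problem. Egorov gives a set $A$ in the base with $\nu_{T^t,T^{t-1}}^{x'_{t-1}}(A^c) < \eta$ on which the bound $\nu_{T,T^t}^{x'_t}(B(y,r)) \le r^{\overline{\delta}^t - \epsilon}$ holds uniformly. But the contribution to the integral from $A^c$ can only be bounded trivially by $\nu_{T^t,T^{t-1}}^{x'_{t-1}}(A^c) < \eta$, which is a fixed small constant independent of $r$; what you need is a bound that decays with $r$ like $r^{\overline{\delta}^{t-1}-\epsilon'}$. You acknowledge this is ``where the discreteness of $\mu$ is essential,'' but the sentence that is supposed to close the gap (``the contribution from $A^c$ becomes negligible thanks to the discrete combinatorial control'') asserts the conclusion rather than proving it. The paper avoids Egorov entirely: it works on $\Omega_-$ with the approximating configurations $f_n$ of Lemma \ref{approximationlemma}, introduces the cylinder sets $[h_{-n},\ldots,h_{-1}]$ and the events $A_n, B_n, C_n$, and proves a \emph{pointwise} density statement (Lemma \ref{lebesguelemma}, via Lebesgue differentiation on the configuration space) together with an entropy-ratio estimate (Lemma \ref{conditionalslemma}, coming from Corollary \ref{kappa5} and the martingale convergence theorem). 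Pushing a lower bound on $m_{T^t}^{x_t}$ through the cylinder-by-cylinder inequality of $B_n$ yields a lower bound on $m_{T^{t-1}}^{x_{t-1}}$, which at scale $r_n$ is the desired ball estimate for $\nu_{T,T^{t-1}}$. Nothing in that argument requires upgrading a.e.\ control to uniform control on a large set.

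Second, the bound by $|\mathrm{supp}(\mu)|^n$ on the number of $n$-step orbits is only finite when $\mu$ has finite support, whereas the theorem's hypothesis is merely that $\mu$ is \emph{discrete} (hence possibly countably supported, as in the abstract's $\sum_g \log\|g\|\,\mu(g)<+\infty$). A union bound over $|\mathrm{supp}(\mu)|^n$ cylinders is therefore not available in the stated generality. The role of discreteness in the paper's proof is subtler: it makes the cylinders $[h_{-n},\ldots,h_{-1}]$ into a genuine countable partition of positive measure on which one can condition, so that Corollary \ref{kappa5} and the Birkhoff ergodic theorem provide the $\exp(n\kappa)$ comparison between $m_{T^t}^{x_t}$ and $m_{T^{t-1}}^{x_{t-1}}$ on cylinders -- no cardinality bound is used or needed. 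Finally, a minor structural point: the inequality \eqref{induction2} is proven for each $t$ in isolation using only the \emph{definition} of $\overline{\delta}^t$ as the upper local dimension of $\nu_{T,T^t}^{x'_t}$; there is no backward induction, and ``the induction hypothesis'' you invoke is not actually needed (nor would it give the uniform fiberwise bound you want).
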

Summing the relations (\ref{induction2})  for  \( t = N_{T,T'}, N_{T,T'} -1, \ldots , 1 \) (with  \(\ov  \de ^{N_{T,T'} } = 0 \)) gives \( \ov  \delta _{T ,T'} = \ov \de ^0   \leq    \sum _{t=1}^{N_{T,T'}} \g _{T^t, T^{t-1}}.\) Comparing with theorem \ref{mainT}.1 gives the result. \end{proof}

We prove theorem \ref{counting} in Section \ref{addingdimensions}. The analog of theorem \ref{counting} in \cite{ledrappier-young1985} is a counting argument (see section (10.2)) that uses partitions with finite entropy in the underlying space. This is not possible here.
 By working on the trajectory space of the underlying process, \cite{feng} and \cite{rapaport}  perform this counting procedure in the case when the measure $\mu $ has finite support. We follow the same scheme under the hypothesis that \( \mu \) is discrete.  

\subsection{Properties of the partial dimensions}\label{section:gamma}
  The spaces \( (\pi _{T^t, T^{t-1}})^{-1} (x)\) and the measures \( \nu^{x}_{T^t,T^{t-1}} \) for a.e. \(x\) depend only on the arrow  \( T^t  \overset {1}{\prec} T^{t-1} \)and not on its environment \( T \prec T'\) such that 
 \(T \prec  T^t  \overset {1}{\prec} T^{t-1} \prec T'\). Indeed, the dimension  \( \g_{T^t,T^{t-1}}\) in formula (\ref{LY}) does not depend on the environment \( T \prec T'\).
But still,  there might be  several arrows corresponding to the  same pair \( (i,j), 0< i <j \leq d.\) We can write
\begin{lemma}\label{factors} Assume the diagram of projections \(\begin{matrix} T & \longrightarrow & S  \\
\downarrow 1 & & \downarrow 1 &   \\
T' & \longrightarrow & S'  
\end{matrix}\) commutes and \(i,j\) are such that \(T(i) = T'(i) \setminus \lbrace j\rbrace\) and \(S(i) = S'(i) \setminus \lbrace j \rbrace\). Then, \( \g_{T,T'} \leq \g_{S,S'} .\) \end{lemma}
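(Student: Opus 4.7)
My plan is to read the inequality at the level of the exact fiber dimensions given by Theorem \ref{exactfibers}, and to exploit the commutative square to exhibit \(\nu_{S,S'}^{y'}\) as a measurable mixture of pushforwards of the measures \(\nu_{T,T'}^{x'}\). Because both vertical arrows are one step and correspond to the same pair \((i,j)\), their exponents coincide: \(\chi_{T,T'}=\chi_i-\chi_j=\chi_{S,S'}\). Theorem \ref{exactfibers} then gives that \(\nu_{T,T'}^{x'}\) and \(\nu_{S,S'}^{y'}\) are exact-dimensional with respective dimensions \(\g_{T,T'}\) and \(\g_{S,S'}\), so the inequality reduces to comparing these two dimensions directly.

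The structural step will be to check that the hypothesis forces \(T(k) = T'(k)\cap S(k)\) and \(S'(k) = T'(k)\cup S(k)\) for every \(k\): this uses \(T(i)=T'(i)\setminus\lbrace j\rbrace\), \(S(i)=S'(i)\setminus\lbrace j\rbrace\) together with \(T'\prec S'\), which in particular implies \(j \in T'(i)\subset S'(i)\). In lattice terms this says \(T=T'\vee S\) and \(S'=T'\wedge S\). Plugging into the defining formula \([F_T(f,f')]_I = \oplus_{i\in I}(U_i\cap U'_{d-i+1})\) yields the identity \([F_T]_{T(k)} = [F_{T'}]_{T'(k)}\cap [F_S]_{S(k)}\), which exhibits a smooth \(G\)-equivariant isomorphism \(\X_T \cong \X_{T'}\times_{\X_{S'}}\X_S\), with inverse \((u,v)\mapsto x\) given by \(x_{T(k)} = u_{T'(k)}\cap v_{S(k)}\). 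Restricted to the \(1\)-dimensional fibers, this makes \(\pi_{T,S}\) a bi-Lipschitz bijection from \((\pi_{T,T'})^{-1}(x')\) onto \((\pi_{S,S'})^{-1}(\pi_{T',S'}(x'))\), matching up the \((i,j)\)-parameterizations of Section \ref{section:coordinates} on the two fibers. Disintegrating the identity \((\pi_{T,S})_\ast \nu_T^{f'} = \nu_S^{f'}\) simultaneously along \(\pi_{T',S'}\) and \(\pi_{S,S'}\) then yields, for \(\nu_{S'}^{f'}\)-a.e. \(y'\), the mixture formula
\[ \nu_{S,S'}^{y'} \;=\; \int_{(\pi_{T',S'})^{-1}(y')} (\pi_{T,S})_\ast \nu_{T,T'}^{x'}\,d\nu_{T',S'}^{y'}(x'). \]

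To finish, each pushforward \((\pi_{T,S})_\ast \nu_{T,T'}^{x'}\) is exact-dimensional of dimension \(\g_{T,T'}\) by bi-Lipschitz invariance, and the standard fact that a measurable integral average of probability measures with lower pointwise dimension at least \(\de\) itself has lower pointwise dimension at least \(\de\) — a direct consequence of Billingsley's characterization \(\un\de(\nu) = \inf\lbrace \dim_H E : \nu(E)>0\rbrace\), since any \(E\) with \(\nu_{S,S'}^{y'}(E) > 0\) receives mass from a positive-\(\nu_{T',S'}^{y'}\)-measure family of components — yields \(\un\de(\nu_{S,S'}^{y'}) \geq \g_{T,T'}\). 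Because \(\nu_{S,S'}^{y'}\) is exact-dimensional of dimension \(\g_{S,S'}\), this gives \(\g_{T,T'}\leq \g_{S,S'}\). The main obstacle I anticipate is verifying that \(\pi_{T,S}\) identifies the \((i,j)\)-coordinate on each pair of fibers up to a smooth, nonvanishing comparison, so that bi-Lipschitz equivalence and hence preservation of dimension really hold; once this is granted, the remainder rests only on Theorem \ref{exactfibers} and the mixture/Billingsley lemma.
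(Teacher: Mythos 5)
Your proof is correct and follows essentially the same route as the paper: a bilipschitz bijection between the one-step fibers of \(\pi_{T,T'}\) and \(\pi_{S,S'}\) induced by \(\pi_{T,S}\), the mixture formula for \(\nu_{S,S'}\), and a dimension-of-mixture argument. The paper obtains the fiber identification from Corollary \ref{factors2} (via the \(T_1,T_{i,j}\) chain of Lemma \ref{lipschitzlemma}) and cites Lemma 11.3.2 of \cite{ledrappier-young1985} for the last step, while you use a fiber-product description and a Billingsley argument; both are interchangeable, though the asserted isomorphism \(\X_T\cong\X_{T'}\times_{\X_{S'}}\X_S\) should be restricted to the locus of transversal configurations (where \(u_{T'(k)}+v_{S(k)}=u_{S'(k)}\)), which does contain the support of the measures in question.
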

Lemma \ref{factors} is proven in section \ref{section:telescoping}. The example in section \ref{example} shows that, in general, \( \g_{T,T'}< \g_{S,S'} .\) 

Let \(i,j\)  satisfy \( 0< i <j \leq d\) and let $T_{i,j} $ be the topology defined by \[ T_{i,j} (k) = \{k\} \;\; {\textrm {if}} \;\; k\neq i, \; T_{i,j}(i) = \{i, j\}.\]
The topology \(T_{i,j} \) is admissible and one step coarser than $T_1$. 
\begin{corollary} Let \( T \prec T''\)  be admissible topologies.
 For  \( T^t  \overset {1}{\prec} T^{t-1} \) in the decomposition of \( T \prec T''\)  and \((i,j)\) such that  \( T^{t-1}(i) = T^{t}(i )\cup \{j\} \), we have \(  \g_{T_1, T_{i,j} }  \leq \g_{T^t, T^{t-1}}.\)  \end{corollary}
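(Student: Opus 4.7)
The plan is to apply Lemma~\ref{factors} to a suitable commutative diagram of admissible topologies. With $T^t \overset{1}{\prec} T^{t-1}$ and the pair $(i,j)$ as in the hypothesis (so that $T^{t-1}(i)=T^t(i)\cup\{j\}$), I would write down the square
\[
\begin{matrix}
T_1 & \longrightarrow & T^t \\
\downarrow 1 & & \downarrow 1 \\
T_{i,j} & \longrightarrow & T^{t-1}
\end{matrix}
\]
and check that it meets the combinatorial requirements of Lemma~\ref{factors} with the same index pair $(i,j)$ on both vertical arrows.

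First I would verify the two vertical one-step relations. For $T_1 \overset{1}{\prec} T_{i,j}$ the two topologies differ only at the $i$-th atom, and $T_{i,j}(i)\setminus T_1(i)=\{i,j\}\setminus\{i\}=\{j\}$, a singleton, so the definition of $\overset{1}{\prec}$ is satisfied with the given $(i,j)$; admissibility of $T_{i,j}$ is guaranteed by $j>i$. The right vertical relation $T^t \overset{1}{\prec} T^{t-1}$ together with the identity $T^{t-1}(i)\setminus T^t(i)=\{j\}$ is part of the hypothesis. Hence both vertical arrows realize the lemma's condition ``$T(i)=T'(i)\setminus\{j\}$'' with the same $(i,j)$.

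Next I would verify the two horizontal inclusions. The relation $T_1\prec T^t$ is automatic since $T_1$ is the finest admissible topology. For $T_{i,j}\prec T^{t-1}$ I would check atom by atom: when $k\neq i$, $T_{i,j}(k)=\{k\}\subset T^{t-1}(k)$ because any atom of an admissible topology contains its index; and when $k=i$, $T_{i,j}(i)=\{i,j\}\subset T^{t-1}(i)$ because $j\in T^{t-1}(i)$ by hypothesis and $i\in T^{t-1}(i)$ automatically. Commutativity of the square is automatic, since each projection $\pi_{R,R'}$ between configuration spaces is simply the restriction of a configuration to the coarser indexing topology, so two compositions with the same source and target agree.

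With all four sides of the diagram verified and commutativity in hand, Lemma~\ref{factors} applies and yields $\gamma_{T_1,T_{i,j}}\leq \gamma_{T^t,T^{t-1}}$ directly. I do not anticipate any real obstacle: the substantive content — that $\gamma$ is monotone under such factoring diagrams — is already packaged in Lemma~\ref{factors} via the one-dimensional coordinates on the fibers, so the corollary is purely combinatorial bookkeeping on top of that lemma.
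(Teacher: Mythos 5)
Your proof is correct and uses exactly the same approach as the paper: apply Lemma~\ref{factors} to the commutative square with $T_1, T_{i,j}$ on the left and $T^t, T^{t-1}$ on the right, with the same index pair $(i,j)$ witnessing both vertical one-step arrows. The paper simply displays the square as the relevant commutative sub-diagram of a slightly larger diagram, while you verify the hypotheses of Lemma~\ref{factors} in more explicit detail.
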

 \begin{proof}  Apply lemma \ref{factors}.to the commutative part of the diagram
 \[\begin{matrix}& & & T &\\
  &  &  & \downarrow &\\
  & T_1 &\longrightarrow & T^t &\\
 &  \downarrow 1 &  & \downarrow 1& \\
 & T_{i,j} &\longrightarrow & T^{t-1} &\longrightarrow T''.
\end{matrix}\]  \end{proof}

 In the case when  there are several pairs \(\{(i_1,j_1), \ldots , (i_k, j_k) \} \) with the same difference \( \chi _i - \chi _j \), there are different decompositions given by proposition \ref{order}.1. If the measure \( \mu \) is discrete, we can apply theorem \ref{mainT}.2 to each decomposition and get a common formula by grouping together the terms corresponding to the  same difference \( \chi _i - \chi _j \). Namely, set  \( \de_{T^{t+k}, T^{t}} := \sum \limits_{\ell =1}\limits^k \g _{T^{t +\ell}, T^{t +\ell -1}} .\)  By theorem \ref{mainT}.2 applied to \( T^{t+k} \prec T^{t} ,\) this number  \( \de_{T^{t+k}, T^{t}}\)  is the exact dimension of the measures \( \nu _{T^{t+k},T^t} \) and thus is independent of the order of the decomposition.  We also obtain that \( \kappa _{T^{t+k}, T^{t}} =(\chi _i - \chi _j ) \de _{T^{t+k}, T^{t}}.\)

\subsection{Lyapunov Dimension}\label{Lyapunov}
Theorem \ref{LyaDim1} follows  from the previous results. We state and prove  the corresponding result for a general  partition $Q$ of $\{0,1, \ldots , d\}$ into intervals. Assume \( \mu \in \MM(d)\)  is discrete. 

Let $\chi _1> \ldots > \chi _d$ be the Lyapunov exponents of $(G, \mu )$ and for $ i = 1, \ldots d,$ set $\ell _Q(i) $ for the  index such that $ q_{\ell _Q(i) -1} < i \leq  q_{\ell _Q(i)}.$ Denote \( \{ 0 < \lambda _1 \leq \lambda _2 \leq \ldots \leq \lambda _N \} \) the differences of exponents \( \chi _i - \chi _j \) for all \((i,j)\) such that \( \ell _Q(i) < \ell _Q(j).\)  Define the continuous, piecewise affine  function \( D_{\F_Q, \mu } \) on the interval \( [ 0,N= \dim \F_Q]\)
 as:
\[ D_{\F_Q, \mu } (0) := h(\F_Q, \mu, \nu _Q) \quad {\textrm {and}}\quad  D'_{\F_Q, \mu } (s) = - \lambda _k\; {\textrm {for}} \; s \in (k-1,k), \; k = 1, \ldots, N. \]
As before, the {\it {Lyapunov dimension}} \( \dim_{{\textrm{LY}}}(\F_Q, \mu  )\) 
 is such that \( D_{\F_Q,\mu} ( \dim_{{\textrm{LY}}}(\F_Q, \mu  ) ) = 0.\) Observe that by theorem \ref {ent/expQ1}, \( \dim_{{\textrm{LY}}}(\F_Q, \mu  ) \leq \dim \F _Q.\) 

\begin{proposition}\label{LyaDimQ} Let \( \mu \in \MM(d)\) be discrete, $Q$ a partition of $\{0,1, \ldots , d\}$ into intervals. Then, the exact dimension \(\de _Q\) of \( \nu  _Q\) satisfies
\[ \de _Q \; \leq \; \dim_{{\textrm{LY}}}(\F_Q, \mu).\]
\end{proposition}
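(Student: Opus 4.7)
The plan is to reduce the bound to a short exchange (rearrangement) argument on the partial dimensions produced by Corollary~\ref{mainQ}. That corollary identifies
\[ \de_Q \;=\; \sum_{i,j:\,\ell_Q(i)<\ell_Q(j)} \g^Q_{i,j}, \qquad h(\F_Q,\mu,\nu_Q) \;=\; \sum_{i,j:\,\ell_Q(i)<\ell_Q(j)} \g^Q_{i,j}\,(\chi_i-\chi_j), \]
and every $\g^Q_{i,j}$ is by construction a $\g_{T^t,T^{t-1}}$ for a one-step arrow $T^t \overset{1}{\prec} T^{t-1}$, hence lies in $[0,1]$ by the one-dimensional parameterisation of the fibres recorded just before Theorem~\ref{exactfibers}. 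First I would re-enumerate the $N$ pairs $(i,j)$ with $\ell_Q(i)<\ell_Q(j)$ as $(i_k,j_k)_{k=1}^N$ so that the associated exponents $\lambda_k=\chi_{i_k}-\chi_{j_k}$ are non-decreasing, and set $\g_k:=\g^Q_{i_k,j_k}\in[0,1]$. In this bookkeeping,
\[ \sum_{k=1}^{N} \g_k \;=\; \de_Q \qquad \text{and} \qquad \sum_{k=1}^{N} \g_k\lambda_k \;=\; D_{\F_Q,\mu}(0). \]

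Next I would unwind the definition of the Lyapunov dimension. Writing $s^*=\dim_{{\textrm{LY}}}(\F_Q,\mu)$ and $m=\lfloor s^*\rfloor$, the equation $D_{\F_Q,\mu}(s^*)=0$ reads
\[ D_{\F_Q,\mu}(0) \;=\; \sum_{k=1}^{m}\lambda_k \;+\; (s^*-m)\,\lambda_{m+1}. \]
This suggests comparing $(\g_k)$ with the greedy sequence $\g'_k=1$ for $k\le m$, $\g'_{m+1}=s^*-m$, and $\g'_k=0$ otherwise, which satisfies $\sum_k\g'_k=s^*$ and $\sum_k\g'_k\lambda_k=D_{\F_Q,\mu}(0)$. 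Setting $\D_k:=\g_k-\g'_k$, the constraints $\g_k\in[0,1]$ force $\D_k\le 0$ whenever $k\le m$ (where $\lambda_k\le\lambda_{m+1}$) and $\D_k\ge 0$ whenever $k\ge m+2$ (where $\lambda_k\ge\lambda_{m+1}$). The monotonicity of the $\lambda_k$ then delivers $\D_k(\lambda_k-\lambda_{m+1})\ge 0$ term by term, so summing,
\[ 0 \;=\; \sum_{k} \D_k\lambda_k \;\ge\; \lambda_{m+1}\sum_{k}\D_k \;=\; \lambda_{m+1}\bigl(\de_Q-s^*\bigr), \]
and since $\lambda_{m+1}>0$ this gives $\de_Q\le s^*$, which is the claimed inequality.

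The bulk of the content is therefore entirely upstream: all the analytic work is packaged into Corollary~\ref{mainQ} together with the universal bound $\g^Q_{i,j}\le 1$ coming from the one-dimensional fibre parameterisation. Once those are in hand, Proposition~\ref{LyaDimQ} reduces to the linear-programming observation above, and the main obstacle is purely cosmetic: the boundary case $s^*=N$, where $\lambda_{m+1}$ is not defined. In that case I would compare instead against $\lambda_N$, and the same sign analysis $\D_k\le 0$ for all $k\le N$ combined with $\lambda_k\le\lambda_N$ still yields $\de_Q\le N=s^*$, so no genuine difficulty appears.
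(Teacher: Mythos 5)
Your argument is correct and rests on exactly the same inputs as the paper's proof: equation (\ref{etvoila}) from Corollary~\ref{mainQ} together with the bound \(\g^Q_{i,j}\le 1\) from Proposition~\ref{order}.2, followed by a rearrangement observation. The only difference is presentational: the paper phrases the rearrangement step as the pointwise inequality \(\un D_{\F_Q,\mu,\nu_Q}(s)\le D_{\F_Q,\mu}(s)\) between two piecewise-affine functions evaluated at \(s=\de_Q\), while you phrase it as a discrete exchange argument with the signed differences \(\D_k\); the two are the same estimate.
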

\begin{proof}The measure \( \nu _Q\) is exact-dimensional with dimension \(\de_Q\).  By equation (\ref{etvoila}), there are numbers \( \g ^Q_{i,j} \) such that \(  \de _Q= \sum _{i,j: \ell _Q(i) < \ell _Q(j) }  \g^Q_{i,j} .\)  Moreover,  \(h(\F_Q,\mu , \nu _Q) =  \sum _{i,j: \ell _Q(i) < \ell _Q(j) }  \g^Q_{i,j} (\chi _i - \chi _j).\)  We ordered \( \{ 0 < \lambda _1 \leq \lambda _2 \leq \ldots \leq \lambda _N \} \) the  \( \lambda _k := \chi _{i_k} - \chi _{j_k} \) for all \( (i_k,j_k)\) such that \(  \ell _Q(i_k) < \ell _Q(j_k) \). We can define another  continuous, piecewise affine Lyapunov  function \( \un D_{\F_Q, \mu, \nu _Q } \) on the interval \( [ 0, \de _Q ]\)  such that  \( \un D_{\F_Q, \mu, \nu _Q } (0) = h(\F_Q, \mu, \nu_Q)\) and the slope of  \( \un D_{\F_Q, \mu, \nu _Q } (s) \) on the successive intervals of length \( \g^Q_{i_k,j_k}\)  is \( - \lambda _k.\) By (\ref{etvoila}),  \( \un D_{\F_Q, \mu, \nu _Q } (\de _Q) = 0.\) On the other hand, since  for all \((i,j), \;  \g^Q_{i,j} \leq 1 \) (proposition \ref{order}.2),
 for all \( s \in [0, \de _Q],\)  \[  \un D_{\F_Q, \mu, \nu _Q } (s) \; \leq \; D_{\F_Q, \mu } (s).\]
In particular,  \( D_{\F_Q, \mu } (\de_Q) \geq  \un D_{\F_Q, \mu, \nu _Q } (\de _Q) = 0.\) This shows proposition (\ref{LyaDimQ}). \end{proof}
\subsection{Content of the paper}
Given the previous discussion, we still have to prove theorems \ref{ent/exp1}, \ref{LyaDim3}, \ref{exactfibers},  \ref{counting} and \ref{Manhattan}, proposition  \ref{order} and lemma  \ref{factors}. In section \ref{section2}, we show proposition \ref {order} and  discuss the geometry of the spaces \(  \X_{T}^{f'} \). In particular, lemma \ref{coordinate} describes  the one-dimensional structure of $\pi _{T,T'}^{-1}(x')$ for 
\( x' \in \X_{T'}^{f'} \) and  \(T \overset {1}{\prec} T'\). We also discuss the multidimensional structure of the configuration spaces (proposition 
\ref{lipschitzproduct}). We  recall in section \ref{section:oseledets} the underlying trajectory space of the associated random walk, and the applications of Oseledets theorem to random walks of matrices. In section \ref{section:ent/exp},  we prove theorem \ref {ent/expQ1}, the generalisation of theorem \ref {ent/exp1}.   In section \ref{section:mutualinformation},  we recall the notion  of mutual information of random variables and its properties.  We prove  theorem \ref{exactfibers},  lemma \ref{factors} in section \ref{section:telescoping} and theorem  \ref{counting} in section \ref{addingdimensions}. The central  arguments in sections \ref{section:ent/exp}, \ref{section:telescoping} and \ref{addingdimensions} have a long history in ergodic theory. A short comment about background heads each of the corresponding sections. In section \ref{example}, we discuss the case of Hitchin representations of cocompact surface groups, exhibit examples of non-conservation of dimension and   prove theorems \ref{LyaDim3} and \ref{Manhattan}.

\section{Preliminaries}\label{section2}
\subsection{Topologies and exponents}\label{section:topologies}

\begin{proof}[Proof of Proposition \ref{order}.1]
Let us order the indices in \( D_{T, T'} \) in such a way that 
 \[\chi_{i_1} - \chi_{j_1} \le \chi_{i_2} - \chi_{j_2} \le \cdots \le \chi_{i_{N_{T, T'} }} - \chi_{j_{N_{T, T'} }}.\]
 
Beginning with \(T^0 = T'\) define the sequence inductively so that for each \(t = 1,2,\ldots, N_{T, T'} \) the topology \(T^{t}\) is generated by \(T^{t-1}\) together with the set \(T^{t-1}(i_{t}) \setminus \lbrace j_{t}\rbrace\). Since \(T^t \prec T^{t-1}\), it follows by induction that \(T^t\) is admissible.

We have that \(T^t \prec T^{t-1}\); we claim that \(T^t\) is one step finer than \(T^{t-1}\).
Indeed, if this is not the case, then  there exists \(j \in T^{t-1}(i)\) such that \(i_t < j < j_t\) and \(T^{t}(j) \setminus T^{t-1}(j) = \lbrace j_t\rbrace\).
Since \(\chi_j - \chi_{j_t} < \chi_{i_t} - \chi_{j_t}\) it must be the case that \(j_t \in T(j)\), otherwise \(j_t\) would have been removed from \(T^{t-1}(j)\) at some previous step.
Similarly, since \(\chi_{i_t} - \chi_j < \chi_{i_t} - \chi_{j_t}\) we obtain that \(j \in T(i_t)\).
However, these two facts imply that \(j_t \in T(i_t)\) which is a contradiction.  It follows that \(T^t\) is one step finer than \(T^{t-1}\) as claimed.

We finally claim that \(T \prec T^t\). To see this, observe that \(T^t\) is generated by \(\lbrace A(i) \rbrace,\) for \( i = 1, \ldots ,d ,\) where \(A(i) = T^{t-1}(i)\) if \(i \neq i_t\) and \(A(i_t) = T^{t-1}(i_t) \setminus \lbrace j_t\rbrace\).
Inductively, if \(i \neq i_t\), one has \(A(i) = T^{t-1}(i) \supset T(i)\).   By construction \(j_t \notin T(i_t)\) and therefore \(A(i_t) \supset T(i_t)\) as well.   Hence, \(T \prec T^t\) as claimed.
\end{proof}

We also record here the following fact about admissible topologies which will be used several times.
\begin{proposition}\label{onestepproposition}
 Let \(T \overset{1}{\prec} T'\) be admissible topologies and \(i,j\) be such that \(T(i) = T'(i) \setminus \lbrace j\rbrace\).   Then both \(T'(i) \setminus \lbrace i\rbrace\) and \(T'(i) \setminus \lbrace i,j\rbrace\) are in \(T'\).
\end{proposition}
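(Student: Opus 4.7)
The plan is to reduce everything to the standard fact that in a finite topological space, open sets are unions of atoms, and the atom map is order-reversing with respect to the specialization preorder: if $k$ lies in the atom $S(i)$, then $S(k)\subset S(i)$ (because $S(i)$ is itself an open set containing $k$, so must contain the smallest such set). So to check that a set $A\subset\{1,\dots,d\}$ is open in $T'$, it suffices to check that $T'(k)\subset A$ for every $k\in A$; then $A=\bigcup_{k\in A} T'(k)$.

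Applying this to $A = T'(i)\setminus\{i\}$: for any $k\in A$, admissibility gives $k\in T'(i)\subset\{i,i+1,\dots,d\}$ and $k\neq i$, so $k>i$. Then $T'(k)\subset\{k,\dots,d\}$ which excludes $i$, and $T'(k)\subset T'(i)$ by the specialization argument. Hence $T'(k)\subset T'(i)\setminus\{i\}$, and $A\in T'$ (or $A=\emptyset\in T'$ if $T'(i)=\{i\}$).

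For $A = T'(i)\setminus\{i,j\}$ the same two observations give $T'(k)\subset T'(i)\setminus\{i\}$, so it remains to show $j\notin T'(k)$ for $k\in A$. Here we exploit $T\overset{1}{\prec}T'$: since $T$ and $T'$ differ only at the atom of $i$, we have $T'(k)=T(k)$ whenever $k\neq i$. Now $T(i)=T'(i)\setminus\{j\}$ is the $T$-atom of $i$ and contains $k$ (because $k\in T'(i)$ and $k\neq j$); applying the specialization argument inside $T$ gives $T(k)\subset T(i)$, and $T(i)$ does not contain $j$ by construction. Therefore $j\notin T'(k)=T(k)$, which finishes the verification that $T'(k)\subset T'(i)\setminus\{i,j\}$ for every $k\in A$, and hence that $A\in T'$.

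No step looks like a genuine obstacle; the only thing to be slightly careful about is the empty-set edge cases ($T'(i)=\{i\}$ or $T'(i)=\{i,j\}$), which are handled by noting that $\emptyset\in T'$. The real content of the lemma is the transfer step for excluding $j$, where the one-step hypothesis is used to realize $T'(i)\setminus\{j\}$ as the $T$-atom of $i$ and hence as a $T'$-open-relative witness that $T(k)$ avoids $j$.
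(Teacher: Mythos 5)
Your proof is correct and uses essentially the same mechanism as the paper's: show $T'(i)\setminus\{i\}$ is open via the fact that atoms of elements $k>i$ lie in $\{k,\dots,d\}\cap T'(i)$, then for the set $T'(i)\setminus\{i,j\}=T(i)\setminus\{i\}$ pass to $T$, where the one-step hypothesis gives $T(k)=T'(k)$ for $k\neq i$, and the $T$-atom $T(i)$ excludes $j$. The paper phrases the first part as $T'(i)\cap\{i+1,\dots,d\}$ and the second as a union-of-atoms identity; your formulation via the ``atom containment'' criterion for openness is a minor repackaging of the same argument.
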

\begin{proof}
Since \(T'\) is admissible we have \(T'(i) \subset \lbrace i,i+1,\ldots,d\rbrace,\) and \(\lbrace i+1,i+2,\ldots,d\rbrace \in T'\).  Therefore, \(T'(i) \setminus \lbrace i\rbrace = T'(i) \cap \lbrace i+1,\ldots,d\rbrace\) is in \(T'\) as claimed.

Repeating the argument for \(T\) we obtain that \(T'(i) \setminus \lbrace i,j\rbrace = T(i) \setminus \lbrace i\rbrace\) is in \(T\).

Combining this with the fact that \(T \overset{1}{\prec} T'\) we obtain
\[T'(i) \setminus \lbrace i,j\rbrace = \bigcup\limits_{k \in T'(i) \setminus \lbrace i,j\rbrace}T(k) = \bigcup\limits_{k \in T'(i) \setminus \lbrace i,j\rbrace}T'(k),\]
so that \(T'(i) \setminus \lbrace i,j\rbrace\) is in \(T'\) as claimed.
\end{proof}

\subsection{Distances on configuration spaces}\label{section:coordinates}

Let \(\GG_i\) be the Grassmannian manifold of \(i\)-dimensional subspaces of \(\R^d\).

We fix on each \(\GG_i\) a Riemannian metric which is invariant under the action of orthogonal transformations with the additional property that if \(S,S' \in \GG_i\) are such that \(S + S'\) has dimension \(i+1\) then the Riemannian distance satisfies
\[\dist(S,S') = \angle(\pi(S),\pi(S')),\]
where \(\pi:S+S' \to (S+S')/(S\cap S')\) is the projection onto the quotient space \((S+S')/(S \cap S')\), which is endowed with the inner product inherited from \(\R^d\).

From the definition it follows that when \(\dim(S+S') = i+1\) one has 
\[\dist(S+W,S'+W) \le \dist(S,S'),\]
for all subspaces \(W\) such that \(\dim(S+W) = \dim(S'+W)\).

Given an admissible topology \(T\) we define the distance on the configuration space \(\X_{T}\) so that
\[\dist((x_I)_{I \in T}, (x_I')_{I \in T}) = \sum\limits_{I \in T}\dist(x_I,x_I').\]
\begin{proposition}\label{lipschitzproduct} Let \( T\prec T' \) be admissible topologies and \( x' \in \X _{T'} \). Then, \((\pi _{T,T'} )^{-1} (x') \), endowed with the metric \(\dist \),  is locally bilipschitz homeomorphic to the Euclidean space \(\R ^{N_{T,T'}}.\) \end{proposition}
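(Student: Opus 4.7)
The plan is to induct on $N = N_{T,T'}$ using the chain of one-step refinements provided by Proposition \ref{order}.1: a sequence of admissible topologies \(T^0 = T', T^1, \ldots, T^N = T\) with \(T^t\) one step finer than \(T^{t-1}\) for each \(t\). The projection \(\pi_{T,T'}\) factors as the composition \(\pi_{T^1,T^0} \circ \pi_{T^2,T^1} \circ \cdots \circ \pi_{T^N,T^{N-1}}\), so it is enough to analyse each one-step projection and combine them. The base case \(N = 0\) is trivial since then \(T = T'\) and the fiber is a single point.

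For the one-step case \(T \overset{1}{\prec} T'\) with \(T(i) = T'(i) \setminus \{j\}\), Proposition \ref{onestepproposition} gives that both \(S := T'(i) \setminus \{i, j\}\) and \(T'(i) \setminus \{i\}\) lie in \(T'\), so \(x_S\) and \(x_{T'(i)\setminus\{i\}}\) are already determined by \(x'\). The only new datum in specifying a configuration of \(\X_T\) above \(x'\) is the subspace \(x_{T(i)}\) of dimension \(|S| + 1\), constrained by \(x_S \subset x_{T(i)} \subset x_{T'(i)}\). Since \(x_{T'(i)}/x_S\) is two-dimensional, such intermediate subspaces form the projective line \(\mathbb{P}(x_{T'(i)}/x_S)\), a smooth compact \(1\)-dimensional submanifold of \(\GG_{|T(i)|}\). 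A direct check using admissibility then shows that the remaining intersection and sum compatibility relations in the definition of \(\X_T\) are either automatic for any such \(x_{T(i)}\) or cut out an open subset of this projective line (by excluding finitely many degenerate configurations, such as those forcing \(x_{T(i)}\) to coincide with another atom). In either case, \((\pi_{T,T'})^{-1}(x')\) is a smooth embedded \(1\)-manifold, locally bilipschitz to \(\R\).

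For the inductive step with \(N \geq 2\), set \(T'' := T^{N-1}\), so that \(T \overset{1}{\prec} T''\) and \(N_{T'',T'} = N-1\). The one-step case gives that \(\pi_{T,T''}\) has smooth \(1\)-dimensional fibers varying smoothly with the base, and the inductive hypothesis gives that \((\pi_{T'',T'})^{-1}(x')\) is locally bilipschitz to \(\R^{N-1}\). Composing local coordinates from the two factors realises \((\pi_{T,T'})^{-1}(x')\) as a smooth \(N\)-dimensional submanifold of \(\prod_{I \in T \setminus T'} \GG_{|I|}\), inheriting the smooth product Riemannian metric. Since on a smooth submanifold of a compact Riemannian manifold every smooth chart is bilipschitz on the compact subsets of its domain, this provides the required local bilipschitz homeomorphism with \(\R^N\).

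The main obstacle is the compatibility verification at the one-step level: one must confirm that the constraints imposed on \(x_{T(i)}\) by the other sum and intersection relations of \(\X_T\) only cut out at worst a lower-dimensional locus inside \(\mathbb{P}(x_{T'(i)}/x_S)\), so that the fiber is genuinely a smooth open subset of a projective line rather than something smaller. This is a finite combinatorial check driven by the admissibility of \(T\) and \(T'\), after which the proof reduces to iterating smooth projective line fibrations and invoking the standard bilipschitz property of smooth charts on compact Riemannian manifolds.
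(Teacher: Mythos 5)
Your decomposition and the overall plan match the paper exactly: apply Proposition~\ref{order}.1 to write $\pi_{T,T'}$ as a chain of one-step projections, identify each one-step fiber as an open subset of a projective line via Proposition~\ref{onestepproposition}, and induct. The paper also observes that the one-step fiber is precisely $\{x_{T(i)} \subset x'_{T'(i)} : x'_{T'(i)\setminus\{i,j\}} \subset x_{T(i)},\ x_{T(i)} \neq x'_{T'(i)\setminus\{i\}}\}$, so your ``degenerate configurations'' exclude exactly one point, as you guessed.

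Where you diverge is in how the bilipschitz comparison is carried out. You appeal to the general principle that smooth charts on a smooth embedded submanifold of a compact Riemannian manifold are bilipschitz on compacta, applied with the ambient $\ell^1$-sum distance $\dist$ on $\prod_{I\in T}\GG_{|I|}$. This is a legitimate shortcut for the proposition as stated, and saves the explicit computation. The paper instead constructs, for each one-step arrow, an explicit parametrization $\varphi_x : (-\pi/2,\pi/2) \to \pi^{-1}_{T,T'}(x')$, computes its derivative in closed form, and passes through a Lipschitz-equivalent ``fiber metric'' $\dist^{x'}_{T,T'}$ (Lemma~\ref{fiberdistancelemma} and Lemma~\ref{coordinate}). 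This is not redundancy: the explicit Lipschitz constant $|d\varphi_x(u)| = |\sin\theta|/(1+\sin(2u)\cos\theta)$, as a function of $\theta = \dist(x_{T(i)}, x'_{T'(i)\setminus\{i\}})$, is needed later (Lemma~\ref{coordinateslemma} and the proofs of Theorems~\ref{exactfibers} and~\ref{counting}), where one must control how the constants degenerate along the orbit as $\theta(\sigma^{-n}\omega) \to 0$.

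Two details you gloss over deserve a little more care. First, your ``direct check using admissibility'' should be made precise: the only constraint on $x_{T(i)}$ beyond $x'_{T'(i)\setminus\{i,j\}} \subset x_{T(i)} \subset x'_{T'(i)}$ is that the intersection relation for $T(i)$ and $T'(i)\setminus\{i\}$ (both in $T$) forces $x_{T(i)} \neq x'_{T'(i)\setminus\{i\}}$; Proposition~\ref{onestepproposition} guarantees these sets are indeed in $T'$, so the check is as short as you expected. Second, and more substantively, your inductive step needs the local bilipschitz constants from the one-step layers to vary continuously in the base point, so that local charts can actually be composed. The paper addresses this explicitly at the end of its proof (continuity of $z \mapsto \varphi_z$ as a bilipschitz map); your submanifold argument sidesteps it, but you should at least note that the fiber $(\pi_{T,T'})^{-1}(x')$ is not merely a disjoint union of one-step fibers: you are asserting it is a single smooth embedded $N$-manifold, and that assertion is what replaces the paper's continuity verification.
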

\begin{proof} 
By proposition \ref{order}.1, we take   a sequence \(T^0 =T',T^1,\ldots,T^N = T\) such that, \(T^{t}\) is one step finer than \(T^{t-1}\) for \(t=1,\ldots,N\). We prove by increasing induction on \(t\) that \((\pi _{T^t,T'} )^{-1} (x') \), endowed with the metric \(\dist \),  is locally bilipschitz homeomorphic to the Euclidean space \(\R ^t.\) This is trivially true for \( t= 0\). So, we assume for \(t>0\), that for any \( y' \in (\pi _{T^{t-1},T'} )^{-1} (x') ,\)  there is a neighborhood of \(y'\)  in \((\pi _{T^{t-1},T'} )^{-1} (x') \) which   is bilipschitz homeomorphic to the Euclidean space \(\R ^{t-1}.\) The fibers of the projections \( \pi _{T^t, T^{t-1}}\)  form a \(C^\infty \) foliation of \( (\pi _{T^{t},T'} )^{-1} (x') . \) We have to verify that the induced metric by \(\dist\) on the fibers is bilipschitz equivalent to the Euclidean one dimensional metric, uniformly in the neighborhood of \( y'\).

Let \((i, j)\)   such that \(T^t(i) = T^{t-1} (i) \setminus \lbrace j\rbrace\), we define a distance on each fiber of the projection \(\pi_{T^t,T^{t-1}}\) by setting
\begin{equation}\label{fibermetric}\dist_{T^t,T^{t-1}}^{x'}(x_1,x_2) = \dist((x_1)_{T(i)},(x_2)_{T(i)}),\end{equation}
for each \(x' \in \X_{T^{t-1}}\) and \(x_1,x_2 \in \pi^{-1}_{T^t,T^{t-1}}(x')\).
These  distances are all Lipschitz equivalent on the fibers:
\begin{lemma}\label{fiberdistancelemma}
For all admissible topologies \(T \overset{1}{\prec} T'\), all \(x' \in \X_{T'}\) and all \(x_1,x_2 \in \pi_{T,T'}^{-1}(x')\), one has
\[\dist_{T,T'}^{x'}(x_1,x_2) \le \dist(x_1,x_2) \le 2^d\dist_{T,T'}^{x'}(x_1,x_2).\]
\end{lemma}
\begin{proof}
The inequality \(\dist_{T,T'}^{x'}(x_1,x_2) \le \dist(x_1,x_2)\) is immediate from the definitions.  For the second inequality we assume \(x_1 \neq x_2\).

Let \(i,j\) be such that \(T(i) = T'(i) \setminus \lbrace j\rbrace\).

Since \((x_1)_{T(i)}\) and \((x_2)_{T(i)}\) are distinct codimension one subspaces of \(x'_{T'(i)}\) their sum is \(x'_{T'(i)}\), and therefore
\[\dist_{T,T'}^{x'}(x_1,x_2) = \dist((x_1)_{T(i)},(x_2)_{T(i)}) \ge \dist((x_1)_{T(i)} + W,(x_2)_{T(i)}+W),\]
for all subspaces \(W\) containing neither \((x_1)_{T(i)}\) nor \((x_2)_{T(i)}\).

For each \(I \in T \setminus T'\) one has \(i \in I\) and therefore \(T(i) \subset I\).   Noticing that \(J := \bigcup\limits_{k \in I \setminus \lbrace i\rbrace}T(k) = \bigcup\limits_{k \in I \setminus \lbrace i\rbrace}T'(k)\) belongs to \(T'\) we obtain
\begin{align*}
\dist(x_1,x_2) &= \sum\limits_{I \in T \setminus T'}\dist((x_1)_I,(x_2)_I) 
\\ &= \sum\limits_{I \in T \setminus T'}\dist((x_1)_{T(i)} + x'_J, (x_2)_{T(i)} + x'_J) \le 2^d \dist_{T,T'}^{x'}(x_1,x_2). 
\end{align*}
\end{proof}

 Consider \( T, T'\) admissible topologies with \(T \overset {1}{\prec} T'\), \( (i<j) \) such that \( T'(i )= T(i) \cup \{j\} .\)  Given \(x \in \X_T\), we use the metric (\ref{fibermetric}) to  define a bilipschitz homeomorphism \(\varphi_x:(-\pi/2,\pi/2) \to \pi_{T,T'}^{-1}(x')\) where \(x' = \pi_{T,T'}(x)\).
 
 For this purpose let \(V = x'_{T'(i)}/x'_{T'(i) \setminus \lbrace i,j\rbrace}\) endowed with the inner product inherited from the ambient space \(\R^d\) (i.e. the inner product between two classes is calculated by taking representatives perpendicular to \(x'_{T'(i) \setminus \lbrace i,j\rbrace}\)).

One has that \((\pi_{T,T'})^{-1}(x')\) consists in configurations \(z\) with \(z_{T'(k)} = x'_{T'(k)}\) for all \(k \neq i\) while \(z_{T(i)}\) is a codimension one subspace of \(x'_{T'(i)}\) which contains \(x'_{T'(i) \setminus \lbrace i,j\rbrace}\) and is distinct from \(x'_{T'(i) \setminus \lbrace i\rbrace}\).
It follows that \((\pi_{T,T'})^{-1}(x')\) endowed with \(dist^{x'}_{T,T'}\) is isometric to the space of one dimensional subspaces of \(V\) minus the projection of \(x'_{T'(i) \setminus \lbrace i\rbrace}\) with the angle distance.

Let \(X,Y\) be a pair of unit vectors in \(V\) such that \(X\) has a representative in  \(x_{T(i)}\),  \(Y\)  has a representative in \(x'_{T'(i)\setminus \lbrace i\rbrace}\) and such that \(\cos \angle (X,Y) >0 \). Define \(\vf_x :(-\pi /2, + \pi /2)  \to \pi_{T,T'}^{-1} (x') \) by associating to \(u \in (-\pi/2, +\pi /2) \) 
the configuration where the corresponding one dimensional subspace of \(V\) contains a vector of the form \(\cos u X + \sin u Y.\)  Set \( \theta : = \angle (X,Y). \)
 
\begin{lemma}\label{coordinate}
 In the above context \(\varphi_x\) is a bilipschitz homeomorphism. Moreover,  \begin{equation}\label{Lipschitzconstant} |\tan \frac{\theta }{2} | \, < \, {\textrm{ Lip }} (\vf _x) \, < \, \frac{1}{ |\tan \frac{\theta }{2} |}. \end{equation}
\end{lemma} 
\begin{proof}

Let \(e_1 = (1,0), e_2 = (0,1)\) be the standard basis of \(\R^2\) and \(\GG_1(\R^2)\) be the space of one dimensional subspaces with the angle distance.
Let \(L_2\) be the subspace generated by \(e_2\).

The mapping \(f:(-\pi/2,\pi/2) \to \GG_1(\R^2) \setminus \lbrace L_2\rbrace\) where \(f(u)\) is the subspace generated by \(\cos(u)e_1 + \sin(u)e_2\) is an isometry.

By \cite[Lemma 2]{lessa}, if \(\theta \in (0,\pi/2]\) then the mapping \(g_{\theta}:\GG_1(\R^2) \setminus \lbrace L_2\rbrace\) induced by the matrix \(A = \begin{pmatrix}\sin(\theta) & 0\\ \cos(\theta) &1\end{pmatrix}\) has derivative
\(|dg_{\theta}(L)| = \frac{|\det(A)|}{\|A_{\vert L}\|^2}\).

This implies for the composition one has
\[|dg_{\theta}\circ f(u)| = \frac{|\sin(\theta)|}{\cos(u)^2\sin(\theta)^2 + (\cos(u)\cos(\theta) + \sin(u))^2} = \frac{|\sin(\theta)|}{1 + \sin(2u)\cos(\theta)}.\]

Letting \(\theta = \dist(x_{T(i)},x'_{T'(i) \setminus \lbrace i\rbrace})\) the fiber \(\pi_{T,T'}^{-1}(x')\) endowed with \(\dist_{T,T'}^{x'}\) is isometric to the projective space \(\GG_1(\R^2) \setminus \lbrace L_2 \rbrace\) where \(x\) is identified with the subspace generated by \(\sin(\theta)e_1 + \cos(\theta)e_2\).

Under this identification \(\varphi_x\) corresponds to the composition \(g_{\theta}\circ f\), so we have obtained
\[|d\varphi_x(u)| = \frac{|\sin(\theta)|}{1 + \sin(2u)\cos(\theta)}.\]
\end{proof}

To finish the proof of proposition \ref{lipschitzproduct}, we still have to show that the homeomorphism \( \vf _z , z \in  (\pi _{T^t,T'} )^{-1} (x') \) depends continuously on \( z\) as a bilipschitz homeomorphism. Given the above construction, this  is true since we can locally  choose in a continuous way parameterizations of the spaces \[V_z =  [\pi_{T^t,T^{t-1}} (z)]_{T^{t-1} (i)}/ [\pi_{T^t,T^{t-1}} (z)]_{T^{t-1} (i)\setminus \{i, j\}} \] and in these spaces 
vectors \( X_z, Y_z \) such that \(X_z \) has a representative in \( z_{T^t (i)} ,\) \(Y_z \) has a representative in \( [\pi_{T^t,T^{t-1}} (z)]_{T^{t-1} (i)\setminus \{i\}} \)  and \( \cos \angle (X_z, Y_z) >0 .\)
\end{proof}

\begin{proof}[Proof of proposition \ref{order}.2]
For \(T \prec T'\)  admissible topologies, it follows from proposition \ref{lipschitzproduct} that \( \overline \de _{T,T'} \leq N_{T,T'} .\) Also since  \(T^t \overset{1}{\prec} T^{t-1} \), we have \( \ov \g _{T^t,T^{t-1}} \leq 1.\)  By formula (\ref {LY}), we have indeed \( \kappa _{T,T'}  \leq \sum _{ (i,j) \in D_{T,T'} } (\chi _i - \chi _j).\) \footnote{Observe that relation (\ref{LY}) depends on theorem \ref{exactfibers}, which will be proven in  section \ref{section:ent/exp}.}
\end{proof}

\subsection{One-dimensional coordinates}\label{section:coordinates2}
Let \(i<j \) and consider all arrows \( T, T'\)  of admissible topologies with \(T \overset {1}{\prec} T'\), such that \( T'(i )= T(i) \cup \{j\} .\)  Recall that $T_{i,j} $ is  the topology defined by \[ T_{i,j} (k) = \{k\} \;\; {\textrm {if}} \;\; k\neq i, \; T_{i,j}(i) = \{i, j\}.\]
The topology \(T_{i,j} \) is admissible and one step coarser than $T_1$. 
\begin{lemma}\label{lipschitzlemma}
Let \(T \overset{1}{\prec} T'\) be admissible topologies and \(i,j\) be such that \(T(i) = T'(i) \setminus \lbrace j\rbrace\).
For all \( y',x'\) such that \( \pi _{ T_{i,j},T'} y' = x',\)  \( \pi _{T_1, T} \) defines  a bilipschitz homeomorphism  between   \((\pi _{T,T'})^{-1} (x') \) and \( (  \pi _{T_1, T_{i,j}})^{-1} (y').\) The Lipschitz constants depend only on \(y'\).
\end{lemma}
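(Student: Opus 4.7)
The plan is to identify both fibers with open subsets of the projective line of the two-dimensional quotient $V := x'_{T'(i)}/x'_{T'(i) \setminus \lbrace i,j \rbrace}$, and then to check that under these identifications $\pi_{T_1, T}$ becomes the identity, while the two fiber metrics become bilipschitz equivalent with constants controlled by the relative position of $y'_{\lbrace i,j\rbrace}$ and $x'_{T'(i)\setminus\lbrace i,j\rbrace}$ inside $x'_{T'(i)}$, and hence by $y'$ alone (since $x' = \pi_{T_{i,j},T'}(y')$).

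First I will verify that $\pi_{T_1, T}$ does map $(\pi_{T_1, T_{i,j}})^{-1}(y')$ into $(\pi_{T, T'})^{-1}(x')$: this follows from the commutativity $\pi_{T, T'} \circ \pi_{T_1, T} = \pi_{T_{i,j}, T'} \circ \pi_{T_1, T_{i,j}}$ together with the hypothesis $\pi_{T_{i,j}, T'}(y') = x'$. Next I use Proposition \ref{onestepproposition} to know that both $T'(i) \setminus \lbrace i \rbrace$ and $T'(i) \setminus \lbrace i,j\rbrace$ belong to $T'$, which lets me describe the two fibers explicitly. A point $w$ of the first is determined by a codimension-one subspace $w_{T(i)}$ of $x'_{T'(i)}$ containing $x'_{T'(i) \setminus \lbrace i,j\rbrace}$ and different from $x'_{T'(i)\setminus\lbrace i\rbrace}$; a point $z$ of the second is determined by a line $z_{\lbrace i\rbrace} \subset y'_{\lbrace i,j\rbrace}$ different from $y'_{\lbrace j\rbrace}$. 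Taking the quotient by $x'_{T'(i)\setminus\lbrace i,j\rbrace}$ identifies the first set with an open subset of the projective line of $V$, and restricting this quotient to $y'_{\lbrace i,j\rbrace}$ gives a linear map $p : y'_{\lbrace i,j\rbrace} \to V$ that identifies the second set with an open subset of the projective line of $V$ as well.

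The two linking identities that make everything match are $y'_{\lbrace i,j\rbrace} + x'_{T'(i) \setminus \lbrace i,j\rbrace} = x'_{T'(i)}$, which together with a dimension count shows $p$ is a linear isomorphism, and $y'_{\lbrace j\rbrace} + x'_{T'(i)\setminus\lbrace i,j\rbrace} = x'_{T'(i)\setminus\lbrace i\rbrace}$, which says $p$ carries the excluded line $y'_{\lbrace j\rbrace}$ to the excluded line of the first fiber. Both follow by expanding the sums as $\sum_{k} y'_{\lbrace k\rbrace}$ and noting that the relevant index sets are atoms of $T_{i,j}$ and of $T'$, so the sums coincide with $y'$- and $x'$-configuration values. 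A direct calculation $\pi_{T_1, T}(z)_{T(i)} = z_{\lbrace i\rbrace} + x'_{T'(i)\setminus\lbrace i,j\rbrace}$ then shows that $\pi_{T_1, T}$ becomes the identity on the projective line of $V$.

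For the metric comparison, Lemma \ref{fiberdistancelemma} lets me replace the ambient metrics on each fiber by the fiber distances $\dist^{x'}_{T,T'}$ and $\dist^{y'}_{T_1,T_{i,j}}$. By the normalization of the Grassmannian metrics, these fiber distances coincide with the angle metric on the projective line of $V$ in the first case (since for any two distinct codim-one subspaces of $x'_{T'(i)}$ containing $x'_{T'(i)\setminus\lbrace i,j\rbrace}$ their intersection is exactly $x'_{T'(i)\setminus\lbrace i,j\rbrace}$) and with the angle metric on the projective line of $y'_{\lbrace i,j\rbrace}$ in the second. The only step I expect to require real care is the comparison of these two angle metrics under $p$: this reduces to the classical fact that a linear isomorphism between two-dimensional inner-product spaces induces a bilipschitz map on projective lines whose distortion is bounded in terms of the ratio of its singular values. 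Here those singular values are determined by the angle between $y'_{\lbrace i,j\rbrace}$ and $x'_{T'(i)\setminus\lbrace i,j\rbrace}$ inside $x'_{T'(i)}$, a quantity depending only on $y'$, which yields the required dependence of the Lipschitz constants.
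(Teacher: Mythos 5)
Your argument is correct, and it takes a genuinely different route from the paper's proof. The paper works directly in the ambient configuration metric: for $y_1, y_2 \in \pi_{T_1, T_{i,j}}^{-1}(y')$ it isolates the lines $S_a := (y_a)_{\lbrace i\rbrace} \subset y'_{\lbrace i,j\rbrace}$, observes that $(y_a)_I = y'_{I\setminus\lbrace i\rbrace} + S_a$ for every $I \in T_1\setminus T_{i,j}$, and sandwiches each summand $\dist((y_1)_I,(y_2)_I) = \angle(\pi_{W_I^\perp}S_1,\pi_{W_I^\perp}S_2)$ between $c\,\angle(S_1,S_2)$ and $\angle(S_1,S_2)$, where $c>0$ is controlled by the minimal angle between $W_I := y'_{I\setminus\lbrace i\rbrace}$ and $y'_{\lbrace i,j\rbrace}$; combining this with Lemma \ref{fiberdistancelemma} and the inclusion $T\setminus T' \subset T_1\setminus T_{i,j}$ gives the two-sided Lipschitz estimate. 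Your proof instead identifies both fibers with a common punctured projective line $\mathbb{P}(V)\setminus\lbrace\ast\rbrace$, $V = x'_{T'(i)}/x'_{T'(i)\setminus\lbrace i,j\rbrace}$, under which $\pi_{T_1,T}$ becomes the identity, and reduces the metric comparison to the bilipschitz distortion on projective lines of the linear isomorphism $p: y'_{\lbrace i,j\rbrace}\to V$; that last step is the same single-variable estimate the paper invokes inside the proof of Lemma \ref{coordinate} (via \cite[Lemma 2]{lessa}). One advantage of your version is that bijectivity of $\pi_{T_1,T}$ between the two fibers is made manifest, since both are carried onto the same model set, a point the paper's proof leaves implicit. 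The paper's version is more hands-on about the constant, exhibiting it directly as a minimum of angles in $y'$, and avoids the detour through the auxiliary quotient $V$ and the map $p$. Both arguments use Lemma \ref{fiberdistancelemma} and Proposition \ref{onestepproposition} in essentially the same way and give the same dependence of the Lipschitz constants on $y'$, so the two proofs are of equal strength.
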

\begin{proof}
Consider \(y_1,y_2 \in \X_{T_1}\), distinct and such that \(\pi_{T,T_{i,j}}(y_1) = \pi_{T,T_{i,j}}(y_2) = y'\).
Set \(x_1 = \pi_{T_1,T}(y_1)\) and \(x_2 = \pi_{T_1,T}(y_2)\) and notice that \(\pi_{T,T'}(x_1) = \pi_{T,T'}(x_2) = \pi_{T_{i,j},T'}(y') = x'\).
Since \(\pi_{T_1,T}\) consists of forgetting some subspaces of each sequence \((x_I)_{I \in T_1}\) it is \(1\)-Lipschitz, and in particular \(1\)-Lipschitz as a mapping between \(\pi_{T_1,T_{i,j}}^{-1}(y')\) and \(\pi_{T,T'}^{-1}(x')\).

To prove that the inverse is also Lipschitz let \(S_1 = (y_1)_{\lbrace i \rbrace}, S_2 = (y_2)_{\lbrace i \rbrace}\).
Notice that \(S_1,S_2\) are distinct one dimensional subspaces of the two dimensional subspace \(y'_{\lbrace i,j\rbrace}\).  Therefore \(S_1 + S_2 = y'_{\lbrace i,j\rbrace}\) and \(\dist(S_1,S_2) = \angle(S_1,S_2)\).

For each \(I \in T_{1} \setminus T_{i,j}\) notice that \(i \in I\) and \(j \notin I\).   Setting \(W_I = y'_{I \setminus \lbrace i\rbrace}\) we have
\((y_1)_I = W_I + S_1\) and \((y_2)_I = W_I + S_2\) from which it follows that 
\[\dist((y_1)_{I}, (y_2)_{I}) = \angle(\pi_{W_I^\perp}(S_1),\pi_{W_I^\perp}(S_2)) \le \angle(S_1,S_2),\]
where \(\pi_{W^\perp}:\R^d \to W^{\perp}\) is the orthogonal projection onto  \(W^{\perp}\).
By Lemma \ref{fiberdistancelemma},
\[\dist(y_1,y_2) = \sum\limits_{I \in T_1 \setminus T_{i,j}}\dist((y_1)_{I}, (y_2)_{I}) \le 2^d \angle(S_1,S_2).\]

Because \(y'\) is a configuration, the minimum over \(I \in T_1 \setminus T_{i,j}\) of the angle between \(W_I\) and \(S_1+S_2 = y'_{\lbrace i,j\rbrace}\) is positive.
It follows that there exists \(c > 0\) which depends only on \(y'\) such that 
\[\dist((y_1)_{I}, (y_2)_{I}) = \angle(\pi_{W_I^\perp}(S_1),\pi_{W_I^\perp}(S_2)) \ge c\angle(S_1,S_2),\]
for all \(I \in  T_{1} \setminus T_{i,j}\).

Notice that, since \( T_1 \setminus T_{i,j} = \{ I : I \ni i {\textrm { and }} I \not \ni j\}, \)  \(T \setminus T' \subset T_1 \setminus T_{i,j}\), and therefore
\[\dist(x_1,x_2) = \sum\limits_{I \in T \setminus T'}\angle(\pi_{W_I^\perp}(S_1),\pi_{W_I^\perp}(S_2)) \ge 2^{-d}c\dist(y_1,y_2).\]
It follows that \(\pi_{T_1,T}\) is a bilipschitz homeomorphism between \(\pi_{T_1,T_{i,j}}^{-1}(y')\) and \(\pi_{T,T'}^{-1}(x')\), as claimed.
\end{proof}

For \(x \in \X_T ,\)  \(T \overset {1}{\prec} T'\) admissible topologies, lemma \ref{coordinate} yields a Lipschitz homeomorphism between \(\pi_{T,T'}^{-1}(x')\) where \(x' = \pi_{T,T'}(x)\)  and \((-\frac{\pi}{2},\frac{\pi}{2})\) and the Lipschitz constant depends on \(x\). Combining with lemma \ref{lipschitzlemma} yields
\begin{corollary}\label{factors2} Assume the diagram of projections \(\begin{matrix} T & \longrightarrow & S  \\
\downarrow 1 & & \downarrow 1 &   \\
T' & \longrightarrow & S'  
\end{matrix}\) commutes and \(i,j\) are such that \(T(i) = T'(i) \setminus \lbrace j\rbrace\) and \(S(i) = S'(i) \setminus \lbrace j \rbrace\). Then, for \( x \in \X_T,\) there is  a bilipschitz homeomorphism between \( (\pi _{T,T'})^{-1} (\pi _{T,T'}) (x) \) and 
 \( (\pi _{S,S'})^{-1} (\pi _{T,S'}) (x) \). The Lipschitz constant depends only on \( x\). \end{corollary}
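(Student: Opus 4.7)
The plan is to apply Lemma \ref{lipschitzlemma} twice, once to the pair \((T,T')\) and once to \((S,S')\), with a common point in \(\X_{T_{i,j}}\), and then to observe that the resulting identification of fibers is implemented by \(\pi_{T,S}\) itself. The preliminary observation needed is that \(T_{i,j}\) is finer than both \(T'\) and \(S'\), so that both invocations of the lemma are legitimate with a common \(y'\). This is structural: for any admissible topology \(R\) with \(j \in R(i)\), one has \(T_{i,j} \prec R\). Indeed, \(i \in R(k)\) forces \(R(i) \subset R(k)\), since \(R(i)\) is the smallest open set of \(R\) containing \(i\), and hence \(j \in R(k)\); thus every atom of \(R\) is a union of atoms of \(T_{i,j}\). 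The hypotheses \(j \in T'(i)\) and \(j \in S'(i)\) are precisely what allows us to apply this to \(R = T'\) and \(R = S'\).

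Given this, pick any \(y \in \X_{T_1}\) with \(\pi_{T_1,T}(y) = x\), and set \(y' := \pi_{T_1,T_{i,j}}(y) \in \X_{T_{i,j}}\). Commutativity of the projections then gives
\[\pi_{T_{i,j},T'}(y') = \pi_{T,T'}(x) \quad \text{and} \quad \pi_{T_{i,j},S'}(y') = \pi_{T,S'}(x).\]
Lemma \ref{lipschitzlemma} applied to \((T,T')\) with this \(y'\) asserts that \(\pi_{T_1,T}\) restricts to a bilipschitz homeomorphism from \((\pi_{T_1,T_{i,j}})^{-1}(y')\) onto \((\pi_{T,T'})^{-1}(\pi_{T,T'}(x))\), with constants depending only on \(y'\). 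Applying the same lemma to \((S,S')\) with the same \((i,j)\), \(\pi_{T_1,S}\) restricts to a bilipschitz homeomorphism from \((\pi_{T_1,T_{i,j}})^{-1}(y')\) onto \((\pi_{S,S'})^{-1}(\pi_{T,S'}(x))\), again with constants depending only on \(y'\).

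Composing the second map with the inverse of the first and using the identity \(\pi_{T_1,S} = \pi_{T,S} \circ \pi_{T_1,T}\), which comes from the commutative diagram, I conclude that \(\pi_{T,S}\) itself restricts to a bilipschitz homeomorphism from \((\pi_{T,T'})^{-1}(\pi_{T,T'}(x))\) onto \((\pi_{S,S'})^{-1}(\pi_{T,S'}(x))\). The Lipschitz constant depends only on \(y'\), and since \(y'\) may be produced canonically from \(x\), only on \(x\), as required. The only substantive step is the preliminary observation about \(T_{i,j}\); everything else is bookkeeping around the commutative diagram.
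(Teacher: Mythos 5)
Your proof is correct and follows essentially the same route the paper takes: apply Lemma \ref{lipschitzlemma} to the pairs \((T,T')\) and \((S,S')\) with a common \(y' \in \X_{T_{i,j}}\) and compose, using \(\pi_{T_1,S} = \pi_{T,S}\circ\pi_{T_1,T}\) to identify the resulting homeomorphism with the restriction of \(\pi_{T,S}\). The paper leaves this terse ("combining with lemma \ref{lipschitzlemma} yields"), whereas you usefully make explicit the structural fact that \(j \in T'(i)\) and \(j \in S'(i)\) force \(T_{i,j} \prec T'\) and \(T_{i,j} \prec S'\), and that the resulting map is \(\pi_{T,S}\) itself — a point the paper relies on implicitly in the proof of Lemma \ref{factors}.
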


\section{Applications of Oseledets theorem}\label{section:oseledets}

\subsection{Oseledets multiplicative ergodic theorem}

We review in this section some applications of Oseledets multiplicative ergodic theorem to random walks on matrices. 
Let $\mu $ be a probability measure on the group $G= SL_d(\R)$ of $d\x d$  matrices with determinant 1. Let $(\Om, m ) = (G^\Z, \mu ^\Z)$ be the probability space of independent trials of elements of $G$ with distribution $\mu$, $\s $ the shift transformation on $\Om$.
 Let $\om \in \Om $ be the sequence \( (g_n) _{n \in \Z} \). We denote  \(g_n\) the mapping $\om \mapsto g_n(\om ) $ that associates to \( \om \in \Om \) its coordinate \( g_n \in G\). In particular \( g_0(\om )\) defines a cocycle with values in $SL(d,\R)$ over the ergodic system $(\Om, m; \s).$ Oseledets  multiplicative ergodic theorem gives
 
 \begin{theorem}[\cite{oseledets}]\label{oseledets} With the above notations, assume that \( \mu \in \MM(d)\) and let 
 $\chi_1 >\chi _2 > \ldots >\chi _d  $  with $\sum _i  \chi_i = 0 $ be the Lyapunov exponents (cf. equation (\ref{exponents})).
 Then,  for $m$-a.e. $\om \in \Om,$ there exists a direct decomposition of $\R^d$ into $d$ one-dimensional spaces 
 \[ \R^d \; = \; E_1 (\om) \oplus E_2 (\om) \oplus  \ldots \oplus E_d (\om )  \;\; \; \; {\textrm {such that }} \]
 \begin{enumerate}
 \item a vector $v \not = 0 $ belongs to $E_i(\om) $ if, and only if,
 \[  \lim\limits _{n \to +\infty } \frac{1}{n} \log |g_{n-1}(\om) \circ \ldots \circ g_0(\om ) \,v| =  \lim\limits _{n \to -\infty } \frac{1}{n} \log |(g_{n}(\om ))^{-1} \circ \ldots \circ (g_{-1}(\om ))^{-1}\, v| \; = \; \chi _i\]
 \item for $ m$-a.e. $\om $, all $i$, $ \lim\limits _{n \to \pm \infty } \frac{1}{|n|} \log| \sin \angle (E_i(\s ^n \om ), \sum_{j \neq i} E_j(\s ^n \om ) ) | \; = \; 0.$
 \end{enumerate}
 \end{theorem}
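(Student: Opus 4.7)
The plan is to invoke the Furstenberg--Kesten / Kingman subadditive ergodic theorem to produce the Lyapunov spectrum, then construct forward and backward filtrations whose intersections yield the one-dimensional Oseledets lines $E_i(\omega)$. The hypothesis $\int \log \|g\|\,d\mu(g) < \infty$ built into $\MM(d)$ furnishes the integrability required by Kingman, and the simplicity of the spectrum (also part of membership in $\MM(d)$) ensures that the decomposition splits into lines rather than higher-dimensional Oseledets blocks.

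First, apply Kingman to the subadditive cocycles $\omega \mapsto \log \|\Lambda^k A_n(\omega)\|$ for $k = 1,\ldots,d$, where $A_n(\omega) = g_{n-1}(\omega)\cdots g_0(\omega)$; integrability follows from $\|\Lambda^k g\| \leq \|g\|^k$. This yields $m$-a.e.~convergence of $\tfrac{1}{n}\log \|\Lambda^k A_n\|$ to a constant, which the flag-space formula (\ref{exponents}) identifies as $\chi_1 + \cdots + \chi_k$. Subtraction produces all $d$ exponents, and $\det A_n = 1$ forces $\sum_i \chi_i = 0$.

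Next, define the forward filtration
\[V_i^+(\omega) = \{v \in \R^d : \limsup_{n \to \infty} \tfrac{1}{n}\log |A_n(\omega)\, v| \leq \chi_i\}.\]
Using the fact that $\tfrac{1}{n}\log \|A_n v_1 \wedge \cdots \wedge A_n v_k\|$ has the same Kingman limit $\chi_1 + \cdots + \chi_k$ for a generic $k$-tuple, one shows inductively that $\dim V_i^+(\omega) = d - i + 1$ a.e., with exact growth rate $\chi_i$ on $V_i^+ \setminus V_{i+1}^+$. Applying the same argument to the inverse cocycle in the backward direction produces a nested family $V_i^-(\omega)$ of dimensions $i$ whose non-trivial classes have backward rate $-\chi_i$. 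By simplicity of the spectrum, $E_i(\omega) := V_i^+(\omega) \cap V_i^-(\omega)$ is one-dimensional with two-sided rate $\chi_i$, and a dimension count yields the direct sum decomposition in item (1); equivariance $A_1(\omega) E_i(\omega) = E_i(\sigma \omega)$ is built into the definition.

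The subexponential angle estimate in (2) is the most delicate step. It can be obtained from the observation that exponential collapse of $\sin \angle(E_i(\sigma^n \omega), \bigoplus_{j \neq i} E_j(\sigma^n \omega))$ along a set of positive density in $n$ would contradict the exact Kingman limit for an appropriate $\log \|\Lambda^k A_N\|$: a near-degenerate basis of Oseledets vectors inflates the relevant exterior-power norms beyond what the spectrum permits. Alternatively, one follows the Raghunathan approach via polar decomposition, identifying the Oseledets directions as limits of eigenspaces of $(A_n^\top A_n)^{1/(2n)}$ and controlling the rate of convergence. The main obstacle throughout is measurability and genericity: Kingman supplies only a.e.~convergence, and propagating this through the linear-algebraic constructions and through the two-sided time parameter requires careful bookkeeping of exceptional sets, typically handled via compactness in the flag variety together with the polar decomposition.
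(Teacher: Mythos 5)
The paper does not prove this statement: it cites Oseledets' multiplicative ergodic theorem \cite{oseledets} as a classical result, so there is no internal argument to compare against. Your sketch is the standard Kingman-based route, and it is correct in outline, but two steps are genuinely incomplete as written.

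The main gap is the transversality of the forward and backward filtrations. You set $E_i(\omega) = V_i^+(\omega) \cap V_i^-(\omega)$ and claim that ``simplicity of the spectrum'' together with ``a dimension count'' makes this one-dimensional and makes the $E_i$ span. The dimension count only gives $\dim\bigl(V_i^+(\omega) \cap V_i^-(\omega)\bigr) \ge (d-i+1) + i - d = 1$; equality, and the direct sum $\R^d = \bigoplus_i E_i(\omega)$, require $V_i^+(\omega) + V_i^-(\omega) = \R^d$ for $m$-a.e.~$\omega$, and this follows neither from simplicity of the spectrum nor from counting. This transversality is the crux of the two-sided Oseledets theorem. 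In the present i.i.d.~setting one natural way to supply it is the observation the paper later records as Proposition~\ref{independence}: the unstable filtration is measurable with respect to the past $(g_n)_{n \le -1}$ and the stable filtration with respect to the future $(g_n)_{n \ge 0}$, so they are independent; combined with the fact that, under the irreducibility/proximality hypotheses built into $\mu \in \MM(d)$, the associated Furstenberg measures do not charge any proper projective subvariety, independence gives transversality almost surely. For general cocycles one instead argues via $\sigma$-invariance of the non-transversality set together with the subexponential angle estimate. Either way this is a substantive step your sketch elides rather than proves.

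The second, lesser gap is item~(2). Both of your suggested routes (a positive-density contradiction against the Kingman limit for an exterior power, or the Raghunathan polar-decomposition argument) are legitimate strategies, but neither is carried out; the ``positive density implies contradiction'' heuristic in particular needs a careful comparison of $\tfrac{1}{n}\log\|\Lambda^k A_n\|$ with $\chi_1 + \cdots + \chi_k$ along the relevant subsequence, and the exceptional-set bookkeeping you flag at the end is precisely where such arguments are delicate. As written these are gestures toward a proof, not a proof; they are fixable with standard effort, but the present text would not compile into a complete argument for part~(2).
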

 
 
  By our assumption \( \mu \in \MM(d),\) the exponents are indeed pairwise distinct and  the spaces $E_i$ are one-dimensional. The directions $E_i (\om) $ are defined $m$-a.e. and are called {\it {Oseledets directions}}.  For two distinct dimensional subspaces $E,E' \subset \R^d$, 
 $|\sin\angle(E,E')| $ is defined by $|\sin\angle(E,E')| = \inf\limits_{v \not = 0 \in E, v' \not = 0 \in E'} \frac{ |v\wedge v'|}{|v| |v'|}. $ 
  
 The set $\Om_{reg}$ of points in $\Om$ such that properties 1 and 2 of Oseledets theorem hold is called the set of regular points. The set $\Om_{reg}$ is  $\s$-invariant, measurable  and  has full measure in $\Om$. Observe that by characterization 1, for all $i$, the mapping $\om \mapsto E_i(\om )$ is measurable  on $\Om _{reg}$ and we have \[ E_i (\s \om )\; = \; g_0 E_i (\om ).\]
  Let $i = 1, \ldots, d.$ We write $U_i(\om) := \oplus_{j=1}^i E_i(\om) $ for the unstable spaces, $U'_i(\om) := \oplus_{j=d-i+1}^d E_j(\om) $ for the stable spaces.\footnote{cf. notations of the Introduction.} 
   Since the exponents are distinct, for \( \om \in \Om _{reg}\),  the flags \(E_-(\om)\) given by  \(\{0\} = U_0  \subset U_{1}(\om) \subset \ldots  \subset U_d = \R^d \) and \( E_+ (\om )  \) given by \( \{0\} = U'_0  \subset U'_{1}(\om) \subset \ldots \subset U'_{d-1} (\om) \subset U'_d = \R^d \) are in general position.
  
An important classical observation is the following:
\begin{proposition}\label{independence} For all $i,$ the mappings $\om \mapsto U_i (\om)$ are measurable with respect to the $\s$-algebra generated by $(g_n)_{n \leq -1}$; for all $i',$ the mappings $\om \mapsto U'_{i'} (\om)$ are measurable with respect to the $\s$-algebra generated by $(g_n)_{n \geq 0}.$  In particular, $E_-$ and $E_+$ are independent. \end{proposition}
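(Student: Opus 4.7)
The plan is to reduce the proposition to the following one-sided characterizations of the Oseledets flags:
\begin{equation}\label{uchar2}
U_i(\om) = \{v \in \R^d : \liminf_{n \to -\infty} \tfrac{1}{n} \log|g_n(\om)^{-1} \circ \cdots \circ g_{-1}(\om)^{-1} v| \geq \chi_i\},
\end{equation}
and symmetrically
\begin{equation}\label{upchar2}
U'_{i'}(\om) = \{v \in \R^d : \limsup_{n \to +\infty} \tfrac{1}{n} \log|g_{n-1}(\om) \circ \cdots \circ g_0(\om) v| \leq \chi_{d-i'+1}\}.
\end{equation}
Granting \eqref{uchar2} and \eqref{upchar2}, the proposition is immediate: for each fixed $v \in \R^d$ the event in \eqref{uchar2} is measurable with respect to the $\s$-algebra generated by $(g_n)_{n \leq -1}$, and since $U_i(\om)$ is the unique $i$-dimensional subspace of $\R^d$ characterized by this membership condition, running $v$ through a countable dense family shows that $\om \mapsto U_i(\om) \in \GG_i$ is measurable with respect to this $\s$-algebra. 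The analogous argument based on \eqref{upchar2} handles $U'_{i'}$. Independence of $E_-$ and $E_+$ then follows because $(g_n)_{n \leq -1}$ and $(g_n)_{n \geq 0}$ are independent under the product measure $m = \mu^{\otimes \Z}$.

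To prove \eqref{uchar2}, I would decompose $v = \sum_{j=1}^d v_j$ along the Oseledets splitting at $\om$ with $v_j \in E_j(\om)$. By conclusion (1) of Theorem \ref{oseledets}, for each $j$ with $v_j \neq 0$ one has $\tfrac{1}{n}\log|g_n(\om)^{-1}\circ\cdots\circ g_{-1}(\om)^{-1} v_j| \to \chi_j$ as $n \to -\infty$. Since the exponential $e^{\chi_j n}$ with $n \to -\infty$ is largest for the smallest $\chi_j$, setting $j^* := \max\{j : v_j \neq 0\}$ the term corresponding to $v_{j^*}$ exponentially dominates the others in norm. The main obstacle in the proof is ruling out exponential cancellation in the sum; for this I plan to invoke conclusion (2) of Theorem \ref{oseledets}: the temperedness estimate $|\sin \angle(E_{j^*}(\s^n \om), \sum_{k \neq j^*} E_k(\s^n\om))| = e^{o(|n|)}$ implies that $|g_n^{-1}\circ\cdots \circ g_{-1}^{-1} v|$ and $|g_n^{-1}\circ\cdots \circ g_{-1}^{-1} v_{j^*}|$ differ only by a subexponential factor. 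Therefore $\tfrac{1}{n}\log|g_n^{-1}\circ\cdots \circ g_{-1}^{-1} v| \to \chi_{j^*}$, and this limit is $\geq \chi_i$ exactly when $j^* \leq i$, that is, when $v \in U_i(\om)$.

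The dual characterization \eqref{upchar2} follows by the same argument applied in positive time: now the dominant Oseledets component is the one with the largest $\chi_j$, so $j^{**} := \min\{j : v_j \neq 0\}$, and the forward limit $\chi_{j^{**}}$ is $\leq \chi_{d-i'+1}$ exactly when $v \in U'_{i'}(\om)$. The delicate point of the whole argument is thus the temperedness-based lemma that permits the one-sided limit of a linear combination to be controlled by that of its dominant Oseledets component; apart from this, the proof is pure bookkeeping.
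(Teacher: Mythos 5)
Your proof is correct and follows essentially the same route as the paper: characterize $U_i(\om)$ (resp.\ $U'_{i'}(\om)$) by a one-sided growth condition on $g_{-n}^{-1}\cdots g_{-1}^{-1}v$ (resp.\ $g_{n-1}\cdots g_0 v$), which visibly depends only on $(g_n)_{n\leq -1}$ (resp.\ $(g_n)_{n\geq 0}$), and then invoke independence of the two halves of the product measure. You spell out more explicitly the role of the temperedness estimate (Oseledets conclusion (2)) in ruling out cancellation, which the paper leaves implicit, and your index $\chi_{d-i'+1}$ in the dual characterization is the correct one (the paper's displayed $\chi_{i'}$ there is a slip, given the convention $U'_{i'}=\bigoplus_{j=d-i'+1}^d E_j$).
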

\begin{proof} It suffices to prove that for any $i$, $U_i$ depends only on $\{g_n\}_{n \leq -1}$. We claim that, for $\om \in \Om_{reg}, $  $U_i  = \{ v : \limsup _{n \to +\infty} \frac{1}{n}  \log |g_{-n}^{-1} \circ \ldots \circ g_{-1}^{-1}\, v| \leq - \chi _i \}.$ This shows that $U_i$ is completely determined when one knows $\{g_n\}_{n \leq -1}$. To prove the claim, observe that $\{ v : \limsup _{n \to +\infty} \frac{1}{n}  \log |g_{-n}^{-1} \circ \ldots \circ g_{-1}^{-1}\, v| \leq - \chi _i \}$ is a vector space that contains $E_j(\om ) , j\leq i$ by definition. It is exactly $U_i (\om)$ since any vector that 
has a nonzero component in one of the $E_\ell (\om ), \ell >i$ satisfies $\limsup _{n \to +\infty} \frac{1}{n}  \log |g_{-n}^{-1} \circ \ldots \circ g_{-1}^{-1}\, v| \geq - \chi _\ell > -\chi _i.$
One verifies in the same way that $ U'_{i'}(\om ) = \{ v : \limsup _{n \to +\infty} \frac{1}{n}  \log |g_{n} \circ \ldots \circ g_0\, v| \leq  \chi _{i'}\}$. \end{proof} 

 Let $Q$ be a partition of $\{0,1, \ldots , d\}$ into intervals, $Q = \{ q_0 = 0  <q_1 < \ldots < q_k = d\}.$ Write  $U_Q(\om ) \in \F_Q$ for the $Q$-flag \[  U_Q (\om ) := \{0\} = U_0  \subset U_{q_1}(\om) \subset \ldots \subset U_{q_{k-1}} (\om )  \subset U_d = \R^d. \]  The set $\GG _i$ of $i$-dimensional subspaces is identified with $\F_{\{0<i<d\}}.$

Since $g_0$ is independent of $U_i(\om ), U_Q (\om) $ and the distribution of $g_0$ is $\mu$, the distribution of $U_i$ (resp. $U_Q$) is a measure on $\GG _i $ (resp. $\F_Q$) which is stationary under the action of $(G, \mu ) $. 
By uniqueness, the distribution of $U_i(\om) $ is $\nu_{\{0<i<d\}},$
 the distribution of $U_Q(\om ) $ is $\nu _Q.$
   Similarly, the distribution of $E_- $ is a stationary measure $\nu' $ for the action of $\mu'$ on $\F .$  For all partition $Q$  of $\{0,1, \ldots , d\}$ into intervals, the distribution of $U'_Q(\om)$ is the stationary measure $\nu'_Q := (\pi _Q)_\ast \nu ' $ for the action of \(\mu '\) on $\F_Q.$
   
   \begin{remark}\label{stable} We do not know a priori that the measure \( \mu '\) has a unique stationary measure; in all the paper, we use the distribution \(\nu '\)  of stable flags 
as stationary measure for the action of  \( \mu '\) on \(\F\).
\end{remark}

  Let \(\Om_+ := (g_n) _{n \geq 0} \) (respectively \( \Om_- :=  (g_n) _{n \leq -1} \)) be the space of one-sided sequences of elements of $G$, $m_+$ (respectively $m_-$) the product measure with $g_k$ of distribution $\mu$ for all $k \geq 0$ (respectively for all $k<0$), $\s $ the shift transformation. Then, by proposition \ref{independence}, \( E_- \) (respectively \(E_+\)) can be seen as a mapping from \( \Om _-\) (respectively \( \Om _+\)) into \(\F\). By  Oseledets theorem \ref{oseledets},  for almost every $\om $, the pair \( E(\om ) := (E_- (\om_-), E_+(\om_+))\) belongs to \(\F^{(2)}\).

We recall in our notations the key Furstenberg result
 \begin{theorem}[\cite{furstenberg1963}]\label{FK} Assume \( \mu \in \MM(d).\) Let $f \in \F, f = \{ \{0\} \subset U_1(f) \subset \ldots \subset U_{d-1}(f) \subset \R^d \}.$ For $m$-a.e. $\om \in \Om_+$, all $j, j=1, \ldots, d,$
 \[ \lim\limits _{n \to +\infty } \frac{1}{n} \log  |\det _{U_j(f)} (g_{n-1} \circ \ldots \circ g_0) | \; = \; \sum _{i\leq j} \chi _i .\]
\end{theorem}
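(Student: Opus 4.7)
I would reduce to the exterior power representation. Fix $j \in \{1,\ldots,d-1\}$ and a unit decomposable $j$-vector $v$ spanning $\wedge^j U_j(f) \subset \wedge^j \R^d$; then $|\det_{U_j(f)}(g)| = \|g^{\wedge j} v\|$, so the goal becomes to prove
\[
\lim_{n \to \infty} \frac{1}{n} \log \|(g_{n-1} \cdots g_0)^{\wedge j} v\| \;=\; \chi_1 + \cdots + \chi_j
\]
for $m_+$-a.e. $\om$. The upper bound $\limsup \leq \chi_1 + \cdots + \chi_j$ is immediate from the Furstenberg--Kesten (subadditive) ergodic theorem applied to the operator norm cocycle $g \mapsto g^{\wedge j}$, whose top Lyapunov exponent is the largest sum of $j$ exponents; the required $\log^+$ integrability comes from the moment hypothesis $\int \log \|g\|\, d\mu < \infty$ together with $\|g^{\wedge j}\| \leq \|g\|^j$.

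The lower bound is the substantive step. I would express the log-norm as a cocycle sum along the random trajectory on the Grassmannian: setting $W_k := (g_{k-1} \cdots g_0) \cdot U_j(f) \in \GG_j$ and writing $\varphi(g,W) := \log \|g^{\wedge j} \tilde W\|$ for any unit representative $\tilde W \in \wedge^j W$ (the value is independent of the choice), we have the exact identity
\[
\log \|(g_{n-1} \cdots g_0)^{\wedge j} v\| \;=\; \sum_{k=0}^{n-1} \varphi(g_k, W_k).
\]
The sequence $(W_k)$ is a Markov chain on $\GG_j$ with deterministic initial state $U_j(f)$. Under the assumption $\mu \in \MM(d)$, the chain has a unique stationary measure, namely $\nu_{\{0<j<d\}}$, and the strict gap between the top two Lyapunov exponents on $\wedge^j \R^d$ (guaranteed by the distinctness of the $\chi_i$) gives proximality: for $m_+$-a.e.\ $\om$, the images $(g_{n-1} \cdots g_0) \cdot W$ and $(g_{n-1} \cdots g_0) \cdot W'$ of any two points in general position contract to a common Grassmannian limit. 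Combined with uniqueness of $\nu_{\{0<j<d\}}$, this yields a Breiman-type strong law of large numbers for Markov cocycles: for \emph{every} deterministic starting point $U_j(f)$,
\[
\frac{1}{n}\sum_{k=0}^{n-1} \varphi(g_k, W_k) \;\longrightarrow\; \int \int \varphi(g,W) \, d\mu(g)\, d\nu_{\{0<j<d\}}(W) \qquad m_+\text{-a.s.}
\]
The limiting integral equals $\chi_1 + \cdots + \chi_j$ by Furstenberg's formula identifying the top Lyapunov exponent of the $\wedge^j$-action with the integral of the logarithmic norm cocycle against the unique stationary measure on the highest-weight orbit in $P(\wedge^j \R^d)$, which is precisely the Pl\"ucker image of $\nu_{\{0<j<d\}}$.

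The hard part is the a.s. convergence from a \emph{deterministic} starting point rather than a $\nu$-random one: the Birkhoff theorem on the skew-product $(\Om_+ \times \GG_j, m_+ \otimes \nu_{\{0<j<d\}})$ only handles $\nu_{\{0<j<d\}}$-a.e.\ initial condition, and promoting this to every $U_j(f)$ is exactly where proximality intervenes. The assumption $\mu \in \MM(d)$ is used twice in an essential way: it ensures simplicity of the top direction in $\wedge^j \R^d$ (producing the gap needed for proximality) and it guarantees the uniqueness of the stationary measure identifying the limit.
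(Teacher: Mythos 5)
Your proposal is correct and is essentially an unpacking of the same argument the paper invokes: the paper's proof is a one-line citation to Theorem 8.5 of Furstenberg (1963), observing that under the hypotheses the distribution of each exterior power $\wedge^j g$ satisfies Furstenberg's conditions. Your reduction to $\|(g_{n-1}\cdots g_0)^{\wedge j} v\|$, the cocycle decomposition on the Grassmannian, the use of proximality (from the exponent gap on $\wedge^j\R^d$) plus uniqueness of $\nu_{\{0<j<d\}}$ to get a Breiman-type SLLN from a deterministic starting point, and the identification of the limit with $\chi_1+\cdots+\chi_j$ via Furstenberg's formula --- that is precisely the content of the cited theorem, so you are re-deriving rather than citing it, but the route is the same.
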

\begin{proof} Under our hypotheses, the distributions of all exterior products $\wedge _{i=1}^j g$ satisfy the conditions of Theorem 8.5 in \cite{furstenberg1963}.
\end{proof}
We used these relations  in the introduction to define  the exponents \(\chi _j, j = 1,\ldots, d,\) in general   for \(\nu \) extremal. 
Since \( \mu \in \MM(d),\) the stationary \( \nu \) measure is extremal and thus the skew product \( (\Om _+ \x \F, m_+\otimes \nu )\) is ergodic for the transformation \( \widehat \s: \widehat \s (\om_+, f ) = (\s \om _+ , g_0(\om ) f) \). We can write 
\begin{eqnarray*}\sum _{i\leq j} \chi _i &=&  \lim\limits _{n \to +\infty } \frac{1}{n}  \log  |\det _{U_j(f)} (g_{n-1} \circ \ldots \circ g_0) | \\
&=& \lim\limits _{n \to +\infty }  \frac{1}{n} \sum _{k=1}^{n-1} \log |\det_{U_j (\widehat \s^k (\om _+, f))}(g_0(\s^k \om ))| .\end{eqnarray*}
The last line converges to \( \int \log |\det_{U_j (f)} (g) | \, d\mu (g) d\nu (f)\) by the ergodic theorem, so that 
\begin{equation}\label{exponents;nonext} \sum _{i\leq j} \chi _i  \; = \;  \int \log |\det_{U_j (f)} (g) | \, d\mu (g) d\nu (f).\end{equation}
 We {\it {define}} the  Lyapunov exponents by equation (\ref{exponents;nonext}) when the measure \( \nu \) is not extremal.
 When the distribution of \( E_-(\om ) \) is \( \nu \) (in particular, for \( \mu \in \MM(d)\)) and since \( g_0 \) is independent of  \( E_-(\om ) ,\)  then  (\ref{exponents;nonext})  can also be written, for all $j, j=1, \ldots, d,$
\[ \sum _{i\leq j} \chi _i  \; = \; \int\limits_{\Omega}\log\left(|\det_{U_j(\omega)}(g_0(\omega))|\right) dm(\omega).\]
Using property 2 in Oseledets multiplicative ergodic theorem \ref{oseledets}, we get, in that case, 
for any subset \(I \subset \lbrace 1,\ldots,d\rbrace\), setting \(V_I(\omega) = \bigoplus\limits_{k \in I}E_k(\omega)\), 
 \begin{equation}\label{exponents2} \int\limits_{\Omega}\log\left(|\det_{V_I(\omega)}(g_0(\omega))|\right) dm(\omega) = \sum\limits_{k \in I}\chi_k.\end{equation}

\subsection{Approximation of partial Oseledets configurations}

Given an admissible topology \(T\) we define \(E_T(\omega) = F_T(E_-(\omega),E_+(\omega))\) where \(F_T\) is defined in section \ref{admissibletopologysection}.

Let \(T \prec T'\) be admissible topologies.  We will extend the configuration \(E_{T'}(\omega)\) to a configuration \(\widehat{E}(\omega)\) defined on \(T\) by applying a deterministic function to \(E_{T'}(\omega)\).

For this purpose, for each \(i = 1,\ldots,d\), we let \(\widehat{E}_i(\omega)\) be the one dimensional subspace of \(E_{T'}(\omega)_{T'(i)}\) 
which is perpendicular to \(E_{T'}(\omega)_{T'(i) \setminus \lbrace i\rbrace}\).   The configuration \(\widehat{E}(\omega)\) is defined by letting
\[\widehat{E}(\omega)_I = \sum\limits_{i \in I}\widehat{E}_i(\omega)\]
for all \(I \in T\).

\begin{proposition}
For  \(m\)-a.e. \(\omega\), and all \(I \in T'\) one has \(\widehat{E}(\omega)_I = E_{T'}(\omega)_I\).
\end{proposition}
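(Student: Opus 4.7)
The plan is to reduce the statement to a purely linear-algebraic claim about the Oseledets splitting, without further use of dynamics. First I would unpack $E_{T'}(\omega)_I = [F_{T'}(E_-(\omega),E_+(\omega))]_I$. Using the definitions $U_i(\omega) = \bigoplus_{j\le i} E_j(\omega)$ and $U'_{i'}(\omega) = \bigoplus_{j\ge d-i'+1} E_j(\omega)$, one gets $U_i(\omega)\cap U'_{d-i+1}(\omega) = E_i(\omega)$, so that
\[ E_{T'}(\omega)_I \;=\; \bigoplus_{i \in I} E_i(\omega) \qquad \text{for every } I \in T'. \]

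Next I would show that $\widehat{E}_i(\omega)$ is well defined. Since $T'$ is admissible, $T'(i) \subset \{i,i+1,\ldots,d\}$ and $\{i+1,\ldots,d\} \in T'$, so $T'(i) \setminus \{i\} = T'(i) \cap \{i+1,\ldots,d\} \in T'$ (this is already noted in Proposition \ref{onestepproposition}). Thus $E_{T'}(\omega)_{T'(i)\setminus\{i\}}$ is a codimension one subspace of $E_{T'}(\omega)_{T'(i)}$, and its orthogonal complement $\widehat{E}_i(\omega)$ is a well-defined line lying inside $E_{T'}(\omega)_{T'(i)}$. In particular, by the direct-sum decomposition $\bigoplus_{j \in T'(i)} E_j(\omega)$ of $E_{T'}(\omega)_{T'(i)}$, any nonzero $v_i \in \widehat{E}_i(\omega)$ has nonzero $E_i(\omega)$-component (otherwise $v_i$ would lie in $\bigoplus_{j\in T'(i)\setminus\{i\}} E_j(\omega)$, contradicting that this subspace has trivial intersection with its orthogonal complement).

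Fix now $I \in T'$. For every $i \in I$, the atom $T'(i)$ is contained in $I$, hence $\widehat{E}_i(\omega) \subset E_{T'}(\omega)_{T'(i)} \subset E_{T'}(\omega)_I$. So $\widehat{E}(\omega)_I = \sum_{i \in I} \widehat{E}_i(\omega) \subset E_{T'}(\omega)_I$, and since the right-hand side has dimension $|I|$, it suffices to prove that the lines $\widehat{E}_i(\omega)$, $i \in I$, are linearly independent. This is the only step that requires any argument: I would induct on the elements of $I$ listed in increasing order. Suppose $\sum_{i \in I} c_i v_i = 0$ with $v_i \in \widehat{E}_i(\omega)$. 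For the smallest index $i_0 \in I$, admissibility gives $T'(i) \subset \{i,i+1,\ldots,d\}$ for each $i \in I$, so $i_0 \in T'(i)$ forces $i = i_0$; projecting the relation onto $E_{i_0}(\omega)$ along the complementary $E_j(\omega)$'s yields $c_{i_0} v_{i_0,i_0} = 0$, and since the $E_{i_0}$-component of $v_{i_0}$ is nonzero, $c_{i_0}=0$. Removing $i_0$ and iterating handles the next smallest index, and so on, proving independence.

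The main (mild) obstacle is the linear independence step, which is resolved by combining admissibility ($T'(i)$ only reaches forward) with the observation that the $E_i$-component of any nonzero vector in $\widehat{E}_i(\omega)$ is nonzero. Once this is in place, equality of dimensions finishes the proof, and the result holds for every $\omega \in \Omega_{\mathrm{reg}}$, in particular $m$-almost surely.
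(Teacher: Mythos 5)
Your proof is correct but takes a genuinely different route from the paper's. The paper proves the equality $\widehat{E}(\omega)_{T'(i)} = E_{T'}(\omega)_{T'(i)}$ by a downward induction on $i = d, d-1, \ldots, 1$, relying on the recursive structure of atoms: if the claim holds for $i+1,\ldots,d$, then $\widehat{E}(\omega)_{T'(i)\setminus\{i\}} = \sum_{j \in T'(i)\setminus\{i\}}\widehat{E}(\omega)_{T'(j)} = E_{T'}(\omega)_{T'(i)\setminus\{i\}}$, and since $\widehat{E}_i(\omega)$ is complementary to this codimension-one subspace inside $E_{T'}(\omega)_{T'(i)}$, the sum fills up $E_{T'}(\omega)_{T'(i)}$. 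Your argument instead unpacks $E_{T'}(\omega)_I = \bigoplus_{i\in I} E_i(\omega)$ once and for all, then proves linear independence of the lines $\widehat{E}_i(\omega)$ ($i\in I$) directly via a triangularity argument: each $v_i \in \widehat{E}_i(\omega)$ is supported on $\bigoplus_{j\in T'(i)} E_j(\omega)$ with $T'(i)\subset\{i,\ldots,d\}$, and $v_i$ has nonvanishing $E_i$-component, so the ``lead indices'' are distinct and independence follows by clearing coefficients from the smallest index upward. Both approaches exploit the same admissibility asymmetry ($T'(i)$ only reaches forward), but yours treats all atoms simultaneously and reduces the problem to an explicit linear-algebra statement about a direct sum, while the paper's recursion stays closer to the combinatorics of the topology lattice. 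Your version is arguably a bit more self-contained, since it doesn't presuppose that one has internalized the atom-union identity $T'(i)\setminus\{i\} = \bigcup_{j\in T'(i)\setminus\{i\}} T'(j)$.
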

\begin{proof}
 It suffices to verify that \(\widehat{E}(\omega)_{T'(i)} = E_{T'}(\omega)_{T'(i)}\) for \(i = 1,\ldots,d\).
 
 When \(i = d,\) this is trivial since \(E_{T'}(\omega)_{T'(d)} = E_d(\omega)\) and therefore \(\widehat{E}(\omega)_{T'(d)} = E_d(\omega)\) as well.
 
 Suppose that the claim is true for \(i+1,\ldots,d\), then 
 \[\widehat{E}(\omega)_{T'(i)\setminus \lbrace i\rbrace} = \sum\limits_{j \in T'(i)\setminus \lbrace i\rbrace}\widehat{E}(\omega)_{T'(j)} = \sum\limits_{j \in T'(i)\setminus \lbrace i\rbrace}E(\omega)_{T'(j)} = E(\omega)_{T'(i) \setminus\lbrace i\rbrace}.\]
 
 It follows that \(\widehat{E}_i(\omega)\) is complementary to the codimension one subspace \(\widehat{E}(\omega)_{T'(i) \setminus \lbrace i\rbrace}\) within \(E(\omega)_{T'(i)}\).  Taking the sum one obtains \(\widehat{E}(\omega)_{T'(i)} = E_{T'}(\omega)_{T'(i)}\), as claimed.
\end{proof}

We now show that using the extension above one may approximate \(E_T(\omega)\) using only \(E_{T'}(\omega)\) and \(g_{-1}(\omega),\ldots,g_{-n}(\omega)\) up to an error which is exponentially small as \(n \to +\infty\).

\begin{proposition}\label{configurationapproximationlemma}
 For \(m\)-a.e. \(\omega\) when \(n \to +\infty\) one has
 \[\dist(g_{-1}(\omega)\cdots g_{-n}(\omega)\widehat{E}(\sigma^{-n}(\omega))_T,E_{T}(\omega)) \le \exp(-\chi n + o(n)),\]
 where \(\chi = \min\limits_{(i,j) \in D_{T,T'}}\chi_i - \chi_j\).
\end{proposition}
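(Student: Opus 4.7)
Write $\omega' = \sigma^{-n}\omega$ and $g^{(n)} = g_{-1}(\omega) \cdots g_{-n}(\omega)$, so that by equivariance of the Oseledets decomposition $g^{(n)} E_k(\omega') = E_k(\omega)$ for each $k$. For $m$-a.e.\ $\omega$, Theorem~\ref{oseledets} applied forward from $\omega'$ provides two ingredients I will use throughout: (a) for each $v \in E_k(\omega') \setminus \{0\}$, $\log\|g^{(n)} v\| - \log \|v\| = n\chi_k + o(n)$ as $n \to +\infty$; and (b) $|\sin \angle(E_k(\omega'), \bigoplus_{l \neq k} E_l(\omega'))| \geq \exp(-o(n))$, so every Oseledets ``change-of-basis'' projection at $\omega'$ has operator norm at most $\exp(o(n))$, while the analogous projections at the fixed point $\omega$ are bounded by some constant $C(\omega)$.

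The plan is to analyze one index at a time. Fix $i$, pick a unit vector $u_i \in \widehat{E}_i(\omega')$ and decompose it along the Oseledets basis as $u_i = a_i + \sum_{j \in T'(i) \setminus \{i\}} b_{i,j}$ with $a_i \in E_i(\omega')$ and $b_{i,j} \in E_j(\omega')$. By the definition of $\widehat{E}_i(\omega')$ as the line inside $E_i(\omega') \oplus F$ perpendicular to $F := \bigoplus_{j \in T'(i) \setminus \{i\}} E_j(\omega')$, a direct computation gives $|a_i| = 1/|\sin \angle(E_i(\omega'), F)|$; combining with (b) (and the inclusion $F \subseteq \bigoplus_{l \neq i} E_l(\omega')$) yields $1 \leq |a_i| \leq \exp(o(n))$, and the Oseledets projections at $\omega'$ then give $|b_{i,j}| \leq \exp(o(n))$. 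Pushing forward by $g^{(n)}$ and invoking (a), $\|g^{(n)} a_i\| \geq \exp(n\chi_i - o(n))$ while $\|g^{(n)} b_{i,j}\| \leq \exp(n\chi_j + o(n))$. Expressing a unit vector in the line $g^{(n)} \widehat{E}_i(\omega')$ in the Oseledets basis at $\omega$ (where the projections are now $O(1)$), its $E_i(\omega)$-component is bounded away from $0$ while each $E_j(\omega)$-component for $j \in T'(i) \setminus \{i\}$ has norm at most $\exp(-n(\chi_i - \chi_j) + o(n))$.

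To finish I assemble the atom-wise contributions, using that the metric on $\X_T$ is $\sum_{I \in T} \dist(\cdot, \cdot)$. If $I \in T'$, the preceding proposition combined with equivariance gives $g^{(n)} \widehat{E}(\omega')_I = g^{(n)} E_{T'}(\omega')_I = E_T(\omega)_I$ exactly. If instead $I \in T \setminus T'$, then $g^{(n)} \widehat{E}(\omega')_I = \sum_{i \in I} g^{(n)} \widehat{E}_i(\omega')$, and only the Oseledets components in directions $E_j(\omega)$ with $j \notin I$ contribute to the distance from $E_T(\omega)_I = \bigoplus_{k \in I} E_k(\omega)$. The key combinatorial observation is that for $i \in I \in T$ and $j \in T'(i) \setminus \{i\}$ with $j \notin I$ one necessarily has $(i,j) \in D_{T,T'}$: indeed, since $I = \bigcup_{k \in I} T(k)$ and $i \in I$, we have $T(i) \subset I$, so $j \notin I$ forces $j \notin T(i)$. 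Hence every contraction rate that appears in the error is of the form $\chi_i - \chi_j$ with $(i,j) \in D_{T,T'}$, which is at least $\chi$. Converting the vector-wise estimates into a Grassmannian distance (using the bounded angles at $\omega$) gives $\dist(g^{(n)} \widehat{E}(\omega')_I, E_T(\omega)_I) \leq \exp(-\chi n + o(n))$ for each such $I$, and summing yields the claim. The only delicate point is that the various $o(n)$ corrections must hold uniformly along the entire backward orbit of $\omega$, but this is automatic from the almost-sure validity of the Oseledets limits at $m$-a.e.\ $\omega$.
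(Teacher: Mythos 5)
Your argument is correct, and it rests on the same core mechanism as the paper: express a unit vector of $\widehat{E}_i(\sigma^{-n}\omega)$ in the Oseledets basis at $\sigma^{-n}\omega$, push forward by $g_{-1}(\omega)\cdots g_{-n}(\omega)$, and read off the exponential contraction in the directions $E_j(\omega)$, $j>i$. Where you differ from the paper is in the organization of the combinatorics. The paper proceeds by downward induction on $i$ through the atoms $T(i)$: at each step it isolates a vector $z_n \in \widehat{E}(\sigma^{-n}\omega)_{T(i)}\cap W_i(\sigma^{-n}\omega)$ that by construction has no Oseledets components in $E_j$ for $j\in T(i)\setminus\{i\}$, so that the only error directions are indexed by $T'(i)\setminus T(i)$, i.e. by $D_{T,T'}$; the rest of $\widehat{E}(\sigma^{-n}\omega)_{T(i)}$ is handled by the inductive hypothesis via Proposition~\ref{onestepproposition}. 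You avoid the induction: for each $I\in T$ you decompose $\widehat{E}(\sigma^{-n}\omega)_I = \sum_{i\in I}\widehat{E}_i(\sigma^{-n}\omega)$, note that the components of $g^{(n)}\widehat{E}_i$ in directions $E_j(\omega)$ with $j\in I$ land inside the target $E_T(\omega)_I$ so do not affect the Grassmannian distance, and observe that $i\in I\in T$, $j\in T'(i)$, $j\notin I$ forces $j\notin T(i)$ (since $T(i)\subset I$), hence $(i,j)\in D_{T,T'}$. The two routes isolate exactly the same set of exponent differences; yours is a bit more transparent and treats every $I\in T$, not only the atoms, directly. Two small caveats. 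First, your closing remark that the $o(n)$ bounds are ``automatic from almost-sure validity'' somewhat undersells the mechanism: the uniformity along the backward orbit is the tempered refinement of Oseledets (which is what gives $\exp(o(n))$ control of the angles and of the norm growth $\|g^{(n)}v\|$ for $v\in E_k(\sigma^{-n}\omega)$ as $n\to\infty$, a moving-target estimate not literally contained in the raw limit statement); the paper uses the same fact implicitly in the estimates $|a_{i,j}(\sigma^{-n}\omega)|\le e^{o(n)}$, $|a_{i,i}(\sigma^{-n}\omega)|\ge e^{-o(n)}$. Second, passing from the vector-wise estimates to a Grassmannian distance bound requires the pushed-forward basis $\{g^{(n)}u_i\}_{i\in I}$ to remain uniformly linearly independent, which you cover by appealing to the fixed angles at $\omega$; the paper also passes over this conversion without elaboration, so you are in good company.
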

\begin{proof}
To simplify calculations for each \(i = 1,\ldots,d\) let \(w_i(\omega)\) be a unit vector generating \(E_i(\omega)\), and \(v_i(\omega)\) be a unit vector generating \(\widehat{E}_i(\omega)\).   Write
\[v_i(\omega) = \sum\limits_{j \in T'(i)}a_{i,j}(\omega)w_j(\omega).\]
Since \(v_i(\sigma^{-n}\omega)\) is a unit vector  \(|a_{i,j}(\sigma^{-n}\omega)| \le e^{o(n)}\) for all \(j\), all \(n\), and \(m\)-a.e. \(\omega\). Furthermore, since the angle between \(E_i(\sigma^{-n}\omega)\) and \(\sum\limits_{j \in T'(i) \setminus \lbrace i \rbrace}E_j(\sigma^{-n}\omega)\) is at least \(e^{-o(n)}\), we obtain \(|a_{i,i}(\sigma^{-n}\omega)| \ge e^{-o(n)}\) when \(n \to +\infty\) for \(m\)-a.e. \(\omega\).

It suffices to show that
 \[\dist(g_{-1}(\omega)\cdots g_{-n}(\omega)\widehat{E}(\sigma^{-n}(\omega))_{T(i)},E_{T}(\omega)_{T(i)}) \le \exp(-\chi n + o(n)),\]
when \(n \to +\infty\) for all \(i = 1,\ldots,d\).

The claim is trivially true when \(i = d\).

Suppose that the claim is true for \(i+1,\ldots,d\), so in particular one has that
 \[\dist(g_{-1}(\omega)\cdots g_{-n}(\omega)\widehat{E}(\sigma^{-n}(\omega))_{J},E_{T}(\omega)_{J}) \le \exp(-\chi n + o(n)),\]
as \(n \to +\infty\) where \(J = T(i) \setminus \lbrace i\rbrace\).

Let \(z_n(\omega)\)  be a unit vector in the intersection of \(\widehat{E}(\sigma^{-n}(\omega))_{T(i)}\)  with the subspace \(W_i(\sigma^{-n}(\omega)),\)
 where \( W_i (\om ) \) is the space generated by \(\{ w_j (\om) , j = i \) and \( j \in T'(i) \setminus T(i) \}. \) We have \( e^{-o(n)} \leq  |<z_n , w_i (\s ^{-n} (\om ) )> | \) and \( \| z_n \| \leq 1.\) If we write 
\[z_n(\omega) = b_{n,i}(\omega)w_i(\sigma^{-n}\omega) + \sum\limits_{j \in T'(i) \setminus T(i)}b_{n,j}(\omega)w_j(\sigma^{-n}(\omega),\]we have \(|b_{n,i}(\omega)| \ge e^{-o(n)}\) while \(|b_{n,j}(\omega)| \le e^{o(n)}\) for all \(j\).

To conclude notice that
\[g_{-1}(\omega)\cdots g_{-n}(\omega)z_n(\omega) = e^{\chi_i n + o(n)}b_{n,i}(\omega)w_i(\omega) + \sum\limits_{j \in T'(i) \setminus T(i)}e^{\chi_j n+o(n)}b_{n,j}(\omega)w_j(\omega).\]
 
It follows that the angle between \(E_i(\omega)\) and the subspace \(L_n(\omega)\) generated by \(g_{-1}(\omega)\cdots g_{-n}(\omega)z_n(\omega)\)  is at most \(e^{-\chi' n + o(n)}\) where \(\chi' = \min\limits_{j \in T'(i) \setminus T(i)} \chi_i - \chi_j \ge \chi\).
Since \(g_{-1}(\omega)\cdots g_{-n}(\omega)\widehat{E}(\omega)_{T(i)} = L_n(\omega) + g_{-1}(\omega)\cdots g_{-n}(\omega)\widehat{E}(\omega)_{J},\)
the claim follows.
\end{proof}

Assume in the above discussion that  \(T \overset {1}{\prec} T'\)  and that \( i<j \) is such that \(T'(i) = T(i) \cup \{j \} \). Set \( x' = E_{T'} (\om ) \). Then the space \( W_i (\om ) \), generated by \( E_i(\om ), E_j(\om ) \) is a representative of the vector space  \(V = x'_{T'(i)}/x'_{T'(i) \setminus \lbrace i,j\rbrace}\) discussed in Lemma \ref{coordinate}.
Let \(\pi\:= \pi_{T,T'} \) be the projection from \(\X_{T}\) to \(\X_{T'}\) and consider the  coordinates \(\vf _{x(\om  )}\) given by lemma \ref{coordinate} on \(\pi^{-1}(E_{T'}(\omega)),\) setting \(x (\om)= E_{T}(\omega)\) and \(x' (\om )= E_{T'}(\omega)\). 
 The distance  \(\vf _{x(\om  )}\) on \(W_i(\om)\) is equivalent to the metric \( \dist_{T,T'}^{x'(\om)}\) (see (\ref{fibermetric})) on \( \pi^{-1}(x'(\om))\).
\begin{lemma}\label{coordinateslemma}  Let  \(T \overset {1}{\prec} T'\) be admissible topologies  and  \( i<j \)  such that \(T'(i) = T(i) \cup \{j \} \). Fix \( \beta >0 \) and let \( x_n \in \pi^{-1}(x'(\s ^{-n}\om))\) satisfy \(\vf_{x(\s ^{-n} \om)} (x_n)) \leq \beta\). Then for  \(m\)-a.e. \( \om \), as \( n \to \infty ,\) one has 
 \begin{equation}\label{northsouth}  \dist_{T,T'}^{E_{T'}(\om )} (g_{-1}(\om) \circ \ldots \circ g_{-n}(\om ) (x_n) , {E_{T}( \om )} ) \; \leq \; \exp(- \chi _{T,T'} n +o(n) ) . \end{equation} \end{lemma}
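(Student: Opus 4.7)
The plan is to reduce (\ref{northsouth}) to a north--south hyperbolic dynamics computation on the two-dimensional quotient space
\[V(\om') := x'(\om')_{T'(i)}\big/x'(\om')_{T'(i) \setminus \{i,j\}} \qquad (\om' \in \{\om, \s^{-n}\om\}),\]
on which \(g_{-1}(\om)\cdots g_{-n}(\om)\) induces a well-defined linear isomorphism \(V(\s^{-n}\om) \to V(\om)\). By Lemma \ref{coordinate} together with the definition (\ref{fibermetric}), the fiber \(\pi^{-1}(x'(\cdot))\) endowed with \(\dist_{T,T'}^{x'(\cdot)}\) is isometric (up to removing one point) to the projective line of \(V(\cdot)\) equipped with its angle distance, and under this identification \(\vf_{x(\cdot)}(0)\) corresponds to the class of \(w_i(\cdot)\), that is, to \(E_T(\cdot)\).

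First I would express \(x_n\) in these coordinates. Writing \(x_n = \vf_{E_T(\s^{-n}\om)}(u_n)\) with \(|u_n| \le \beta\), the associated line in \(V(\s^{-n}\om)\) is generated by
\[\cos(u_n)\,a_n\, w_i(\s^{-n}\om) + \sin(u_n)\,c_n\, w_j(\s^{-n}\om) \pmod{x'(\s^{-n}\om)_{T'(i)\setminus\{i,j\}}},\]
where \(a_n,c_n\) are the normalisations that turn the classes of \(w_i(\s^{-n}\om)\) and \(w_j(\s^{-n}\om)\) into the unit vectors \(X\) and \(Y\) from the proof of Lemma \ref{coordinate}. These normalisations are inverses of sines of angles between Oseledets directions and the sum of the others, so by Theorem \ref{oseledets}.2 applied at \(\s^{-n}\om\), \(a_n, c_n = e^{o(n)}\) as \(n \to \infty\).

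Next I would push forward by \(g_{-1}(\om)\cdots g_{-n}(\om)\). By equivariance of the Oseledets decomposition together with Theorem \ref{oseledets}.1,
\(g_{-1}(\om)\cdots g_{-n}(\om)\, w_k(\s^{-n}\om) = \alpha_k(n,\om)\, w_k(\om)\) with \(|\alpha_k(n,\om)| = e^{\chi_k n + o(n)}\) for \(k \in \{i,j\}\), while \(x'(\s^{-n}\om)_{T'(i)\setminus\{i,j\}}\) is sent to \(x'(\om)_{T'(i)\setminus\{i,j\}}\). The image line in \(V(\om)\) is therefore generated by
\[\cos(u_n)\, a_n \alpha_i\, w_i(\om) + \sin(u_n)\, c_n \alpha_j\, w_j(\om) \pmod{x'(\om)_{T'(i)\setminus\{i,j\}}}.\]
In a fixed orthonormal basis of \(V(\om)\) (whose choice introduces only \(\om\)-dependent, \(n\)-independent bilipschitz factors) the angle between this line and the class of \(w_i(\om)\) is of order
\[\frac{c_n}{a_n}\,|\tan u_n| \cdot \frac{|\alpha_j|}{|\alpha_i|} \;\le\; \tan(\beta)\cdot e^{o(n)} \cdot e^{-(\chi_i-\chi_j)n} \;=\; e^{-\chi_{T,T'}\, n + o(n)},\]
which, by the identification recalled in the first paragraph, is exactly \(\dist_{T,T'}^{E_{T'}(\om)}(g_{-1}(\om)\cdots g_{-n}(\om)\, x_n,\, E_T(\om))\), yielding (\ref{northsouth}). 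The only technical care needed is the uniform absorption of all \(e^{o(n)}\) factors, achieved by working on the full-measure set where the Oseledets angle and growth estimates of Theorem \ref{oseledets} hold simultaneously at \(\om\) and along its entire backward orbit.
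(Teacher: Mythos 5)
Your argument is correct and is essentially the paper's proof: the paper simply observes that \(\theta_n = \dist(E_T(\s^{-n}\om),E_{T'}(\s^{-n}\om)) \geq e^{-o(n)}\) by Oseledets and then invokes ``following the proof of Proposition \ref{configurationapproximationlemma}'', which runs exactly the computation you spell out — expanding a spanning vector of \(x_n\) in the Oseledets basis \(w_i(\s^{-n}\om), w_j(\s^{-n}\om)\) with coefficients controlled by \(e^{\pm o(n)}\), pushing forward by the cocycle to pick up the factor \(e^{(\chi_j-\chi_i)n+o(n)}\), and absorbing the \(\om\)-dependent (but \(n\)-independent) distortion at time \(0\). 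You do the bookkeeping directly in the quotient \(V\) rather than with the auxiliary vector \(z_n \in W_i(\s^{-n}\om)\) of the paper, but the two are equivalent.
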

 \begin{proof} By theorem \ref{oseledets}.2, the distance from \( E_T (\s ^{-n} \om) \) to \(  {E_{T'}(\s ^{-n} \om )}\) is at least \( \exp (-o(n)) .\) Therefore, \(  \dist_{T,T'}^{E_{T'}(\s^{-n}(\om ))} ( (x_n) , {E_{T'}(\s ^{-n} \om )} )  \geq   \exp (-o(n)) \) as well.
 Following the proof of proposition \ref{configurationapproximationlemma}, we have that the point \( x_n \) 
satisfies \[\dist_{T,T'}^{E_{T'}(\om )} (g_{-1}(\om) \circ \ldots \circ g_{-n}(\om ) (x_n) , {E_{T}( \om )} ) \; \leq \; \exp(- \chi _{T,T'} n +o(n) ) .\]
The lemma follows. \end{proof}
 
 \section{Proof of theorem  \ref{ent/exp1} }\label{section:ent/exp}
 Theorem \ref{ent/exp1} states that some entropy is estimated from above by exponents. For random walks on matrices, this is a fundamental observation of Furstenberg (\cite{furstenberg1963}). Theorem \ref{ent/exp1} and its proof are one more variant of the original proof: one shows equality for a big family of random walks on the same group and one approximates using this family.  The exponents are continuous and the entropy has some upper semi-continuous properties. This should be sufficient for the astute reader, but we will give a detailed proof anyway. In particular, it gives some a priori quasi-invariance of stationary measures (see Corollary \ref{finiteentropy}). We state and prove the generalisation of theorem \ref{ent/exp1} to the action on \(\F_Q\), for any \(Q\) partition de \( \{0, 1, \ldots ,d\} \).
 \begin{theorem}\label{ent/expQ1} With the above notations, for any partition $Q$ of $\{0,1, \ldots , d\}$ into intervals, there exists a stationary measure \( \nu _Q\) on \(\F_Q\) such that 
\begin{equation}\label{ent/expQ} h(\F_Q, \mu, \nu_Q )  \; \leq \; \sum _{i,j: \ell _Q(i) < \ell _Q(j) } \chi_i - \chi _j .\end{equation}
If there is equality in (\ref{ent/expQ}), then the measure \( \nu _Q \) is exact dimensional with  dimension \( \dim \F_Q\). \end{theorem}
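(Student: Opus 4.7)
The plan is to establish (\ref{ent/expQ}) via a Furstenberg-type identity relating the entropy to the log-Jacobian of the $G$-action on $\F_Q$, to bound that Jacobian integral by a sum of Lyapunov exponent differences via Oseledets' theorem, and to deduce the equality case from the Ledrappier--Young type dimension formula of Theorem \ref{mainT}.1 proved later in the paper.

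For the inequality, fix a smooth Riemannian probability $\lambda$ on $\F_Q$ and let $J_g(f) = |\det d_f g|$ denote the Jacobian of the action $f \mapsto gf$, so that $dg_*\lambda/d\lambda = J_{g^{-1}}$. When a stationary $\nu_Q = \rho\,\lambda$ is absolutely continuous, the chain rule yields $\log \frac{dg_*\nu_Q}{d\nu_Q}(f) = \log \rho(g^{-1}f) - \log \rho(f) + \log J_{g^{-1}}(f)$; integrating against $dg_*\nu_Q\, d\mu(g)$ and using stationarity of $\nu_Q$ to cancel the $\log\rho$ terms gives the identity
\[ h(\F_Q, \mu, \nu_Q) \;=\; -\int \log J_g(f)\, d\mu(g)\, d\nu_Q(f). \]
For a general (possibly singular) stationary $\nu_Q$, a Jensen/Gibbs-type inequality comparing the densities of $g_*\nu_Q$ and $\nu_Q$ against the common reference $\lambda$ yields the corresponding upper bound. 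Existence of a stationary measure satisfying this bound is obtained by first smoothing $\mu$ with an absolutely continuous compactly supported kernel on $G$, whose associated stationary measure $\tilde\nu_Q$ is then AC on $\F_Q$ and hence satisfies the identity exactly; one passes to the limit along a sequence of such smoothings using upper semi-continuity of the Furstenberg entropy under weak-$*$ convergence.

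Next, at each flag $f \in \F_Q$ the tangent space $T_f\F_Q$ decomposes into $N := \dim\F_Q$ one-dimensional root subspaces indexed by pairs $(i,j)$ with $\ell_Q(i) < \ell_Q(j)$, and the infinitesimal action of $g$ on each such line is a ratio of Jacobians of $g$ on nested subspaces of $\R^d$. Applying Theorem \ref{oseledets} to the derivative cocycle of $g_0$ on $T\F_Q$ over the skew product $(\Omega \times \F_Q, m \otimes \nu_Q, \widehat{\sigma})$, and invoking formulas (\ref{exponents;nonext}) and (\ref{exponents2}), the sum of Lyapunov exponents of this tangent cocycle is at most $\sum_{\ell_Q(i) < \ell_Q(j)} (\chi_j - \chi_i)$ by the Oseledets extremal bound. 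Therefore
\[ -\int \log J_g\, d\mu\, d\nu_Q \;\leq\; \sum_{\ell_Q(i) < \ell_Q(j)} (\chi_i - \chi_j), \]
and combined with the previous step this yields (\ref{ent/expQ}).

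For the equality case, apply Proposition \ref{order}.1 to the pair $T_Q \prec T_0$ to obtain a decomposition $T_Q = T^N \overset{1}{\prec} T^{N-1} \overset{1}{\prec} \cdots \overset{1}{\prec} T^0 = T_0$ whose exponents $\chi_{T^t, T^{t-1}}$ run through the differences $\chi_i - \chi_j$ with $\ell_Q(i) < \ell_Q(j)$. The Ledrappier--Young formula (\ref{LY}) from Theorem \ref{exactfibers} gives $h(\F_Q, \mu, \nu_Q) = \kappa_{T_Q, T_0} = \sum_{t=1}^N \g_{T^t, T^{t-1}}\, \chi_{T^t, T^{t-1}}$; since each $\g_{T^t, T^{t-1}} \leq 1$ by Lemma \ref{coordinate}, equality in (\ref{ent/expQ}) forces $\g_{T^t, T^{t-1}} = 1$ for every $t$. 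Theorem \ref{mainT}.1 then gives $\un \de(\nu_Q) \geq \sum_{t=1}^N \g_{T^t, T^{t-1}} = N = \dim\F_Q$, while $\ov \de(\nu_Q) \leq \dim\F_Q$ automatically since $\F_Q$ is a smooth manifold of that dimension, so $\nu_Q$ is exact-dimensional with dimension $\dim \F_Q$. The main obstacle is making the entropy--Jacobian inequality rigorous for genuinely singular stationary measures, which rests on the delicate smoothing-and-limit argument and on semi-continuity of the Furstenberg entropy.
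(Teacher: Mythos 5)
Your high-level strategy (mollify $\mu$, obtain an absolutely continuous stationary measure for which the entropy--Jacobian identity holds exactly, then pass to the weak-$*$ limit) is the same as the paper's. The equality case is also handled the same way, by invoking Proposition \ref{order}.1, formula (\ref{LY}), the bound $\g_{T^t,T^{t-1}}\le 1$, and Theorem \ref{mainT}.1. However, there are several substantive gaps in how you propose to close the inequality.

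First, the claimed ``Jensen/Gibbs-type inequality comparing the densities of $g_*\nu_Q$ and $\nu_Q$ against the common reference $\lambda$'' for a genuinely singular $\nu_Q$ does not make sense as written: if $\nu_Q$ is singular with respect to $\lambda$, then neither $\nu_Q$ nor $g_*\nu_Q$ has a density relative to $\lambda$, so the decomposition $\log\frac{dg_*\nu_Q}{d\nu_Q}=\log\rho(g^{-1}f)-\log\rho(f)+\log J_{g^{-1}}(f)$ is unavailable and there is nothing to apply Jensen to. This putative direct route should simply be discarded; only the mollification route is viable.

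Second, and most importantly, the passage to the limit is the crux, and ``upper semi-continuity of the Furstenberg entropy under weak-$*$ convergence'' is neither standard nor easy; it is precisely what needs proof. The Furstenberg entropy involves a logarithm of a Radon--Nikodym derivative, and Radon--Nikodym derivatives behave badly under weak-$*$ limits. The paper resolves this by working with \emph{mutual information} $I$ of the pair $(g,gf)$ rather than with $h$ directly, using the Gelfand--Yaglom--Perez theorem to identify $I$ with $h$ when $I<\infty$, and then invoking Dobrushin's theorem (that $I$ is a supremum over finite Borel partitions) to obtain the lower semi-continuity $I\le\liminf_n I_n$ (Lemma \ref{uppersemicont.}). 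Without this machinery, your limiting step is a gap, not a step. In addition, the paper mollifies on the orthogonal group $SO(d)$ rather than by a compactly supported kernel on all of $G$; since left multiplication by orthogonal elements preserves $\|g\|$, this makes the uniform integrability of $\log\|g\|$ against $\mu_n\times\nu_n$ automatic and is needed to justify convergence of the exponent integrals (Lemma \ref{limitschi}).

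Third, the ``Oseledets extremal bound'' you invoke for the tangent cocycle is not a standard statement, and you do not need an inequality here anyway. For the mollified random walk with its absolutely continuous stationary measure $\nu_{Q,n}$, the entropy equals the mutual information $I_n$, and the paper's explicit computation of $\frac{dg_*\eta_Q}{d\eta_Q}$ (Proposition \ref{explicit}) turns the log-Jacobian integral into an exact \emph{identity} $I_n=\sum_{\ell_Q(i)<\ell_Q(j)}(\chi_{i,n}-\chi_{j,n})$, where the $\chi_{i,n}$ are the exponents of the mollified walk. The inequality then comes only from the semi-continuity of $I$ in the limit, not from any Oseledets-type bound. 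Your invocation of formulas (\ref{exponents;nonext}) and (\ref{exponents2}) also conflates the exponents of $\mu$ with those of $\mu_n$; the paper is careful to define $\chi_{i,n}$ for each $n$ and prove $\chi_{i,n}\to\chi_i$ separately.

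In summary: the outline is right and the equality case is handled correctly, but the core analytic step --- making the limit rigorous --- requires the Gelfand--Yaglom--Perez / Dobrushin mutual-information framework that the paper sets up in Section \ref{mutualinformation1}, and your proposal as written does not supply a substitute for it.
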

For \( \mu \in \MM(d)\), the stationary measure \( \nu _Q\) is unique and thus satisfies (\ref{ent/expQ}).

\subsection{Mollification of \(\mu\)}

For each \(n=1,2,3,\ldots\) fix a probability \(\lambda_n\) with a smooth positive density with respect to Haar measure on the orthogonal group of \(\R^d\), in such a way that \(\lim\limits_{n \to +\infty}\lambda_n = \delta_{Id}\) where \(\delta_{Id}\) is the point mass at the identity.

Let \(\mu_n = \lambda_n * \mu\) so one has, for all continuous \(h:G \to \R\)
\begin{equation}\label{approximation} \int\limits_{G} h(g) d\mu_n(g) = \int\limits_{G} \int h(rg) d\lambda_n(r) d\mu(g).\end{equation}

Let \(\eta\) be the orthogonally invariant probability on \(\F\).
\begin{lemma}
 For each \(n\) there is a unique \(\mu_n\)-stationary probability \(\nu_n\) on \(\F\).  Furthermore, \(\nu_n\) has a continuous positive density with respect to \(\eta\).
\end{lemma}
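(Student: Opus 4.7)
The plan is to exploit the convolution structure \(\mu_n = \lambda_n * \mu\) to transfer the smoothness of \(\lambda_n\) to any \(\mu_n\)-stationary measure, and then obtain uniqueness via a Doeblin minorization.

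Existence of a \(\mu_n\)-stationary probability is automatic from the Markov--Kakutani theorem applied to the continuous affine map \(\sigma \mapsto \int g_*\sigma\, d\mu_n(g)\) on the weak\(^*\)-compact convex set of probability measures on \(\F\). For regularity, let \(\nu_n\) be any \(\mu_n\)-stationary probability. Using (\ref{approximation}) and Fubini,
\[
\nu_n \;=\; \int g'_*\nu_n\, d\mu_n(g') \;=\; \int r_*(g_*\nu_n)\, d\lambda_n(r)\, d\mu(g) \;=\; \int r_*\sigma_n\, d\lambda_n(r),
\]
where \(\sigma_n := \int g_*\nu_n\, d\mu(g)\) is itself a probability on \(\F\). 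So \(\nu_n = \lambda_n * \sigma_n\) in the sense of convolution of a rotation-averaging measure with a measure on \(\F\).

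The key step is to show that for \emph{any} probability \(\sigma\) on \(\F\), the measure \(\lambda_n * \sigma := \int r_*\sigma\, d\lambda_n(r)\) has a continuous strictly positive density with respect to \(\eta\). Since \(O(d)\) acts smoothly and transitively on \(\F\), for each fixed \(f \in \F\) the map \(r \mapsto rf\) is a smooth surjective submersion \(O(d) \to \F\); pushing forward \(\lambda_n\) (which has a smooth positive density with respect to Haar measure on \(O(d)\)) yields a smooth positive density \(K_n(f,\cdot)\) with respect to \(\eta\). A standard fiber-integration check shows that \(K_n\) is jointly continuous and strictly positive on the compact space \(\F \times \F\). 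Then \(\lambda_n * \sigma\) has density \(f' \mapsto \int K_n(f,f')\, d\sigma(f)\), which is continuous and positive by dominated convergence. Applied to \(\sigma = \sigma_n\), this proves that \(\nu_n\) has continuous positive density with respect to \(\eta\).

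For uniqueness, I would argue via Doeblin. The Markov kernel \(P_n(f, A) = \int \mathbf{1}_A(gf)\, d\mu_n(g)\) admits the density
\[
P_n(f, df') = k_n(f,f')\, d\eta(f'), \qquad k_n(f,f') = \int K_n(gf,f')\, d\mu(g).
\]
Since \(K_n\) is continuous and strictly positive on the compact space \(\F \times \F\), there exists \(c > 0\) with \(K_n \ge c\) everywhere, hence \(k_n(f,f') \ge c\) for all \(f,f'\). This uniform minorization \(P_n(f,\cdot) \ge c\,\eta\) is the Doeblin condition, which implies that \(P_n\) admits a unique invariant probability measure. The only anticipated technical point is the joint continuity and positivity of \(K_n\), which follows from the smoothness of \(\lambda_n\)'s density and the fact that the stabilizer subgroup of a flag in \(O(d)\) is compact, so that the fiber integral defining \(K_n\) varies continuously in \((f,f')\); beyond this the argument is routine.
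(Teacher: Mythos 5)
Your proof is correct, and the regularity part is essentially the same as the paper's: both write \(\nu_n = \lambda_n * (\mu * \nu_n)\) using associativity of convolution and observe that \(\lambda_n * \sigma\) has a continuous positive density with respect to \(\eta\) for any probability \(\sigma\), since \(\lambda_n\) has a smooth positive density on \(O(d)\) and \(O(d)\) acts transitively on \(\F\).

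Where you diverge is in the uniqueness step. The paper's argument is short and soft: any two distinct \emph{extremal} \(\mu_n\)-stationary probabilities must be mutually singular, but the regularity just established shows every \(\mu_n\)-stationary probability is equivalent to \(\eta\), so there can be only one extremal one, and hence only one. You instead extract a uniform Doeblin minorization \(P_n(f,\cdot) \ge c\,\eta\) from the continuity and strict positivity of \(K_n\) on the compact \(\F \times \F\), and invoke the classical consequence that a Doeblin chain has a unique invariant probability. Both routes are valid. Yours is more quantitative and in fact yields more (exponential convergence of \(P_n^k\mu\) to \(\nu_n\) in total variation, uniformly in the initial law), at the cost of needing the uniform lower bound on \(K_n\); the paper's route is lighter, needing only that the stationary measures are all equivalent to a common reference measure. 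One small point worth flagging in your version: you should make explicit that the uniform bound \(K_n \ge c > 0\) is where compactness of \(\F\times\F\) and joint continuity are essential — the argument would fail if \(K_n\) were merely positive but not bounded away from zero.
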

\begin{proof}
 For any \(\mu_n\)-stationary probability we have
 \[\nu_n = \mu_n * \nu_n = (\lambda_n * \mu) * \nu_n = \lambda_n * (\mu * \nu_n).\]
  Since, for any probability \(m\) on \(\F\), the convolution \(\lambda_n*m\) has a continuous positive density with respect to \(\eta\) it follows that any \(\mu_n\)-stationary probability has this property.
 However, any two distinct extremal \(\mu_n\)-stationary probabilities must be mutually singular.  This implies that \(\nu_n\) is unique as claimed.
\end{proof}

Endow \( \mathcal M (G \times \F) \)  with the topology of convergence over continuous function \( \vf \) on $(G\times \F)$ with \(|\vf (g,f) |\leq C \log \|g\| \) for some constant $C$. 
\begin{lemma}
 In \( \mathcal M(G\times \F),\) any limit  \(\lim\limits_{n \to +\infty}(\mu_n \times \nu_n) \) is of the form \( \mu \times \nu \) for some stationary \( \nu \) on \(  \F\).
\end{lemma}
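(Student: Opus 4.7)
The plan is to fix a subsequence along which $\nu_n$ converges weakly on the compact space $\F$ to some probability $\nu$, upgrade this convergence to the product convergence $\mu_n \times \nu_n \to \mu \times \nu$ in the topology of the ambient paper (continuous test functions with log-growth in $g$), and finally pass to the limit in the stationarity equation $\mu_n * \nu_n = \nu_n$.

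First, by Prokhorov applied to probabilities on the compact flag space $\F$, I extract a subsequence with $\nu_n \to \nu$ weakly. I also verify that $\mu_n \to \mu$ in the ambient topology: for any continuous $h$ on $G$ with $|h(g)| \le C\log\|g\|$, (\ref{approximation}) reads $\int h\,d\mu_n = \int\!\!\int h(rg)\,d\lambda_n(r)\,d\mu(g)$; the crucial observation is that for an orthogonal $r$ one has $\|rg\|=\|g\|$ in operator norm (since $r$ is an isometry of $\R^d$), so the integrand is bounded in modulus by $C\log\|g\|$, which is $\mu$-integrable. Pointwise, $\int h(rg)\,d\lambda_n(r)\to h(g)$ as $\lambda_n\to\delta_{Id}$, and dominated convergence yields $\int h\,d\mu_n\to\int h\,d\mu$.

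Next, I promote these two convergences to the desired product convergence. Pick a continuous $\phi$ on $G\times\F$ with $|\phi(g,f)|\le C\log\|g\|$, and set $\Phi_n(g)=\int\phi(g,f)\,d\nu_n(f)$, $\Phi(g)=\int\phi(g,f)\,d\nu(f)$. On any compact $K\subset G$, the family $\{\phi(g,\cdot):g\in K\}$ is equicontinuous on $\F$, so weak convergence $\nu_n\to\nu$ implies $\Phi_n\to\Phi$ uniformly on $K$; in particular $\Phi_n\to\Phi$ pointwise on $G$. Writing
\[
\int\phi\,d(\mu_n\times\nu_n)\;=\;\int\!\!\int\Phi_n(rg)\,d\lambda_n(r)\,d\mu(g),
\]
and using $|\Phi_n(rg)|\le C\log\|rg\|=C\log\|g\|$, a further application of dominated convergence (the inner integral against $\lambda_n$ concentrates around $r=Id$ and $\Phi_n$ is locally uniformly close to $\Phi$) gives $\int\phi\,d(\mu_n\times\nu_n)\to\int\Phi\,d\mu=\int\phi\,d(\mu\times\nu)$. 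Hence $\mu_n\times\nu_n\to\mu\times\nu$ in the prescribed topology, and in particular any limit point must be a product measure with first marginal $\mu$.

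Finally, to see $\nu$ is $\mu$-stationary, apply the above to $\phi(g,f)=k(g\cdot f)$ for bounded continuous $k:\F\to\R$ (continuous because $G$ acts continuously on $\F$, and trivially of log-growth since it is bounded). The $\mu_n$-stationarity of $\nu_n$ reads
\[
\int k(g\cdot f)\,d(\mu_n\times\nu_n)(g,f)\;=\;\int k\,d\nu_n,
\]
and passing to the limit gives $\int k(g\cdot f)\,d(\mu\times\nu)=\int k\,d\nu$, i.e. $\nu=\int g_{\ast}\nu\,d\mu(g)$.

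The main obstacle is the product-convergence step: because the topology on $\mathcal M(G\times\F)$ tests against unbounded functions (log-growth in $g$), weak convergence of marginals alone is insufficient. One really needs both the mollification structure $\mu_n=\lambda_n*\mu$ and the fact that $\|rg\|=\|g\|$ for orthogonal $r$ to get a genuine $\mu$-integrable uniform majorant and apply dominated convergence to the iterated integral; without the orthogonality hypothesis on the support of $\lambda_n$ this step would fail.
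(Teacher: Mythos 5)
Your proof is correct but takes a different technical route than the paper's. The paper first observes that along any vaguely converging subsequence the limit is necessarily $\mu\times\nu$ for some stationary $\nu$, and then upgrades vague convergence to convergence in $\mathcal M(G\times\F)$ by applying Skorohod's representation theorem to the sequence $(\mu_n\times\nu_n)$ and checking that the random variables $|\varphi(A_n,F_n)|$ are uniformly integrable, the uniform integrability resting on the same key observation you use: left-multiplication by an orthogonal matrix preserves $\|\cdot\|$, so $\log\|A_n\|$ has the same distribution as $\log\|A\|$, giving a single dominating integrable variable. You instead exploit the explicit convolution structure $\mu_n=\lambda_n*\mu$ to write everything as an iterated integral against $\lambda_n\otimes\mu$, upgrade weak convergence $\nu_n\to\nu$ to locally uniform convergence of $\Phi_n=\int\phi(\cdot,f)\,d\nu_n(f)$ via equicontinuity, and then conclude by dominated convergence using the same majorant $C\log\|g\|$. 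The two arguments are essentially of comparable difficulty and both hinge on the orthogonal invariance of the norm; yours is somewhat more hands-on and avoids invoking Skorohod, while the paper's is shorter once one grants the probabilistic machinery. One small point you could spell out for completeness is the continuity of the limiting map $\Phi(g)=\int\phi(g,f)\,d\nu(f)$ in $g$, which is needed for the pointwise limit $\int\Phi_n(rg)\,d\lambda_n(r)\to\Phi(g)$; it follows immediately from dominated convergence since $\phi$ is continuous and locally bounded in $g$, uniformly in $f$.
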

\begin{proof} 
We have \( \lim\limits_{n \to \infty } \mu_n = \mu \) and, since any weak*-limit of \(\nu _n \) is \( \mu \)-stationary,  \( \lim\limits_{n \to \infty }( \mu_n \times \nu_n ) \) is of the form \( \mu \times \nu \) for some \( \mu \)-stationary \( \nu \) on \(  \F\).
A priori, we have convergence against only continuous functions with compact support on \( G\times \F\). We want to ensure that there is also convergence in \( \mathcal M(G\times \F).\) We may assume that \( \{ \mu_n \times \nu_n \}_{n \to \infty} \) is the converging sequence.

By  Skorohod's representation theorem,  there exists a probability space \((P,\mathcal{E},\P)\) and random elements on this space such that
\[\lim\limits_{n \to +\infty}(A_n,F_n) = (A,F)\]
almost surely, where \((A,F)\) has distribution \(\mu \times \nu\) and \((A_n,F_n)\) has distribution \(\mu_n \times \nu_n\) for each \(n\).
Let \( \vf \) be a continuous function \( \vf \) on $(G\times \F)$ with \(|\vf (g,f) |\leq C \log \|g\| \) for some constant $C$. 

With these auxiliary random elements we calculate (denoting integration with respect to \(\P\) by \(\E{}\) as usual)
\[\lim\limits_{n \to +\infty}\int \vf (g,f) d\mu _n (g) d\nu _n (f)  =  \lim\limits_{n \to +\infty}\E{\left(\vf (A_n,F_n)\right)}.\]

Observe that, since left composition with an orthogonal transformation does not change the norm a linear mapping, 
 the random variables \(\left(|\vf (A_n,F_n)|\right)_{n= 1,2,\ldots }\) are uniformly integrable.  It follows that 
 \[ \lim\limits_{n \to +\infty}\E{\vf (A_n,F_n)} \; = \; \E{\vf (A,F)}. \]
\end{proof}

\begin{lemma}\label{limitschi}
 For each \(n\) and each \(i = 1,\ldots,d\) let \( \chi _{i,n}\) be such that \[ \sum _{j\leq i } \chi_{j,n} := \int\limits_{G} \int\limits_{\F} \log\left(|\det_{U_i(f)}(g)|\right) d\nu_n(f)d\mu_n(g)\].
 Then one has \(\lim\limits_{n \to +\infty}\chi_{i,n} = \chi_{i}\) for each \(i = 1,\ldots,d\).
\end{lemma}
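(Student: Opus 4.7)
The plan is to deduce this lemma directly from the preceding weak convergence result $\mu_n \times \nu_n \to \mu \times \nu$ in $\mathcal{M}(G \times \F)$, applied to the test functions $\varphi_i(g,f) := \log|\det_{U_i(f)}(g)|$. Once we check that each $\varphi_i$ is continuous and satisfies a growth bound of the form $|\varphi_i(g,f)| \le C_i \log\|g\|$, the convergence of partial sums $\sum_{j \le i} \chi_{j,n} = \int \varphi_i \, d(\mu_n \times \nu_n)$ to $\int \varphi_i \, d(\mu \times \nu)$ is immediate, and the right-hand integral equals $\sum_{j \le i} \chi_j$ by equation (\ref{exponents;nonext}) applied to the limiting stationary measure $\nu$. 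Taking successive differences then gives $\chi_{i,n} \to \chi_i$ for each $i$.

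Continuity of $\varphi_i$ on $G \times \F$ is straightforward: the assignment $f \mapsto U_i(f)$ is continuous from $\F$ into the Grassmannian $\GG_i$, the restriction $g|_{U_i(f)}$ depends continuously on both variables, and its Jacobian is a continuous function of this linear map. Since every $g \in SL_d(\R)$ is invertible, this Jacobian is strictly positive on all of $G \times \F$, so its logarithm is continuous.

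For the growth bound, I would use the singular value decomposition. Writing the singular values of $g \in SL_d(\R)$ as $\sigma_1 \ge \sigma_2 \ge \cdots \ge \sigma_d > 0$ with $\prod_j \sigma_j = 1$, one has on the one hand
\[
|\det_{U_i(f)}(g)| \; \le \; \sigma_1 \cdots \sigma_i \; \le \; \sigma_1^{\,i} \; = \; \|g\|^i,
\]
and on the other hand, by the min-max characterization,
\[
|\det_{U_i(f)}(g)| \; \ge \; \sigma_{d-i+1} \cdots \sigma_d \; = \; \frac{1}{\sigma_1 \cdots \sigma_{d-i}} \; \ge \; \|g\|^{-(d-i)}.
\]
Since $g \in SL_d(\R)$ forces $\|g\| \ge 1$ (the largest singular value dominates the geometric mean), these bounds combine to give $|\varphi_i(g,f)| \le d \, \log\|g\|$, which is exactly the growth condition defining the topology on $\mathcal{M}(G \times \F)$ used in the previous lemma.

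The main (and only) obstacle is this uniform growth estimate, and it is resolved by the SVD argument above. Once it is in place, the proof is a one-line application of weak convergence. I would finish by noting that $\chi_{1,n} \to \chi_1$ is the case $i=1$ directly, and $\chi_{i,n} = \sum_{j \le i}\chi_{j,n} - \sum_{j \le i-1}\chi_{j,n} \to \chi_i$ for $i \ge 2$.
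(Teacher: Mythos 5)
Your proof is correct and follows exactly the route the paper takes: apply the preceding weak-convergence lemma (in the topology of $\mathcal{M}(G\times\F)$ over continuous test functions with $|\varphi|\le C\log\|g\|$) to $\varphi_i(g,f) = \log|\det_{U_i(f)}(g)|$, and identify the limiting integral with the partial sum of exponents via equation (\ref{exponents;nonext}). The paper's one-line justification simply asserts a (typo-ridden) growth bound with constant $d!$; your singular-value argument supplies a clean proof of that bound (with the even better constant $d$), which is the only detail the paper leaves implicit.
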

\begin{proof} Recall the definition of the Lyapunov exponents by equation  (\ref{exponents;nonext}). We may apply the preceding lemma since \( |\det_{U_i(f)}(g)| \leq d! |\log g|. \) 
\end{proof}

\subsection{Quasi-independence and mutual information}\label{mutualinformation1}
 Let \( (\Om , \P)\) be a probability space. Given \(X:\Omega \to \X\) and \(Y:\Omega \to \mathcal{Y}\) taking values in Polish spaces, we say \( X \) and \( Y\) are quasi-independent if the distribution \( \P_{X,Y} \) of \(X,Y\) is absolutely continuous with respect to the product \( \P_X\otimes \P_Y \) of the distributions of \(X\) and \(Y\). We write \( f(x,y) \) or \( f_\P (x,y) \) for the Radon-Nikodym derivative \( \frac{d\P_{X,Y} }{d(\P_X\otimes \P_Y)}.\) In that case, the disintegration of the measure \( \P_{X,Y} \)  with respect to the projections over \( X \) and \( Y\) is respectively \( \P^x =  f(x,y) \P_Y \) and \( \P^y =  f(x,y) \P_X \).
 
 For  \( X \) and \( Y\) as above, define the mutual information between \(X\) and \(Y\)  as
\[I(X,Y) = I_{\P}(X,Y) = \sup \sum\limits_{A \in \mathcal{A}}\log\left(\frac{\P_{(X,Y)}(A)}{(\P_{X}\times \P_{Y})(A)}\right)\P_{(X,Y)}(A),\]
where the supremum is over finite Borel partitions \(\mathcal{A}\) of \(\X\times \mathcal{Y}\)\footnote{By convention, \( \log\left(\frac{\P_{(X,Y)}(A)}{(\P_{X}\times \P_{Y})(A)}\right)\P_{(X,Y)}(A) =0 \) if \( \P_{X,Y} (A) = 0,\) the sum is \(  +\infty \)  if there is one \( A \in \mathcal A\) such that \( \P_{X,Y} (A) \not = 0\) and \( (\P_{X}\times \P_{Y})(A) =0.\)}.

Directly from the definition one sees that \(I(X,Y) = I(Y,X)\).   By Jensen inequality \(0 \le I(X,Y) \le +\infty\) with equality to \(0\) if and only if \(X\) and \(Y\) are independent.   If \(X\) takes countably many values and has finite entropy \(H(X)\) in the sense of \cite{shannon}  one has \(I(X,Y) \le H(X)\). It was shown in \cite{dobrushin} that \(I(X,Y)\) is the supremum over any sequence of partitions which generate the Borel \(\sigma\)-algebra in \(\mathcal{X}\times \mathcal{Y}\). 

It was shown in \cite{gelfand-yaglom} and \cite{perez} that if \(I(X,Y) < +\infty\) then \(X\) and \(Y\) are quasi-independent  and 
\[ I(X,Y) \; = \; \int \log f(x,y)\, d\P_{X,Y} (x,y) \; =\; \int f(x,y)\log f(x,y)  \,d\P_X(x)d\P_Y(y) .\]

Let \(Q\) be a partition of \(\lbrace 0,1,\ldots d\rbrace\) into intervals and \(\pi _Q:\F \to \F_Q\) the projection from \(\F\) into the corresponding space of partial flags.
Consider the stationary measure  \(\nu_Q = (\pi_Q)_*(\nu)\) that is  a limit  of the measures \(\nu_{Q,n} = (\pi_Q)_*\nu_n\) as \( n \to \infty.\)

We define the probability \(\P\) on \(G \times \F_Q\) so that \[ \int h(g,f) d\P(g,f) = \int\limits_{G} \int\limits_{\F_Q} h(g,gf) d\nu_Q(f) d\mu(g)\] and the mutual information \(I \)  between the coordinate projections on \(G \times \F_{Q}\) with respect to \(\P\). Inequality  (\ref {ent/expQ}) for \(\nu _Q\) and thus theorem \ref {ent/expQ1} will follow directly from 
\begin{proposition}\label{Ifinite} With the above notations, \[ I \le \sum\limits_{\ell_Q(i) < \ell_Q(j)}\chi_i - \chi_j.\] \end{proposition}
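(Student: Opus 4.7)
The plan is to establish the bound first for the mollified pairs \((\mu_n,\nu_{Q,n})\), where the smoothness of the stationary measure allows an exact evaluation of the mutual information in terms of Lyapunov exponents, and then to pass to the limit using lower semi-continuity of mutual information.

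I would begin by noting that \(I\) agrees with the Furstenberg entropy \(h(\F_Q,\mu,\nu_Q)\). Indeed under \(\P\) the conditional law of \(Y=gf\) given \(X=g\) is \(g_\ast\nu_Q\), while the \(Y\)-marginal is \(\nu_Q\) by stationarity, so \(\frac{d\P_{Y\mid X=g}}{d\P_Y}=\frac{dg_\ast\nu_Q}{d\nu_Q}\) and the two definitions coincide. In particular, the bound claimed in proposition \ref{Ifinite} is equivalent to inequality (\ref{ent/expQ}) for \(\nu_Q\), which is what theorem \ref{ent/expQ1} asserts.

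For the smooth pair \((\mu_n,\nu_{Q,n})\), letting \(\phi_n\) be the continuous positive density of \(\nu_{Q,n}\) with respect to the \(SO(d)\)-invariant (Riemannian) probability \(\eta_Q\) on \(\F_Q\), one has \(I_n=h_n<+\infty\). Writing
\[\frac{dg_\ast\nu_{Q,n}}{d\nu_{Q,n}}(gf)=\frac{\phi_n(f)\,J_g(gf)}{\phi_n(gf)}\]
with \(J_g=\frac{dg_\ast\eta_Q}{d\eta_Q}\), and cancelling the \(\log\phi_n\) terms via the stationarity identity \(\int\log\phi_n(gf)\,d\mu_n(g)d\nu_{Q,n}(f)=\int\log\phi_n\,d\nu_{Q,n}\), together with the Riemannian identity \(J_g(gf)=|\det Dg(f)|^{-1}\), one arrives at
\[h_n \;=\; -\int\log|\det Dg(f)|\,d\mu_n(g)\,d\nu_{Q,n}(f).\]
The core computation is then the Jacobian identification: decomposing \(T_f\F_Q=\bigoplus_{r<s}\mathrm{Hom}(V_r,V_s)\) with \(V_r=U_{q_r}(f)/U_{q_{r-1}}(f)\), the linearization \(Dg(f)\) acts on \(\mathrm{Hom}(V_r,V_s)\) by \(X\mapsto (g|_{V_s})\circ X\circ(g|_{V_r})^{-1}\), whose determinant factors as \(\Delta_s(g,f)^{\dim V_r}\,\Delta_r(g,f)^{-\dim V_s}\) with \(\Delta_s(g,f)=|\det_{U_{q_s}(f)}g|/|\det_{U_{q_{s-1}}(f)}g|\). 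Averaging via formula (\ref{exponents;nonext}) — which applies because \(\nu_n\) is the unique, hence extremal, \(\mu_n\)-stationary probability — gives \(\int\log\Delta_s\,d\mu_n d\nu_{Q,n}=\sum_{\ell_Q(k)=s}\chi_{k,n}\), and summing over \(r<s\) collapses to \(h_n=\sum_{\ell_Q(i)<\ell_Q(j)}(\chi_{i,n}-\chi_{j,n})\), with equality in the smooth case.

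To pass to the limit, I would appeal to lower semi-continuity of mutual information in the weak-\(\ast\) topology of the joint law. The preceding lemma provides convergence \(\mu_n\times\nu_{Q,n}\to\mu\times\nu_Q\) in a topology stronger than weak-\(\ast\) (against continuous functions dominated by \(\log\|g\|\)), which push-forwards under \((g,f)\mapsto(g,gf)\) to weak-\(\ast\) convergence \(\P_n\to\P\) on \(G\times\F_Q\); hence \(I\le\liminf_n I_n\), and Lemma \ref{limitschi} finishes the proof since \(\liminf_n\sum_{\ell_Q(i)<\ell_Q(j)}(\chi_{i,n}-\chi_{j,n})=\sum_{\ell_Q(i)<\ell_Q(j)}(\chi_i-\chi_j)\). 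The principal obstacle is the Jacobian identification in the smooth case: confirming that \(T_f\F_Q\) decomposes over exactly those pairs \((i,j)\) with \(\ell_Q(i)<\ell_Q(j)\), and that the averages \(\int\log\Delta_s\,d\mu_n d\nu_{Q,n}\) are the expected partial sums of Lyapunov exponents rather than some correction term.
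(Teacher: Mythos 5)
Your proposal is correct and follows essentially the same route as the paper: mollify to $(\mu_n,\nu_{Q,n})$, use cancellation of the smooth density $\phi_n$ plus the Jacobian of the rotation-invariant measure to compute $I_n$ exactly, then pass to the limit via Dobrushin's lower semi-continuity of mutual information and Lemma \ref{limitschi}. The only (minor) difference is how the Jacobian $\frac{dg_*\eta_Q}{d\eta_Q}$ is evaluated: you linearize directly on the tangent space decomposition $T_f\F_Q \cong \bigoplus_{r<s}\mathrm{Hom}(V_r,V_s)$, while the paper derives the same explicit formula (Proposition \ref{explicit}) by a chain of splittings through Grassmannians; the two computations agree.
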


Indeed, by proposition \ref{Ifinite}, the variables \(G\) and \( \F_Q\) are quasi-independent under $\P$, with density \( \frac{d g_\ast\nu _Q}{d \nu _Q} (f) \). In particular,
\[ h(\F_Q,\mu,\nu_Q) = \int \log\left(\frac{dg_*\nu_{Q}}{d\nu_{Q}}(f)\right) dg_*\nu_Q(f)d\mu(g) = I \le \sum\limits_{\ell_Q(i) < \ell_Q(j)}\chi_i - \chi_j,\]
which is the statement of (\ref {ent/expQ}).

\begin{proof}[Proof of proposition \ref{Ifinite}]
We analogously define the probability \(\P_n\) on \(G \times \F_Q\) so that \[ \int h(g,f) d\P_n (g,f) = \int\limits_{G} \int\limits_{\F_Q} h(g,gf) d\nu_{Q,n}(f) d\mu_n(g)\] and the mutual information \(I _n\)  between the coordinate projections on \(G \times \F_{Q}\) with respect to \(\P_n\). 
\begin{lemma}\label{uppersemicont.}
 In the above context, \(I \le \liminf\limits_{n \to +\infty}I_n\).
\end{lemma}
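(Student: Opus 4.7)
The strategy is to verify the lower semi-continuity of mutual information along the sequence $(\P_n)$, exploiting the partition characterization of $I$ from Dobrushin together with the weak convergence $\P_n \to \P$ and of the marginals.

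First, I would establish that $\P_n \to \P$ weakly on $G \times \F_Q$ and that both marginals converge. For any bounded continuous $h : G \times \F_Q \to \R$, the function $\tilde{h}(g,f) := h(g, g\pi_Q(f))$ is bounded and continuous on $G \times \F$, hence the preceding lemma yields $\int h\, d\P_n = \int \tilde{h}\, d(\mu_n \times \nu_n) \to \int \tilde{h}\, d(\mu \times \nu) = \int h\, d\P$. The marginals satisfy $\P_{n,G} = \mu_n \to \mu$ weakly, while stationarity gives $\P_{n,\F_Q} = \mu_n \ast \nu_{Q,n} = \nu_{Q,n} \to \nu_Q$ weakly.

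Next, fix $M < I$ and choose a finite Borel partition $\A$ of $G \times \F_Q$ with
\[ \sum_{A \in \A} \P(A) \log\!\left(\P(A)/\tilde{\P}(A)\right) > M, \]
where $\tilde{\P} := \mu \otimes \nu_Q$. By the log-sum inequality, refinement can only increase this sum, so I may assume each $A \in \A$ is a product rectangle $B \times C$ where $B$ is a $\mu$-continuity set in $G$ and $C$ is a $\nu_Q$-continuity set in $\F_Q$ (such rectangles generate the Borel $\sigma$-algebra by taking balls of generic radii, whose boundaries receive zero marginal mass). Each such rectangle is automatically a $\P$-continuity set, since its topological boundary lies in $(\pp B \times \F_Q) \cup (G \times \pp C)$, of $\P$-measure $\mu(\pp B) + \nu_Q(\pp C) = 0$.

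By the Portmanteau theorem, $\P_n(A) \to \P(A)$ and $\tilde{\P}_n(A) = \mu_n(B)\,\nu_{Q,n}(C) \to \tilde{\P}(A)$ for every $A \in \A$. Discarding atoms where $\tilde{\P}(A) = 0$ (these force $\P(A) = 0$ as well when $I < +\infty$; when $I = +\infty$ the argument runs for arbitrarily large $M$), the continuity of $(x,y) \mapsto x \log(x/y)$ on $(0,1] \times (0,1]$ yields
\[ I_n \;\ge\; \sum_{A \in \A} \P_n(A) \log\frac{\P_n(A)}{\tilde{\P}_n(A)} \;\longrightarrow\; \sum_{A \in \A} \P(A) \log\frac{\P(A)}{\tilde{\P}(A)} \;>\; M, \]
so $\liminf_n I_n \ge M$; letting $M \uparrow I$ concludes. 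The main delicate point is the approximation step: producing a partition into joint continuity sets for both $\P$ and $\tilde{\P}$ while retaining a near-supremum value of the Kullback--Leibler-type sum. This relies on the Polish structure of $G$, compactness of $\F_Q$, and the observation that only countably many ball radii can charge the boundaries of balls under a finite collection of probability measures.
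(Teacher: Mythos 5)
The proposal is correct and follows essentially the same route as the paper's proof: Dobrushin's characterization of mutual information as a supremum over partitions from any generating family, combined with Portmanteau-type weak convergence of $\P_n$, $\mu_n$, $\nu_{Q,n}$. One sentence (``refinement can only increase this sum, so I may assume each $A$ is a product rectangle'') reads as though an arbitrary Borel partition could be refined into rectangles, which is not literally true --- the correct step is that Dobrushin lets you restrict the supremum to partitions drawn from the algebra generated by these continuity-set rectangles, which then \emph{can} be refined into disjoint rectangles --- but your parenthetical makes the intended generating-family argument clear, so this is a wording slip rather than a gap.
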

\begin{proof}
By Dobrushin theorem the supremum may be taken over partitions whose sets belong to any generating set for the Borel \(\sigma\)-algebra.   Therefore we may consider only partitions into sets satisfying \(\lim\limits_{n \to +\infty}\P_n(A) = \P(A)\).   The inequality follows immediately.
\end{proof}

\begin{lemma}\label{abs.cont}
 For each \(n\) one has \(I_n = \sum\limits_{\ell_Q(i) < \ell_Q(j)}\chi_{i,n} - \chi_{j,n}\).
\end{lemma}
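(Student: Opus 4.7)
The plan is to identify \(I_n\) with the Furstenberg entropy \(h(\F_Q,\mu_n,\nu_{Q,n})\), and then to compute this entropy using the smoothness of \(\nu_{Q,n}\) together with an explicit formula for the Jacobian of the \(G\)-action on \(\F_Q\).

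First I would observe that \(\nu_{Q,n}=(\pi_Q)_\ast\nu_n\) inherits from \(\nu_n\) a continuous positive density \(\rho_n\) with respect to the \(SO(d)\)-invariant probability \(\eta_Q=(\pi_Q)_\ast\eta\) on \(\F_Q\). Since \(g\) acts smoothly on \(\F_Q\) and \(\eta_Q\) is equivalent to Riemannian volume, the measure \(g_\ast\nu_{Q,n}\) is absolutely continuous with respect to \(\nu_{Q,n}\) for every \(g\in G\), with Radon--Nikodym derivative
\[\frac{dg_\ast\nu_{Q,n}}{d\nu_{Q,n}}(f') \;=\; \frac{\rho_n(g^{-1}f')}{\rho_n(f')}\cdot\frac{dg_\ast\eta_Q}{d\eta_Q}(f').\]
In particular \(\P_n\ll \mu_n\otimes\nu_{Q,n}\) with density given by the above formula; by the Gelfand--Yaglom--Perez identity recalled in section \ref{mutualinformation1} this yields \(I_n = h(\F_Q,\mu_n,\nu_{Q,n})\).

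Next I would substitute this expression into the Furstenberg entropy integral. The terms \(\log\rho_n(g^{-1}f')\) and \(\log\rho_n(f')\) both integrate (by stationarity of \(\nu_{Q,n}\)) to \(\int\log\rho_n\,d\nu_{Q,n}\) and therefore cancel. Changing variables \(f'=gf\) then leaves
\[I_n \;=\; -\int_{G\x\F_Q}\log|\det D_fg|\, d\nu_{Q,n}(f)\,d\mu_n(g),\]
where \(|\det D_fg|\) is the Jacobian of \(g\colon\F_Q\to\F_Q\) at \(f\) relative to Riemannian volume.

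The core step is then a pointwise formula for this Jacobian. Writing \(Q=\{0=q_0<q_1<\cdots<q_k=d\}\), I claim
\[-\log|\det D_fg| \;=\; \sum_{\ell=1}^{k-1}(q_{\ell+1}-q_{\ell-1})\log|\det_{U_{q_\ell}(f)}g|.\]
This identity can be obtained by diagonalising \(g\) via the Cartan (\(KAK\)) decomposition adapted to \(f\): the tangent space \(T_f\F_Q\) splits into root subspaces indexed by pairs \((i,j)\) with \(\ell_Q(i)<\ell_Q(j)\), on each of which the Jacobian is a one-dimensional Iwasawa cocycle, and summing these contributions and regrouping by Abel summation yields the stated combination of the \(\log|\det_{U_{q_\ell}}g|\). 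Integrating against \(\nu_{Q,n}\otimes\mu_n\) and invoking the definition (\ref{exponents;nonext}) of \(\chi_{i,n}\) gives
\[I_n\;=\;\sum_{\ell=1}^{k-1}(q_{\ell+1}-q_{\ell-1})\sum_{i\le q_\ell}\chi_{i,n},\]
and a second Abel summation rewrites this as \(\sum_{\ell_Q(i)<\ell_Q(j)}(\chi_{i,n}-\chi_{j,n})\), as required.

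The main obstacle is the third-step pointwise Jacobian identity: it is essentially combinatorial bookkeeping in the root-space decomposition of \(\mathfrak g/\mathfrak p_Q\), but must be carried out carefully so that the normalisation of Riemannian volume on \(\F_Q\) matches the exterior-power determinants \(|\det_{U_{q_\ell}}g|\) that appear in (\ref{exponents;nonext}). The first two steps are routine once integrability is granted, and the finiteness of \(I_n\) follows a posteriori from the computation itself since \(\mu_n\) inherits the log-moment of \(\mu\).
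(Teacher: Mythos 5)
Your proposal is correct and follows the paper's proof closely: identify \(I_n\) with the Furstenberg entropy via the Gelfand--Yaglom--Perez identity, cancel the density factors \(\rho_n\) by stationarity, reduce to a pointwise Jacobian identity, and finish by Abel summation. The only place you genuinely diverge is in how you propose to establish the Jacobian identity, which is exactly Proposition~\ref{explicit} of the paper (specialised to \(|\det g|=1\)). The paper proves that proposition by a direct volume-form computation in local coordinates: first on a single Grassmannian \(Q=\{0<i<d\}\), where one obtains \(\frac{dg\eta}{d\eta}(gx)=|\det_x g|^d/|\det g|^i\) by parametrising a neighbourhood of \(x\) by linear maps \(x\to\R^d/x\), and then by iterating over successive refinements of the partition \(Q\). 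Your sketch via the Cartan decomposition and a root-space splitting of \(\mathfrak g/\mathfrak p_Q\) is a workable alternative, but, as you yourself flag, it requires care in matching the Riemannian normalisation on \(\F_Q\) to the exterior-power determinants \(|\det_{U_{q_\ell}}g|\); that bookkeeping is precisely what the paper's elementary coordinate argument carries out, so you should not treat that step as routine. Everything else --- the formula for the Radon--Nikodym derivative in terms of \(\rho_n\) and the Jacobian, the observation that the \(|\det g|\) term drops out on \(SL_d\), the sign, and the final regrouping \(\sum_\ell (q_{\ell+1}-q_{\ell-1})\sum_{i\le q_\ell}\chi_{i,n}=\sum_{\ell_Q(i)<\ell_Q(j)}(\chi_{i,n}-\chi_{j,n})\) --- matches the paper.
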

\begin{proof}
Let \(\varphi_n\) be the density of \(\nu_{Q,n}\) with respect to the rotationally invariant probability \(\eta_{Q}\) on \(\F_Q\).

By the Gelfand-Yaglom-Perez theorem one has
\begin{align*}
I_n &= \int\limits_{G \times \F_Q} \log\left(\frac{dg_*\nu_{Q,n}}{d\nu_{Q,n}}(f)\right)d\P_n(g,f)
\\ &= \int\limits_{G}\int\limits_{\F_Q}\log\left(\frac{dg_*\nu_{Q,n}}{d\nu_{Q,n}}(gf)\right)d\nu_{Q,n}(f)d\mu_n(g)
\\ &= \int\limits_{G}\int\limits_{\F_Q}\log\left(\frac{\varphi_n(f)}{\varphi_n(gf)}\frac{dg_*\eta_{Q}}{d\eta_{Q}}(gf)\right)d\nu_{Q,n}(f)d\mu_n(g)
\end{align*}

By \(\mu_n\)-stationarity of \(\nu_{Q,n}\) the integrals involving \(\varphi_n\) cancel, and one obtains
\begin{align*}
I_n &= \int\limits_{G} \int\limits_{\F_Q} \log\left(\frac{dg_*\eta_{Q}}{d\eta_{Q}}(gf)\right)d\nu_{Q,n}(f)d\mu_n(g).
\end{align*}
We have the following explicit formula for \(\frac{dg_*\eta_{Q}}{d\eta_Q}(gf)\)
\begin{proposition}\label{explicit}
For \(Q = \lbrace q_0 = 0 < q_1 < \cdots < q_k = d\rbrace\) and \(\eta = \eta_Q\) the rotation invariant measure then
\[\frac{dg\eta}{d\eta}(gx) = \frac{|\det_{U_{q_1}(x)} (g)|^{q_2} |\det_{U_{q_2}(x)} (g)|^{q_3-q_1} \cdots |\det_{U_{q_{k-1}}(x)} (g)|^{d-q_{k-2}}}{|\det (g)|^{q_{k-1}}}.\]

In particular on the space of full flags one has
\[\frac{dg\eta}{d\eta}(gx) = \frac{|\det_{U_{1}(x)} (g)|^{2} |\det_{U_{2}(x)} (g)|^{2} \cdots |\det_{U_{k-1}(x)} (g)|^{2}}{|{\det (g)|^{d-1}}}.\]
\end{proposition}
Proposition \ref{explicit} is proven in the next subsection. Given proposition \ref{explicit}, we may write
\begin{align*}
I_n &= \int\limits_{G} \int\limits_{\F_Q} \log\left(\frac{|\det_{U_{q_1}(f)}(g)|^{q_2}|\det_{U_{q_2}(f)}(g)|^{q_3-q_1} \ldots |\det_{U_{q_{k-1}}(f)}(g)|^{d-q_{k-2}}}{|\det(g)|^{q_{k-1}}}\right)d\nu_{Q,n}(f)d\mu_n(g)
\\ &= \left(\sum\limits_{j = 1}^{k-1} (q_{j+1}-q_{j-1})\sum\limits_{i = 1}^{q_j} \chi_{i,n}\right)    - q_{k-1}\sum\limits_{i = 1}^{d} \chi_{i,n}
\\ &= \sum\limits_{\ell_{Q}(i) < \ell_{Q}(j)}\chi_{i,n} - \chi_{j,n},
\end{align*}
where the last equality follows by direct computation.\end{proof}

Using lemma \ref{limitschi}, proposition \ref{Ifinite} follows. 
\end{proof}
\begin{corollary}\label{finiteentropy} Let \( \mu \in \MM(d), \nu\)  the stationary measure on \( \F\). Then,
\[ h( \F, \mu ,\nu ) \; \leq \; \sum _{0 < i 
<j \leq d} \chi_i - \chi _j  \; < \; + \infty .\]
In particular, for \(\mu \)-a. e. \(g  , g_\ast \nu \) is absolutely continuous with respect to \( \nu \) and the function \( \log \frac {dg_\ast \nu }{d\nu } \) is integrable with respect to \( g_\ast \nu .\) \end{corollary}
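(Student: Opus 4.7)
The plan is to deduce the corollary as a direct consequence of theorem \ref{ent/expQ1} (the generalization just proven) applied to the finest partition \(Q_1 = \{0 < 1 < 2 < \cdots < d\}\), for which \(\F_{Q_1} = \F\). Since \(\mu \in \MM(d)\), the stationary measure \(\nu\) on \(\F\) is unique, and so must coincide with the measure \(\nu_{Q_1}\) provided by that theorem. The bound
\[ h(\F, \mu, \nu) \; \leq \; \sum_{0 < i < j \leq d}(\chi_i - \chi_j)\]
is then immediate. Finiteness of the right hand side follows from the hypothesis \(\int \log \|g\| \, d\mu(g) < +\infty\) built into the definition of \(\MM(d)\), which ensures each Lyapunov exponent is finite.

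For the "in particular" statement, I would invoke the mutual information framework developed in Subsection \ref{mutualinformation1}. Consider the probability \(\P\) on \(G \times \F\) defined by \(\int h \, d\P = \int\int h(g, gf) \, d\nu(f) d\mu(g)\), and let \(I\) denote the mutual information between the coordinate projections. Proposition \ref{Ifinite} (with \(Q = Q_1\)) gives \(I \leq \sum_{i<j}(\chi_i - \chi_j) < +\infty\). The Gelfand--Yaglom--Perez theorem, already cited in the excerpt, then guarantees that finiteness of \(I\) implies quasi-independence of the two coordinates under \(\P\): the distribution of \((g, gf)\) is absolutely continuous with respect to \(\mu \otimes \nu\). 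Unpacking this (using the stationarity of \(\nu\) to identify the marginals) shows precisely that for \(\mu\)-a.e. \(g\), the pushforward \(g_\ast \nu\) is absolutely continuous with respect to \(\nu\), and the joint density is \( (g,f) \mapsto \tfrac{dg_\ast \nu}{d\nu}(f) \).

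For integrability, the Gelfand--Yaglom--Perez formula yields
\[ I \; = \; \int_G \int_\F \log\frac{dg_\ast \nu}{d\nu}(f) \, dg_\ast \nu(f) \, d\mu(g) \; < \; +\infty.\]
For each fixed \(g\) with \(g_\ast \nu \ll \nu\), the inner integral is the Kullback--Leibler divergence \(\int \psi_g \log \psi_g \, d\nu\) where \(\psi_g := \tfrac{dg_\ast\nu}{d\nu}\); this is nonnegative by Jensen's inequality applied to the convex function \(t \mapsto t \log t\). Therefore Fubini applies, and finiteness of the double integral of a nonnegative function forces the inner integral to be finite for \(\mu\)-a.e. \(g\), which is exactly the integrability of \(\log \tfrac{dg_\ast \nu}{d\nu}\) against \(g_\ast \nu\).

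There is no real obstacle here: the work was done in proving theorem \ref{ent/expQ1} and in setting up the Gelfand--Yaglom--Perez machinery in Subsection \ref{mutualinformation1}. The only care needed is the bookkeeping that identifies the measure \(\P\) correctly so that the marginals are \(\mu\) and \(\nu\) (using \(\mu\)-stationarity), and the invocation of the nonnegativity of the inner divergence to pass from "integral is finite" to "inner integrand is a.e. finite", which is what "integrable" means.
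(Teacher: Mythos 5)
Your proof takes essentially the same route as the paper. The paper's proof of this corollary is a single line (it simply records that the double integral $\int_G\int_\F \log\frac{dg_*\nu}{d\nu}(f)\,dg_*\nu(f)\,d\mu(g)$ is finite), relying on exactly the machinery you invoke: theorem \ref{ent/expQ1} with $Q = Q_1$, plus the quasi-independence supplied by proposition \ref{Ifinite} and the Gelfand--Yaglom--Perez theorem. One small imprecision to flag: "the inner integral is finite for a.e.\ $g$" does not by itself give integrability of $\log\frac{dg_*\nu}{d\nu}$ against $g_*\nu$ (which requires $\int \psi_g\,|\log\psi_g|\,d\nu < \infty$ with $\psi_g = \frac{dg_*\nu}{d\nu}$); you also need the elementary bound $t\log t \ge -1/e$ to control the negative part, so that finiteness of $\int \psi_g\log\psi_g\,d\nu$ upgrades to $L^1$. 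With that observation added the argument is complete, and is the one the paper intends.
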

Indeed, we have \( \int\limits_{G}\int\limits_{\F}\log\left(\frac{dg_*\nu}{d\nu}(f)\right)\,dg_*\nu(f)\,d\mu(g) < +\infty .\) 

\subsection{Proof of proposition \ref{explicit}}

\begin{lemma}
Let \(Q = \lbrace 0 < i < d\rbrace\) and \(\eta = \eta_Q\) be the unique rotationally invariant probability on \(\F_Q\) the Grasmannian manifold of \(i\)-dimensional subspaces of \(\R^d\).  Then
\[\frac{dg\eta}{d\eta}(gx) = \frac{|\det_x (g)|^d}{|\det (g) |^i},\]
for all \(g \in \GL_d(\R)\).
\end{lemma}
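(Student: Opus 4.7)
The approach is a direct differential-geometric computation: determine the Jacobian of the self-map $x \mapsto gx$ of $\GG_i$ at the point $x$, then invoke the change-of-variables formula, using that $\eta$ is a constant multiple of the rotation invariant Riemannian volume form. First I would identify $T_x \GG_i \cong \mathrm{Hom}(x, x^\perp)$ via $\phi \leftrightarrow \frac{d}{dt}\lbrace v + t\phi(v) : v \in x\rbrace$, endowed with the inner product coming from the ambient Euclidean structure on $x$ and $x^\perp$; by $O(d)$-equivariance this recovers the rotation invariant Riemannian metric on $\GG_i$ up to a global scalar, which disappears in the Jacobian ratio.

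The key step is the computation of the differential $dg_x$. From $gx_t = \lbrace gv + t g\phi(v) : v \in x\rbrace$ and the decomposition $g\phi(v) = P_{gx}(g\phi(v)) + P_{(gx)^\perp}(g\phi(v))$, one re-parametrizes the curve in the form $\lbrace w + t\psi(w): w\in gx\rbrace$ to first order in $t$ and obtains $dg_x(\phi) = A \circ \phi \circ B$, where $B = (g\vert_x)^{-1} : gx \to x$ and $A = P_{(gx)^\perp} \circ g\vert_{x^\perp} : x^\perp \to (gx)^\perp$. The Kronecker-product formula for the determinant of $M \mapsto AMB$ on $\R^{(d-i)\times i}$ then gives $|\mathrm{Jac}(dg_x)| = |\det A|^i\,|\det B|^{d-i}$. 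Finally, writing $g$ in block form with respect to the orthogonal decompositions $\R^d = x \oplus x^\perp$ (source) and $\R^d = gx \oplus (gx)^\perp$ (target), the lower-left block vanishes since $g(x) = gx$, so that $|\det g| = |\det_x g|\cdot |\det A|$, giving $|\det A| = |\det g|/|\det_x g|$ and $|\det B| = 1/|\det_x g|$. Substituting yields $|\mathrm{Jac}(g)(x)| = |\det g|^i/|\det_x g|^d$, and the change-of-variables identity $\frac{dg\eta}{d\eta}(gx) = 1/|\mathrm{Jac}(g)(x)|$ (valid because $\eta$ has constant density with respect to the $O(d)$-invariant volume) delivers the claimed formula.

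The argument is essentially routine; the only points requiring care are that the Euclidean identification $T_x\GG_i \cong \mathrm{Hom}(x,x^\perp)$ is $O(d)$-equivariant and hence compatible with the rotation invariant metric up to a universal constant, and the bookkeeping of the exponents $i$ and $d-i$ in the Kronecker-product step, which is exactly what produces the asymmetric powers $d$ and $i$ in the final formula.
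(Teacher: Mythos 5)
Your proposal is correct and is essentially the paper's argument: both set up the graph chart identifying $T_x\GG_i$ with $\mathrm{Hom}(x,x^\perp)$ (equivalently $\mathrm{Hom}(x,\R^d/x)$), observe that the differential of the action of $g$ is a map of the form $\phi \mapsto A\phi B$ with $A = P_{(gx)^\perp}\circ g|_{x^\perp}$ (the paper's $g|_{\R^d/x}$) and $B = (g|_x)^{-1}$, and compute its determinant, using an $O(d)$-invariant volume form in place of $\eta$. The only difference is how the determinant of $\phi\mapsto A\phi B$ is evaluated: the paper first uses orthogonal invariance to reduce to the case $gx=x$ with $A,B$ diagonal and reads off $\prod_{k,l}\mu_l/\sigma_k$, whereas you skip the reduction and invoke the Kronecker-product formula $\det(\phi\mapsto A\phi B)=(\det A)^i(\det B)^{d-i}$ together with the block-triangular factorization $|\det g|=|\det_x g|\cdot|\det A|$ — the same arithmetic in coordinate-free form.
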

\begin{proof}
Let \(\pi_x:\R^d \to x\) be the orthogonal projection onto \(x\) and \(\pi_{\R^d/x}:\R^d \to \R^d/x\) the canonical projection.  
The quotient space \(\R^d/x\) is endowed with the inner product such that the projection is an isometry when restricted to the orthogonal complement of \(x\).

Since \(\eta\) is invariant under orthogonal transformations we may compose \(g\) with such transformations on both sides.   In particular we may assume that \(g(x) = x\) and therefore \(g\) induces a transformation on \(\R^d/x\) which we denote by \(g\) as well. 

We may further assume that:
\begin{enumerate}
 \item There is an orthogonal basis \(v_1,\ldots,v_i\) of \(x\) and positive eigenvalues \(\sigma_1,\ldots,\sigma_i > 0\) such that \(gv_k = \sigma_k v_k\) for \(k = 1,\ldots,i\).
 \item There is an orthogonal basis \(w_1,\ldots,w_{d-i}\) of \(\R^d/x\) and positive eigenvalues \(\mu_1,\ldots,\mu_{d-i}> 0\) such that \(gw_k = \mu_k w_k\) for \(k = 1,\ldots,d-i\).
\end{enumerate}

Notice that \(\sigma_1\cdots \sigma_i = |\det_x (g)|\) while \(\mu_1\cdots \mu_{d-i} = |\det (g) |/|\det_x (g)|\).

A local parametrization of \(\F_Q\) around \(x\) is given by identifying each linear mapping \(\varphi:x \to \R^d/x\) with the subspace \(x_\varphi\) such that \(v \in x_\varphi\) if and only if \(\varphi(\pi_x(v)) = \pi_{\R^d/x}(v)\).

Identify each \(\varphi\) with its matrix \(\left(a_{lk}\right)_{k,l}\) where \(\varphi(v_k) = \sum\limits_{l}a_{lk} w_l\).   
With this identification, we define a volume form \(\omega\) on \(\F_Q\) such that in any coordinates constructed as above one has \(\omega(0) = \pm\bigwedge_{k,l} da_{lk}\).  
Since \(\omega\) is orthogonally invariant it must define a volume on \(\F_Q\) which is a constant multiple of \(\eta\).

In this chart the action of \(g\) on \(\F_Q\) maps \(\varphi\) to \(g_{\vert \R^d/x}\circ \varphi\circ g^{-1}_{\vert x}\) so that 
\[g_{\vert \R^d/x}\circ \varphi\circ g^{-1}_{\vert x}(v_k) = g\varphi(\sigma_k^{-1}v_k) =  \sum\limits_{l}a_{lk}\frac{\mu_{l}}{\sigma_k}w_l.\]

It follows that the pull-back under \(g\) of \(\omega\) satisfies \(g^*\omega(0) = \prod\limits_{k,l}\frac{\mu_l}{\sigma_k} \omega(0) = \frac{|\det g|^i}{|\det_x g|^d}\omega(0)\), which implies the desired claim.
\end{proof}

\begin{corollary}
 If \(Q\) is obtained by a splitting an interval \(k+1 < k+m\) of \(Q'\) into \(k+1 <  k+i < k+m\) then
\[\frac{dg\eta_{Q,Q'}^x}{d\eta_{Q,Q'}^{gx}}(gx) = \frac{|\det_{U_{k+i}(x)} (g)|^{m}}{|\det_{U_{k}(x)} (g)|^{m-i}|\det_{U_{k+m}(x)} (g)|^i},\]
for all \(g \in \GL_d(\R)\) and all \(x \in \F_Q\).
\end{corollary}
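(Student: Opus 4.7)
The plan is to reduce the corollary to the single Grassmannian case already handled in the previous lemma. Given $x \in \F_Q$, set $x' := \pi_{Q,Q'}(x) \in \F_{Q'}$, and consider the $m$-dimensional Euclidean space $W := U_{k+m}(x)/U_k(x)$, with inner product inherited from the orthogonal complement of $U_k(x)$ inside $U_{k+m}(x)$. The fiber $\pi_{Q,Q'}^{-1}(x')$ is canonically identified with the Grassmannian $\GG_i(W)$ of $i$-planes in $W$, via the bijection sending a configuration whose level-$(k+i)$ subspace is $V$ to $V/U_k(x) \subset W$.

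Under this identification the conditional measure $\eta_{Q,Q'}^x$ corresponds to the unique $O(W)$-invariant probability on $\GG_i(W)$: both are characterized by invariance under the image in $O(W)$ of the subgroup of $O(d)$ fixing the pair $(U_k(x), U_{k+m}(x))$, and this image is all of $O(W)$. Moreover, $g$ acts on the fiber via the induced linear isomorphism $\bar g : W \to U_{k+m}(gx)/U_k(gx)$, which sends $v + U_k(x)$ to $gv + U_k(gx)$. Applying the previous lemma to $\bar g$ — the proof there goes through verbatim for a linear isomorphism between two $m$-dimensional Euclidean spaces, after composing with isometries on either side — yields
\[\frac{d\bar g\,\eta'}{d\eta'}(\bar g y) \;=\; \frac{|\det_y(\bar g)|^m}{|\det(\bar g)|^i},\]
where $\eta'$ is the invariant probability on $\GG_i(W)$ and $y := U_{k+i}(x)/U_k(x)$.

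The remaining step is routine linear algebra. Decomposing $g$ restricted to $U_{k+i}(x)$ in block upper-triangular form relative to $U_{k+i}(x) = U_k(x) \oplus y^{\perp}$ (with $y^{\perp}$ a representative of $y$ in the orthogonal complement of $U_k(x)$ inside $U_{k+i}(x)$), and similarly for $U_{k+m}(x)$, one obtains
\[|\det_y(\bar g)| \;=\; \frac{|\det_{U_{k+i}(x)}(g)|}{|\det_{U_k(x)}(g)|}, \qquad |\det(\bar g)| \;=\; \frac{|\det_{U_{k+m}(x)}(g)|}{|\det_{U_k(x)}(g)|}.\]
Substituting these into the previous display and simplifying gives the announced formula. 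The only step that warrants real care is the identification of the conditional measure $\eta_{Q,Q'}^x$ with the orthogonally invariant probability on $\GG_i(W)$; once this is in place, the computation is mechanical.
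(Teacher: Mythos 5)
Your proposal is correct and takes essentially the same approach as the paper: identify the fiber over $x'$ with the $i$-dimensional Grassmannian of $W = U_{k+m}(x)/U_k(x)$, compute the Jacobians of the induced map $\bar g$ restricted to $y$ and on all of $W$ as the quotients $|\det_{U_{k+i}(x)}(g)|/|\det_{U_k(x)}(g)|$ and $|\det_{U_{k+m}(x)}(g)|/|\det_{U_k(x)}(g)|$, and substitute into the preceding lemma. The paper states these two Jacobian computations and declares the result; you additionally spell out why the conditional measure $\eta_{Q,Q'}^x$ is the $O(W)$-invariant one and why the lemma extends to linear isomorphisms between abstract $m$-dimensional Euclidean spaces, both of which are worthwhile elaborations but not a different route.
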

\begin{proof}
The fiber of the projection from \(\F_Q\) to \(\F_{Q'}\) which contains \(x\) is naturally identified with the the \(i\)-dimensional Grasmannian of \(U_{k+m}(x)/U_k(x)\).

The Jacobian of \(g\) as a mapping from \(U_{k+m}(x)/U_k(x)\) to its image is \(|\det_{U_{k+m}(x)} (g)|/|\det_{U_k(x)} (g)|\).  
The Jacobian of the restriction of \(g\) to the subspace of \(U_{k+m}(x)/U_k(x)\) represented by \(U_{k+i}\) is \(|\det_{U_{k+i}} (g)|/|\det_{U_{k}} (g)|\).

The result follows replacing these values in the previous lemma.
\end{proof}
Proposition \ref{explicit}  follows from the previous corollary by splitting \(\lbrace 0 < d\rbrace\) successively into \(\lbrace 0 < q_{k-1} < d\rbrace\), \(\lbrace0 < q_{k-2} < q_{k-1} < d\rbrace\), etc.

 \section{Mutual information}\label{section:mutualinformation}
\subsection{Conditional mutual information}\label{section:cond.mut.info}

 Given \(X:\Omega \to \mathcal{X} , Y:\Omega \to \mathcal{Y}\) and \(Z:\Omega \to \mathcal{Z} \) taking values in polish spaces,
 one may define for \(\P_Z\)-a.e. \(z \in \mathcal{Z} \) the disintegrations \( \P^z_{X\times Y} \) with respect to the projections  on \(\mathcal {Z}\)  and the  mutual information 
\( I_{\P^z}(X,Y)\)
of \(X\) and \(Y\) given \(Z\) using these conditional distributions \(\P^{z}\) of \((X,Y)\) given \(Z\). Define the  {\it {conditional mutual information}} \( I(X,Y\vert Z)\)  of \( X \) and \(Y\) given \(Z\) by  \[ I(X,Y \vert Z) \; := \;  I_{\P^z} (X,Y)  \] and the  {\it {conditional mutual entropy}} \( H(X,Y\vert Z)\)  of \( X \) and \(Y\) given \(Z\) by \[ H(X,Y \vert Z) \; := \; \int  I(X,Y\vert Z) \, d\P_Z(z) .\]

Observe that \(X\) and \(Y\) are conditionally independent given \(Z\) if, and only if, \( I(X,Y \vert Z) = 0 {\textrm{ almost everywhere}}, \) if, and only if, \( H(X,Y \vert Z) = 0  .\)  Observe also that, by definition, \( I(X,Y \vert Z) = I(Y,X \vert Z) \).
If \(W,X,Y,Z\) are measurable functions from \(\Omega\) into Polish spaces then one has the following chain rule
\begin{eqnarray*}  H(W,(X,Y)\vert Z) &=&H(W,X\vert Y,Z) +  H(W,Y\vert Z).\end{eqnarray*}

Let \( \P\) be the measure on \( G \times \F \) introduced in section \ref{mutualinformation1}: for any positive measurable function \(h \) on \( G \times \F \),
\[ \int h(g,f) \,d\P (g, f ) \; = \; \int h(g, gf) \, d\mu (g) d\nu (f).\]
We showed that  \( h(\F, \mu ,\nu ) = I_\P (g, f) < +\infty  .\) 
\begin{lemma}\label{quasiindependence} The measure \(\P\) is the distribution of the  variables \( (g_{-1}, E_- ) \) on \( (\Om, m).\) In particular, \( g_{-1}(\om ) \) and \( E_-(\om ) \) are quasi-independent.\end{lemma}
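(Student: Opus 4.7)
The plan is a direct computation based on the equivariance of the unstable flag combined with the independence property from Proposition \ref{independence}. Recall from Oseledets theorem that \(E_i(\sigma\omega) = g_0(\omega) E_i(\omega)\), and hence the flag satisfies \(E_-(\sigma\omega) = g_0(\omega) E_-(\omega)\). Shifting once backwards we obtain the key identity
\[ E_-(\omega) \; = \; g_{-1}(\omega)\, E_-(\sigma^{-1}\omega). \]
Now apply Proposition \ref{independence} to \(\sigma^{-1}\omega\): the flag \(E_-(\sigma^{-1}\omega)\) is measurable with respect to the \(\sigma\)-algebra generated by \((g_n)_{n \leq -2}\), and is therefore independent under \(m\) of the coordinate \(g_{-1}\). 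Since the distribution of \(g_{-1}\) is \(\mu\) and the distribution of \(E_-(\sigma^{-1}\omega)\) is the stationary measure \(\nu\) (as established in the paragraph following Proposition \ref{independence}), for any bounded measurable \(h\) on \(G \times \F\) we compute
\[ \int h(g_{-1}(\omega), E_-(\omega))\, dm(\omega) \;=\; \int h\!\left(g_{-1}(\omega),\, g_{-1}(\omega) E_-(\sigma^{-1}\omega)\right) dm(\omega) \;=\; \int h(g, gf)\, d\mu(g)\, d\nu(f), \]
and the right-hand side is exactly \(\int h\, d\P\) by the definition of \(\P\). This identifies the distribution of \((g_{-1}, E_-)\) with \(\P\).

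For the quasi-independence statement, observe that the marginals of \(\P\) are \(\mu\) and \(\nu\), so quasi-independence of \(g_{-1}\) and \(E_-\) is equivalent to \(\P \ll \mu \otimes \nu\). By Corollary \ref{finiteentropy} one has \(h(\F, \mu, \nu) < +\infty\), and the computation in Section \ref{mutualinformation1} shows that this entropy equals the mutual information \(I_\P(g, f)\) of the coordinate projections on \(G \times \F\). Applying the Gelfand--Yaglom--Perez theorem recalled in Section \ref{mutualinformation1}, finiteness of the mutual information yields absolute continuity of \(\P\) with respect to \(\mu \otimes \nu\), with density \(\frac{dg_*\nu}{d\nu}(f)\). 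No step is a serious obstacle; the only subtlety is keeping straight which of \(g_{-1}\), \(E_-(\omega)\) and \(E_-(\sigma^{-1}\omega)\) are mutually independent, which is resolved by combining Proposition \ref{independence} with a one-step backward shift.
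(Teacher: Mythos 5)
Your argument is correct and is essentially the paper's own: the paper applies $\sigma$-invariance of $m$ to rewrite $\int h(g_{-1}(\omega),E_-(\omega))\,dm(\omega) = \int h(g_0(\omega), g_0(\omega)E_-(\omega))\,dm(\omega)$ and then uses the independence of $g_0$ and $E_-$ together with their marginals $\mu$ and $\nu$, which is exactly your computation read through the shift $\omega \mapsto \sigma\omega$. The quasi-independence deduction from Corollary \ref{finiteentropy} via Gelfand--Yaglom--Perez matches the paper as well.
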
 
\begin{proof} We indeed have, by invariance of $m,$ for any positive  \(h \) on \( G \times \F \) 
\begin{eqnarray*} \int h(g_{-1}(\om ) , E_- (\om )) \, dm(\om ) &=&  \int h(g_{-1}(\s \om ), E_- (\s\om )) \, dm(\om )\\ &= & \int h(g_{0}(\om ), g_0(\om )E_- (\om )) \, dm(\om ) = \int h \, d\P.\end{eqnarray*} \end{proof}
Using proposition \ref{independence}, we may write, for \(m_+\)-a.e. \( \om _+\):
\[ h(\F, \mu ,\nu ) \;=\; I (g_{-1}, E_-) \;= \;  I(g_{-1}, E_- \vert E_+) \;= \; I (g_{-1}, E_-\vert g_0, g_1, \ldots ).\]

Let \(T\) be an admissible topology. Recall that we defined the entropy  \(\kappa _{T} \) by equation (\ref{entropy}) \[ \kappa _{T}  \; := \; \int  \log \frac{ dg_\ast\nu _T^{g^{-1} f'} }{  d\nu _T^{f'} }(g,y)\, dg_\ast\nu _T^{g^{-1} f'}(y)  d\nu '(f') d\mu(g) .\] For \(m\) -a.e. \( \om \in \Om \) write \( E_T (\om ) \in \X_T^{E_+(\om )} \) for \( E_T(\om ) := F_T (E_-(\om ), E_+ (\om ) ) .\) The next three propositions give other useful expressions for \(\kappa _T \).
\begin{proposition} With the above notations, we have
\begin{equation}\label{entropyT} \kappa _T  \; = \; H(g_{-1} , E_T\vert E_+ ) \; <\; +\infty . \end{equation} \end{proposition}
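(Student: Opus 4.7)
My plan is to identify, for $\nu'$-a.e.\ $f'$, the conditional law $\P_T^{f'}$ of $(g_{-1}, E_T)$ given $E_+ = f'$, show that $\frac{dg_* \nu_T^{g^{-1}f'}}{d\nu_T^{f'}}(y)$ is its density with respect to $\mu \otimes \nu_T^{f'}$, and then apply Gelfand--Yaglom--Perez. Finiteness comes for free from a data-processing comparison with $I(g_{-1}, E_-) = h(\F, \mu, \nu)$.

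I would begin with a cocycle unrolling. Writing $g_{-1}(\omega) = g_0(\sigma^{-1}\omega)$ and $E_-(\omega) = g_{-1}(\omega)\cdot E_-(\sigma^{-1}\omega)$, Proposition \ref{independence} shows that the variables $g_0(\sigma^{-1}\omega)$, $E_-(\sigma^{-1}\omega)$ and $E_+(\omega)$ are mutually independent with respective laws $\mu$, $\nu$, $\nu'$. Hence $(g_{-1}, E_-, E_+) \eqd (g, gE', F)$ with $(g, E', F) \sim \mu \otimes \nu \otimes \nu'$; in particular $(g_{-1}, E_-)$ is independent of $E_+$. The equivariance $F_T(gh, gf'') = g\cdot F_T(h, f'')$ (immediate from $[F_T(\cdot,\cdot)]_I = \bigoplus_{i\in I}(U_i \cap U'_{d-i+1})$) yields
\[E_T = F_T(gE', F) = g \cdot F_T(E', g^{-1}F),\]
and since $\F^{(2)}$ is invariant under the diagonal $G$-action, $(E', g^{-1}f')$ is a.s.\ in general position so that $F_T(E', g^{-1}f') \sim \nu_T^{g^{-1}f'}$. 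Conditioning on $E_+ = f'$, $g_{-1} = g$ therefore gives $E_T \sim g_*\nu_T^{g^{-1}f'}$, and the conditional law factors as $\P_T^{f'}(dg, dy) = d\mu(g)\, d(g_*\nu_T^{g^{-1}f'})(y)$. Its $E_T$-marginal equals $\nu_T^{f'}$ by definition of the disintegration, which proves the stationarity identity $\int g_* \nu_T^{g^{-1}f'}\, d\mu(g) = \nu_T^{f'}$ and, when absolute continuity holds, identifies the density as claimed.

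For finiteness I would invoke data processing. Since $E_T = F_T(E_-, E_+)$ is $\sigma(E_-, E_+)$-measurable, the chain rule of section \ref{section:cond.mut.info} gives
\[H(g_{-1}, E_T \mid E_+) \ \le\ H(g_{-1}, E_- \mid E_+) \ =\ I(g_{-1}, E_-) \ =\ h(\F, \mu, \nu),\]
where the middle equality uses independence of $(g_{-1}, E_-)$ from $E_+$ and the last is Lemma \ref{quasiindependence} together with the computation in section \ref{mutualinformation1}. This is finite by Corollary \ref{finiteentropy}. Gelfand--Yaglom--Perez then ensures $\P_T^{f'} \ll \mu \otimes \nu_T^{f'}$ for $\nu'$-a.e.\ $f'$ (so $g_* \nu_T^{g^{-1}f'} \ll \nu_T^{f'}$ for $\mu$-a.e.\ $g$), and
\[I_{\P_T^{f'}}(g_{-1}, E_T) = \int \log \tfrac{dg_*\nu_T^{g^{-1}f'}}{d\nu_T^{f'}}(y)\, dg_*\nu_T^{g^{-1}f'}(y)\, d\mu(g).\]
Integrating against $\nu'$ yields $H(g_{-1}, E_T \mid E_+) = \kappa_T$, as required. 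The only real content is the cocycle/equivariance identification of $\P_T^{f'}$; everything else is routine manipulation of mutual information.
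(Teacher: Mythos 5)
Your proposal is correct and follows essentially the same argument as the paper: both identify the conditional law of $(g_{-1},E_T)$ given $E_+=f'$ as $\mu(dg)\otimes\bigl(g_*\nu_T^{g^{-1}f'}\bigr)(dy)$ via the shift/cocycle relations and the equivariance of $F_T$, both deduce finiteness from the data-processing bound $H(g_{-1},E_T\mid E_+)\le H(g_{-1},E_-\mid E_+)=I(g_{-1},E_-)=h(\F,\mu,\nu)<+\infty$, and both invoke Gelfand--Yaglom--Perez to convert the conditional mutual information into the integral defining $\kappa_T$. Your write-up is a somewhat more explicit unrolling (making the mutual independence of $g_{-1}$, $E_-(\sigma^{-1}\omega)$, $E_+$ and the identity $E_T=g\cdot F_T(E',g^{-1}F)$ visible), but the underlying mechanism is identical to the paper's.
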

\begin{proof} Observe first that  the RHS of (\ref{entropyT}) is finite because
\[  H (g_{-1} , E_T \vert E_+ ) \leq    H (g_{-1} , E_-\vert E_+) < +\infty. \]
 The distribution of \( g_{-1} (\om ) \)  given \(E_+ (\om )\)  is \(\mu \) and the distribution of \(E_T (\om )\) given \(E_+ (\om )\)  is \(  \nu _T^{ E_+(\om ) } \). Remains to compute the joint distribution of \( (g_{-1}(\om ),E_T(\om ) )\) given \(E_+ (\om )\). 
 We claim that it projects to \(\mu\) with conditional measures given by \(g_{-1}(\omega)_*\nu_T^{g_{-1}(\omega)^{-1}E_+(\omega)}\).
 
  It follows that, for \(m_+\)-a.e. \( E_+ (\om _+),\)
\[ I (g_{-1}(\om ),E_T (\om )  \vert E_+(\om ) ) = \int \log \frac{ dg_\ast\nu _T^{g^{-1} E_+(\om )} }{  d\nu _T^{E_+(\om ) } }(g,y)\,  dg_\ast\nu _T^{g^{-1} E_+(\om )} (y) d\mu(g).\]
By integrating in \(E_+ (\om) \), i.e. in \(f'\) with respect to the measure \( \nu ',\) we find that \(  H (g_{-1} , E_T\vert E_+ ) \) is given by 
\[ H (g_{-1} , E_T\vert E_+ ) \; = \; \int  \log \frac{ dg_\ast\nu _T^{g^{-1} f'} }{  d\nu _T^{f'} }(g,y)\, d\mu(g) dg_\ast\nu _T^{g^{-1} f'}(y)  d\nu '(f'), \] which is the formula defining \(\kappa _T \) in relation (\ref{entropy}).
 
We prove the claim:
firstly, \( g_{-1}(\om) \) is independent of \( E_+(\om )\), so that what remains to compute is the distribution of \( E_T(\om ) \) given \(((g_{-1}(\om ),E_+ (\om ))\). The distribution of   \( E_T(\s ^{-1} \om ) \) given \(((g_{-1}(\om ),E_+ (\om ))\) is by definition \( \nu _T^{E_+(\s ^{-1}\om )} .\) Since \(  E_+(\s^{-1} \om ) = g_{-1}(\om ^{-1})E_+(\om ), \) the distribution of \( E_T(\om ) \) given \(((g_{-1}(\om ),E_+ (\om ))\) is indeed given by \(g_{-1}(\omega)_*\nu_T^{g_{-1}(\omega)^{-1}E_+(\omega)}\).
\end{proof}

\begin{proposition}\label{quasiMarkov} We also have  \[ \kappa _T \;=\;  H (g_{-1} , E_T (\om )  \vert ( E_+(\om ) , g_0, g_1, \ldots) )\;= \;  H (g_{-1} , E_T (\om )  \vert \om _+) .\] \end{proposition}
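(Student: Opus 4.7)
The plan is to treat the two equalities separately; both reduce to the independence of \(\omega_-\) and \(\omega_+\) under \(m\) together with the measurability statements in Proposition \ref{independence}.

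The second equality is essentially tautological. By Proposition \ref{independence}, \(E_+(\omega)\) is measurable with respect to \(\sigma(g_0,g_1,\ldots)\), so the random variables \((E_+,g_0,g_1,\ldots)\) and \(\omega_+ = (g_0,g_1,\ldots)\) generate the same \(\sigma\)-algebra. Conditional mutual entropy depends only on the \(\sigma\)-algebra being conditioned on, so the two conditional entropies coincide.

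For the first equality, I would argue that conditioning on \(\omega_+\) carries no more information about \((g_{-1},E_T)\) than conditioning on \(E_+\). First, since the coordinates of \(\omega\) are i.i.d.\ under \(m\), the random variables \(\omega_-\) and \(\omega_+\) are independent; since \(g_{-1}\) and \(E_-\) are both \(\sigma(\omega_-)\)-measurable (the latter by Proposition \ref{independence}), the pair \((g_{-1},E_-)\) is independent of \(\omega_+\). Next, \(E_+\) is a deterministic function of \(\omega_+\) and \(E_T(\omega) = F_T(E_-(\omega),E_+(\omega))\), so conditionally on \(\omega_+ = \omega_+^0\) (which fixes \(E_+ = f'\)), the joint law of \((g_{-1},E_T)\) is the push-forward of the (unconditional) law of \((g_{-1},E_-)\) under \((g,e)\mapsto (g,F_T(e,f'))\). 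This conditional law therefore depends on \(\omega_+\) only through the value of \(E_+\).

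Applying the definition \(H(X,Y\mid Z) = \int I_{\mathbb P^z}(X,Y)\,d\mathbb P_Z(z)\) of section \ref{section:cond.mut.info} to \(Z=\omega_+\) and \(Z=E_+\), one obtains that the integrand \(I(g_{-1},E_T\mid \omega_+ = \omega_+^0)\) depends on \(\omega_+^0\) only through \(f'=E_+(\omega_+^0)\), and is equal to \(I(g_{-1},E_T\mid E_+=f')\). Integrating against the law of \(\omega_+\) and using that the push-forward of \(m_+\) by \(E_+\) is \(\nu'\) gives \(H(g_{-1},E_T\mid\omega_+) = H(g_{-1},E_T\mid E_+) = \kappa_T\) by (\ref{entropyT}). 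I do not expect a genuine obstacle; the only care needed is the measure-theoretic identification of ``conditioning on \((E_+,\omega_+)\)'' with ``conditioning on \(\omega_+\)'' and the bookkeeping of conditional laws when \(E_T\) is a deterministic function of \((E_-,E_+)\).
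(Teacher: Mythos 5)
Your proof is correct and takes essentially the same approach as the paper: both equalities reduce to Proposition \ref{independence} together with the independence of $\omega_-$ and $\omega_+$ under $m$. Your write-up is if anything slightly more explicit than the paper's, since you argue directly that the conditional \emph{joint} law of $(g_{-1},E_T)$ given $\omega_+$ factors through $E_+$, whereas the paper states only the corresponding fact for $E_T$ and leaves the passage to the joint law implicit.
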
 
\begin{proof} By proposition \ref{independence}, the conditional measures on \(E_{T_1}(\om ) \) with respect to \(E_+ (\om) \) and \(E_+ (\om), g_0, g_1, \ldots \) coincide with \( \nu _{T_1}^{E_+(\om )}.\)
We have, for all admissible \(T\), \(E_T(\om ) = \pi _{T_1,T} E_{T_1} (\om), \) so that the conditional measures on \(E_{T}(\om ) \) with respect to \(E_+ (\om) \) and \(E_+ (\om), g_0, g_1, \ldots \) coincide with \((\pi _{T_1,T} )_\ast  \nu _{T_1}^{E_+(\om )}= \nu _{T}^{E_+(\om )}.\)
\end{proof}

\

The projection \( \om \in \Om  \mapsto E_T(\om ) \in \X_T^{E_+(\om _+)} \) admits disintegrations that we denote \( m_T^{x}.\) We still denote by \( m_T^{x}\) the projection  of \( m_T^{x}\)  to \(\Om_-\).
 For instance, \(m_{T_0}^{f'}  = m_- \) for \(\nu '\)-a.e. \(f'\),   \(  m_{T_1}^{f} \) is the distribution of \( \om _- \) given the unstable flag \(f\).

By Proposition \ref{independence} and (\ref{entropyT}), the variables \( g_{-1} \) and \( E_T \) are quasi-independent (Indeed, since \( E_+ \) is \( E_T\) measurable, \( I(g_{-1}, E_T) = I(g_{-1}, E_+) + H (g_{-1} , E_T \vert E_+ ) \)  equals \( \kappa _T\) and  thus is finite). It follows that the density \( f \) of the measure \( m_T^{E_T(\om )}\) restricted to \( g_{-1}(\om ) \) with  respect to \( \mu \) is given by \[f(\om ) :=\,\frac{dm_T^{E_T(\om)}|_{g_{-1}}}{d\mu} (g_{-1}(\om ))\,=\, \frac{ d(g_{-1} (\om ))_\ast\nu _T^{(g_{-1}(\om ))^{-1} E_+(\om )} }{  d\nu _T^{E_+(\om ) } }(E_T(\om )) .\]
This yields another formula for \( \kappa _T :\)
\begin{equation}\label{kappa3} \kappa _T \; = \;   \int \left(\log\frac{dm_T^{E_T(\om)}|_{g_{-1}}}{d\mu} (g_{-1}(\om ))\right) \, dm(\om )\; =\; \int \log f(\om )\, dm(\om )\end{equation} 
\begin{proposition}\label{kappa4} We have, for \(m\)-a.e. \(\om \),  \[\kappa _T = \lim\limits _{n \to \infty } \frac{1}{n} \log \frac{dm_T^{E_T(\om )}|_{\{g_{-1}, \ldots, g_{-n}\}}}{d\otimes _1^n \mu} (g_{-1}(\om ), \ldots, g_{-n}(\om )).\] \end{proposition}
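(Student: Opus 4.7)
The plan is a Shannon--McMillan--Breiman style argument: decompose the log-density telescopically, identify each term as a shift of
\(f(\om) := \frac{dm_T^{E_T(\om)}|_{g_{-1}}}{d\mu}(g_{-1}(\om)),\)
and conclude with Birkhoff's ergodic theorem. By the chain rule for Radon--Nikodym derivatives,
\[\log \frac{dm_T^{E_T(\om)}|_{\{g_{-1}, \ldots, g_{-n}\}}}{d\mu^{\otimes n}}(g_{-1}(\om), \ldots, g_{-n}(\om)) = \sum_{k=1}^{n} \log \rho_k(\om),\]
where \(\rho_k(\om)\) denotes the density, with respect to \(\mu\), of the conditional distribution of \(g_{-k}\) given \((E_T(\om), g_{-1}(\om), \ldots, g_{-(k-1)}(\om))\), evaluated at \(g_{-k}(\om)\).

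The key step is to show \(\rho_k(\om) = f(\sigma^{-(k-1)}\om)\) for \(m\)-a.e.\ \(\om\). Setting \(\om' = \sigma^{-(k-1)}\om\), the cocycle identities give \(g_{-k}(\om) = g_{-1}(\om')\) and \((g_{-1}(\om), \ldots, g_{-(k-1)}(\om)) = (g_{k-2}(\om'), \ldots, g_0(\om'))\), while the relation \(E_T(\sigma\om) = g_0(\om)E_T(\om)\) iterated yields \(E_T(\om) = g_{-1}(\om)\cdots g_{-(k-1)}(\om)\cdot E_T(\om')\). Hence the \(\sigma\)-algebra generated by \((E_T(\om), g_{-1}(\om), \ldots, g_{-(k-1)}(\om))\) coincides with the one generated by \((E_T(\om'), g_0(\om'), \ldots, g_{k-2}(\om'))\), so \(\rho_k(\om)\) is the density at \(g_{-1}(\om')\) of the law of \(g_{-1}(\om')\) given \((E_T(\om'), g_0(\om'), \ldots, g_{k-2}(\om'))\). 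By Proposition \ref{quasiMarkov}, the conditional law of \((g_{-1}, E_T)\) given the full future \(\om_+\) agrees with its conditional law given \(E_+\); since \(E_+\) is \(E_T\)-measurable, the law of \(g_{-1}(\om')\) given \((E_T(\om'), \om'_+)\) coincides with that given \(E_T(\om')\) alone, yielding \(\rho_k(\om) = f(\sigma^{-(k-1)}\om)\).

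Finally, \((\Om, m, \sigma)\) is an ergodic Bernoulli shift, and \(\log f \in L^1(m)\) with \(\int \log f\, dm = \kappa_T\) by equation (\ref{kappa3}). Birkhoff's ergodic theorem applied to \(\sigma^{-1}\) therefore gives
\[\frac{1}{n}\sum_{k=1}^{n}\log f(\sigma^{-(k-1)}\om) \longrightarrow \kappa_T \quad m\text{-a.e.},\]
which is the claim.

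The main obstacle lies in the identification \(\rho_k(\om) = f(\sigma^{-(k-1)}\om)\): a priori, given only \(E_T(\om')\), the variable \(g_{-1}(\om')\) is not independent of the future \(\om'_+\) (were it so, \(\kappa_T\) would vanish). The Markov-type property supplied by Proposition \ref{quasiMarkov}---that the conditional law of \((g_{-1}, E_T)\) given \(\om_+\) depends on \(\om_+\) only through \(E_+\)---is exactly what is needed to strip the superfluous future coordinates. A subsidiary technical point is the \(L^1\)-integrability of \(\log f\), which follows from \(\kappa_T < +\infty\) together with the Gelfand--Yaglom--Perez formula recalled in Section \ref{section:mutualinformation}.
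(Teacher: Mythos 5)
Your proof is correct and takes essentially the same route as the paper's: telescope the log-density, identify the $k$-th increment with $f\circ\sigma^{-(k-1)}$ via the cocycle identity $E_T(\sigma\om)=g_0(\om)E_T(\om)$ together with the quasi-Markov property from Proposition~\ref{quasiMarkov}, and finish with Birkhoff. The only cosmetic difference is that you spell out the passage from conditioning on $(E_T,\om_+)$ to conditioning on the intermediate finite block $(E_T,g_0,\ldots,g_{k-2})$, which the paper leaves implicit.
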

\begin{proof} We claim that the ratio \(f^{(n)}(\om ) :=  \frac{dm_T^{E_T(\om )}|_{\{g_{-1}, \ldots, g_{-n}\}}}{d\otimes _1^n \mu} (g_{-1}(\om ), \ldots, g_{-n}(\om ) )\) is given by \(  f ^{(n)} (\om )= \prod _{j=0}^{n-1} f(\s ^{-j} \om ).\) Then \(\frac{1}{n} \log f^{(n)} (\om )\) is an ergodic average of the function \( \log f(\om ).\) By the Birkhoff ergodic theorem this average converges to \( \int \log f (\om ) \, dm(\om ) = \kappa_T\) (the function \(\log f \) is integrable by Corollary \ref{finiteentropy}).

We prove the claim: by the law of composition of conditional probabilities, the ratio \( f^{(n)}(\om )/ f^{(n-1)}(\om ) \) is the density with respect to \( \mu \) of the conditional measures of \(m\) relative to \( (g_{-1}(\om ), \ldots, g_{-n+1}(\om ), E_T(\om ) ) \) restricted to \( g_{-n}(\om ).\) But the $\s$-algebra generated by \( (g_{-1}(\om ), \ldots, g_{-n+1}(\om ), E_T(\om ) ) \) is the same as the $\s$-algebra generated by \( (E_T(\s^{-n+1}\om ), g_0(\s^{-n+1}\om ),  \ldots, g_{n-2}(\s^{-n+1}\om )).\) By proposition \ref{quasiMarkov} and stationarity, this is the density of the measure  \( m_T^{E_T(\s^{-n+1}\om )}\) restricted to \( g_{-n }(\om ) = g_{-1}(\s^{-n+1} \om )\) , that is \(f(\s^{-n+1}\om ).\)

\end{proof}

\subsection{The case of the projective space \(\R\P^{d-1}\)}\label{rapaport}
Consider the case of the topology \( T_{Q} \) associated to the partition \( Q := 0 <1<d\). The space \(\F _{Q} \) is the projective space \(\R\P^{d-1}\) and the measure \( \nu _{Q} \) is the stationary measure considered by \cite{rapaport}. In this section, we compare the formulations of our results in the case when both framework coincide. Namely,  from corollary \ref{mainQ} applied to  the partition \( Q\), we get:
\begin{corollary} \label{mainR} Let $\mu \in \MM(d)$ be a discrete probability measure on $ SL_d(\R)$.  
Let $Q$ be the  partition \( Q := 0 <1<d\).
Then the unique stationary probability measure $\nu _Q$ on the projective space \(\F_Q =\R\P^{d-1}\) is exact-dimensional.  There are numbers \( \g _j \) such that
\begin{equation}\label{etvoilaR}  \de _Q= \sum _{j=1}^{d-1}  \g_{j} , \quad \quad h(\R\P^{d-1},\mu , \nu _Q) =  \sum _{j=1}^{d-1}   \g_{j} (\chi _1 - \chi _{j+1}). \end{equation} Let   \( T_{Q} = T^{d-1} \overset {1}{\prec} T^{d-2} \overset {1}{\prec}  \ldots \overset {1}{\prec} T^j \overset {1}{\prec} \ldots \overset {1}{\prec} T^1 \overset {1}{\prec} T^0 = T_0 \) be the intermediate topologies from  proposition \ref{order}.1 applied to \( T_Q.\) The numbers \( \g _j \) are the  exact dimensions of the conditional measures \( \nu _{T^{j}, T^{j-1}} \) and  we have 
\begin{equation}\label{ent/expR}  \g_{j} (\chi _1 - \chi _{j+1}) \; = \; \kappa _{T^{j}, T^{j-1}} \; = \; H(g_{-1} , E_{T^j}\vert E_+ ) - H(g_{-1} , E_{T^{j-1}}\vert E_+ ) .\end{equation}
 \end{corollary}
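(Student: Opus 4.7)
The plan is to specialize Corollary \ref{mainQ} to the partition \(Q = \{0 < 1 < d\}\) and unwind the definitions; no new argument is required. First I would determine the filtered topology \(T_Q\) explicitly: its atoms are \(T_Q(1) = \{1\}\) and \(T_Q(i) = \{i,i+1,\ldots,d\}\) for \(i \geq 2\). Consequently \(D_{T_Q,T_0}\) consists exactly of the pairs \((1,j)\) for \(j = 2,\ldots,d\), with associated exponents \(\chi_1 - \chi_j\). Since \(\chi_1 - \chi_2 < \chi_1 - \chi_3 < \cdots < \chi_1 - \chi_d\), these are already in the increasing order required by Proposition \ref{order}.1, so the inductive construction in that proposition removes \(j+1\) from the atom of \(1\) at step \(j\) and produces exactly the chain \(T_Q = T^{d-1} \overset{1}{\prec} T^{d-2} \overset{1}{\prec} \cdots \overset{1}{\prec} T^0 = T_0\) described in the statement, with \(\chi_{T^j,T^{j-1}} = \chi_1 - \chi_{j+1}\).

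Next I would apply Corollary \ref{mainQ} directly. It gives exact-dimensionality of \(\nu_Q\) on \(\F_Q = \R\P^{d-1}\) together with the formulas \(\de_Q = \sum_{i,j:\,\ell_Q(i)<\ell_Q(j)} \g_{i,j}^Q\) and \(h(\F_Q,\mu,\nu_Q) = \sum_{i,j:\,\ell_Q(i)<\ell_Q(j)} \g_{i,j}^Q(\chi_i - \chi_j)\). Because \(\ell_Q(1) = 1\) and \(\ell_Q(i) = 2\) for \(i \geq 2\), the index sets collapse to the pairs \((1,j)\) with \(j = 2,\ldots,d\), and setting \(\g_j := \g_{1,j+1}^Q = \g_{T^j,T^{j-1}}\) yields the two identities in \eqref{etvoilaR}.

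The identification of each \(\g_j\) as the exact dimension of the conditional measure \(\nu_{T^j,T^{j-1}}^{x'}\) is then immediate from Theorem \ref{exactfibers} applied to the one-step refinement \(T^j \overset{1}{\prec} T^{j-1}\). For the entropy identity \eqref{ent/expR} I would combine three ingredients: Theorem \ref{exactfibers} gives \(\kappa_{T^j,T^{j-1}} = \chi_{T^j,T^{j-1}}\,\g_{T^j,T^{j-1}} = (\chi_1 - \chi_{j+1})\g_j\); the definition \eqref{entropydifference} gives the telescoping \(\kappa_{T^j,T^{j-1}} = \kappa_{T^j} - \kappa_{T^{j-1}}\); and equation \eqref{entropyT} expresses each \(\kappa_{T^j}\) as the conditional mutual entropy \(H(g_{-1},E_{T^j}\vert E_+)\), from which the right-hand side of \eqref{ent/expR} follows by subtraction.

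There is essentially no obstacle: the only point to check carefully is that the canonical chain produced by Proposition \ref{order}.1 from the pair \(T_Q \prec T_0\) coincides with the chain named in the statement, which is what the combinatorial inspection of the exponents \(\chi_1 - \chi_j\) provides.
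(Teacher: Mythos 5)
Your proof is correct and follows the same route as the paper: you compute the atoms of \(T_Q\), identify \(D_{T_Q,T_0}\) as the pairs \((1,j)\) with exponents \(\chi_1-\chi_j\) already in increasing order, trace the chain from Proposition \ref{order}.1 to recover \(T^j(1)=\{1,j+2,\ldots,d\}\), then specialize Corollary \ref{mainQ} for \eqref{etvoilaR} and combine Theorem \ref{exactfibers} with \eqref{entropydifference} and \eqref{entropyT} for \eqref{ent/expR}. This is exactly the paper's argument.
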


 \begin{proof} The topology \(T_Q\) associated to \( Q\) is (defined by its atoms)\[ T_{Q} \; = \; \{1\}, \{2, \ldots , d\} , \ldots, \{ d-1, d\}, \{d\} .\] Then, \( D_{T_{Q} , T_0 } = \{ (1,j), 1< j \leq d \}\) and the set of differences of exponents is \( \chi _1 - \chi _{j+1} , 1\leq j<d \).
Therefore,  the intermediate topologies from  proposition \ref{order}.1 \[ T_{Q} = T^{d-1} \overset {1}{\prec} T^{d-2} \overset {1}{\prec}  \ldots \overset {1}{\prec} T^j \overset {1}{\prec} \ldots \overset {1}{\prec} T^1 \overset {1}{\prec} T^0 = T_0 \] are given by \( T^j = \{1, j+2, \ldots , d\}, \{2, \ldots , d\} , \ldots, \{ d-1, d\}, \{d\} .\) Since   \(\chi _{T^{j},T^{j-1}} =\chi _1 - \chi _{j+1}\) for \(1\leq  j <d\), we indeed have that   \( \chi_{T^j, T^{j-1}}\) is increasing in \(j.\) 
Relation (\ref{etvoilaR}) follows from  theorem \ref{mainT} and corollary \ref{mainQ}, with \( \g_j = \de _{T^{j}, T^{j-1}}\)  for  \(1\leq  j <d\), relation (\ref{ent/expR}) from theorem \ref{exactfibers} and equation (\ref{entropyT}). \end{proof}

 If the measure \( \mu \) has finite support, a similar formula was shown in \cite{rapaport} under the (weaker) hypothesis that the support of \(\mu\) generates a strongly irreducible and proximal semi-group. Our purpose in this section is to indicate why both formulas are the same.\footnote{We will not discuss how, when restricted to \( T_Q\), our arguments would still hold under the hypothesis that the stationary measure is unique on \(\R\P^{d-1}\)  and that \( \chi _1 > \chi_2\). This would contain (and be very close  to) \cite{rapaport}.} In our case, all exponents are distinct; so, the \( \widetilde \la _j \) in  \cite{rapaport} are in our notations,
 \[ \widetilde \la _j \; = \; \chi _{j+1} - \chi _1 \; \textrm{{ for }} \; j = 1, \ldots, s= d-1.\] 
 Comparing (\ref{ent/expR}) with  \cite{rapaport} theorem 1.3, we see that 
 \[  \kappa _{T^j}- \kappa _{T^{j-1}} \; =\;  \kappa _{T^{j}, T^{j-1}} \; = \;  \g_{j} (\chi _1 - \chi _{j+1}) \; = \; - \g_j  \widetilde \la _j \; = \; H_{j-1} - H_j ,\]
 where \(H_j\) are the partial entropies from \cite{rapaport} theorem 1.3. Indeed, we 
  now verify that \( \kappa _{T^j} = H_0 - H_j.\)

Fix \(f' \in E_+(\Om_{reg} ) \)  and write  it as the stable flag of the Oseledets decomposition
\[ f' \;=\;  \{0\} \subset  U'_1 \subset \ldots \subset U'_j \subset \ldots \subset   U'_{d-1}  \subset  \R^{d} . \] 
We identify \( \X_{Q}^{f'} \) with the set of directions \( V_1\) in \(\R^{d} \) that do not belong to  \( U'_{d-1}\) and   for \( 1\leq j \leq d-1, \,  \X_{T^j}^{f'} \) with  the set of \((d-j)\)-planes \( V_{d-j}\)  such that  \( V_{d-j} \cap U'_{d-1} = U'_{d-j-1} \). The projection \(\pi _{T_Q, T^{j}} \) associates to \(V_1\) the space generated by \( V_1\) and \( U'_{d-j+1}.\) For \( 1\leq j <d, \) we define the partition \(\zeta _j\) of \(\Om _{reg} \) by the mapping \( \om \mapsto E_{T^j} (\om) \), i.e.
\[ \zeta_j (\om) := \{ \om ' \in \Om_{reg}, U_1 (\om') + U'_{d-j-1}(\om ') = U_1 (\om) + U'_{d-j-1} (\om) \},\]
where, for \( \om \in \Om _{reg}, \;  U_1 (\om )\) is the first unstable direction of the Oseledets decomposition. 
Consider the conditional measures \( m_j^{E_{T^j}(\om)} \) of \( m\) with respect to the partition \( \zeta _j\).

The entropy \( \kappa _{T^j} \)  is given by 
 (\ref{kappa3}): \[   \kappa _{T^j}  \; = \;   \int \left(\log\frac{dm_{j}^{E_{T^j}(\om)}|_{g_{-1}}}{d\mu} (g_{-1}(\om ))\right) \, dm(\om )\] and 
by proposition \ref{kappa4},  we have, for \(m\)-a.e. \(\om \),  \[\kappa _{T^{j}} = \lim\limits _{n \to \infty } \frac{1}{n} \log \frac{dm_{j}^{E_{T^j}(\om)}|_{\{g_{-1}, \ldots, g_{-n}\}}}{d\otimes _1^n \mu} (g_{-1}(\om ), \ldots, g_{-n}(\om )),\]
which coincide with \( H_0 - H_j \) in \cite{rapaport}, Theorem 1.3.  Observe that we have not used the fact that \( \mu \) is discrete for this last expression. If \(\mu \) is not discrete, relation (\ref{etvoilaR}) is not proven, but this last equation holds as soon as  \( \int \log \|g\| \, d\mu (g) <+\infty .\)

\subsection{Entropy difference}

For any pair of admissible topologies \(T \prec T'\) we defined  \(\kappa_{T,T'} =  \kappa_T - \kappa_{T'}\).
By relation (\ref{entropyT}), \(\kappa_{T,T'} <+\infty \) and if \(T \prec T' \prec T''\) one has
\[\kappa_{T,T''} = \kappa_{T,T'} + \kappa_{T',T''}.\]

By the chain rule for conditional mutual entropy, relation(\ref{entropyT})  and proposition \ref{quasiMarkov}, we have
\[\kappa_{T,T'} =  H(g_{-1},E_T\vert E_{T'}) = H(g_{-1},E_T\vert (E_{T'},g_0,g_1,\ldots))=  H(g_{-1},E_T\vert (E_{T'},\om_+)).\]

Recall that in the introduction we defined, for \(\nu '\)-a.e. \(f' \in \F,\) \( \nu _{T'}^{f'} \)-a.e. \( x' \in \X_{T'}^{f'} , \) \( \nu _{T,T'}^{x'} \) as a family of disintegrations of the measure \( \nu _T^{f'} \) with respect to \( \pi_{T,T'} .\)  
Then, \begin{equation}\label{transitions} \frac{ dg_\ast\nu _T^{f'} }{  d\nu _T^{gf'} }(y) = \frac{ dg_\ast\nu _{TT'}^{ x'} }{  d\nu _{T,T'}^{gx'} }(y) \frac{ dg_\ast\nu _{T'}^{ f'} }{  d\nu _{T'}^{gf'} }(x') .\end{equation}

\begin{proposition}\label{kappa1}
If \(T \prec T'\) are admissible topologies then
\[\kappa_{T,T'} = \int\limits_{\Omega}\log\left(\frac{dg_0(\omega)\nu_{T,T'}^{E_{T'}(\omega)}}{d\nu_{T,T'}^{E_{T'}(\sigma(\omega))}}(E_T(\sigma(\omega))\right) dm(\omega).\]
\end{proposition}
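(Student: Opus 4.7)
The plan is to rewrite both $\kappa_T$ and $\kappa_{T'}$ as integrals over the trajectory space $(\Omega, m)$, and then subtract them using the multiplicative chain rule \eqref{transitions}.

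The first step is to realize $\kappa_T$ as a trajectory-space integral. Under $m$, the triple $(g_0(\omega), E_+(\sigma\omega), E_T(\sigma\omega))$ has the same joint distribution as $(g, f', y)$, where $(g, f') \sim \mu \otimes \nu'$ are independent and $y \mid (g, f') \sim g_*\nu_T^{g^{-1}f'}$. Indeed, $E_+(\sigma\omega)$ is measurable with respect to $(g_n)_{n \ge 1}$ by proposition \ref{independence} and hence independent of $g_0(\omega)$, with marginal $\nu'$ by $\mu$-stationarity; moreover the Oseledets equivariance $E_\pm(\sigma\omega) = g_0(\omega) E_\pm(\omega)$ implies $E_T(\sigma\omega) = g_0(\omega) E_T(\omega)$, while conditionally on $E_+(\omega) = g_0(\omega)^{-1} E_+(\sigma\omega)$ the variable $E_T(\omega)$ has distribution $\nu_T^{E_+(\omega)}$. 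Feeding this change of variables into the definition \eqref{entropy} yields
\[\kappa_T \;=\; \int_{\Omega} \log\frac{d(g_0(\omega))_*\nu_T^{E_+(\omega)}}{d\nu_T^{E_+(\sigma\omega)}}\bigl(E_T(\sigma\omega)\bigr)\, dm(\omega),\]
and the same identity holds with $T$ replaced by $T'$ and $E_T$ by $E_{T'}$.

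The second step is the factorization. I apply the chain rule \eqref{transitions} pointwise at $y = E_T(\sigma\omega)$ with $f' = E_+(\omega)$, $g = g_0(\omega)$ and $x' = E_{T'}(\omega)$, noting that $gf' = E_+(\sigma\omega)$ and $gx' = E_{T'}(\sigma\omega) = \pi_{T,T'}(y)$, so both measures in the fiber-level Radon--Nikodym derivative live on $\pi_{T,T'}^{-1}(E_{T'}(\sigma\omega))$. This factors the integrand of the trajectory-space formula for $\kappa_T$ into the product of a fiber ratio $\frac{d(g_0(\omega))_*\nu_{T,T'}^{E_{T'}(\omega)}}{d\nu_{T,T'}^{E_{T'}(\sigma\omega)}}(E_T(\sigma\omega))$ and a base ratio $\frac{d(g_0(\omega))_*\nu_{T'}^{E_+(\omega)}}{d\nu_{T'}^{E_+(\sigma\omega)}}(E_{T'}(\sigma\omega))$. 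Taking logarithms and integrating against $m$, the integral of the log of the base ratio is precisely the trajectory-space formula for $\kappa_{T'}$, so
\[\kappa_{T,T'} \;=\; \kappa_T - \kappa_{T'} \;=\; \int_{\Omega} \log\frac{d(g_0(\omega))_*\nu_{T,T'}^{E_{T'}(\omega)}}{d\nu_{T,T'}^{E_{T'}(\sigma\omega)}}\bigl(E_T(\sigma\omega)\bigr)\, dm(\omega),\]
which is the claim.

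The only real obstacle is bookkeeping: one must verify that in each Radon--Nikodym derivative both measures sit on the same fiber, so that the chain rule applies and the pointwise factorization is legitimate. Integrability, needed to split the log of the product into the sum of two separately integrable logs, is automatic since $\kappa_T, \kappa_{T'}<+\infty$ by \eqref{entropyT} and the bound $H(g_{-1}, E_T \mid E_+) \le H(g_{-1}, E_- \mid E_+) < +\infty$ coming from corollary \ref{finiteentropy}.
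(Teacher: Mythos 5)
Your proposal is correct and follows essentially the same route as the paper: rewrite $\kappa_T$ (and $\kappa_{T'}$) as a trajectory-space integral, then factor the integrand via \eqref{transitions} and subtract. The only cosmetic difference is that the paper first invokes \eqref{entropyT} (expressing $\kappa_T$ as the conditional mutual entropy $H(g_{-1},E_T\vert E_+)$) and then shifts by $\sigma$, whereas you verify the joint law of $(g_0,E_+(\sigma\omega),E_T(\sigma\omega))$ directly from proposition \ref{independence} and the Oseledets equivariance; these two derivations of the trajectory-space identity are the same computation.
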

\begin{proof}
We write \(\kappa_T \) as \( \kappa _{T} \; = \; \mathbb E  \left[I (g_{-1},E_T   \vert E_+ ) (\om) \right].\) Reporting the explicit expression for \( I (g_{-1}(\om ),E_T (\om )  \vert E_+(\om ) ),\)
we have \begin{eqnarray*} \kappa _T &=& \mathbb E\left[ \log \frac{d (g_{-1} (\om )) _\ast \nu _T^{(g_{-1}(\om ) )^{-1}E_+(\om )} } {d\nu _T^{E_+(\om )}} (E_T(\om ))\right]\\ &=& \mathbb E\left[\log \frac{ d(g_{0} (\om )) _\ast \nu _T^{E_+(\om )} } {d\nu _T^{E_+(\s \om )}} (E_T(\s \om ))\right],\end{eqnarray*}
where the second line follows by \(\s\)-invariance. The proposition follows by making the difference \( \kappa _{T,T'} = \kappa _T -\kappa _{T'} \) and applying (\ref{transitions}). \end{proof}
Fix \( T \prec T' .\) For a.e. \( x \in (\pi_{T,T'})^{-1} (E_{T'} (\om ) ), \) set 
 \(f_\omega(x) := \frac{dg_0(\omega)\nu_{T,T'}^{E_{T'}(\omega)}}{d\nu_{T,T'}^{E_{T'}(\sigma(\omega))}}(x).\)
From proposition \ref{kappa1}, follows 
\begin{equation}\label{kappa2} \kappa _{T,T'} \; =\; \mathbb E \left[ \log f_\om (E_T(\om )) \right].\end{equation}

Recall formula (\ref{kappa3}) for \(\kappa _T \) and \(\kappa _{T'} .\) With the same notations, we have
\[ \kappa _{T,T'}\;   = \;   \int \left(\log \frac{dm_T^{E_T(\om)}|_{g_{-1}}}{dm_{T'}^{E_{T'}(\om)}|_{g_{-1}}}(\om )) \right) \, dm(\om )\]
and the following corollary of Proposition \ref{kappa4}
\begin{corollary}\label{kappa5} Assume \(T \prec T'\) are admissible topologies. Then, we have, for \(m\)-a.e. \(\om \),  \[\kappa _{T,T'} = \lim\limits _{n \to \infty } \frac{1}{n} \log \frac{dm_T^{E_T(\om )}|_{\{g_{-1}, \ldots, g_{-n}\}}}{dm_{T'}^{E_{T'}(\om )}|_{\{g_{-1}, \ldots, g_{-n}\}}} (g_{-1}(\om ), \ldots, g_{-n}(\om )).\] \end{corollary}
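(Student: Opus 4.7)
The plan is to deduce Corollary \ref{kappa5} directly from Proposition \ref{kappa4} by taking a ratio of the two Radon--Nikodym derivatives. First, I would apply Proposition \ref{kappa4} separately to \(T\) and to \(T'\), which gives, for \(m\)-a.e.\ \(\omega\),
\[\kappa_T = \lim_{n \to \infty} \frac{1}{n}\log \frac{dm_T^{E_T(\omega)}|_{\{g_{-1},\ldots,g_{-n}\}}}{d\otimes_1^n \mu}(g_{-1}(\omega),\ldots,g_{-n}(\omega)),\]
and the analogous identity with \(T'\) in place of \(T\). Since both limits exist almost surely, their difference converges almost surely as well, and by the chain rule for Radon--Nikodym derivatives the product measure \(d\otimes_1^n \mu\) cancels in the ratio.

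The key algebraic step is that the quotient
\[\frac{dm_T^{E_T(\omega)}|_{\{g_{-1},\ldots,g_{-n}\}}/d\otimes_1^n \mu}{dm_{T'}^{E_{T'}(\omega)}|_{\{g_{-1},\ldots,g_{-n}\}}/d\otimes_1^n \mu}\]
equals \(dm_T^{E_T(\omega)}|_{\{g_{-1},\ldots,g_{-n}\}}/dm_{T'}^{E_{T'}(\omega)}|_{\{g_{-1},\ldots,g_{-n}\}}\). This quotient is well defined because \(T \prec T'\) implies that the \(\sigma\)-algebra generated by \(E_T\) refines the one generated by \(E_{T'}\); hence \(m_T^{E_T(\omega)}\) is almost surely a conditional measure of \(m_{T'}^{E_{T'}(\omega)}\), and the relevant absolute continuity on the marginal \(\{g_{-1},\ldots,g_{-n}\}\) carries over from the quasi-independence statements established through Proposition \ref{independence} and equation (\ref{entropyT}).

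Taking logarithms, dividing by \(n\), and invoking the existence of both limits above gives, for \(m\)-a.e.\ \(\omega\),
\[\kappa_T - \kappa_{T'} = \lim_{n \to \infty} \frac{1}{n} \log \frac{dm_T^{E_T(\omega)}|_{\{g_{-1},\ldots,g_{-n}\}}}{dm_{T'}^{E_{T'}(\omega)}|_{\{g_{-1},\ldots,g_{-n}\}}}(g_{-1}(\omega),\ldots,g_{-n}(\omega)).\]
By definition \(\kappa_{T,T'} = \kappa_T - \kappa_{T'}\), and this completes the identification with the claimed formula.

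Since the proof is essentially an arithmetic subtraction of two copies of Proposition \ref{kappa4}, there is no serious obstacle; the only point that requires care is the verification that the quotient of Radon--Nikodym derivatives is itself a Radon--Nikodym derivative of \(m_T^{E_T(\omega)}|_{\{g_{-1},\ldots,g_{-n}\}}\) with respect to \(m_{T'}^{E_{T'}(\omega)}|_{\{g_{-1},\ldots,g_{-n}\}}\). That is immediate from the tower property of conditional expectation applied to the inclusion of \(\sigma\)-algebras induced by \(T \prec T'\), so the corollary follows.
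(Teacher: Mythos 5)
Your proof is correct and follows the same route the paper intends: the paper explicitly presents the statement as "the following corollary of Proposition \ref{kappa4}," and your argument — applying Proposition \ref{kappa4} to $T$ and $T'$, subtracting the two a.s.\ finite limits, and cancelling the reference measure $\otimes_1^n\mu$ via the chain rule for Radon--Nikodym derivatives (with absolute continuity $m_T^{E_T(\omega)}|_{\{g_{-1},\ldots,g_{-n}\}} \ll m_{T'}^{E_{T'}(\omega)}|_{\{g_{-1},\ldots,g_{-n}\}}$ supplied by the inclusion of $\sigma$-algebras from $T \prec T'$) — is exactly the intended derivation.
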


\subsection{Zero entropy difference}
\begin{proposition}\label{kappa0}
If \(\kappa_{T,T'} = 0\) then \(\nu_{T,T'}^{E_{T'}(\omega)}\) is the point mass at \(E_{T}(\omega)\) for \(m\)-a.e. \(\omega \in \Omega\).  In particular, it is exact dimensional with dimension \(0\).
\end{proposition}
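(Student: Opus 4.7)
The plan is to show that $\kappa_{T,T'}=0$ forces a strict equivariance of the fiber disintegrations under the random walk, and then to use the exponential contraction along fibers from Lemma~\ref{coordinateslemma} to force $\nu_{T,T'}^{E_{T'}(\omega)}$ to collapse onto $E_T(\omega)$. First I would reduce to the one-step case $T \overset{1}{\prec} T'$: by Proposition~\ref{order}.1 a general $T \prec T'$ factors as a chain $T = T^N \overset{1}{\prec} \cdots \overset{1}{\prec} T^0 = T'$, and additivity together with the non-negativity of each $\kappa_{T^k,T^{k-1}}$ forces every intermediate difference to vanish; if each one-step disintegration collapses to a point mass, iterating the tower $\X_T^{f'} \to \X_{T^{N-1}}^{f'} \to \cdots \to \X_{T'}^{f'}$ makes the whole $\nu_{T,T'}^{E_{T'}(\omega)}$ a point mass as well.

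In the one-step case, Proposition~\ref{kappa1} combined with the observation that $E_T(\sigma\omega) = g_0(\omega) E_T(\omega)$ has conditional distribution $g_0(\omega)_*\nu_{T,T'}^{E_{T'}(\omega)}$ given $(g_0(\omega), E_{T'}(\omega))$ rewrites $\kappa_{T,T'}$ as
\[ \kappa_{T,T'} \;=\; \Exp{\int \log \frac{dg_0(\omega)_*\nu_{T,T'}^{E_{T'}(\omega)}}{d\nu_{T,T'}^{E_{T'}(\sigma\omega)}}\; d\bigl(g_0(\omega)_*\nu_{T,T'}^{E_{T'}(\omega)}\bigr)}, \]
an expected relative entropy. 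Its vanishing combined with the non-negativity of the integrand forces $g_0(\omega)_*\nu_{T,T'}^{E_{T'}(\omega)} = \nu_{T,T'}^{E_{T'}(\sigma\omega)}$ for $m$-a.e.\ $\omega$, and iterating this equivariance backwards in time yields, $m$-almost surely and for every $n\ge 1$,
\[ (g_{-1}(\omega)\circ\cdots\circ g_{-n}(\omega))_*\, \nu_{T,T'}^{E_{T'}(\sigma^{-n}\omega)} \;=\; \nu_{T,T'}^{E_{T'}(\omega)}. \]

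To close the argument, I would pull back each measure $\nu_{T,T'}^{E_{T'}(\sigma^{-n}\omega)}$ through the coordinate chart $\varphi_{E_T(\sigma^{-n}\omega)}:(-\pi/2,\pi/2)\to \pi_{T,T'}^{-1}(E_{T'}(\sigma^{-n}\omega))$ of Lemma~\ref{coordinate}; stationarity of $m$ makes the law of the resulting random probability on $(-\pi/2,\pi/2)$ independent of $n$. Given $\varepsilon>0$, pick $R<\pi/2$ such that with probability at least $1-\varepsilon$ this random measure places mass at least $1-\varepsilon$ on $[-R,R]$; by ergodicity of $(\Omega,m,\sigma)$ this event recurs for infinitely many $n$, $m$-a.s. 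Along those $n$, Lemma~\ref{coordinateslemma} confines the image of $\varphi_{E_T(\sigma^{-n}\omega)}([-R,R])$ under $g_{-1}\cdots g_{-n}$ to a ball of radius $\exp(-\chi_{T,T'} n + o(n))$ around $E_T(\omega)$, so by equivariance $\nu_{T,T'}^{E_{T'}(\omega)}$ places mass at least $1-\varepsilon$ in this shrinking ball. Letting $n\to\infty$ and then $\varepsilon\to 0$ gives $\nu_{T,T'}^{E_{T'}(\omega)} = \delta_{E_T(\omega)}$, which is trivially exact-dimensional of dimension~$0$. The main technical obstacle is in the first step: turning the integrand of Proposition~\ref{kappa1} into a bona fide Kullback--Leibler divergence requires $g_0(\omega)_*\nu_{T,T'}^{E_{T'}(\omega)} \ll \nu_{T,T'}^{E_{T'}(\sigma\omega)}$ $m$-a.s., which should follow from Corollary~\ref{finiteentropy} combined with the fiber decomposition~\eqref{transitions}, but needs careful bookkeeping.
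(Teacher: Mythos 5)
Your proposal is correct and follows essentially the same route as the paper: reduce to the one-step case via Proposition~\ref{order}.1, derive the equivariance $g_0(\omega)_*\nu_{T,T'}^{E_{T'}(\omega)} = \nu_{T,T'}^{E_{T'}(\sigma\omega)}$ from Proposition~\ref{kappa1} and Jensen, and push this equivariance through the fiber contraction of Lemma~\ref{coordinateslemma}. The only cosmetic difference is that you use stationarity and ergodicity of the pulled-back random measure where the paper invokes Poincar\'e recurrence; both versions rest on the (implicitly used in your argument) fact, which the paper makes explicit, that $\nu_{T,T'}^{E_{T'}(\omega)}$ places no mass on the excluded point $E_{T'}(\omega)_{T'(i)\setminus\{i\}}$ of the $\varphi$-chart, which is exactly what lets you pick the compact $[-R,R]$ with mass near $1$.
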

\begin{proof}
By proposition \ref{order}, we may assume that  \( T \overset {1}{\prec} T'\) and that \( i<j\) are such that \( T'(i) = T(i) \cup \{ j\}.\) By proposition \ref{kappa1}, we have 
\[\kappa_{T,T'} = \int\limits_{\Omega}\log\left(\frac{dg_0(\omega)\nu_{T,T'}^{E_{T'}(\omega)}}{d\nu_{T,T'}^{E_{T'}(\sigma(\omega))}}(E_T(\sigma(\omega))\right) dm(\omega).\]

Therefore, by Jensen inequality, if \(\kappa_{T,T'} = 0\) then for \(m\)-almost every \(\omega\)  we have 
 \(g_{0}(\omega)\nu_{T,T'}^{E_{T'}}(\omega) = \nu_{T,T'}^{E_{T'}(\sigma(\omega))}\).
Since \(m\) is \(\sigma\)-invariant, we obtain that \(m\)-almost everywhere one has \(\nu_{T,T'}^{E_{T'}(\omega)} = g_{-1}(\omega)\ldots g_{-n}(\omega)\nu_{T,T'}^{E_{T'}(\sigma^{-n}(\omega))}\) for all \(n \ge 1\).

Observe that, since \(E_+\) and \(E_-\) are in general position, one has \(E_{T'}(\omega)_{T'(i) \setminus \lbrace i\rbrace} \neq E_{T}(\omega)_{T(i)}\) 
for \(m\)-almost every \(\omega \in \Omega\), and therefore \(\nu_{T,T'}^{E_{T'}(\omega)}\left(\lbrace E_{T'}(\omega)_{T'(i) \setminus \lbrace i\rbrace}\rbrace \right) = 0\). Using the  coordinate \( \vf _{x(\om )} \) (see  lemma \ref{coordinateslemma}),    we have
\( \nu_{T,T'}^{E_{T'}(\omega)} [\vf _{x(\om )}((-\pi/2, \pi /2))] = 1.\)

By Poincar\'e recurrence, for almost every  \( \om \in \Om_- \),  there exists an infinite sequence \(n_k, k\in \N\) and \( \alpha >0\) such that 
\( \nu_{T,T'}^{E_{T'}(\sigma^{-n_k}(\omega))} \)converges to \(\nu_{T,T'}^{E_{T'}(\omega)}\) as \( k \to \infty \) and \(\nu_{T,T'}^{E_{T'}(\omega)}\left( (\vf _{x(\s ^{-n_k} (\om))} ) ((-\alpha, \alpha))\right) > \alpha.\)

Hence, for those \( \om \) for which  lemma \ref{coordinateslemma} hold, \( g_{-1}(\omega)\ldots g_{-n_k}(\omega)\) sends any neighborhood of \( E_{T}(\sigma^{-n_k}(\omega)) \) to a small neighborhood of \( E_{T}(\omega) \).
This is possible only if \( \nu_{T,T'}^{E_{T'}(\omega)} \) is concentrated at \( E_{T}(\omega) \). \end{proof}

 \section{Proof of theorem \ref{exactfibers}} \label{section:telescoping}
 
Theorem \ref{exactfibers} is an almost everywhere statement. It uses a telescoping argument mixed with weak type (1,1) techniques  as in  the proof of Shannon-McMillan-Breiman theorem for finite entropy partitions. 
The proof  follows  \cite{hochman-solomyak2017} and  \cite{lessa}.

 In this section, we  assume that  \( T \overset {1}{\prec} T'\) and that \( i<j\) are such that \( T'(i) = T(i) \cup \{ j\}.\) Let \( \chi := \chi _{T,T'}, \kappa := \kappa _{T,T'}. \) we want to show that for \( \nu ' \)-a.e. \( f' \in \F ,\) \( \nu _{T'}^{f'} \)-a.e. \( x' \in \X_{T'}^{f'}, \) the conditional measure \(\nu_{T,T'}^{x'} \)  is exact-dimensional with dimension \( \kappa / \chi.\)

\subsection{Length of stationary neighborhoods}

Let \(\pi\:= \pi_{T,T'} \) be the projection from \(\X_{T}\) to \(\X_{T'}\) and consider the  coordinates given by lemma \ref{coordinate} on \(\pi^{-1}(E_{T'}(\omega)),\) setting \(x = E_{T}(\omega)\) and \(x' = E_{T'}(\omega)\). 

For each \(\alpha,  0 < \alpha < \pi /2\) let \(N^{\alpha}(\omega)\) (respectively \(N^{\alpha,+}(\omega), N^{\alpha,-}(\omega)\)) the set \( \vf ((-\alpha, \alpha )) \) (respectively \(\vf( [0, \alpha)), \vf((-\alpha, 0] )\). Recall that we denoted by \( \eta\) the rotation invariant probability measure on \(\pi^{-1}(E_{T'}(\omega)).\) The measure \( \vf _\ast du \) has a bounded continuous density with respect to \( \eta\) (the bound depends on \(\om\)). From lemma \ref{coordinateslemma} follows

\begin{lemma}\label{intervallengthlemma}
For all \(\alpha,  0 < \alpha <\pi /2\), for \(m\)-a.e. \(\omega \in \Omega\) one has 
 \[\eta\left(g_{-1}(\omega)\ldots g_{-n}(\omega)N^{\alpha}(\sigma^{-n}(\omega))\right) = e^{-\chi n + o(n)}\text{ as }n \to +\infty,\]
 and the same holds for \(N^{\alpha,+}\) and \(N^{\alpha,-}\).
\end{lemma}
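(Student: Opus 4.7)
The plan is to reduce to a 2-dimensional projective dynamical system and read off the contraction rate directly from Oseledets' theorem applied to this reduction. By Lemma \ref{coordinate}, the fiber $\pi^{-1}(E_{T'}(\om))$ is identified with a dense open subset of the projective line over the 2-dimensional quotient $V_\om := x'_{T'(i)}/x'_{T'(i) \setminus \lbrace i,j\rbrace}$ evaluated at $\om$. Because both $x'_{T'(i)}(\s^{-n}\om)$ and $x'_{T'(i) \setminus \lbrace i,j\rbrace}(\s^{-n}\om)$ are direct sums of Oseledets subspaces and are therefore mapped by $g_{-1}(\om)\cdots g_{-n}(\om)$ onto their counterparts at $\om$, the cocycle descends to a linear isomorphism $V_{\s^{-n}\om} \to V_\om$. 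In the bases induced by the images of $E_i$ and $E_j$, Theorem \ref{oseledets}.1 gives singular values $e^{\chi_i n + o(n)}$ and $e^{\chi_j n + o(n)}$, and the angle non-degeneracy of Theorem \ref{oseledets}.2 ensures that these Oseledets bases stay $e^{o(n)}$-close to orthonormal. Consequently, the induced map on projective lines is an essentially diagonal Möbius transformation contracting a fixed projective neighborhood of the top singular direction by ratio $e^{-\chi n + o(n)}$ toward the top singular direction at the target, which is $e^{-o(n)}$-close to $E_T(\om)$ (cf.\ Proposition \ref{configurationapproximationlemma} and Lemma \ref{coordinateslemma}).

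The set $N^\alpha(\s^{-n}\om) = \vf_{x(\s^{-n}\om)}((-\alpha, \alpha))$ is precisely such a neighborhood of the top singular direction. The derivative formula $|d\vf_x(u)| = |\sin\theta|/(1 + \sin(2u)\cos\theta)$ from the proof of Lemma \ref{coordinate}, combined with the lower bound $|\sin\theta_{-n}| \ge e^{-o(n)}$ from Theorem \ref{oseledets}.2, shows that $N^\alpha(\s^{-n}\om)$ has angular diameter bounded above and below by $\alpha$ up to factors $e^{\pm o(n)}$. Applying the contracting Möbius action then gives that $g_{-1}(\om)\cdots g_{-n}(\om) N^\alpha(\s^{-n}\om)$ is an interval of angular length $e^{-\chi n + o(n)}$ around a point $e^{-\chi n + o(n)}$-close to $E_T(\om)$; since $\eta$ is the normalized angular measure on the fiber, this is exactly the claimed estimate. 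The half-intervals $N^{\alpha,\pm}$ are handled identically, because a Möbius map is a monotone homeomorphism of the projective line and the two half-neighborhoods transform into intervals of comparable length.

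The main difficulty is keeping all the sub-exponential corrections under simultaneous control along a single full-measure set: the singular-value estimates on $V_{\s^{-n}\om} \to V_\om$, the sub-exponential non-degeneracy of the angle between the Oseledets directions at $\s^{-n}\om$ used to identify $V_{\s^{-n}\om}$ with an almost-Euclidean plane, and the comparison between angular length and the $\eta$-measure. All three sit inside the single Oseledets-regular set of Theorem \ref{oseledets}, and the estimates must be carried out pointwise for each such $\om$, with $\alpha$ fixed in advance, so that the $o(n)$ errors remain independent of the leading prefactor $e^{-\chi n}$.
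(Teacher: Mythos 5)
Your proof is correct and takes the same route the paper intends: reduce to the induced M\"obius action on the two-dimensional quotient $V_{\omega}$ spanned (mod $x'_{T'(i)\setminus\{i,j\}}$) by the images of $E_i$ and $E_j$, control all distortions via the angle estimates of Theorem~\ref{oseledets}.2, and combine with the explicit derivative formula of Lemma~\ref{coordinate}. One remark: the paper's stated justification, a one-line appeal to Lemma~\ref{coordinateslemma}, only furnishes the upper bound $\leq e^{-\chi n + o(n)}$, while your two-sided projective derivative estimate (using $|\det A|/\|A|_p\|^{2}$ together with the singular-value asymptotics $\sigma_1 = e^{\chi_i n + o(n)}$, $\sigma_2 = e^{\chi_j n + o(n)}$ and the $e^{-o(n)}$ separation of $N^{\alpha}(\sigma^{-n}\omega)$ from the excluded direction) is precisely what yields the matching lower bound on $\eta$, so your argument usefully makes explicit the step the paper leaves implicit.
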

In the sequel, we set, for each \(\alpha,  0 < \alpha < \pi /2\) and  for each \(n \ge 1\),  \(N_n^\alpha (\omega) \)  for the interval  in \( \pi ^{-1}(E_{T'}( \om )) \) given by \[N_n^{\alpha }(\om )= (g_{-1}(\omega)\ldots g_{-n}(\omega))(N^{\alpha} (\sigma^{-n}(\omega)).\]   
\( N_n^{\alpha,+} (\omega),  N_n^{\alpha,-} (\omega) \) are defined similarly, \( N_n^{\alpha,\pm} (\omega)\) is a choice of one of the  three intervals.

\subsection{Probability of stationary neighborhoods}
Theorem \ref{exactfibers} will follow by comparing lemma \ref{intervallengthlemma} and the following
\begin{proposition}\label{intervalprobabilitylemma}
 For all \(\alpha,  0 < \alpha <\pi /2\), for \(m\)-a.e. \(\omega \in \Omega\),  one has 
 \[\nu_{T,T'}^{E_{T'}(\omega)}(N_n^{\alpha,\pm }(\omega)) = e^{-\kappa n + o(n)}\nu_{T,T'}^{E_{T'}(\sigma^{-n}(\omega))}(N^{\alpha,\pm' }(\sigma^{-n}(\omega)))\text{ as }n \to +\infty.\]
\end{proposition}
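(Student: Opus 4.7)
The plan is to express the Radon--Nikodym derivative between the fibre disintegrations along the reverse orbit as a multiplicative cocycle, identify its exponential rate at the Oseledets reference point $E_T(\omega)$ as $\kappa$ via Birkhoff's ergodic theorem, and then transport this pointwise estimate to the entire shrinking interval $N_n^{\alpha,\pm}(\omega)$ by a Shannon--McMillan--Breiman--type argument.

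Writing $h_n(\omega) := g_{-1}(\omega)\cdots g_{-n}(\omega)$ and
\[
\phi_\omega(y) := \frac{d\,g_{-1}(\omega)_\ast\nu_{T,T'}^{E_{T'}(\sigma^{-1}\omega)}}{d\nu_{T,T'}^{E_{T'}(\omega)}}(y),
\]
a straightforward induction using $h_n(\omega) = g_{-1}(\omega)\,h_{n-1}(\sigma^{-1}\omega)$ yields the cocycle identity
\[
\Psi_n(\omega,y) := \frac{d\,h_n(\omega)_\ast\nu_{T,T'}^{E_{T'}(\sigma^{-n}\omega)}}{d\nu_{T,T'}^{E_{T'}(\omega)}}(y) = \prod_{k=0}^{n-1}\phi_{\sigma^{-k}\omega}\!\bigl(h_k(\omega)^{-1}y\bigr).
\]
Since $h_n(\omega)$ bijects $N^{\alpha,\pm}(\sigma^{-n}\omega)$ onto some $N_n^{\alpha,\pm'}(\omega)$, a change of variables gives
\[
\nu_{T,T'}^{E_{T'}(\omega)}\!\bigl(N_n^{\alpha,\pm'}(\omega)\bigr) = \int_{N^{\alpha,\pm}(\sigma^{-n}\omega)}\Psi_n\!\bigl(\omega, h_n(\omega)z\bigr)^{-1}d\nu_{T,T'}^{E_{T'}(\sigma^{-n}\omega)}(z),
\]
so the proposition reduces to proving $\Psi_n(\omega, h_n(\omega)z) = e^{\kappa n + o(n)}$ in an integrated sense over $z \in N^{\alpha,\pm}(\sigma^{-n}\omega)$. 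At the distinguished point $y = E_T(\omega)$ the factors collapse, because $h_k(\omega)^{-1}E_T(\omega) = E_T(\sigma^{-k}\omega)$, leaving the Birkhoff sum
\[
\frac{1}{n}\log\Psi_n(\omega,E_T(\omega)) = \frac{1}{n}\sum_{k=0}^{n-1}\log\phi_{\sigma^{-k}\omega}\!\bigl(E_T(\sigma^{-k}\omega)\bigr) \xrightarrow[n\to\infty]{} \kappa,
\]
by the ergodic theorem applied to $(\Omega, m, \sigma)$ and the integrable function $\omega \mapsto \log\phi_\omega(E_T(\omega))$, whose mean equals $\kappa$ by Proposition~\ref{kappa1} after the substitution $\omega \mapsto \sigma^{-1}\omega$ (which turns $g_0(\sigma^{-1}\omega)$ into $g_{-1}(\omega)$).

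The remaining task---and the main obstacle---is to upgrade this pointwise estimate at $E_T(\omega)$ to a uniform statement over $y \in N_n^{\alpha,\pm'}(\omega)$. For such $y = h_n(\omega)z$ the argument $h_k(\omega)^{-1}y = h_{n-k}(\sigma^{-k}\omega)z$ of the $k$-th factor lies in $N_{n-k}^{\alpha,\pm}(\sigma^{-k}\omega)$, which by Lemma~\ref{coordinateslemma} is contained in a neighbourhood of $E_T(\sigma^{-k}\omega)$ of radius $e^{-\chi(n-k)+o(n-k)}$. This is small for $k$ not too close to $n$, but the factors $\phi_{\sigma^{-k}\omega}$ are only a.e.-defined Radon--Nikodym derivatives, not continuous functions, so a naive pointwise comparison with $\phi_{\sigma^{-k}\omega}(E_T(\sigma^{-k}\omega))$ fails. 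Following the strategy of \cite{hochman-solomyak2017} and \cite{lessa}, I would bound each $\phi_{\sigma^{-k}\omega}(h_k(\omega)^{-1}y)$ by the Hardy--Littlewood maximal average of $\phi_{\sigma^{-k}\omega}$ over the shrinking interval $N_{n-k}^{\alpha,\pm}(\sigma^{-k}\omega)$, using the one-dimensional bilipschitz coordinate of Lemma~\ref{coordinate} and the weak-type $(1,1)$ inequality to control these averages in $L^1$. Maker's ergodic theorem then absorbs the fact that each summand depends on the running index $n-k$, and a Borel--Cantelli argument---feasible because $\chi > 0$ makes the fibre contraction exponential, hence summable---converts $L^1$ bounds into almost sure uniformity in $z$. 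Reinserting the resulting asymptotic $\Psi_n(\omega, h_n(\omega)z) = e^{\kappa n + o(n)}$ into the change-of-variables formula, together with the matching lower bound, delivers the claimed equivalence.
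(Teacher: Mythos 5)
Your set-up is correct: you identify the cocycle structure for the Radon--Nikodym derivatives along the reverse orbit, the Birkhoff limit at the distinguished point $E_T(\omega)$, and the fact that the real work is transporting this limit to the shrinking intervals $N_n^{\alpha,\pm}$. But the route you propose for that last step has a gap you yourself half-acknowledge: a Hardy--Littlewood maximal function bounds \emph{averages} of $\phi_{\sigma^{-k}\omega}$ over intervals, not its value at an arbitrary point $h_k(\omega)^{-1}y$ inside the interval. Since $\phi_{\sigma^{-k}\omega}$ is only a.e.-defined, there is no way to recover the pointwise estimate $\Psi_n(\omega, h_n(\omega)z) = e^{\kappa n + o(n)}$ uniformly over $z$ from an $L^1$ bound on maximal averages, and the appeal to Borel--Cantelli does not fix this: summability of $e^{-\chi(n-k)}$ gives you no leverage on the oscillation of an $L^1$ density at scale $e^{-\chi(n-k)}$ without additional regularity that the measures $\nu_{T,T'}^{x'}$ do not a priori possess.

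The paper avoids this obstacle entirely by never asking for pointwise values of the Radon--Nikodym derivative over the shrinking neighbourhoods. Instead it defines, for each $n$, the \emph{averaged} one-step ratio
\[
f_n^{\alpha,\pm}(\omega) = \frac{\nu_{T,T'}^{E_{T'}(\omega)}(N_n^{\alpha,\pm}(\omega))}{\nu_{T,T'}^{E_{T'}(\sigma\omega)}(g_0(\omega)N_n^{\alpha,\pm}(\omega))} = \frac{1}{\nu_{T,T'}^{E_{T'}(\sigma\omega)}(g_0(\omega)N_n^{\alpha,\pm}(\omega))}\int_{g_0(\omega)N_n^{\alpha,\pm}(\omega)} f_\omega\, d\nu_{T,T'}^{E_{T'}(\sigma\omega)},
\]
which is the quantity the maximal function and Lebesgue differentiation actually control: one gets $f_n^{\alpha,\pm}(\omega)\to f_\omega(E_T(\sigma\omega))$ a.s. and $\sup_n|\log f_n^{\alpha,\pm}|$ integrable. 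The key observation is then that the product $\prod_{k=1}^n f_{n-k}^{\alpha,\pm}(\sigma^{-k}\omega)$ telescopes \emph{exactly} to the ratio $\nu_{T,T'}^{E_{T'}(\sigma^{-n}\omega)}(N^{\alpha,\pm}(\sigma^{-n}\omega))\big/\nu_{T,T'}^{E_{T'}(\omega)}(N_n^{\alpha,\pm}(\omega))$, because $g_{-k}(\omega)N_{n-k}^{\alpha,\pm}(\sigma^{-k}\omega) = N_{n-k+1}^{\alpha,\pm}(\sigma^{-k+1}\omega)$. Maker's ergodic theorem, applied to the triangular array $\log f_{n-k}^{\alpha,\pm}(\sigma^{-k}\omega)$ with the a.s. convergence and uniform integrable domination just established, then gives the Birkhoff-type limit $\kappa$ directly for the log of this telescoping product, which is the statement. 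So rather than ``pointwise then upgrade to the interval,'' the right move is ``work with interval ratios throughout, since they both telescope and fall under Lebesgue differentiation.'' Your change-of-variables display is essentially this telescoping identity in disguise; the repair is to replace the integrand $\Psi_n(\omega,h_n(\omega)z)^{-1}$ (whose pointwise behaviour is inaccessible) by the averaged ratios $f_n^{\alpha,\pm}$ before passing to the limit.
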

 In this formula, the sign \(\pm '\) is not necessarily the same as \( \pm .\)

\begin{proof}
Recall that we set  \(f_\omega(x) = \frac{dg_0(\omega)\nu_{T,T'}^{E_{T'}(\omega)}}{d\nu_{T,T'}^{E_{T'}(\sigma(\omega))}}(x)\),  and that by (\ref{kappa2}), \[ \kappa \; =\; \int  \log f_\om (E_T(\om )) dm (\om ) \; =\; \mathbb E \left[ \int _{\pi^{-1}(E_{T'}(\omega))}\log f_\om (x)\, d\nu_{T,T'}^{E_{T'}(\omega)}(x) \right] .\]
Since \( \kappa  < +\infty ,\)  for \(m\)-a.e. \( \om \), \( \int _{\pi^{-1}(E_{T'}(\omega))} \log f_\om (x)\, d\nu_{T,T'}^{E_{T'}(\omega)}(x) < +\infty .\)

Set  for each \(\alpha,  0 < \alpha < \pi /2\), for each \(n \ge 1\),
 \[f_{n}^{\alpha, \pm }(\omega) = \frac{\nu_{T,T'}^{E_{T'}(\omega)}(N_n^{\alpha, \pm }(\omega))}{\nu_{T,T'}^{E_{T'}(\sigma(\omega))}(g_0(\omega) N_n^{\alpha, \pm }(\omega))} = \frac{\int\limits_{g_0(\omega)N_n^{\alpha, \pm }(\omega)}f_{\omega}(x)d\nu_{T,T'}^{E_{T'}(\sigma(\omega))}(x)}{\nu_{T,T'}^{E_{T'}(\sigma(\omega))}(g_0(\omega) N_n^{\alpha, \pm }(\omega))}.\]

\begin{lemma} \label{makerhypothesislemma}
  For all \(\alpha,  0 < \alpha <\pi /2\), for \(m\)-a.e. \(\omega \in \Omega\) one has \(\lim\limits_{n \to +\infty}f_n^{\alpha ,\pm }(\omega) = f_{\omega}(E_{T}(\sigma(\omega)))\).   Furthermore \(\int \sup\limits_{n \ge 1}|\log(f_n^{\alpha ,\pm }(\omega))| \,dm(\omega) < +\infty\).
\end{lemma}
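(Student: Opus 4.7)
The plan is to recognize $f_n^{\alpha,\pm}(\omega)$ as a sequence of Lebesgue differentiation quotients on a one-dimensional fiber, and then combine the Lebesgue differentiation theorem with an $L\log L$ maximal inequality. The starting point is the cocycle identity $g_0(\omega) = g_{-1}(\sigma\omega)$, from which
\[ g_0(\omega) N_n^{\alpha,\pm}(\omega) \;=\; N_{n+1}^{\alpha,\pm'}(\sigma\omega) ,\]
(the sign $\pm'$ possibly differing from $\pm$ according to the orientation of $g_0(\omega)$), and hence, using the definition of $f_\omega$,
\[ f_n^{\alpha,\pm}(\omega) \;=\; \frac{1}{\nu_{T,T'}^{E_{T'}(\sigma\omega)}\bigl(N_{n+1}^{\alpha,\pm'}(\sigma\omega)\bigr)} \int_{N_{n+1}^{\alpha,\pm'}(\sigma\omega)} f_\omega \, d\nu_{T,T'}^{E_{T'}(\sigma\omega)} . \]
By lemma \ref{coordinateslemma}, these sets contain $E_T(\sigma\omega)$ and their diameters decay exponentially to zero inside the fiber $\pi^{-1}(E_{T'}(\sigma\omega))$, which is bilipschitz to an interval by lemma \ref{coordinate}. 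Thus $f_n^{\alpha,\pm}(\omega)$ is the average of the probability density $f_\omega$ over shrinking one-dimensional neighborhoods of $E_T(\sigma\omega)$ with respect to $\nu_{T,T'}^{E_{T'}(\sigma\omega)}$.

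For the first claim, the plan is to apply the Lebesgue differentiation theorem on the one-dimensional fiber to conclude that $f_n^{\alpha,\pm}(\omega) \to f_\omega(x)$ at every $\nu_{T,T'}^{E_{T'}(\sigma\omega)}$-Lebesgue point $x$ of $f_\omega$. A disintegration argument, using that the conditional distribution of $E_T(\sigma\omega)$ given $(E_+(\sigma\omega),E_{T'}(\sigma\omega))$ is $\nu_{T,T'}^{E_{T'}(\sigma\omega)}$ (by proposition \ref{independence} and the very definition of the disintegration $\nu_{T,T'}^{x'}$), then shows that $E_T(\sigma\omega)$ is $m$-almost surely such a Lebesgue point, which yields the claimed pointwise limit.

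For the second claim, split $|\log f_n^{\alpha,\pm}| = \log^+ f_n^{\alpha,\pm} + \log^+(1/f_n^{\alpha,\pm})$. The supremum $\sup_n f_n^{\alpha,\pm}(\omega)$ is dominated by the Hardy-Littlewood maximal function $\mathcal{M} f_\omega(E_T(\sigma\omega))$ of $f_\omega$ on the fiber, with respect to $\nu_{T,T'}^{E_{T'}(\sigma\omega)}$. The classical one-dimensional $L\log L$ maximal inequality gives
\[ \int \mathcal{M} f_\omega \, d\nu_{T,T'}^{E_{T'}(\sigma\omega)} \;\le\; C \int f_\omega \log^+ f_\omega \, d\nu_{T,T'}^{E_{T'}(\sigma\omega)} + C , \]
and taking the $m$-expectation reduces the right-hand side to $C\kappa + C$, which is finite by corollary \ref{finiteentropy} and the reformulation of $\kappa$ in equation (\ref{kappa2}). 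For the reciprocal, Jensen's inequality applied to $x \mapsto 1/x$ bounds $1/f_n^{\alpha,\pm}(\omega)$ by the average of $1/f_\omega = d\nu_{T,T'}^{E_{T'}(\sigma\omega)}/d\bigl((g_0(\omega))_\ast \nu_{T,T'}^{E_{T'}(\omega)}\bigr)$ over $N_{n+1}^{\alpha,\pm'}(\sigma\omega)$ with respect to the pushforward measure $(g_0(\omega))_\ast \nu_{T,T'}^{E_{T'}(\omega)}$; the symmetric maximal function argument, now with the two fiber measures swapped, reduces again to the finiteness of $\kappa$ via $\sigma$-invariance of $m$.

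The main obstacle is the transfer from fiber-wise integrability to $m$-integrability of the specific value at $E_T(\sigma\omega)$. Because $E_T(\sigma\omega)$ is a deterministic function of $\omega$, the naive Fubini swap between $\int_{\Omega} \phi(\omega, E_T(\sigma\omega))\,dm$ and $\int_{\Omega}\int \phi(\omega,x)\,d\nu_{T,T'}^{E_{T'}(\sigma\omega)}(x)\,dm(\omega)$ is not legitimate. The fix is to identify a sub-$\sigma$-algebra $\mathcal{G}$ on $\Omega$ under which the maximal function $\mathcal{M} f_\omega$ and the fiber measure $\nu_{T,T'}^{E_{T'}(\sigma\omega)}$ are both measurable and under which the conditional distribution of $E_T(\sigma\omega)$ remains absolutely continuous with respect to $\nu_{T,T'}^{E_{T'}(\sigma\omega)}$ with a uniformly controlled density. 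The independence of $E_-$ and $E_+$ from proposition \ref{independence} is the crucial structural input, but making this conditioning explicit and carrying out the two symmetric $L\log L$ estimates is the delicate technical core of the argument.
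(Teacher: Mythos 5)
Your approach is essentially the one the paper uses: first part by recognizing $f_n^{\alpha,\pm}$ as averages of the fiberwise density $f_\omega$ over shrinking one-sided neighborhoods of $E_T(\sigma\omega)$ and invoking Lebesgue differentiation, second part via the fiberwise Hardy--Littlewood maximal function plus the finiteness of $\kappa_{T,T'}$. The paper states the weak type (1,1) bound for the one-dimensional maximal operator and then cites Lemmas 8--10 of \cite{lessa} for the resulting $m$-integrability of $\log M f_\omega(E_T(\sigma\omega))$ and for the lower bound, whereas you invoke an $L\log L$ maximal estimate and Jensen for the reciprocal; these are minor reformulations of the same mechanism (the $L\log L$ bound is itself a consequence of weak (1,1), and what is really needed is $\int f_\omega \log^+(Mf_\omega)\,d\nu$, which you can reach from your stated estimate by $\log^+ s \le s/e$). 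You also correctly flag the crucial Fubini/conditioning step --- passing from fiberwise bounds to $m$-integrability at the specific point $E_T(\sigma\omega)$ via the fact that its conditional law given $(E_{T'}(\sigma\omega), E_+(\sigma\omega), g_0(\omega))$ has density $f_\omega$ with respect to $\nu_{T,T'}^{E_{T'}(\sigma\omega)}$ --- but leave it unexecuted; that is precisely the technical core the paper outsources to \cite{lessa}, so your level of detail is comparable to the paper's own.
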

\begin{proof}
 By lemma \ref{intervallengthlemma} the intervals \(N_n^{\alpha ,\pm }(\omega)\) intersect to \(E_{T}(\omega)\) when \(n \to +\infty\) for \(m\)-a.e. \(\omega \in \Omega\).   By the Lebesgue differentiation theorem this implies
 \[\lim\limits_{n \to +\infty}f_n^{\alpha ,\pm }(\omega) = f_{\omega}(g_0(\omega)E_{T}(\omega)) = f_{\omega}(E_{T}(\sigma(\omega))),\]
 for \(m\)-a.e. \(\omega \in \Omega\) as claimed.

For the second claim, let \(M f_{\omega}\) be the maximal function defined by 
\[M f_{\omega}(x) = \sup\limits_{N \supset \lbrace x\rbrace} \frac{1}{\nu_{T,T'}^{E_{T'}(\sigma(\omega))}(N)}\int\limits_{N} f_{\omega}(x)\,d\nu_{T,T'}^{E_{T'}(\sigma(\omega))}(x),\]
where the supremum is over intervals \(N\) starting or finishing at \(x\) in \(\pi^{-1}(E_{T'}(\sigma(\omega))).\)
Since  \( \pi^{-1}(x')\) is one-dimensional,  for all \(x' \in \X_{T'}\), the maximal operator is of weak type (1,1) and we have (see \cite[Lemma 8]{lessa}, \cite[Lemma 9]{lessa} for details)

 \begin{equation}\label{maximalintegrabilitylemma} \int \log\left(M f_{\omega}(E_{T}(\sigma(\omega)))\right)\, dm(\omega) < +\infty.\end{equation}
 It remains to show that \(\inf\limits_{n}\log(f_n^{\alpha ,\pm }(\omega))\) is \(m\)-integrable. See \cite[Lemma 10]{lessa} for the argument in a very similar setting. \end{proof}
 
 From Birkhoff ergodic theorem we have
 \begin{align*}
\kappa &= \lim\limits_{n \to +\infty} \frac{1}{n}\sum\limits_{k = 1}^{n}\log\left(\frac{dg_{-k}\nu_{T,T'}^{E_{T'}(\sigma^{-k}(\omega))}}{d\nu_{T,T'}^{E_{T'}(\sigma^{-k+1}(\omega))}}(E_{T}(\sigma^{-k+1}(\omega)))\right)
\\ &= \lim\limits_{n \to +\infty} \frac{1}{n}\sum\limits_{k = 1}^{n}\log\left(f_{\sigma^{-k}(\omega)}(E_{T}(\sigma(\sigma^{-k}(\omega))))\right)
\end{align*}

To conclude, using Lemma \ref{makerhypothesislemma} and Maker theorem (see e.g. \cite[Theorem 5.7]{rapaport}) we obtain
 \begin{align*}
\kappa&= \lim\limits_{n \to +\infty} \frac{1}{n}\sum\limits_{k = 1}^{n}\log\left(f_{n-k}^{\alpha, \pm }(\sigma^{-k}(\omega))\right)
\\ &= \lim\limits_{n \to +\infty} \frac{1}{n}\sum\limits_{k = 1}^{n}\log\left(\frac{\nu_{T,T'}^{E_{T'}(\sigma^{-k}(\omega))}(N_{n-k}^{\alpha, \pm }(\sigma^{-k}(\omega)))}{\nu_{T,T'}^{E_{T'}(\sigma^{-k+1}(\omega))}(g_{-k}(\omega) N_{n-k}^{\alpha, \pm }(\sigma^{-k}(\omega)))}\right)
\\ &= \lim\limits_{n \to +\infty} \frac{1}{n}\sum\limits_{k = 1}^{n}\log\left(\frac{\nu_{T,T'}^{E_{T'}(\sigma^{-k}(\omega))}(g_{-k-1}(\omega) \ldots g_{-n}(\omega) N^{\alpha, \pm }(\sigma^{-n}(\omega)) )}{\nu_{T,T'}^{E_{T'}(\sigma^{-k+1}(\omega))}(g_{-k}(\omega) \ldots g_{-n}(\omega) N^{\alpha, \pm }(\sigma^{-n}(\omega)))}\right)
\\ &= \lim\limits_{n \to +\infty} \frac{1}{n}\log\left(\frac{\nu_{T,T'}^{E_{T'}(\sigma^{-n}(\omega))}(N^{\alpha, \pm }(\sigma^{-n}(\omega)))}{\nu_{T,T'}^{E_{T'}(\omega)}(N_n^{\alpha, \pm }(\omega))}\right).
\end{align*}
\end{proof}

\subsection{Exact dimensionality}
We finish the proof of theorem \ref{exactfibers}.
 Firstly, if \( \kappa =0, \) by proposition \ref{kappa0}, the measure \( \nu_{T,T'}^{E_{T'}(\om )} \) is exact-dimensional with dimension 0. On the other hand, since \(  \chi >0 \) and \(\kappa =0 \), we have \( 0= \kappa /\chi.\)

So we may assume \( \kappa >0.\) By proposition \ref{intervalprobabilitylemma}, for \(m\)-a.e. \( \om \), all \(\alpha,  0 < \alpha < \pi /2\), \(n\) going to infinity, 
\[ \frac{1}{n} \log \nu_{T,T'}^{E_{T'}(\omega)}(N_n^{\alpha,\pm}(\omega)) \; =\; - \kappa + o(1) + \frac{1}{n} \log \nu_{T,T'}^{E_{T'}(\s ^{-n} \omega)}(N^{\alpha,\pm ' }(\s^{-n} \omega)) .\]
On the other hand, we have, by lemma \ref{intervallengthlemma},  for \(m\)-a.e. \( \om \), all \(\alpha,  0 < \alpha < \pi /2\), \(n\) going to infinity, 
\[ B_{\pi^{-1} (E_{T'}(\omega))}^\pm  (E_T(\om ), e^{-n\chi + o(n)})  \subset N_n^{\alpha, \pm' } (\om )  \subset B_{\pi^{-1} (E_{T'}(\omega))}^\pm  (E_T(\om ), e^{-n\chi + o(n)}) ,\]
where \( B_{\pi^{-1} (E_{T'}(\omega))}^\pm  (x,r) \) is either of the two intervals of size \(r\) based on \(x\).

It follows that for \(m\)-a.e. \( \om \),  \(\nu_{T,T'}^{E_{T'}(\omega)}\)-a.e. \(x\),
\[\liminf\limits_{r \to 0}\frac{\log(\nu_{T,T'}^{E_{T'}(\omega))}(B(x,r))}{\log(r)} \ge \frac{\kappa}{\chi}.\]

We cannot estimate directly  \( \limsup\limits_{r \to 0}\frac{\log(\nu_{T,T'}^{E_{T'}(\omega))}(B(x,r))}{\log(r)} \) in the same way, because we do not know a priori that \( \liminf\limits_{n\to \infty }\frac{1}{n} \log (\nu_{T,T'}^{E_{T'}(\s ^{-n}\omega))}(N^{\alpha, \pm } (\s^{-n}\om )) =0.\) The observation is that to compute \(\limsup\limits_{r \to 0}\frac{\log(\nu_{T,T'}^{E_{T'}(\omega))}(B(x,r))}{\log(r)} , \) it suffices to consider values of \(-\log r\) in \( \N\) with a positive density. 

We claim that we can take \( \alpha \) large enough that one of  \( \nu_{T,T'}^{E_{T'}(\omega)}(N^{\alpha, + } (\om ))\) and  \( \nu_{T,T'}^{E_{T'}(\omega)}(N^{\alpha, -  } (\om ))\) is at  least some \(c >0\)  on a set \( \Om ' \subset \Om \) of positive measure. This finishes the proof, because, by Birkhoff ergodic theorem,  for \(m\)-a.e.  \(\om \) the sequence \( n_k \) such that \( \s ^{-n_k} \om  \in \Om ' \) has positive density. 

Remains to prove the claim. We prove it by contradiction: if it is not true, \( \nu_{T,T'}^{E_{T'}(\omega)}(N^{\alpha } (\om )) = 0 \)
\( m\)-a.e. for all \(\alpha \). But then, the measure \( \nu_{T,T'}^{E_{T'}(\omega)}\) is concentrated on \( E_{T'}(\omega)_{T'(i) \setminus \lbrace i\rbrace} \), a contradiction with the fact that \(E_+\) and \(E_-\) are in general position.

\subsection{Proof of lemma \ref{factors}}
Assume the diagram of projections \(\begin{matrix} T & \longrightarrow & S  \\
\downarrow 1 & & \downarrow 1 &   \\
T' & \longrightarrow & S'  
\end{matrix}\) commutes and \(i,j\) are such that \(T(i) = T'(i) \setminus \lbrace j\rbrace\) and \(S(i) = S'(i) \setminus \lbrace j \rbrace\).

Then, by Corollary \ref{factors2}, for \(x' \in \X_{S'} \) and \( y'  \in (\pi _{T',S'})^{-1} (x') \)  there is a bilipschitz homeomorphism between 
 \( (\pi _{S,S'})^{-1} (x') \) and  \( (\pi _{T,T'})^{-1} (y') \). 
 The measure  \( \nu ^{x'}_{S,S'} \) is the average over \( y'\) of the measures  \( (\pi _{T,S} )_\ast \nu ^{y'}_{T,T'} \), the average being taken under \( d\nu ^{x'}_{T',S'}(y') .\)  Lemma \ref{factors} then follows from \cite{ledrappier-young1985} Lemma 11.3.2. \( \qed \)

\section{Proof of Theorem \ref{counting} }\label{addingdimensions}
In this section, we assume that the measure \( \mu \) is discrete and prove Theorem \ref{counting}.

The general idea of the proof is that  entropy conservation implies dimension conservation. In \cite{ledrappier-young1985}, dynamical balls are disjoint ellipsoids with exponentially big eccentricities, not suitable for dimension estimates. But they behave very well for entropy estimates, even when considering their slices by invariant foliations. If one takes the slices in increasing order of size, this forces dimension conservation. For IFS or stationary measures, the dynamical balls are not disjoint any more. The idea of \cite{feng} is to look at the dynamical balls and the slicing at the level of 
the invertible dynamics on \(\Om \). Since  the measure  $\mu$ is discrete, working at the level of the space \(\Om \)  is possible here as well. 

\subsection{Setup}

Recall that we consider \( T \prec T' \) and the decomposition \(T^0 ,T^1,\ldots,T^{N_{T,T'}} \), where \(T^0 = T'\) and \(T^{N_{T,T'}} = T\) of proposition \ref{order} with \( T^t \overset{1}{\prec} T^{t-1}\) for \( t = 1, \ldots {N_{T,T'}}\).   

Recall that for each admissible topology \(S\) we set \(E_S(\omega) = F_{S}(E(\om))\).

We will use \(x\) to denote a point in \(\X_{T}\) and always set \(x_t = \pi_{T,T^t}(x)\) and \(x_{t-1} = \pi_{T,T^{t-1}}(x)\).

We fix \(t\) and set \(\chi = \chi_{T^t,T^{t-1}}\), \(\kappa = \kappa_{T^t,T^{t-1}}\) and \(\delta = \overline{\delta^{t}}\). Then,  \(\gamma_{T^t,T^{t-1}} = \kappa/\chi\).

We are interested in comparing for \(\nu_{\omega} = \nu_{T,T_0}^{E_+(\omega)}\)-almost every \(x\), the upper dimension at \(x\) of \(\nu _{T, T^t}^{x_t}\) and \(\nu _{T, T^{t-1}}^{x_{t-1}}\).

For this purpose we fix \(\varepsilon > 0\) and set \(r_n = \exp(-n(\chi - \varepsilon))\) for all \(n\) in what follows.

\subsection{Approximating configurations}

\begin{lemma}\label{approximationlemma}
There exist functions \(f_n\) such that, for \(m\)-a.e. \( \om  ,\) setting \(E_{T,n}(\omega) = f_n(E_{T^{t-1}}(\omega),g_{-1}(\omega),\ldots,g_{-n}(\omega))\) there exists \(n(\omega)\) such that for all \(n \ge n(\omega)\),
\(E_{T,n}(\omega) \in B(E_{T}(\omega),r_n)\).
\end{lemma}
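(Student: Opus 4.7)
The plan is to apply Proposition \ref{configurationapproximationlemma} to the pair $T \prec T^{t-1}$ after deterministically rebuilding the past configuration $E_{T^{t-1}}(\sigma^{-n}(\omega))$ from $E_{T^{t-1}}(\omega)$ and the increments $g_{-1}(\omega),\ldots,g_{-n}(\omega)$. The construction of $f_n$ would proceed in three steps. First, use the cocycle identity $g_0(\omega') E_{T^{t-1}}(\omega') = E_{T^{t-1}}(\sigma\omega')$, iterated $n$ times, to recover
\[E_{T^{t-1}}(\sigma^{-n}(\omega)) \;=\; g_{-n}(\omega)^{-1}\cdots g_{-1}(\omega)^{-1} E_{T^{t-1}}(\omega),\]
which is a deterministic function of the allowed data. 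Second, apply the deterministic perpendicular extension $\widehat{E}(\cdot)_T$ from Section \ref{section:oseledets} (which uses only the ambient inner product on $\R^d$) to produce an element $\widehat{E}(\sigma^{-n}(\omega))_T \in \X_T$ projecting onto $E_{T^{t-1}}(\sigma^{-n}(\omega))$. Third, set
\[E_{T,n}(\omega) \;:=\; g_{-1}(\omega)\cdots g_{-n}(\omega)\, \widehat{E}(\sigma^{-n}(\omega))_T,\]
which is again a deterministic function of $(E_{T^{t-1}}(\omega),g_{-1}(\omega),\ldots,g_{-n}(\omega))$; this is how I would define $f_n$.

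The error estimate is then immediate from Proposition \ref{configurationapproximationlemma} applied to the pair $T \prec T^{t-1}$, which yields
\[\dist\bigl(E_{T,n}(\omega),E_T(\omega)\bigr) \;\le\; \exp(-\chi'' n + o(n)) \quad\text{as } n \to +\infty,\]
for $m$-a.e.\ $\omega$, where $\chi'' = \min_{(i,j) \in D_{T,T^{t-1}}}(\chi_i - \chi_j)$. It remains to identify $\chi''$ with $\chi = \chi_{T^t,T^{t-1}}$. By the construction in Proposition \ref{order}.1, the sequence $T^{t-1}\succ T^{t}\succ\cdots\succ T^{N_{T,T'}}=T$ removes one pair $(i_s,j_s)$ at each step $s$, so that $D_{T,T^{t-1}} = \bigsqcup_{s=t}^{N_{T,T'}}\{(i_s,j_s)\}$, and the associated exponents $\chi_{T^s,T^{s-1}} = \chi_{i_s}-\chi_{j_s}$ are non-decreasing in $s$. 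Hence the minimum over $D_{T,T^{t-1}}$ is attained at $s=t$ and equals $\chi$. The bound therefore reads $\exp(-\chi n + o(n))$, which is eventually smaller than $r_n = \exp(-n(\chi-\varepsilon))$, so for all $n$ past some $m$-a.e.\ finite $n(\omega)$ one has $E_{T,n}(\omega) \in B(E_T(\omega),r_n)$, as required.

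The only non-cosmetic point in this plan is the identification $\chi'' = \chi$; it is not an obstacle but rather a sanity check on the ordering chosen in Proposition \ref{order}.1, and it is precisely the reason the decomposition was arranged with non-decreasing exponents in the first place. Everything else reduces to applying already-proven results (the cocycle property of Oseledets flags, the deterministic extension of Section \ref{section:oseledets}, and Proposition \ref{configurationapproximationlemma}).
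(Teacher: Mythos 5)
Your proposal is correct and takes essentially the same approach as the paper: the paper's proof just invokes Proposition \ref{configurationapproximationlemma} applied to the pair $T \prec T^{t-1}$ together with the observation that the non-decreasing ordering from Proposition \ref{order}.1 forces $\min_{(i,j)\in D_{T,T^{t-1}}}(\chi_i-\chi_j) = \chi_{T^t,T^{t-1}}$, leaving the construction of $f_n$ (cocycle recovery of $E_{T^{t-1}}(\sigma^{-n}\omega)$, perpendicular extension $\widehat E$, push-forward by $g_{-1}\cdots g_{-n}$) implicit. You have simply made explicit what the paper states tersely, including the check that $D_{T,T^{t-1}} = \bigsqcup_{s=t}^{N_{T,T'}}\{(i_s,j_s)\}$.
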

\begin{proof}
The construction of Proposition \ref{order} guarantees that 
\[\chi = \min\left\lbrace\chi_{T^s,T^{s-1}}: s = t,t+1,\ldots, N_{T,T'}\right\rbrace.\] 

Therefore the claim follows directly from proposition \ref{configurationapproximationlemma}.
\end{proof}

\begin{corollary}\label{approximationcorollary1}
For \(m\)-a.e. \( \om \), \(\nu_{\omega}\)-almost every \(x\) and \(m_{T^t}^{x_t}\)-almost every \((h_n)_{n \le -1}\) there exists \(N(\om , x_t,(h_n)_{n \le -1})\) such that, for all \(n \ge N(\om, x_t,(h_n)_{n \le -1})\).
\[f_n(x_{t-1},h_{-1},\ldots,h_{-n}) \in B\left(\lim\limits_{k \to +\infty}f_k(x_{t-1},h_{-1},\ldots,h_{-k}), r_n\right).\]
\end{corollary}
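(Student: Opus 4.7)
The plan is to derive Corollary \ref{approximationcorollary1} from Lemma \ref{approximationlemma} by two observations: the event whose almost sure occurrence is sought depends only on the pair $(x_{t-1},(h_n)_{n\le -1})$, and the joint law of this pair under the corollary's sampling coincides with the joint law of $(E_{T^{t-1}}(\omega),(g_n)_{n\le -1})$ under $m$.

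First I would introduce the event
\[
A := \bigl\{(y,(h_n)_{n\le -1}) : f_n(y,h_{-1},\ldots,h_{-n}) \text{ converges and lies in } B(\lim_k f_k,r_n) \text{ for all large }n\bigr\},
\]
which is a Borel subset of $\X_{T^{t-1}} \times \Om_-$ because the $f_n$ and the radii $r_n$ are deterministic. Lemma \ref{approximationlemma} asserts that for $m$-a.e.\ $\omega$ the sequence $f_n(E_{T^{t-1}}(\omega),g_{-1}(\omega),\ldots,g_{-n}(\omega))$ lies in $B(E_T(\omega),r_n)$ for all large $n$; since $r_n \to 0$ this forces the sequence to converge to $E_T(\omega)$, so the pair $(E_{T^{t-1}}(\omega),(g_n)_{n\le -1})$ lies in $A$ for $m$-a.e.\ $\omega$.

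Next I would verify that the joint distribution of $(\omega_+, x_{t-1},(h_n)_{n\le -1})$ under the corollary's sampling agrees with the joint distribution of $(\omega_+,E_{T^{t-1}}(\omega),(g_n)_{n\le -1})$ under $m$. Since $\nu_\omega = \nu_T^{E_+(\omega)}$, conditional on $\omega_+$ the variable $x_{t-1}$ has distribution $(\pi_{T,T^{t-1}})_\ast \nu_T^{E_+(\omega)} = \nu_{T^{t-1}}^{E_+(\omega)}$, which matches the conditional law of $E_{T^{t-1}}(\omega)$ under $m$. Conditioning further on $y := x_{t-1}$, the intermediate projection $x_t$ has law $(\pi_{T,T^t})_\ast \nu_{T,T^{t-1}}^{y} = \nu_{T^t,T^{t-1}}^{y}$ by uniqueness of disintegrations, and given $x_t$ the past $(h_n)$ is distributed as $m_{T^t}^{x_t}$. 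Integrating out $x_t$ yields
\[
\int m_{T^t}^{x_t}\,d\nu_{T^t,T^{t-1}}^{y}(x_t) = m_{T^{t-1}}^{y},
\]
which is exactly the conditional distribution of $(g_n)_{n\le -1}$ given $E_{T^{t-1}}(\omega) = y$ under $m$.

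Combining the two observations, the event $A$ has full measure under the corollary's sampling, which is precisely the claim of Corollary \ref{approximationcorollary1}. The argument amounts to bookkeeping with disintegrations, so I do not foresee any substantial obstacle; the one point requiring a little care is the identity $(\pi_{T,T^t})_\ast \nu_{T,T^{t-1}}^{y} = \nu_{T^t,T^{t-1}}^{y}$, a standard consequence of the uniqueness of conditional measures applied to the tower $\X_T \to \X_{T^t} \to \X_{T^{t-1}}$.
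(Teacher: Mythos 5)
Your argument is correct, and since the paper states this corollary without a proof block, the disintegration bookkeeping you write out is exactly what is implicitly meant. The two observations you isolate are the right ones: the event depends only on the pair $(x_{t-1},(h_n)_{n\le -1})$, and its joint law under the corollary's sampling coincides (after conditioning on $\omega_+$, using that $\nu_\omega = \nu_T^{E_+(\omega)}$ depends only on $\omega_+$, then integrating $x_t$ out via equation (\ref{m-measures})) with the joint law of $(E_{T^{t-1}}(\omega),(g_n)_{n\le -1})$ under $m$; the only genuine inputs are Lemma \ref{approximationlemma}, the tower identity (\ref{m-measures}), and the fact that a point of $\X_{T^{t-1}}$ determines the flag $E_+(\omega_+)$ via $\pi_{T^{t-1},T_0}$, which is what lets the conditional law of the past given $E_{T^{t-1}}=y$ be insensitive to the extra information in $\omega_+$.
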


\subsection{Approximating conditional probabilities}
Let \(T\) be an admissible topology. By definition, \(E_T(\om ) \in \X_T^{E_+(\om _+)} \) and, if \(T \prec T', \; E_{T'}(\om ) = \pi _{T,T'} (E_T(\om )).\) We introduced in section \ref{section:cond.mut.info} the disintegrations 
of the projection \( \om \in \Om  \mapsto E_T(\om ) \in \X_T^{E_+(\om _+)} \)  and their restrictions \( m_T^{E_T(\om)}\)  to \(\Om_-\). 
 With  our notations, we have, for \( \nu' \)-a.e. \(f', \nu _{T^t}^{f'} \)-a.e. \( z \in \X_{T^{t-1}}, \)
\begin{equation}\label{m-measures}  m_{T^{t-1}}^z \; = \; \int _{\X_{T^t, T^{t-1}}^z} m_{T^{t}}^y \, d\nu_{T^t, T^{t-1}}^z(y)
 . \end{equation}

Given \(x \in \X_T\), let \(A_n(x_{t-1})\) be the set of  sequences \((h_{n})_{n \le -1}\) such that, for all \( m \geq n,\) \(f_m(x_{t-1},h_{-1},\ldots,h_{-m}) \in B\left(\lim\limits_{k \to +\infty}f_k(x_{t-1},h_{-1},\ldots,h_{-k}), r_m\right)\).
 We let \([h_{-n},\ldots,h_{-1}]\) denote the set of sequences \((h_n')_{n \le -1}\) such that \(h_{-n}=h_{-n}',\ldots,h_{-1}=h_{-1}'\).

\begin{lemma}\label{conditionalslemma}
For \( m \)-a.e. \( \om \), \(\nu_{\omega}\)-almost every \(x\) and \(m_{T^{t}}^{x_t}\)-almost every \((h_n)_{n \le -1}\), there exists \(N(\om, x_t,(h_n)_{n \le -1})\) such that, for all \(n \ge N(\om, x_t,(h_n)_{n \le -1})\),
\[m_{T^{t}}^{x_t}([h_{-n},\ldots,h_{-1}] \cap A_n(x_{t-1})) \le \exp(n(\kappa + \varepsilon))m_{T^{t-1}}^{x_{t-1}}([h_{-n},\ldots,h_{-1}] \cap A_n(x_{t-1})).\]
\end{lemma}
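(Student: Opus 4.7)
The plan is to combine Corollary \ref{kappa5} (giving the cylinder-density asymptotic \(\psi_n \approx e^{n\kappa}\)) with a conditional-probability lower bound on \(A_n(x_{t-1})\) inside each cylinder.

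Let \(\psi_n(h_{-1},\ldots,h_{-n})\) denote the Radon--Nikodym derivative of \(m_{T^t}^{x_t}|_{\{g_{-1},\ldots,g_{-n}\}}\) with respect to \(m_{T^{t-1}}^{x_{t-1}}|_{\{g_{-1},\ldots,g_{-n}\}}\). Since \(\mu\) is discrete, \(\psi_n\) is constant on each cylinder \([h_{-n},\ldots,h_{-1}]\), where it equals the ratio of the two cylinder masses. By Corollary \ref{kappa5}, transferred through the joint distribution of \((\omega,x)\) with \(x\) drawn from \(\nu_\omega\), for \(\nu_\omega\)-a.e.\ \(x\) and \(m_{T^t}^{x_t}\)-a.e.\ \((h_n)\) one has \(\frac{1}{n}\log\psi_n(h_{-1},\ldots,h_{-n}) \to \kappa\). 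Hence \(\psi_n(h_{-1},\ldots,h_{-n}) \le e^{n(\kappa+\varepsilon/2)}\) for \(n\) beyond some random threshold, and combined with the trivial inclusion \(m_{T^t}^{x_t}([h_{-n},\ldots,h_{-1}]\cap A_n(x_{t-1})) \le m_{T^t}^{x_t}([h_{-n},\ldots,h_{-1}])\) this gives
\[ m_{T^t}^{x_t}([h_{-n},\ldots,h_{-1}] \cap A_n(x_{t-1})) \;\le\; e^{n(\kappa+\varepsilon/2)}\, m_{T^{t-1}}^{x_{t-1}}([h_{-n},\ldots,h_{-1}]). \]
This reduces the lemma to establishing the conditional lower bound \(m_{T^{t-1}}^{x_{t-1}}(A_n(x_{t-1}) \mid [h_{-n},\ldots,h_{-1}]) \ge e^{-n\varepsilon/2}\).

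By the disintegration of \(m_{T^{t-1}}^{x_{t-1}}\) along the cylinder, the conditional probability equals \(m_{T^{t-1}}^{y_{t-1}}(\tilde A_n)\), where \(y_{t-1} = (h_{-1}\cdots h_{-n})^{-1}x_{t-1}\) and \(\tilde A_n\) is the set of tails \(\tau\) such that the concatenation lies in \(A_n(x_{t-1})\). Using the shift identity \(f_{n+j}(x_{t-1},h_{-1},\ldots,h_{-n},\tau_1,\ldots,\tau_j) = (h_{-1}\cdots h_{-n})\, f_j(y_{t-1},\tau_1,\ldots,\tau_j)\) and the one-dimensional bi-Lipschitz coordinate of Lemma \ref{coordinate} on the relevant fiber, the condition defining \(\tilde A_n\) pulls back to \(\|f_j(y_{t-1},\tau_1,\ldots,\tau_j) - \lim\| \le r_{n+j}/L_n\) for all \(j \ge 0\), where \(L_n\) is the Lipschitz constant of \(h_{-1}\cdots h_{-n}\) on the fiber.

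The hard part will be the resulting lower bound on \(m_{T^{t-1}}^{y_{t-1}}(\tilde A_n)\). By Oseledets regularity (Theorem \ref{oseledets}), for \(m\)-a.e.\ \(\omega\) the Lipschitz constant satisfies \(L_n = e^{-\chi n + o(n)}\), hence \(r_{n+j}/L_n = e^{n\varepsilon - j(\chi-\varepsilon) + o(n)}\), which is \emph{larger} than the typical approximation error \(e^{-\chi j + o(j)}\) provided by Proposition \ref{configurationapproximationlemma} applied to the shifted trajectory at \(y_{t-1}\). A Borel--Cantelli argument over the failures of the shifted approximation, combined with a uniformization of the \(o(n)\) and \(o(j)\) terms in Oseledets's decomposition, then shows that \(m_{T^{t-1}}^{y_{t-1}}(\tilde A_n) \to 1\) as \(n \to \infty\) for \(m_{T^t}^{x_t}\)-a.e.\ \((h_n)\), yielding the required bound with room to spare.
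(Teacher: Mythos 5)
Your overall strategy is structurally the same as the paper's: split the ratio as (cylinder ratio)\(\times\)(conditional probability ratio), use Corollary \ref{kappa5} for the cylinder ratio, the trivial inclusion for one factor, and a lower bound on \(m_{T^{t-1}}^{x_{t-1}}(A_n(x_{t-1})\mid[h_{-n},\ldots,h_{-1}])\) for the other. Your shift identity and the reduction of that conditional probability to \(m_{T^{t-1}}^{y_{t-1}}(\tilde A_n)\) are also correct in spirit. The difference is in \emph{how} the conditional lower bound is established, and that is where the proposal has a real gap.

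The paper proves the stronger statement that \(m_{T^{t-1}}^{x_{t-1}}(A_n(x_{t-1})\mid[h_{-n},\ldots,h_{-1}])\to 1\) by a soft martingale argument: on the probability space \(\P_\omega\), for fixed \(N\) the conditional expectations \(\mathbb{E}[\mathbf{1}_{A_N}\mid\mathcal{F}_n]\) and \(\mathbb{E}[\mathbf{1}_{A_N}\mid\mathcal{G}_n]\) (with \(\mathcal{F}_n=\sigma(x_t,h_{-1},\ldots,h_{-n})\), \(\mathcal{G}_n=\sigma(x_{t-1},h_{-1},\ldots,h_{-n})\)) converge a.s.\ to \(\mathbf{1}_{A_N}\); since \(A_N\subset A_n\) for \(n\ge N\) and \(\bigcup_N A_N\) has full \(m_{T^t}^{x_t}\)-measure, both \(\mathbb{E}[\mathbf{1}_{A_n}\mid\mathcal{F}_n]\) and \(\mathbb{E}[\mathbf{1}_{A_n}\mid\mathcal{G}_n]\) tend to \(1\). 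No quantitative control on Oseledets rates is needed. In your proposal, the same conclusion is to be wrung from ``a Borel--Cantelli argument over the failures of the shifted approximation, combined with a uniformization of the \(o(n)\) and \(o(j)\) terms.'' This is precisely the hard (and in this form, unproven) point: the error term in Proposition \ref{configurationapproximationlemma} applied to the trajectory starting at \(y_{t-1}=(h_{-1}\cdots h_{-n})^{-1}x_{t-1}\) has an \(o(j)\) that depends on \(\sigma^{-n}\omega\), hence on \(n\), with no a priori uniformity in \(n\); likewise the \(o(n)\) in \(L_n\) is trajectory-dependent. Your target inequality \(o(j)\le n\varepsilon+j\varepsilon-o(n)\) therefore needs a speed-of-convergence estimate for Oseledets' theorem along the shifted trajectories, which Theorem \ref{oseledets} does not give. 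One would need, e.g., an Egorov/tempering argument producing large-measure sets with uniform rates and then a maximal-type estimate to control the bad set of tails \(\tau\), and this is not contained in your sketch. The martingale route of the paper sidesteps this entirely; I would replace your final paragraph with that argument (and note that what it delivers is even stronger than the \(\ge e^{-n\varepsilon/2}\) bound your plan asks for).
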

\begin{proof}
For \(m\)-a.e. \( \om \), consider the measure \(\P_\om\) on the space \(\X_T \times \lbrace (h_k)_{k \le -1} \rbrace\) which projects to  \(\nu_{\omega}\) on \(\X_T\) and whose disintegrations on the fibers projecting to \(x \in \X_T\) are given by \(x \mapsto m_{T^{t}}^{x_t}\). 

By corollary \ref{approximationcorollary1}, for \(m\)-a.e. \( \om \), \(\nu_{\omega}\)-almost every \(x\), \(A_n(x_{t-1})\) is increasing with \(n\) and \(\cup_n A_n(x_{t-1})= \Om _-\) up ot a set of \(m_{T^{t}}^{x_t}\)-measure 0.  By the martingale convergence theorem,  the conditional expectation with respect to \(\P_\om \) of the indicator of the event \(\lbrace (h_k)_{k \le -1} \in A_n(x_{t-1})\rbrace\) with respect to the \(\sigma\)-algebras generated by \(x_{t},h_{-1},\ldots,h_{-n}\) (respectively  \(x_{t-1},h_{-1},\ldots,h_{-n}\)) converge to 1. 

The first conditional expectation is given at  \(m\)-a.e. \( \om \), \(\nu_{\omega}\)-almost every \(x\) and \(m_{T^t}^{x_t}\)-almost every \((h_n)_{n \le -1}\) by
\[\frac{m_{T^{t}}^{x_t}([h_{-n},\ldots,h_{-1}] \cap A_n(x_{t-1}))}{m_{T^{t}}^{x_t}([h_{-n},\ldots,h_{-1}])} .\]
The second one by
\[ \frac {\int\limits_{\X_{T^t, T^{t-1}}^{x_{t-1}}}m_{T^{t}}^{y}([h_{-n},\ldots,h_{-1}] \cap A_n(x_{t-1}))\,d\nu_{T^t,T^{t-1}}^{x_{t-1}}(y)}{\int\limits_{\X_{T^t,T^{t-1}}^{x_{t-1}}} m_{T^{t}}^{y}([h_{-n},\ldots,h_{-1}] )\,d\nu_{T^t,T^{t-1}}^{x_{t-1}}(y) }.\]
Using (\ref{m-measures}), this last expression is 
\[ \frac{m_{T^{t-1}}^{x_{t-1}}([h_{-n},\ldots,h_{-1}] \cap A_n(x_{t-1}))}{m_{T^{t-1}}^{x_{t-1}}([h_{-n},\ldots,h_{-1}])} \]
 Thus,   at  \(m\)-a.e. \( \om \), \(\nu_{\omega}\)-almost every \(x\) and \(m_{T^t}^{x_t}\)-almost every \((h_n)_{n \le -1}\),
\[\lim\limits_{n \to +\infty}\frac{m_{T^{t}}^{x_t}([h_{-n},\ldots,h_{-1}] \cap A_n(x_{t-1}))}{m_{T^{t}}^{x_t}([h_{-n},\ldots,h_{-1}])} =  \lim\limits_{n \to +\infty}\frac{m_{T^{t-1}}^{x_{t-1}}([h_{-n},\ldots,h_{-1}] \cap A_n(x_{t-1}))}{m_{T^{t-1}}^{x_{t-1}}([h_{-n},\ldots,h_{-1}])} = 1.\]

We have shown (in corollary \ref{kappa5}) that for \( m\)-a.e. \(\om \), \(\nu_\omega\)-almost every \(x\) and \(m_{T^{t}}^{x_t}\)-almost every \((h_n)_{n \le -1}\) one has
\[\lim\limits_{n \to +\infty}\frac{1}{n}\log\left(\frac{m_{T^t}^{x_t}([h_{-n},\ldots,h_{-1}])}{m_{T^{t-1}}^{x_{t-1}}([h_{-n},\ldots,h_{-1}])}\right) = \kappa.\]

Combining these three statements we obtain that for \(m\)-a.e. \( \om \),  \(\nu_\omega\)-almost every \(x\) and \(m_{T^{t}}^{x_t}\)-almost every \((h_n)_{n \le -1}\) one has
\[\lim\limits_{n \to +\infty}\frac{1}{n}\log\left(\frac{m_{T^{t}}^{x_t}([h_{-n},\ldots,h_{-1}] \cap A_n(x_{t-1}))}{m_{T^{t-1}}^{x_{t-1}}([h_{-n},\ldots,h_{-1}] \cap A_n(x_{t-1}))}\right) = \kappa,\]
from which the desired result follows immediately.
\end{proof}

\subsection{Lebesgue Density}

Given \(x \in \X_T\) let \(B_n(x_{t})\) denote the set of sequences \((h_n)_{n \le -1}\) such that
\[m_{T^{t}}^{x_t}([h_{-n},\ldots,h_{-1}] \cap A_n(x_{t-1})) \le \exp(n(\kappa + \varepsilon))m_{T^{t-1}}^{x_{t-1}}([h_{-n},\ldots,h_{-1}] \cap A_n(x_{t-1})).\]

\begin{lemma}\label{lebesguelemma}
For \(m \)-a.e. \( \om \) and \(\nu_{\omega}\) almost every \(x\) there exists \(N(\om ,x)\) such that, for all \(n \ge N(\om ,x )\),
 \[m_{T^t}^{x_t}\left(\left\lbrace \lim\limits_{k \to +\infty}f_k(x_{t-1},h_{-1},\ldots,h_{-k}) \in B(x,r_n)\right\rbrace \cap A_n(x_{t-1}) \cap B_n(x_t)\right) \ge \exp(-n(\delta + \varepsilon)\chi).\]
\end{lemma}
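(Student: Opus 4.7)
The plan is to carry out a Lebesgue-density-type argument that combines the upper-dimension hypothesis on \(\nu_{T,T^t}^{x_t}\) with the fact that both \(A_n(x_{t-1})\) and \(B_n(x_t)\) eventually capture almost every \((h_k)\) under the conditional measures coming from corollary \ref{approximationcorollary1} and lemma \ref{conditionalslemma}.

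First I would disintegrate. Conditioning on \(E_{T^t}(\omega) = x_t\) determines \(x_{t-1}\) and \(E_+(\omega)\), turns \(E_T(\omega)\) into a random element of \(\pi_{T,T^t}^{-1}(x_t)\) with law \(\nu_{T,T^t}^{x_t}\), and further conditioning on \(E_T(\omega) = y\) leaves the law \(m_T^y\) on past noise, so
\[ m_{T^t}^{x_t} \; = \; \int_{\pi_{T,T^t}^{-1}(x_t)} m_T^y \, d\nu_{T,T^t}^{x_t}(y). \]
Writing \(C_n := \{ (h_k) : \lim_k f_k(x_{t-1},h_{-1},\ldots,h_{-k}) \in B(x,r_n)\}\), the pushforward of \(m_{T^t}^{x_t}\) under \((h_k) \mapsto \lim_k f_k\) is \(\nu_{T,T^t}^{x_t}\), giving \(m_{T^t}^{x_t}(C_n) = \nu_{T,T^t}^{x_t}(B(x,r_n))\) and
\[ m_{T^t}^{x_t}\bigl(C_n \cap A_n(x_{t-1}) \cap B_n(x_t)\bigr) \; = \; \int_{B(x,r_n)} m_T^y\bigl(A_n \cap B_n\bigr) \, d\nu_{T,T^t}^{x_t}(y). \]

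Second, corollary \ref{approximationcorollary1} and lemma \ref{conditionalslemma} assert that for \(m_{T^t}^{x_t}\)-a.e. \((h_k)\) one has \((h_k) \in A_n(x_{t-1}) \cap B_n(x_t)\) for every sufficiently large \(n\); by the disintegration this translates into \(m_T^y(A_n \cap B_n) \to 1\) for \(\nu_{T,T^t}^{x_t}\)-a.e. \(y\). Egorov's theorem then yields, for each \(\eta > 0\), a measurable set \(K_\eta \subset \pi_{T,T^t}^{-1}(x_t)\) with \(\nu_{T,T^t}^{x_t}(K_\eta) > 1 - \eta\) and an integer \(n_0(\omega, x_t,\eta)\) such that \(m_T^y(A_n \cap B_n) \ge 1/2\) for every \(y \in K_\eta\) and every \(n \ge n_0\).

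Third, the upper-dimension hypothesis \(\ov{\delta^t} = \delta\) gives, for \(\nu_{T,T^t}^{x_t}\)-a.e. \(x\), the bound \(\nu_{T,T^t}^{x_t}(B(x,r_n)) \ge r_n^{\delta + \varepsilon/2}\) for all large \(n\). By proposition \ref{lipschitzproduct} each fiber \(\pi_{T,T^t}^{-1}(x_t)\) is locally bilipschitz to a Euclidean space, so the Besicovitch covering theorem applies and Lebesgue's density theorem holds for \(\nu_{T,T^t}^{x_t}\); consequently for \(\nu_{T,T^t}^{x_t}\)-a.e. \(x \in K_\eta\) one has \(\nu_{T,T^t}^{x_t}(B(x,r_n) \cap K_\eta) \ge \tfrac12 \nu_{T,T^t}^{x_t}(B(x,r_n))\) once \(n\) is large. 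Combining these three ingredients,
\[ m_{T^t}^{x_t}(C_n \cap A_n \cap B_n) \; \ge \; \tfrac12 \nu_{T,T^t}^{x_t}(B(x,r_n) \cap K_\eta) \; \ge \; \tfrac14 r_n^{\delta + \varepsilon/2} \; \ge \; \exp(-n\chi(\delta+\varepsilon)), \]
the last step being valid for \(n\) large because \((\chi-\varepsilon)(\delta+\varepsilon/2) < \chi(\delta+\varepsilon)\). Letting \(\eta_k \to 0\), the union \(\bigcup_k K_{\eta_k}\) has full \(\nu_{T,T^t}^{x_t}\)-measure, and via the disintegration \(\nu_\omega = \int \nu_{T,T^t}^{x_t} \, d\nu_{T^t}^{E_+(\omega)}(x_t)\) the bound extends to \(\nu_\omega\)-a.e. \(x\).

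The main obstacle I anticipate is the bookkeeping: justifying Lebesgue's density theorem for \(\nu_{T,T^t}^{x_t}\) on the non-trivial metric structure of the fibers (where proposition \ref{lipschitzproduct} is what makes it work), and arranging that the several exceptional null sets — from the Egorov choice, from the limsup in the definition of \(\ov{\delta^t}\), and from the density points of the \(K_{\eta_k}\) — line up so that a single threshold \(N(\omega,x)\) can be chosen uniformly in \(n \ge N\) for a common full-measure family of \((\omega,x)\).
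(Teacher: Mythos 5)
Your argument is correct and follows the same overall Lebesgue--density outline as the paper: disintegrate \(m_{T^t}^{x_t}\) over the value of the limit configuration to rewrite the left-hand side as \(\int_{B(x,r_n)} m_T^y(A_n \cap B_n)\,d\nu_{T,T^t}^{x_t}(y)\), use a.e. convergence of the integrand to \(1\), and then combine a density argument with the upper-dimension bound \(\nu_{T,T^t}^{x_t}(B(x,r_n)) \gtrsim r_n^{\delta+\varepsilon/2}\). The one genuine difference is how you handle the fact that the integrand \(y \mapsto m_T^y(A_n \cap B_n)\) changes with \(n\). You apply Egorov's theorem to the a.e. pointwise convergence, producing a set \(K_\eta\) of measure \(>1-\eta\) on which \(m_T^y(A_n\cap B_n)\ge 1/2\) uniformly for large \(n\), then take density points of \(K_\eta\) and absorb the resulting constant \(1/4\) into the slack between \(r_n^{\delta+\varepsilon/2}\) and \(\exp(-n\chi(\delta+\varepsilon))\). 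The paper instead replaces \(B_n(x_t)\) by the tail intersection \(C_n(x)=\bigcap_{m\ge n}B_m(x_t)\); since \(A_n\) and \(C_n\) are both increasing in \(n\), the integrands form a pointwise non-decreasing family, and Lebesgue differentiation of each fixed-index function \(y\mapsto m_T^y(A_k\cap C_k)\) together with monotonicity yields that the normalized integral converges to \(1\) exactly, without Egorov and without a lost constant. Both routes work; yours is somewhat more direct but requires the exhausting family \(K_{\eta_k}\) and the associated bookkeeping of exceptional sets, which you correctly flag; the paper's monotonization replaces that with a countable intersection of Lebesgue sets for the fixed-index integrands. A minor point worth making explicit: the pointwise convergence \(m_T^y(A_n\cap B_n)\to 1\) for \(\nu_{T,T^t}^{x_t}\)-a.e. \(y\) does follow from Corollary \ref{approximationcorollary1} and Lemma \ref{conditionalslemma}, because those give \(m_T^y\big(\liminf_n(A_n\cap B_n)\big)=1\) a.e., which forces \(m_T^y(A_n\cap B_n)\to 1\).
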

\begin{proof}
The probability on the left-hand side in the statement is
\[\int\limits_{B(x,r_n)}m_{T}^{y}(A_n(x_{t-1}) \cap B_n(x_t))d\nu_{T,T^t}^{x_{t}}(y) = \int\limits_{B(x,r_n)}m_{T}^{y}(A_n(y_{t-1}) \cap B_n(y_t))d\nu_{T,T^t}^{x_{t}}(y).\]

Let \(C_n(x) = \bigcap\limits_{m \ge n}B_m(x_t)\) so that \(C_n(x)\) is increasing with \(n\).

We have
\[\int\limits_{B(x,r_n)}m_{T}^{y}(A_n(y_{t-1}) \cap B_n(y_{t}))d\nu_{T,T^t}^{x_t}(y) \ge \int\limits_{B(x,r_n)}m_{T}^{y}(A_n(y_{t-1}) \cap C_n(y_t))d\nu_{T,T^t}^{x_t}(y).\]

By lemma \ref{conditionalslemma}, for \( m \)-a.e. \( \om \), \(\nu_\omega\)-a.e.  \(x\), the function \(y \mapsto m_{T}^{y}(A_n(y_{t-1}) \cap C_n(y_t))\) increases to \(1\) at   \(\nu_{T,T^{t}}^{x_t}\)-a.e.  \(y\). 
Applying the Lebesgue differentiation theorem (justified since configuration spaces are bilipschitz homeomorphic to Euclidean spaces) we obtain a set \(L(\om, x_t )\) of \(\nu_{T,T^t}^{x_t}\)-full measure such that for all \(z\) in this set
\[\lim\limits_{n \to +\infty}\frac{1}{\nu_{T,T^t}^{x_t}(B(z,r_n))}\int\limits_{B(z,r_n)}m_{T}^{y}(A_n(y_{t-1}) \cap C_n(y_t))d\nu_{T,T^t}^{x_t}(y) = 1.\]

For \(m\)-a.e. \( \om \),  \(\nu_\omega\)-a.e.  \(x\) belongs to \( L(\om, x_t)\). Moreover, by hypothesis, for \(m \)-a.e. \( \om \) and \(\nu_\omega\)-a.e. \(x\),  there exists \(N(\om , x)\)  such that  for \(n \geq N(\om ,x) \), \(\nu_{T,T^t}^{x_t}(B(x,r_n)) \ge \exp(-n(\delta+\varepsilon)\chi)\).  The result follows.
\end{proof}

\subsection{Proof of the theorem}

We now complete the proof of Theorem 2.4.

We begin with Lemma \ref{lebesguelemma} and observe that if \(\lim\limits_{k \to +\infty}f_k(x_{t-1},h_{-1},\ldots,h_{-k}) \in B(x,r_n)\) and \((h_n)_{n \le -1} \in A_n(x_{t-1}),\) then in fact \(f_n(x_{t-1},h_{-1},\ldots,h_{-n}) \in B(x,2r_n)\).  

This implies that for \(m \)-a.e. \( \om \), \(\nu_{\omega}\)-a.e.  \(x\) and all \(n \ge N(\om ,x)\) given by Lemma \ref{lebesguelemma} one has
\begin{align*}
 \exp(-n(\delta+\varepsilon)\chi) &\le m_{T^t}^{x_t}\left(\left\lbrace \lim\limits_{k \to +\infty}f_k(x_{t-1},h_{-1},\ldots,h_{-k}) \in B(x,r_n)\right\rbrace \cap A_n(x_{t-1}) \cap B_n(x_t)\right)
\\ &\le m_{T^t}^{x_t}\left(\left\lbrace f_n(x_{t-1},h_{-1},\ldots,h_{-n}) \in B(x,2r_n)\right\rbrace \cap A_n(x_{t-1}) \cap B_n(x_t)\right).
\end{align*}

Notice that both \(D_n(x_{t-1}) : = \lbrace (h_n)_{n \le -1}: f_n(x_{t-1},h_{-1},\ldots,h_{-n}) \in B(x,2r_n)\rbrace\) and \(B_n(x_t)\) are a union of cylinders \([h_{-n},\ldots,h_{-1}]\).  
Therefore from the second line above and the definition of \(B_n(x_t)\) we obtain
\begin{align*}
 \exp(-n(\delta+\varepsilon)\chi) &\le \sum\limits_{[h_{-n},\ldots,h_{-1}] \subset B_n(x_t) \cap D_n(x_{t-1})}m_{T^t}^{x_t}([h_{-n},\ldots,h_{-1}] \cap A_n(x_{t-1}))
 \\ &\le \exp(n(\kappa+\varepsilon))\sum\limits_{[h_{-n},\ldots,h_{-1}] \subset D_n(x_{t-1})}m_{T^{t-1}}^{x_{t-1}}([h_{-n},\ldots,h_{-1}] \cap A_n(x_{t-1})).
\end{align*}

For \(m \)-a.e. \( \om \), \(\nu_{\omega}\)-a.e.  \(x\) and all \(n \ge N(\om ,x)\), we have shown that 
\[\exp(-n(\delta+\varepsilon)\chi)\exp(-n(\kappa+\varepsilon)) \le m_{T^{t-1}}^{x_{t-1}}\left(\left\lbrace f_n(x_{t-1},h_{-1},\ldots,h_{-n}) \in B(x,2r_n)\right\rbrace \cap A_n(x_{t-1})\right).\]

Since whenever \((h_n)_{n \le -1} \in A_n(x_{t-1})\) we have that \(f_n(x_{t-1},h_{-1},\ldots,h_{-n})\) and \(\lim\limits_{k \to +\infty}f_k(x_{t-1},h_{-1},\ldots,h_{-k})\) are at distance at most \(r_n\), this implies that for \(m \)-a.e. \( \om \), \(\nu_{\omega}\)-a.e.  \(x\) and all \(n \ge N(\om ,x)\),
\begin{align*}
 \exp(-n(\delta+\varepsilon)\chi)\exp(-n(\kappa+\varepsilon)) &\le m_{T^{t-1}}^{x_{t-1}}\left(\left\lbrace (h_k)_{k \le -1}: \lim\limits_{k \to +\infty}f_k(x_{t-1},h_{-1},\ldots,h_{-n}) \in B(x,3r_n)\right\rbrace\right)
 \\ &= \nu_{T,T^{t-1}}^{x_{t-1}}(B(x,3r_n)).
\end{align*}

Since this holds for all \(\varepsilon > 0\) it follows that \(\overline{\delta^{t-1}} \le \delta + \frac{\kappa}{\chi} = \overline{\delta^t} +  \gamma_{T^t,T^{t-1}} \) as claimed.

\section{Application to Hitchin representations of compact surface groups}\label{example}

In this section, we first illustrate by an example the  discussion of section 1.2. Take \( \mu \) discrete in  \(\MM( SL_3(\R) )\).  Consider the  nine arrows of Figure 1  describing projections from \( \X^{f'}_T \) to \( \X^{f'}_{T'} \), where \(T \overset{1}{\prec} T'\), in dimension 3. By  Theorem \ref{mainT}, six of these projections are dimension conserving.   There is a natural  family of examples of random walks on \( SL_3(\R) \) for which two of the other projections are not dimension conserving as soon as the middle exponent \( \chi _2 \) is not $0$. Namely, these are the random walks on images of a surface group by a Hitchin representation in \( SL_3(\R) .\) We present these examples and then extend the discussion to Hitchin representations in \( PSL_d(\R) \), for all \( d\geq 3.\)

\subsection{Hitchin component in dimension 3}\label{Hitchin}

Consider a closed surface \( \Sigma \) of genus at least two and the group \( \G := \pi _1 (\Sigma ).\) A representation \(  \rho : \G \to  PSL_2(\R) \) is called {\it {Fuchsian}} if it  is discrete and cocompact. A representation \(  \rho : \G \to  SL_3(\R) \) is also called {\it {Fuchsian}} if it is the composition of a Fuchsian representation and a canonical irreducible representation of \( PSL_2(\R) \) into \( SL _3(\R) \).  It is called {\it {Hitchin}} if it can be obtained by a deformation of a Fuchsian representation. 

Hitchin representations have been studied from many points of view, we only list the properties we are going to use.
 We shall describe points in \( \F\) as pairs \( (\zeta, \ov \zeta )\), where \( \zeta \) in a point in the projective plane \( \R\P^2\) and \( \ov \zeta  \) a line in \( \R\P^2 \) containing \( \zeta.\) The pairs \( (\zeta, \ov\zeta ), (\eta, \ov \eta ) \) are in general position if, and only if, \( \zeta \not \in \ov \eta, \eta \not \in \ov \zeta .\) By classical results of Koszul \cite{koszul},  Goldman \cite{goldman} and Choi-Goldman \cite{choigoldman}, if the representation \( \rho \) is Hitchin, then there exists a \( C^1\)  convex subset \(\Delta \subset  \R\P^2\) invariant under \(\rho (\G)\) and a H\"older continuous mapping \(( \xi ,\ov \xi ): \S^1 \to\F \) such that \\
-- for \( s \neq t \in \S^1, \; (\xi, \ov \xi ) (s) \) and \( (\xi, \ov\xi )(t)\) are in general position,\\
-- \( \xi (\S^1) \) is the boundary \( \partial \Delta \),\\
-- for \(t \in \S^1, \, \) \( \ov \xi (t) \) is the tangent direction to \( \partial \Delta \) at \( \xi (t)\) and\\
-- the set \( \La :=  (\xi, \ov \xi') (\S^1) \) is an invariant set for the action of \( \rho (\G) \) on \( \F\). The set \(\La \) consists in the tangent elements to \( \partial \Delta \).

So, the convex \( \Delta \) admits a cocompact group of projective mappings, i.e. it is  {\it{divisible.}} A classical result of Benz\'ecri is that the boundary is of class \(C^2\) if, and only if, \(\Delta \) is an ellipse, if, and only if, the representation is conjugated to a Fuchsian representation. In that case, \( \chi_2 = 0\) and all the dimension questions reduce to the \(PSL_2(\R) \) case. Therefore, we may assume that  representation \( \rho \) is Hitchin but not a Fuchsian representation. Such representations were studied in detail by Y. Benoist. In particular, he showed   that\\
-- the boundary \( \partial \Delta \)  is \( C^{1+ \beta} \) for some \( \beta >0 \), but not \( C^{1+ abs.cont.} \)(\cite{benoist04}),\\
-- the group \( \rho (\G) \) is Zariski dense in \(SL_3(\R) \)  (\cite{benoist}).\\

We denote \( \MM (\G)\) the set of probability measures \( \mu \) on \(\G\) such that the group generated by the support of \( \mu \) is \( \G\) and \( \sum _ \g |\g| \mu (\g) < + \infty ,\) where \( |\cdot | \) is some word metric on \(\G\). Let \(\mu \in \MM(\G) \); consider the random walk \( (\rho (\G), \rho_\ast (\mu)) \) and the stationary measures \( \nu , \nu '\) on \( \F\). The measures \( \nu \) and \( \nu '\) are supported on \( \La .\)  For \(m \)-a.e. \( \om \in \Om ,\) there are \( (\xi_+, \ov \xi_+)(\om)\) and \((\xi _-, \ov\xi_-) (\om)  \) distinct points in \( \La\) that are  the supports  of the limit measures of \( \left(g_{-1}(\om)  \ldots  g_{-n}(\om)\right)_\ast \nu\) and, respectively,   \( \left(g_{0}(\om)^{-1}  \ldots g_{n-1}(\om)^{-1}\right)_\ast \nu ' \) as \( n \to +\infty \). The distribution of  \((\xi_+, \ov \xi_+)(\om) \)  is \( \nu \), the distribution of  \( (\xi_-, \ov \xi_-)(\om) \) is  \( \nu' \).  The point \( \xi _+(\om )  \) is  the  direction of the  expanding \( E_1(\om) \), the point \( \xi _-(\om ) \) is  the  direction of the  contracting  \( E_3(\om) \). The central direction \( E_2(\om) \) is obtained as \( \ov \xi_+(\om ) \cap \ov\xi _-(\om ) .\)

\begin{proposition}\label{noconsdim} Let \( \rho \) be a Hitchin representation of \( \G\) and  \( \mu \in \MM(\G).\)  Consider the random walk on \( SL_3(\R)\) directed by the probability \( \rho_\ast (\mu) \), \( \chi _1 > \chi _2 > \chi _3 \) its Lyapunov exponents. Let \( \F, \LL , \PP \) be the spaces of flags, lines and planes in \(\R^3\),   \( \nu,  \nu _\LL , \nu _\PP \) the respective stationary measure and \( \de, \de_\LL, \de _\PP \) their dimensions. \\
Assume \( \chi _2 >0 \). Then, \( \de_\PP < \de_\LL = \de\). Moreover, the projections \( \nu \rightarrow \nu _\PP\) are not dimension conserving.\\
Assume \( \chi _2 <0 \). Then, \( \de _\LL < \de_\PP =\de \). Moreover, the projections \( \nu \rightarrow \nu _\LL\) are not dimension conserving.\\
Assume \(\chi _2 =0 .\) Then, \( \de = \de _\LL  = \de _\PP .\) All the  natural projections are dimension conserving. \end{proposition}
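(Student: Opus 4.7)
The proof hinges on the observation that $\nu$, $\nu_\LL$ and $\nu_\PP$ all live on one-dimensional H\"older curves linked by natural projections. Writing $F(t)=(\xi(t),\bar\xi(t))$, strict convexity of $\partial\Delta$ makes $\bar\xi$ injective, so both $\pi_\LL|_\Lambda:\Lambda\to\partial\Delta$ and $\pi_\PP|_\Lambda:\Lambda\to\bar\xi(S^1)=\partial\Delta^*$ are $\rho(\G)$-equivariant Borel bijections. Their fibers therefore meet $\Lambda$ in a single point, the conditional measures of $\nu$ on those fibers are Dirac, and the Radon--Nikodym derivatives driving the Furstenberg entropies agree across the bijections; hence $h:=h(\F,\mu,\nu)=h(\LL,\mu,\nu_\LL)=h(\PP,\mu,\nu_\PP)$.

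Next I would apply Corollary \ref{mainR} to $\nu_\LL$, writing $\delta_\LL=\gamma_1^\LL+\gamma_2^\LL$ with $h=\gamma_1^\LL(\chi_1-\chi_2)+\gamma_2^\LL(\chi_1-\chi_3)$ and $\gamma_j^\LL\in[0,1]$. By Benoist's theorem, $\partial\Delta$ is a $C^{1+\beta}$ convex curve tangent at each point to the slowest Oseledets direction of $\rho$ on $\LL$ (that of exponent $\chi_1-\chi_2$); the plan is to exploit this $C^1$-tangency to replace the coordinate $\vf_x$ of Lemma \ref{coordinate} by arclength on $\partial\Delta$, and rerun the telescoping--Maker argument of Section \ref{section:telescoping} to force $\gamma_2^\LL=0$, yielding $\delta_\LL=h/(\chi_1-\chi_2)$. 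The dual Hitchin representation has limit curve $\partial\Delta^*$ with the same regularity, tangent to the direction of exponent $\chi_2-\chi_3$, so the symmetric argument gives $\delta_\PP=h/(\chi_2-\chi_3)$. Using $\chi_1+\chi_2+\chi_3=0$, the ordering $\chi_1-\chi_2 \lessgtr \chi_2-\chi_3$ flips with the sign of $\chi_2$, hence $\delta_\LL\gtrless\delta_\PP$ according to that sign, with equality at $\chi_2=0$.

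For $\delta$ itself I would apply Corollary \ref{exact-dim} to $T_1$ with the arrow-order of Proposition \ref{order}.1: when $\chi_2>0$ this order is $(1,2),(2,3),(1,3)$, giving $\delta=\gamma_{(1,2)}^\F+\gamma_{(2,3)}^\F+\gamma_{(1,3)}^\F$. The first arrow is literally the same as the first arrow of the $T_Q$-decomposition underlying $\delta_\LL$, so $\gamma_{(1,2)}^\F=\gamma_1^\LL$; the same tangency-plus-telescoping mechanism applied inside $\F$ (using that $\Lambda$ projects bijectively to the $C^{1+\beta}$ curve $\partial\Delta$) forces $\gamma_{(2,3)}^\F=\gamma_{(1,3)}^\F=0$, so $\delta=\delta_\LL$. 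The case $\chi_2<0$ is symmetric with $\LL$ replaced by $\PP$, and $\chi_2=0$ collapses everything. Non-conservation is then immediate from strict inequality: when $\chi_2>0$ the fiber conditionals of $\pi_\PP$ are Dirac, yet $\delta-\delta_\PP>0$, contradicting the equation of dimension conservation; symmetrically for $\chi_2<0$ and $\pi_\LL$. The delicate step will be the vanishing of the transverse partial dimensions $\gamma_2^\LL$, $\gamma_{(2,3)}^\F$ and $\gamma_{(1,3)}^\F$: Dirac fibers alone do not suffice (the Weierstrass graph recalled in Section \ref{history} is precisely a counterexample), so the argument must genuinely use Benoist's $C^{1+\beta}$ regularity together with equivariance of $\xi$ to import the arclength parameter of $\partial\Delta$ into the one-dimensional coordinate of Section \ref{section:coordinates2} before applying the Maker-type counting.
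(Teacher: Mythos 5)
Your opening step, the entropy equalities $h = h_\LL = h_\PP$ via bijectivity of the projections on the limit curve $\Lambda$, and the framework via Corollaries \ref{mainR} and \ref{exact-dim}, are on target. But the proposed mechanism for the key step — showing $\gamma_2^\LL = \gamma^\F_{(2,3)} = \gamma^\F_{(1,3)} = 0$ — is not what the paper does, and the detour through Benoist's $C^{1+\beta}$ regularity and a re-run of the telescoping--Maker argument is unnecessary and, as sketched, not an argument. The paper's route is far more direct: for each of those one-step arrows $T^t \overset{1}{\prec} T^{t-1}$, the conditional measure $\nu_{T^t,T^{t-1}}^{x'}$ is a Dirac mass, because the corresponding projection is a.e.\ one-to-one on (the image of) $\Lambda$: a projective line through $\eta$ meets the strictly convex curve $\partial\Delta$ at exactly one other point, a tangent line from a point of $\bar\eta$ to $\partial\Delta$ other than $\bar\eta$ itself is unique, and $\bar\xi(t)$ is determined by $\xi(t)$ since $\partial\Delta$ is $C^1$. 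A Dirac measure is exact-dimensional with dimension $0$, so by Theorem \ref{exactfibers} those $\gamma$'s vanish (equivalently, all the intermediate entropies $\kappa_{T}$ equal $h$, so every $\kappa_{T^t,T^{t-1}}$ along those arrows is $0$). Theorem \ref{mainT}.2 — the content of which is exactly the dimension-conservation statement your Weierstrass worry calls into question, and which is already proven in the paper under the discreteness hypothesis via Theorem \ref{counting} — then gives $\delta_\LL$, $\delta_\PP$, $\delta$ as sums of those $\gamma$'s, so only the last, nontrivial term survives, giving $\delta_\LL = h/(\chi_1-\chi_2)$, $\delta_\PP = h/(\chi_2-\chi_3)$, and $\delta = \max(\delta_\LL,\delta_\PP)$. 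The paper also names the intermediate configuration spaces $\LL\PP := \X^{f'}_{\{1,3\},\{2\},\{3\}}$, $\LL' := \X^{f'}_{\{1,3\},\{2,3\},\{3\}}$, $\PP' := \X^{f'}_{\{1,2,3\},\{2\},\{3\}}$: your chain $T_0 \to T^1 \to T^2 \to T_1$ (when $\chi_2>0$) is precisely $T_0 \to \LL' \to \LL\PP \to T_1$, and the crucial role of $\LL\PP$ is that the arrow $T_1 \to \LL\PP$ carries the \emph{largest} exponent $\chi_1-\chi_3$, so Corollary \ref{consdim} applies and dimension conservation holds for $\F \to \LL\PP$, which it need not for $\F \to \PP$. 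The finer part of Benoist's result ($C^{1+\beta}$ but not $C^{1+\text{abs.cont.}}$) enters only in Theorem \ref{Manhattan}, not here; for Proposition \ref{noconsdim} only plain $C^1$ tangency is used, to get the bijectivity.
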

\begin{proof}
 By the above discussion, our results apply in this setting. We can consider\\
-- The distribution \( \nu \) of \(f = (\xi _+, \ov\xi'_+ )(\om ) \in \F\). It has entropy \( h \) and dimension \( \de .\)\\
-- The distribution \( \nu _\LL \) of \( \xi _+(\om) \).  It has entropy \( h _\LL\) and dimension \( \de _\LL.\) Observe that, once one knows \( \xi (t) \in \partial \Delta , \, \ov\xi (t) \) is the tangent direction to \( \partial \Delta \) at \( \xi (t) \), so it is uniquely determined. In other words, the projection from \( \nu \) to \( \nu _\LL \) is a.e. one-to-one. By \cite{lessa} \( h_\LL = h \), but, a priori, there is no dimension conservation  and we only get \( \de \geq \de _\LL .\)\\
-- The distribution \( \nu _\PP \) of \( \ov\xi _+(\om  )\).  It has entropy \( h _\PP\) and dimension \( \de _\PP.\) Observe that, similarly, once one knows \( \ov\xi (t)\) is a tangent direction to \( \partial \Delta \) at some point, then this point is \( \xi (t) \), so it is uniquely determined. In other words, the projection from \( \nu \) to \( \nu _\PP \) is a.e. one-to-one. By \cite{lessa} again,  \( h_\PP = h \), but, a priori, there is no dimension conservation  and we only get \( \de \geq \de _\PP .\)

We choose \( f' = (\eta,\ov \eta)  \in \F \). Write \(\LL\PP  := \X^{f'}_{\{1,3\}, \{2\}, \{3\}}, \LL'  := \X^{f'}_{\{1,3\}, \{2,3\}, \{3\}}\) and \( \PP'  := \X^{f'}_{\{1,2,3\}, \{2\}, \{3\}} .\)  For \( \nu ' \) -a.e. \( f' \in \F\), write \( \nu _{\LL\PP}^{f'}, \nu _{\LL'}^{f'} , \nu _{\PP'} ^{f'}\) for the corresponding conditional measures.\\  We  can also consider
the distribution \( \nu _{\LL\PP}^{f'} \) of the couple made of the point \(\ov \xi _+ \cap \ov \eta \) and the line \( (\eta, \xi _+) \) given \( f' = (\eta, \ov\eta).\)  It has entropy \( h _{\LL\PP}\) and dimension \( \de _{\LL\PP}.\) Again, this determines \( ( \xi, \ov\xi) \) by intersection with \( \partial \Delta \) and the dimension on fibers of the projection from \( \F \) to \( \X ^{f'}_{\LL\PP} \) is 0. But now, by theorem~\ref{mainT}, there is  dimension conservation, so \( h = h_{\LL\PP} \) and \( \de = \de _{\LL \PP}.\)

Assume \( \chi _2 \geq 0.\) We project both \( \nu _\LL \) and \( \nu _{\LL\PP}^{f'} \) to the space \( \LL'\) of lines going through \( \eta \) by associating the line going through \( \xi \) and \(\eta \) in the first case and by forgetting \( \ov \xi \cap \ov \eta \) in the second case. The projection and the image measure \( \nu _{\LL'}^{f'} \) depend on \( f'\). For \( \nu '\)-a.e. \( f'\), we have entropy \(h_{\LL'}\) and dimension \(\de _{\LL'}\) on \( \LL'.\) Both projections have almost everywhere trivial fibers: intersecting the line \( (\eta, \xi )\) with \( \partial \Delta \) determines everything. Therefore, \[ h = h_\LL = h_{\LL\PP} = h_{\LL'}.\] Moreover, by theorem \ref{mainT}, both projections have dimension conservation  (observe that \( \chi _{\LL,\LL'} = \chi _1 - \chi _3 \geq \chi _{\LL'} = \chi _1 - \chi _2\)).  So we obtain
\[ \de = \de_\LL = \de_{\LL\PP} = \de _{\LL'} = \frac{h}{\chi _1 -\chi _2} .\]

Remain to understand the projections of both \( \nu _\PP \) and \( \nu _{\LL\PP}^{f'}\) on the space \( \PP'\) of points of \( \ov \eta\). The projection and the image measure \( \nu _{\PP'}^{f'} \) depend on \( f'\). For \( \nu '\)-a.e. \( f'\), write \(h_{\PP'} \) and \(\de _{\PP'} \) for the entropy and the dimension of  \( \nu _{\PP'}^{f'} \) . Once more, knowing the  point \(E_2 \) in \( \ov \eta\) determines the rest by drawing the unique other tangent to \( \partial \Delta \) going through \(E_2\). So, \[  h_\PP  =h_{\PP'}  =  h_{\LL\PP } \] (both \( h_\PP \) and \( h_{\LL\PP } \) are \(h\) by the  above discussion) and all the entropies are the same \(h\). Moreover,  since  \( \chi _{\PP,\PP'} = \chi _1 - \chi _3 \geq \chi _{\PP'} = \chi _2 - \chi _3,\) \[ \de _\PP = \de _{\PP'}  = \frac{h}{\chi_2 - \chi_3}. \] 

If \( \chi _2 = 0 \), then \( \chi_1 - \chi _2 = \chi _2 - \chi _3 , \de _{\LL'} = \de _{\PP'} ,\)  all the dimensions coincide and there is  dimension conservation  at all the projections of Figure 1.

 If \( \chi _2 > 0, \) then 
\( \chi_2 - \chi_3 > \chi_1 - \chi_2\) and \( \de _{\PP'}  < \de _{\LL'} \). So \( \de _\PP < \de \) and the projection from \( \nu \) to \(\nu_\PP \) is not dimension conserving. In the same way, \( \de _{\PP'}  < \de _{\LL\PP} \) and, for \( \nu '\)-a.e. \( f'\), the  projection from \( \nu _{\LL\PP}^{f'}\) to \( \nu _{\PP'}^{f'} \) is not dimension conserving either. 
 
In the case when \( \chi_2 \leq 0\), the discussion is the same, exchanging the role of points and lines and of \( \chi_2 - \chi_3 \) and \( \chi_1 - \chi_2\). \end{proof}

\begin{proof}[Proof of theorem \ref{LyaDim3} in dimension \( d = 3\)] By the above proof, we have \( h = \de (\nu ) \min \{ ( \chi_1 - \chi _2), ( \chi _2 - \chi _3 )\}, \) independently of the sign of \(\chi _2 .\) 
 Theorem \ref{LyaDim3} follows when \( d = 3\). \end{proof}

\subsection{Rigidity of Hitchin representations} \label{Gibbs}
In this section, we prove that the hypotheses  of proposition \ref{noconsdim} are satisfied for some probability measure in \( \MM (\rho (\g)) \) if the representation \( \rho \) is Hitchin but not Fuchsian, namely that one can find such a measure with \( \chi _2 \neq 0\). We have the
\begin{theorem}\label{Manhattan} Let \( \rho \) be a Hitchin representation of a cocompact surface group in \( SL_3(\R) \) such that for all probability measures in \( \MM (\rho (\G)), \, \chi _2 \leq 0 .\) Then, the representation \( \rho \) is Fuchsian. \end{theorem}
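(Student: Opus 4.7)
The plan is to first strengthen the hypothesis into a uniform vanishing $\chi_2(\mu) \equiv 0$, promote this to pointwise vanishing of the middle Jordan projection on $\rho(\G)$, and then invoke Zariski density. Note first that $\MM(\rho(\G))$ is stable under the involution $\mu \mapsto \mu'$, $\mu'(g) = \mu(g^{-1})$: word length is symmetric, so the first-moment condition is preserved, and the group generated by the support is unchanged. The Lyapunov spectrum of $\mu'$ is $(-\chi_3,-\chi_2,-\chi_1)$, so its middle exponent is $-\chi_2(\mu)$. Applying the hypothesis to both $\mu$ and $\mu'$ yields $\chi_2(\mu) \leq 0$ and $-\chi_2(\mu) \leq 0$, whence $\chi_2(\mu) = 0$ for every $\mu \in \MM(\rho(\G))$.

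The second step is to show that $\lambda_2(\rho(\gamma)) = 0$ for every $\gamma \in \G$, where $\lambda_2(g)$ denotes the logarithm of the modulus of the middle eigenvalue of $g \in SL_3(\R)$. For each $\gamma$ (which is loxodromic since $\rho$ is Anosov), fix a finite symmetric generating set $S$ of $\G$ and set $\mu_n = (1-1/n)\delta_\gamma + (1/n)\,\mathrm{Unif}(S) \in \MM(\rho(\G))$. Exploiting the Anosov property (H\"older transversality of the Oseledets flags extending to a $\rho$-equivariant limit map on the Gromov boundary $\partial \G$) together with the continuity of the harmonic measures $\nu_{\mu_n}, \nu'_{\mu_n}$ under first-moment perturbations, the integral formula (\ref{exponents;nonext}) for $\chi_2$ should yield $\chi_2(\mu_n) \to \lambda_2(\rho(\gamma))$. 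Combined with the previous paragraph, this gives $\lambda_2(\rho(\gamma)) = 0$.

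Finally, assume for contradiction that $\rho$ is not Fuchsian. Then $\rho(\G)$ is Zariski dense in $SL_3(\R)$ by Benoist, and by his density theorem for Jordan projections the set
\begin{equation*}
\{(\lambda_1,\lambda_2,\lambda_3)(\rho(\gamma)) : \gamma \in \G\}
\end{equation*}
has non-empty interior in the positive Weyl chamber $\{\lambda_1 \geq \lambda_2 \geq \lambda_3,\ \lambda_1 + \lambda_2 + \lambda_3 = 0\}$. This contradicts the preceding conclusion that every Jordan projection lies on the hyperplane $\{\lambda_2 = 0\}$, forcing $\rho$ to be Fuchsian. The main obstacle is the continuity claim $\chi_2(\mu_n) \to \lambda_2(\rho(\gamma))$: the limit $\delta_\gamma$ lies outside $\MM(\rho(\G))$ and does not correspond to a genuine random walk on $\G$, so the standard continuity of Lyapunov exponents under perturbation does not directly apply. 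The natural substitute is the thermodynamic formalism and Manhattan-curve machinery for Anosov representations (\emph{\`a la} Sambarino and Bridgeman-Canary-Labourie-Sambarino), which would presumably account for the name of the theorem.
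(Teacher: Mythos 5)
Your proposal takes a genuinely different route from the paper, and it has a genuine gap that you yourself flag but do not close.

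The first reduction is correct and clean: the involution $\mu \mapsto \mu'$ preserves $\MM(\rho(\G))$ and sends the middle exponent $\chi_2$ to $-\chi_2$, so the hypothesis forces $\chi_2(\mu) = 0$ for every $\mu \in \MM(\rho(\G))$. This observation (in the guise of the dual representation $\rho^*$) is also what the paper uses to deduce Corollary \ref{Manhattan*} from Theorem \ref{Manhattan}. The third step is also sound in outline: if you knew $\lambda_2(\rho(\gamma)) = 0$ for all $\gamma$, then Benoist's theorem on the non-empty interior of the limit cone of a Zariski dense subgroup (together with Benoist's result that a non-Fuchsian Hitchin image is Zariski dense) would give a contradiction.

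The problem is the middle step, and it is not a technicality you can wave away with ``thermodynamic formalism should handle it.'' You need $\chi_2(\mu_n) \to \log|\lambda_2(\rho(\gamma))|$ for $\mu_n = (1-1/n)\delta_\gamma + (1/n)\mathrm{Unif}(S)$. The limit $\delta_\gamma$ is not in $\MM(\rho(\G))$: the group it generates is cyclic, it has several stationary measures on $\F$, and the Furstenberg-type formula (\ref{exponents;nonext}) degenerates there. Continuity of Lyapunov exponents for finitely-supported measures at such a degenerate boundary point is known to fail in general; what saves you in the Anosov setting is precisely a uniform domination/transversality of the limit flag maps, and making this precise (convergence of the pair of stationary measures $\nu_n, \nu_n'$ in a topology strong enough to pass to the limit in the integrand of (\ref{exponents;nonext}), plus a uniform integrability argument for the log-Jacobian) is genuine work that the argument as written does not supply. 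Moreover, the thermodynamic/Manhattan-curve results you would invoke (Crampon, Potrie--Sambarino) essentially already contain the conclusion $\chi_2 \not\equiv 0$ for non-Fuchsian $\rho$, so the proposed detour risks being circular if one is not careful about which input is being used.

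The paper avoids this limiting argument entirely. It instead exploits the dimension consequences of Proposition \ref{noconsdim}: $\chi_2 \leq 0$ forces $\delta_\LL \leq \delta_\PP$. It then produces \emph{one} specific measure $\mu_0 \in \MM(\G)$ with $\delta_\LL = 1$, by taking the Gibbs--Patterson--Sullivan measure associated to the SRB equilibrium state of the Hilbert geodesic flow on $\rho(\G)\backslash H\Delta$ and realizing it as a stationary measure via Connell--Muchnik. The inequality then forces $\delta_\PP = 1$; passing to the dual representation (where the entropy is the random-walk entropy and hence unchanged) shows $\nu^*_\LL$ is also absolutely continuous, and Benoist's Proposition 6.2 in \cite{benoist04} (absolute continuity of the boundary measure for divisible convex sets implies the convex set is an ellipse) concludes. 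The two approaches are aimed at the same rigidity phenomenon, but the paper works with a single well-chosen measure and an absolute-continuity criterion, rather than with a degenerating family of measures and density of Jordan projections.
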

Such variational characterizations of Fuchsian representations among Hitchin components have been proven by M. Crampon (\cite{crampon09}) and R. Potrie and A. Sambarino (\cite{potriesambarino}) in greater generality. Theorem \ref{Manhattan} is a variant of their results adapted to the dimension 3.
\begin{proof} By proposition \ref{noconsdim}, our hypothesis is that for all \(\mu \in \MM(\G),\) the dimensions \( \de _\LL, \de _\PP \) of the stationary measures on the spaces of lines and planes satisfy \begin{equation}\label{diminequality} \de _\LL \; \leq \; \de _\PP .\end{equation}
We are going to use thermodynamical formalism for the geodesic flow on \( \rho (\G) \setminus H \Delta,\)  where \( H\Delta \) is the homogeneous tangent bundle to \( \Delta\) and a construction of \cite{connellmuchnik} to obtain, for any Hitchin representation \( \rho \),
some \( \mu \in \MM(\rho (\G)) \) such that \( \de _\LL = 1 .\) Since \( \nu _\PP\) is also supported on a \( C^1 \)  circle,  (\ref{diminequality}) implies that \( \de _\PP = 1 \) as well. Using dynamics of the geodesic flow and \cite{benoist04}, section 6, this will imply that  the representation is Fuchsian.

Recall that  all matrices \( \rho (\g), \g \in \G, \rho(\g) \neq Id, \) have three distinct real eigenvalues with absolute values \( e^{\ell _1(\g)} > e^{\ell _2(\g)} > e^{\ell _3(\g)} \) (\cite{labourie06}). 
Let \( \vf \) be the linear functional on \( \Si := \{( \ell_1, \ell_2, \ell_3 ) \in \R^3: \ell _1 + \ell _2+ \ell _3 = 0 \} \) defined by \( \vf := \ell _1 -  \ell _2 .\)
Recall that the geodesic flow on \( \rho (\G) \setminus H\Delta\)  is an Anosov flow. There exists a H\"older continuous function \(f\) on \( H\Delta\) such that 
for any \(\g \in \G, \g \neq Id,\) \[ \ell _1(\g )  - \ell _2 (\g) \; = \; \int _{\s_\g} f, \]  where \( \s_\g \) is the periodic orbit associated to \( \g \) (see \cite{potriesambarino}, sections 2 and 7). Moreover, for any ergodic invariant measure \(m\) for the geodesic flow, \( \int f \, dm \) is the positive Lyapunov exponent of the geodesic flow for \(m\) (\cite{benoist04}, Lemma 6.5). In particular, the equilibrium measure \( m_0\)  for \(- f \) is absolutely continuous along unstable manifolds. 

 Fix a point \( o \in \Delta .\) Then, the {\it {Gibbs-Patterson-Sullivan construction}} (see e.g. \cite{ledrappier94})
  yields an equivariant  family of measures \( \nu _0 \)  at the boundary such that for all \( \g \in \G, \frac{d(\rho (\g))_\ast \nu _0}{d\nu_0} (\xi ) \) is a H\"older continuous function and with the  property that, if a set \(A\) of points in \( \partial \Delta \) is \(\nu_0\)-negligible, then the set of geodesics with end in \(A \) is \(m_0\)-negligible. By the absolute continuity of the stable foliation, this implies that \( \nu _0 \) is absolutely continuous on \( \partial \Delta.\)

 Recall that the limit set \( \La\) of \( \rho (\G)\) projects one-to-one in \( \partial \Delta.\) Denote by \( \nu \) the lift of the measure \( \nu _0 \) to \( \La\).
 
 Next step consists  in  finding a random walk in \( \MM(\rho(\G) )\) such that \( \nu \) is the stationary measure on \(\F\) or equivalently such that \(\nu _0 \) is the stationary measure for the action on \( \partial \Delta \). 
\begin{lemma} Let  \(\G\) be a co-compact group of isometries of \(\H^2\), \( \rho \) a Hitchin non-Fuchsian  representation of \(\G\) in \(SL_3(\R) \), \(\Delta\) the  open convex proper subset of \( \R\P^2 \) invariant under \( \rho (\Gamma ),  \, \nu _0\) be a finite measure  on \( \partial \Delta \) such that for all \( \g \in \G, \frac{d(\rho (\g))_\ast \nu _0}{d\nu_0} (\xi ) \) is a H\"older continuous function. Then there exists a probability measure \( \mu_0 \in \MM(\G) \) such that \( \nu _0\) is \( \rho _\ast (\mu _0)\)-stationary.
\end{lemma}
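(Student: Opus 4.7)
The plan is to reduce this statement to the construction of Connell and Muchnik \cite{connellmuchnik} on the Gromov boundary of $\G$, and to verify the moment condition on the output.

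First, I would transfer the hypothesis from $\partial \Delta$ to the Gromov boundary $\partial \G$. Since $\G$ is a cocompact lattice in $PSL_2(\R)$, its Gromov boundary is canonically the circle $\S^1 = \partial \H^2$, and by the results recalled in section \ref{Hitchin} (Choi--Goldman and Benoist \cite{benoist04}) the map $\xi : \S^1 \to \partial \Delta$ is a $\G$-equivariant bi-H\"older homeomorphism. Pulling $\nu_0$ back along $\xi$ produces a finite measure $\widetilde{\nu}_0$ on $\partial \G$ for which every Radon--Nikodym cocycle $d\g_* \widetilde{\nu}_0 / d\widetilde{\nu}_0$ is H\"older continuous, as a composition of H\"older functions.

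Second, I would invoke the theorem of Connell and Muchnik \cite{connellmuchnik}, which asserts that on a non-elementary word hyperbolic group $\G$, every finite measure on $\partial \G$ whose quasi-invariance cocycle under the $\G$-action is H\"older continuous arises as the stationary measure of some probability measure $\mu_0$ on $\G$ whose support generates $\G$ and which has a finite exponential moment with respect to a word metric. In particular such a $\mu_0$ belongs to $\MM(\G)$, since the first moment condition $\sum_\g |\g|\, \mu_0(\g) < +\infty$ is much weaker than an exponential moment. The $\G$-equivariance of $\xi$ then transports the $\mu_0$-stationarity of $\widetilde{\nu}_0$ on $\partial \G$ into $\rho_\ast(\mu_0)$-stationarity of $\nu_0 = \xi_\ast \widetilde{\nu}_0$ on $\partial \Delta$, which is the desired conclusion.

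The main point on which the argument rests is the applicability of the Connell--Muchnik machinery in exactly this form: their construction builds $\mu_0$ by a contraction and fixed-point procedure along shadows at infinity, and H\"older regularity of the quasi-invariance cocycle is precisely the regularity that makes this scheme converge. I do not expect any genuine difficulty beyond checking that the H\"older hypothesis supplied here matches their hypothesis after transport via $\xi$; once their theorem is invoked, the remaining requirements (generation of $\G$, and finiteness of the first moment with respect to the word length) are automatic from their conclusion.
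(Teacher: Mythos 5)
Your proposal takes essentially the same route as the paper: pull back $\nu_0$ along $\xi^{-1}$ to a measure on $\S^1 = \partial\G$ with H\"older Radon--Nikodym cocycles (using the H\"older continuity and $\G$-equivariance of $\xi$), invoke Connell--Muchnik to produce $\mu_0$, and then transport stationarity back through $\xi$.

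Two small overclaims are worth flagging, though neither invalidates the argument. First, you assert that Connell--Muchnik produce a $\mu_0$ with a finite \emph{exponential} moment; what the paper cites from \cite{connellmuchnik} (page 488) is only the first moment condition $\sum_g \mu_0(g)\, d(o,go) < +\infty$, which is precisely what membership in $\MM(\G)$ requires. Second, you invoke $\xi$ being a bi-H\"older homeomorphism, but the pullback identity
\[
\frac{d\g_\ast \nu_{\S}}{d\nu_{\S}}(t) \;=\; \frac{d(\rho(\g))_\ast \nu_0}{d\nu_0}\bigl(\xi(t)\bigr)
\]
shows that only H\"older continuity of $\xi$ itself is needed to conclude the cocycle on $\S^1$ is H\"older, which is all the paper claims and uses.
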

\begin{proof} We can apply \cite{connellmuchnik}, theorem 1.1, to the action of \( \G\) on the hyperbolic plane with the measure \(\nu _{\S} =  (\xi^{-1})_\ast \nu _0 .\) 
Since the mapping \( \xi  \) is H\"older continuous and \( \G\)-equivariant, the measure \(  \nu  _{\S} \) has H\"older continuous Radon-Nikodym derivatives under the action of \(\G\) as well. Let \( \mu_0 \) be the measure given by \cite{connellmuchnik} Theorem 1.1 and such that \(\nu _{\S} \)  is the stationary measure under \( \mu \). The measure \( \mu \)  has whole support on \( \G\) and   satisfies \( \sum _g \mu_0 (g) d(o, go ) < + \infty  \) (\cite{connellmuchnik}, page 488). It does indeed belong to \( \MM (\G).\) \end{proof}

To summarize, the measure \( \mu :=   \rho _\ast \mu _0\) on \(\rho (\G) \) has the property that \( \nu _\LL \) is absolutely continuous and is the measure at infinity of the SRB measure of the geodesic flow on \( H\Delta .\) Moreover, by (\ref{diminequality}), \( \de _\PP = 1.\)

Consider the dual representation \( \rho ^\ast (\g) = (\rho (\g)^t)^{-1} \)  and the measure \( \mu ^\ast := (\rho ^\ast )_\ast \mu _0 \). 
The exponents of the random walk \( (\G, \mu ^\ast )\)  are the opposite \( -\chi_3 > -\chi _2 > - \chi _1 \)  and we claim that the entropy \( h^\ast \) is the  same entropy \( h^\ast  =  h \). Therefore, the dimension of the stationary measure \(  \nu ^\ast_\LL \) is \( \de _\PP =1.\) 
By the variational principle again, \(  \nu ^\ast_\LL \) is absolutely continuous. By \cite{benoist04} Proposition 6.2, the representation is Fuchsian. 

To prove the claim, observe that, since \(\rho (\G) \) is discrete in \(SL_3(\R),\) the entropy \( h \) is given by the random walk entropy \( h=   h_{{\textrm {RW}}} (\mu ):= \lim\limits_n \frac{1}{n} H(\mu ^{ (n)} ) \) (\cite{ledrappier85}), which is  the same for \( \mu \) and \( \mu ^\ast .\)\end{proof}

Using the dual representation \( \rho ^\ast \), we also have 
\begin{corollary}\label{Manhattan*}  Let \( \rho \) be a Hitchin representation of a cocompact surface group in \( SL_3(\R) \) such that for all probability measures in \( \MM (\rho (\G)), \, \chi _2 \geq 0 .\) Then, the representation \( \rho \) is Fuchsian. \end{corollary}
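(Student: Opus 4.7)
The plan is to deduce the corollary from Theorem \ref{Manhattan} applied to the dual representation \(\rho^\ast\). First I would observe that \(\rho^\ast\) is again a Hitchin representation of \(\G\) (the contragredient involution preserves the Hitchin component in \(SL_3(\R)\), and it fixes the Fuchsian locus: for the irreducible embedding \(\iota:PSL_2(\R)\to SL_3(\R)\), the image preserves a non-degenerate symmetric bilinear form \(J\), so \((\iota(g)^t)^{-1}=J\iota(g)J^{-1}\), which means \(\rho^\ast\) and \(\rho\) are conjugate as soon as \(\rho\) is Fuchsian). In particular, the last step of the argument will be: if we can show \(\rho^\ast\) is Fuchsian, then \(\rho=(\rho^\ast)^\ast\) is Fuchsian as well.

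Next, I would transport the hypothesis on \(\chi_2\) to the dual side. The natural bijection \(\MM(\G)\to\MM(\rho^\ast(\G))\), \(\mu_0\mapsto \rho^\ast_\ast\mu_0\), realizes every measure in \(\MM(\rho^\ast(\G))\) as the pushforward of some \(\mu_0\in\MM(\G)\). For such a \(\mu_0\), the Lyapunov exponents of the random walk driven by \(\rho^\ast_\ast\mu_0\) are the negatives, in reversed order, of the exponents \(\chi_1>\chi_2>\chi_3\) of the random walk driven by \(\rho_\ast\mu_0\); this is the standard duality for cocycles under \(g\mapsto (g^t)^{-1}\). Hence the middle exponent of \((\rho^\ast(\G),\rho^\ast_\ast\mu_0)\) equals \(-\chi_2\).

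Now the assumption of Corollary \ref{Manhattan*} is that \(\chi_2\ge 0\) for every \(\mu\in\MM(\rho(\G))\); equivalently, via the bijection \(\mu_0\leftrightarrow\rho_\ast\mu_0\), this says \(\chi_2^{\rho,\mu_0}\ge 0\) for every \(\mu_0\in\MM(\G)\). By the previous paragraph this is exactly the statement that the middle exponent of every measure in \(\MM(\rho^\ast(\G))\) is \(\le 0\). Theorem \ref{Manhattan} then applies to \(\rho^\ast\) and yields that \(\rho^\ast\) is Fuchsian, so \(\rho\) is Fuchsian, which is the conclusion.

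The only substantive points to verify are the two compatibility statements invoked above, namely that duality permutes Lyapunov exponents by \(\chi_i\mapsto -\chi_{d+1-i}\) (immediate from the fact that the singular values of \((g^t)^{-1}\) are the inverses of those of \(g\)), and that the contragredient involution preserves the Hitchin component and its Fuchsian locus (a standard feature of the Hitchin component, and in \(d=3\) just the identity \((\iota(g)^t)^{-1}=J\iota(g)J^{-1}\) above). Neither step requires additional input beyond results already in the paper, so the corollary reduces to a clean transcription of Theorem \ref{Manhattan}.
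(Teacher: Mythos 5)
Your proof is correct and takes essentially the same approach as the paper: the paper derives Corollary \ref{Manhattan*} from Theorem \ref{Manhattan} by a one-line appeal to the dual representation \(\rho^\ast\), and your argument simply fills in the details it leaves implicit — namely that \(g\mapsto(g^t)^{-1}\) sends the exponent triple \((\chi_1,\chi_2,\chi_3)\) to \((-\chi_3,-\chi_2,-\chi_1)\) so the hypothesis \(\chi_2\ge 0\) for \(\rho\) becomes \(\chi_2\le 0\) for \(\rho^\ast\), and that the contragredient preserves both the Hitchin component and the Fuchsian locus (via the invariant symmetric bilinear form of the irreducible \(PSL_2(\R)\hookrightarrow SL_3(\R)\)), so that \(\rho^\ast\) Fuchsian forces \(\rho\) Fuchsian.
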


\subsection{Hitchin components in higher dimensions}

Consider  the surface  group \( \G .\) As before, a representation \(  \rho : \G \to  PSL_d(\R) \) is  called {\it {Fuchsian}} if it is the composition of a Fuchsian representation and the canonical irreducible representation of \( PSL_2(\R) \) into \( PSL _d(\R) \).  It is called {\it {Hitchin}} if it can be obtained by a deformation of a Fuchsian representation. Geometric properties of Hitchin representations have been studied, notably by F. Labourie (see \cite{labourie06}, \cite{labourie08} for history, background, the properties we use below and much more). Let \( \rho :\g \to PSL_d(\R)  \) be a Hitchin representation and denote again by \( \rho(\G) \) a lift of the representation to \( SL_d(\R) \).  By \cite{guichard08}, proposition 14, the action of \( \rho (\G)\) on \( \R^d\) is strongly irreducible: there is no  finite union of proper vector subspaces of \( \R^d\) that is invariant under \(\rho (\G).\)  By \cite{labourie06}, theorem 1.5, the matrix \( \rho (\g ) \), for \( \g \) non-trivial has all eigenvalues real and distinct. In particular, for \( \g \)  non trivial, there is a unique attracting fixed point \( \g^+ \) for the action of \(\rho( \g )\) on \( \F\). By definition, the {\it {limit set }} \(\La\) is the closure of the set of all \( \g^+, \g \neq Id \in \G.\) Moreover, the projection from the  limit set \( \La \) to \( \R \P^{d-1} \) is one-to-one (\cite{labourie06} theorem 4.1).

 Let \( \mu \in \MM(\G) \). We claim that \( \mu \in \MM (d) \):  on the one hand, the action is proximal on all exterior products   and by  \cite{guivarch-raugi}, the  exponents
\( \chi _1 > \ldots >\chi _d \)  are distinct; on the other hand,  since the action  on \(\R^d\) is strongly irreducible, there is   a unique stationary measure \( \nu \) on \( \R\P^{d-1} \). That measure has a unique lift to \( \La\) and therefore, there is a unique stationary measure on \( \F\). For the same reason, there is a unique stationary measure for \(\mu '\) on \( \F\).  All our discussion and theorem \ref{main} apply, the measure \( \nu \) is exact dimensional with dimension \(\de \)  and entropy \( h := h(\F, \mu, \nu ) \). 
\begin{proposition}\label{noconsdim1} Let \( \la := \inf _{i<j} (\chi_i -\chi _j).\) Then \( \de = h/ \la .\) More generally, let \(T \neq T_0\) be an admissible topology, \( \kappa _T, \de _T\) as defined in  (\ref{entropy}) and corollary \ref{exact-dim}. Let  \( \la _T: = \inf _{i<j, j \not \in T(i)} (\chi_i -\chi _j).\) Then, \( \kappa _T= h \) and  \( \de _T= h/ \la  _T.\) \end{proposition}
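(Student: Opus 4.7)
The plan is to first establish the entropy identity $\kappa_T = h$ for every admissible $T \neq T_0$, and then to derive the dimension formula by concentrating this entropy on the first (smallest-exponent) step of the chain supplied by Proposition \ref{order}.1.

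For the entropy identity I would use the mutual information expression $\kappa_T = I(g_{-1}, E_T \vert E_+)$ from (\ref{entropyT}). Since $E_T = F_T(E_-, E_+)$ is a deterministic function of $(E_-, E_+)$, the data processing inequality together with the independence of $(g_{-1}, E_-)$ from $E_+$ (Proposition \ref{independence}) and the identity $I(g_{-1}, E_-) = h$ (Lemma \ref{quasiindependence}) gives the upper bound $\kappa_T \leq h$. For the reverse inequality the chain rule
\[I(g_{-1}, E_- \vert E_+) \;=\; I(g_{-1}, E_T \vert E_+) + I(g_{-1}, E_- \vert E_T, E_+)\]
reduces matters to showing $I(g_{-1}, E_- \vert E_T, E_+) = 0$, which holds as soon as $E_-$ is a measurable function of $(E_T, E_+)$ almost surely. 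This is where the Hitchin rigidity enters: the limit set $\La = \xi(\S^1)$ is a H\"older curve, and by Labourie's theorem each boundary map $\xi_i : \S^1 \to \GG_i$ ($i = 1,\ldots,d-1$) is a topological embedding. Since $T \neq T_0$ there is some atom $T(i_0) \subsetneq T_0(i_0)$, and the associated subspace
\[[F_T(f, f')]_{T(i_0)} \;=\; \bigoplus_{k \in T(i_0)} \bigl(U_k(f) \cap U'_{d-k+1}(f')\bigr)\]
varies injectively with $f \in \La \setminus \{f'\}$ for $\nu'$-a.e.\ $f'$, thanks to positivity of Hitchin curves; this yields the desired measurable recovery of $E_-$ from $(E_T, E_+)$ and hence $\kappa_T = h$.

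For the dimension formula, I apply Proposition \ref{order}.1 with $T' = T_0$ to obtain a chain $T_0 = T^0 \prec T^1 \prec \cdots \prec T^N = T$ with nondecreasing exponents $\chi_{T^t, T^{t-1}}$, so that $\chi_{T^1, T^0} = \lambda_T$. The entropy identity just established gives $\kappa_{T^t} = h$ for $t \geq 1$ and $\kappa_{T^0} = 0$, whence $\kappa_{T^1, T^0} = h$ and $\kappa_{T^t, T^{t-1}} = 0$ for $t \geq 2$. By Theorem \ref{exactfibers}, $\gamma_{T^1, T^0} = h/\lambda_T$ and $\gamma_{T^t, T^{t-1}} = 0$ for $t \geq 2$, and summing as in Theorem \ref{mainT}.2 gives $\delta_T = h/\lambda_T$. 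The first statement $\delta = h/\lambda$ is the specialisation $T = T_1$, since then $\lambda_{T_1} = \min_{i<j}(\chi_i - \chi_j) = \lambda$ and $\nu_{T_1}^{f'}$ agrees $\nu'$-almost surely with the restriction of $\nu$ to a generic stratum of full measure.

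The main obstacle will be the injectivity claim underlying the lower bound $\kappa_T \geq h$ in Step 1: for every admissible $T \neq T_0$ (not only the filtered ones) the map $F_T(\cdot, f')$ must be shown to be injective on $\La \setminus \{f'\}$ for $\nu'$-a.e.\ $f'$. This is a positivity statement about the Hitchin curve relative to a transverse stable flag; in dimension $d = 3$ it follows from strict convexity of $\partial \Delta$ (three distinct points of $\La$ project to non-collinear points of $\R\P^2$), as already exploited in Section \ref{Hitchin}, while the general-$d$ case would likely need Labourie's cross-ratio theory or Fock--Goncharov positivity.
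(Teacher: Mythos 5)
Your strategy is essentially the paper's own: establish $\kappa_T = h$ by showing that $E_-$ is measurably recoverable from $(E_T, E_+)$, and then concentrate the entropy on the first, smallest-exponent arrow of the chain supplied by Proposition \ref{order}.1, getting $\gamma_{T^1, T_0} = h/\lambda_T$ and $\gamma_{T^t, T^{t-1}} = 0$ for $t \ge 2$, hence $\delta_T = h/\lambda_T$. Your formulation via the data processing inequality and the chain rule for conditional mutual information is a valid rephrasing of the paper's direct observation that the fibers of $\pi_{T_1, T^1}$ meet the limit set $\La$ in at most one point, whence $\kappa_{T_1, T^1} = 0$ and then $\kappa_T = \kappa_{T_1} = h$ for all $T$ with $T_1 \prec T \prec T^1$.

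The one genuine gap is the injectivity claim, which you correctly flag as ``the main obstacle'' but do not actually prove. This is precisely where the Hitchin hypothesis enters, and the embedding of each Frenet map $\xi_i$ alone is not sufficient. The paper proves it cleanly in Lemma \ref{convexity} using Labourie's Property (H): $\xi_i(s) \oplus \bigl(\xi_i(t) \cap \xi_{d-i+1}(t')\bigr) \oplus \xi_{d-i-1}(t') = \R^d$ for any triple of distinct points. If the configuration coordinate $\Psi(t) := \bigl(\xi_i(t) \cap \xi_{d-i+1}(t')\bigr) \oplus \xi_{d-i-1}(t')$ took the same value at $s \ne t$, substituting into Property (H) with the triple $(s,t,t')$ would force $\xi_i(s) \cap \xi_{d-i+1}(t') = \{0\}$, contradicting hyperconvexity. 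Note also that the paper only needs this injectivity for the one-step-coarser topologies $T^1 \overset{1}{\prec} T_0$, which by Proposition \ref{onestepproposition} necessarily have the special form $T^1(i) = \{i, i+2, \ldots, d\}$; your stronger claim that $[F_T(f,f')]_{T(i_0)}$ varies injectively with $f$ for an \emph{arbitrary} strict atom $T(i_0) \subsetneq T_0(i_0)$ is unnecessary and would need separate justification, since one transports $\kappa_T = h$ to general $T$ through the chain $T \prec T^1 \overset{1}{\prec} T_0$.
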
 
In particular, theorem \ref{LyaDim3} follows in all dimensions. Another  consequence is that, as in dimension 3,  comparing  \( \la \) and \( \la _T \) is enough to decide whether the projection from \( \F \) to \( \X_T^{f'} \) is dimension conserving for \( \nu '\)-a.e. \( f'.\)
\begin{corollary} Let \(T \) be an admissible topology such that \( T_1  \overset {1}{\prec} T \). Then \( \de_T = \de \) unless there is a unique \( i \) with \( \la = \chi _i - \chi _{i+1} \) and furthermore \( T = \{1\}, \{2\}, \ldots  , \{ i, i+1\}, \ldots , \{d\} .\) 
\end{corollary}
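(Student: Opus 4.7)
The plan is to reduce the claim to comparing the two Lyapunov ``bottlenecks'' \(\la\) and \(\la_T\) via Proposition \ref{noconsdim1}. Since by that proposition \(\de = h/\la\) and \(\de_T = h/\la_T\) with the same numerator \(h\), and since \(D_T \subsetneq D_{T_0}\) forces \(\la \leq \la_T\), one has \(\de_T = \de\) exactly when \(\la_T = \la\), and \(\de_T < \de\) otherwise. So the corollary is purely a question of when removing the single pair \((i,j)\) dropped by passing from \(T_0\) to \(T\) changes the infimum.

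The second step is an elementary observation about where \(\la\) is attained. Because \(\chi_1 > \chi_2 > \cdots > \chi_d\) strictly (as \(\mu \in \MM(d)\)), the telescoping identity
\[\chi_i - \chi_j \; = \; \sum_{k=i}^{j-1}(\chi_k - \chi_{k+1})\]
shows that \(\chi_i - \chi_j \geq \chi_i - \chi_{i+1}\), with equality if and only if \(j = i+1\). Consequently \(\la = \min_k(\chi_k - \chi_{k+1})\), and the pairs \((i,j)\) with \(i < j\) realizing \(\la\) are precisely the consecutive pairs \((i, i+1)\) with \(i \in \mathcal{I} := \{k : \chi_k - \chi_{k+1} = \la\}\).

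Finally, the hypothesis \(T_1 \overset{1}{\prec} T\) means \(T\) agrees with \(T_1\) except at a single atom \(T(i_0) = \{i_0, j_0\}\) for some \(j_0 > i_0\), so \(D_T = D_{T_0} \setminus \{(i_0, j_0)\}\). Thus \(\la_T > \la\) precisely when \((i_0, j_0)\) is the \emph{unique} pair achieving \(\la\); by the preceding paragraph this forces both \(j_0 = i_0+1\) and \(\mathcal{I} = \{i_0\}\), which is exactly the configuration excluded in the statement of the corollary. In every other case some other consecutive pair \((i,i+1) \in D_T\) with \(i \in \mathcal{I}\) still attains \(\la\), so \(\la_T = \la\) and hence \(\de_T = \de\). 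The argument is purely combinatorial once Proposition \ref{noconsdim1} is granted, so no genuine obstacle is anticipated.
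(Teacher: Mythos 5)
Your proposal is correct and takes exactly the route the paper intends: the corollary is stated immediately after Proposition \ref{noconsdim1} with no separate proof, being a routine combinatorial consequence of the formulas \(\de = h/\la\), \(\de_T = h/\la_T\). Your key observation — that since the exponents are strictly decreasing, the minimizers of \(\chi_i - \chi_j\) over \(i<j\) are precisely the consecutive pairs \((k,k+1)\) with \(\chi_k - \chi_{k+1} = \la\), together with the identification \(D_T = D_{T_0} \setminus \{(i_0,j_0)\}\) when \(T = T_{i_0,j_0}\) — is exactly what is needed to decide when \(\la_T > \la\), and your case analysis correctly isolates the excluded configuration.
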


\begin{proof}[Proof of Proposition \ref{noconsdim1}] The measure \(\nu \) is supported by the limit set \( \La \subset \F.\) Labourie showed that \( \La \) is a {\it { hyperconvex Frenet }} curve with {\it {Property (H)}}. Namely:
\begin{enumerate}
\item There is a H\"older  continuous \(\G\)-equivariant  mapping \( \xi : \S^1 \to \La ,\) \[ \xi (t) = \{0\} \subset \xi _1 (t) \subset \ldots \subset \xi _i (t) \subset \ldots \subset \xi _d (t) = \R^d . \]
\item For any distinct points \( t_1 \ldots , t _\ell \) integers \( d_1, \ldots, d_\ell \) with \( p := \sum _{j=1}^\ell d_j ,\) the following sum is direct
\[ \xi _{d_1, \ldots ,  d_\ell }(t_1, \ldots, t_\ell ) \; : = \;  \xi _{d_1} (t_1) \oplus \ldots \oplus \xi _{d_\ell} (t_\ell) \]
and, if  the distinct \( t_1 \ldots , t _\ell \)  all converge to \( x\), then \( \xi _{d_1, \ldots ,  d_\ell }(t_1, \ldots, t_\ell ) \) converge to \( \xi _p (x) \).
\item for any triple of distinct points \( (s,t,t')\), any integer \(i, 0<  i < d\),
 \[ \xi _i (s) \oplus (\xi _i (t) \cap \xi _{d-i+1} (t') )\oplus \xi _{d-i -1} (t') \; = \; \R^d.\]
\end{enumerate}
Property (2) defines a hyperconvex Frenet curve (\cite{labourie06}, theorem 1.4) and property (3) is relation (6) in \cite{labourie06} theorem 4.1. (Property (3) is called Property (H) in \cite{labourie06} section 7.1.4.)

Let \(T^1\) be an admissible topology such that  \( T^1 \overset {1}{\prec} T_0 \). We claim that there is a unique integer \( i , 0< i< d,\) such that \[ T^1(k) = \{ k, k+1, \ldots, d \} {\textrm { for }} k \neq i, \quad T^1(i) = \{ i, i+2, \ldots , d \} .\]
Indeed, by definition, there is \( i , 0< i< d,\) such that \( T^1(k) = T_0(k) \) for \( k \neq i \) and  \( j >i \) such that \(T^1(i) = T_0(i) \setminus \{j\}\) . By Proposition \ref{onestepproposition}, \( T^1(i) \setminus \{i,j \} \in T_0, \) and this is possible only if \( j = i+1.\)

Fix \( t' \in \S^1\) and set \(f' := \xi (t') .\) By lemma \ref{coordinate}, the set \( \X_{T^1}^{f'} \) is bilipschitz homeomorphic to an open interval.
We associate to \( t \in \S^1, t \neq t',\) a configuration \( \Psi (t ) \in \X_{T^1}^{f'} \) by setting:
\begin{eqnarray*} \Psi (t) _{{T^1}(k )} &=& \xi _{d-k+1} (t') \; {\textrm { for }} \; k \neq i ,\\
\Psi (t) _{{T^1}(i)} &=& \left(\xi _i(t) \cap \xi_{d-i+1}(t')  \right) \oplus \xi _{d-i-1 }(t') .\end{eqnarray*} 
\begin{lemma}\label{convexity} The mapping \( \Psi \) is an homeomorphism between \( \S^1 \setminus \{ t'\} \) and its image in \( \X_{T^1}^{f'}.\) \end{lemma}
\begin{proof} By the  hyperconvexity property (2), dim\(\left(\xi _i(t) \cap \xi_{d-i+1}(t')  \right) =1\) and that space is in general position with respect to \( \xi_{d-i-1} (t') \); therefore   the  mapping \( \Psi \) is continuous. Since \( \Psi \)  is a mapping between two open intervals, it suffices to show that \( \Psi \) is one-to-one. Assume by contradiction that there is \( s\neq t,t' \) such that 
\[  \left(\xi _i(s) \cap \xi_{d-i+1}(t')  \right) \oplus \xi _{d-i -1}(t')  =  \left(\xi _i(t) \cap \xi_{d-i+1}(t')  \right) \oplus \xi _{d-i -1}(t') .\]
By property (3), \( \xi _i (s) \) should be in direct sum with \( \left(\xi _i(s) \cap \xi_{d-i+1}(t')  \right) \oplus \xi _{d-i -1}(t') \), which is possible only if \( \xi _i(s) \cap \xi_{d-i+1}(t') = \{0\} . \) This contradicts hyperconvexity. \end{proof}

By lemma \ref{convexity}, for any \( x' \in \X_{T^1}^{f'}\), there  at most  one point \( s \in \S^1 \) such that \( \pi _{T_1, {T^1}} ( \xi (s) ) = x', \) i.e. \( (\pi _{T_1, {T^1}})^{-1} (x') \) is at most one point. So, \( \kappa _{T_1, {T^1} } = 0 .\) By theorem \ref{exactfibers}, we have \( \de _{T^1} =h /\chi _{{T^1}, T_0} .\)

 For a general admissible topology \( T,\) we apply proposition \ref{order} and obtain a topology \( T^1 \) such that \( T\prec T^1 \overset {1}{\prec} T_0 \) and \( \chi _{T^1, T_0} = \la _T. \) Since \( T_1 \prec T \prec T^1\), \( \kappa _{T, T^1} = 0\) and  \( \kappa _T = h \). By (\ref{LY}) and corollary \ref{exact-dim}, \( \de _T \) is the same as \( \de _{T^1} = h / \la _T.\)
\end{proof}

\end{document}